\title{Poincar\'e duality for Cuntz--Pimsner algebras of bimodules}
\author[1]{Adam Rennie}
\author[2]{David Robertson}
\author[1]{Aidan Sims}
\affil[1]{School of Mathematics and Applied Statistics\\
University of Wollongong\\
Northfields Ave 2522\\ Australia\\ \texttt{renniea@uow.edu.au,asims@uow.edu.au}}
\affil[2]{School of Mathematical and Physical Sciences\\ University of Newcastle\\
University Drive\\ Callaghan 2308\\ Australia\\ \texttt{dave84robertson@gmail.com}}
\numberwithin{equation}{section} 
\theoremstyle{plain} 
\newtheorem{thm}{Theorem}[section]
\newtheorem{lemma}[thm]{Lemma}
\newtheorem{prop}[thm]{Proposition}
\newtheorem{corl}[thm]{Corollary}
\newtheorem{ass}{Assumption}
\theoremstyle{definition} 
\newtheorem{defn}[thm]{Definition}
\newtheorem{example}[thm]{Example}
\theoremstyle{remark} 
\newtheorem{rmk}[thm]{Remark}
\newtheorem{ntn}[thm]{Notation}
\DeclareMathOperator{\End}{End}   
\DeclareMathOperator{\ev}{ev}  
\DeclareMathOperator{\Id}{Id}     
\DeclareMathOperator{\linspan}{span} 
\newcommand{\op}{{\operatorname{op}}}       
\newcommand{\extcls}{[\operatorname{ext}]}
\newcommand{\extocls}{[\operatorname{ext}^\op]}
\newcommand{\extobarcls}{[\overline{\operatorname{ext}}^\op]}
\newcommand{\Aut}{\operatorname{Aut}}
\newcommand{\id}{\operatorname{id}}
\newcommand{\A}{\mathcal{A}}  
\newcommand{\B}{\mathcal{B}}  
\newcommand{\C}{\mathbb{C}}   
\newcommand{\D}{\mathcal{D}}  
\newcommand{\E}{\mathcal{E}}  
\renewcommand{\H}{\mathcal{H}}  
\newcommand{\K}{\mathcal{K}}
\newcommand{\N}{\mathbb{N}}   
\newcommand{\Oo}{\mathcal{O}}
\newcommand{\R}{\mathbb{R}}   
\newcommand{\V}{\mathcal{V}}   
\newcommand{\ox}{\otimes}     
\newcommand{\hox}{\hat\otimes} 
\renewcommand{\S}{\mathcal{S}} 
\newcommand{\Z}{\mathbb{Z}}   
\newcommand{\Fock}{\mathcal{F}}
\newcommand{\CPa}{\mathcal{O}}  
\newcommand{\Toep}{\mathcal{T}} 
\newcommand{\ol}[1]{\overline{#1}}
\newcommand{\Cliff}{\mathbb{C}{\rm liff}}  
\newcommand{\defeq}{\colonequals}
\begin{document}

\maketitle

\vspace{-2pc}

\begin{abstract}
We present a new approach to  Poincar\'e duality for Cuntz--Pimsner algebras. We provide  sufficient
conditions under which Poincar\'e self-duality for the coefficient algebra of a Hilbert
bimodule lifts to Poincar\'e self-duality for the associated Cuntz--Pimsner algebra.

With these conditions in hand, we can constructively produce fundamental classes in
$K$-theory for a wide range of examples. We can also produce $K$-homology fundamental
classes for the important examples of Cuntz--Krieger algebras (following
Kaminker--Putnam) and  crossed products of manifolds by isometries, and their
non-commutative analogues.
%
%
\end{abstract}

\tableofcontents

\parskip=5pt
\parindent=0pt

\addtocontents{toc}{\vspace{-1pc}}

\section{Introduction}
\label{sec:intro}

In this paper we explore a new approach to Poincar\'e duality for Cuntz--Pimsner
algebras. Our approach emphasises the interaction between the dynamics defined by a
bimodule and the topology of its coefficient algebra.

Our motivation was to understand existing proofs of Poincar\'e duality for $C^*$-algebras associated to dynamical
systems in a more geometric light: specifically, the proofs of Poincar\'e duality for
Cuntz--Krieger algebras \cite{KP}, $k$-graph algebras \cite{PZ} and Smale spaces
\cite{KPW}. Other examples of Poincar\'e duality, such as \cite{ConnesGrav} and
\cite{Heath} are somewhat different, but there is overlap in the algebras treated,
if not in the  techniques. Overviews of $C^*$-algebraic Poincar\'e
duality appear in \cite{BMRS,KS}.

Our approach has several elements. Given a suitable coefficient algebra $A$ satisfying
Poincar\'e self-duality (in the sense that $A$ is Poincar\'e dual to its opposite algebra $A^\op$), we
consider an $A$--$A$-correspondence $E$. Our aim is to lift the Poincar\'e self-duality
for $A$ to a duality for the Cuntz--Pimsner algebra $\CPa_E$.

In the first instance, based on Connes' work \cite{ConnesGrav}, we seek a Poincar\'e
duality between $\CPa_E$ and $\CPa_E^\op$. On the other hand, the results of Kaminker--Putnam suggest
that, when $E$ is a bi-Hilbertian bimodule, we might also (or instead) expect duality
between $\CPa_E$ and $\CPa_{E^\op}$. It turns out that when $E$ is an invertible bimodule
(that is, a self-Morita-equivalence bimodule), 
the algebras $\CPa_{E^\op}$ and $\CPa_E^\op$
coincide, so the question of which potential dual algebra to consider is moot. But no
such isomorphism exists in general, so we explore both possible dual algebras. To aid our
study of duality with $\CPa_E^\op$, we establish that $\CPa_E^\op$ is isomorphic to
$\CPa_{\ol{E}^\op}$, where $\ol{E}^\op$ is the dual module $\ol{E}$ regarded as a right
$A^\op$-module.

Our first main result provides checkable conditions, for both possible dual algebras, on
potential $K$-theory and $K$-homology fundamental classes that guarantee that they
implement a Poincar\'e duality for $\CPa_E$. The conditions involve the interaction of the dynamics
defined by $E$ with the fundamental classes witnessing the Poincar\'e self-duality of
$A$, and the Kasparov class of the defining short exact sequence for $\CPa_E$.

In fact we can obtain significant information from the existence of either ``half'' of a
Poincar\'e duality pair. We describe how either a $K$-theory or a $K$-homology
fundamental class, even in the absence of its counterpart, provides non-trivial
information: isomorphisms of $K$-theory groups for one algebra with $K$-homology groups
for the other.
This is important, because our next main result provides an explicit construction of a
$K$-theory fundamental class for both possible dual algebras under very mild hypotheses
on the bimodule $E$. In many cases we obtain explicit representatives of these classes
which can be compared directly with known examples.

Examples to which our methods for the $K$-theory fundamental class apply directly are: the
Wieler solenoids discussed in \cite{DGMW}; crossed products of 
compact spin$^c$ manifolds  by
isometries;  and more generally topological graphs over manifolds and crossed products by
injective endomorphisms. We also recover Kaminker and Putnam's $K$-theory fundamental class for Cuntz--Krieger
algebras and extend it to  more general graph algebras.

Establishing the existence of a $K$-homology fundamental class turns out to be more
challenging and we have not discovered a general method. We do show that Kaminker
and Putnam's $K$-homology fundamental 
class can be obtained via
our approach, and extend their construction to a
broader class of graph algebras. We also provide sufficient conditions for the
crossed product $C(M)\rtimes_\alpha\Z$ of a 
compact spin$^c$ manifold  by a
spin$^c$-structure-preserving isometry to satisfy Poincar\'e duality. This construction extends 
to $\theta$-deformations of manifolds.

Our formulation of Poincar\'e duality is not always
suitable for non-unital algebras. A compactly supported version of $K$-homology and the attendant
exact sequences etc are required, \cite{KasparovEqvar,RennieSmooth}.
This requires either $RKK$ \cite{KasparovEqvar}, 
for $C(X)$-algebras, or 
significant work to establish the required exact sequences for any proposed compactly supported $K$-homology.
For the most part we leave these issues to future work, focussing on the unital case.

The paper is organised as follows. The coefficient algebras, correspondences, bimodules
and Cuntz--Pimsner algebras we consider are discussed in Section~\ref{sec:bimods}. In
particular we tease out some of the relationships between the two possible algebras that
may play the role of Poincar\'e dual algebra to $\CPa_E$. The two contenders are the
opposite algebra $\CPa_E^\op$ and the Cuntz--Pimsner algebra $\CPa_{E^\op}$ of the
opposite module. Of course this choice influences what we mean by self-duality for
$\CPa_E$.

Our sufficient conditions for lifting Poincar\'e self-duality of the coefficient algebra
$A$  to the Cuntz--Pimsner algebra are described in Section~\ref{sec:funky-diagram}.
Again, we discuss criteria for both possible dual algebras. Section
\ref{subsec:K-theory-fun} covers the construction of the $K$-theory fundamental class for
$\CPa_E$. This uses mapping cone techniques, from \cite{AR, CPR1,Putnam}. We produce
explicit representatives of this class in our main examples. The construction of the
$K$-homology class for crossed products and for Cuntz--Krieger algebras is described in
Section~\ref{sec:K-fun}.

Our techniques apply to more general pairs of Poincar\'e dual algebras, as we describe 
in Appendix~A. Finally, in Appendix B we discuss the relationships between the
extension classes. These extension classes play a central role throughout the paper, and
the fine structure of our Poincar\'e duality classes (such as summability, real
structures etc) will depend on these relationships.

{\bf Acknowledgements} All authors were supported by the Australian Research Council. The
authors thank Kylie Fairhall for teaching them `the magic question', which was
instrumental in discovering the results in this paper. Numerous aspects of this work
overlap with projects with our collaborators Francesca Arici, Magnus Goffeng and Bram
Mesland, and we thank them for many conversations and lessons learned. We also wish to
thank Heath Emerson for discussions on sign conventions. This research was supported by
ARC Discovery grant DP120100507, and the MATRIX@Melbourne research program \emph{Refining
$C^*$-algebraic invariants for dynamics using KK-theory}, July 18--29 2016.

\section{Cuntz--Pimsner algebras and associated Kasparov classes}
\label{sec:bimods}

In all the following, we suppose that $A$ is a separable, 
$C^*$-algebra. Given a
right Hilbert $C^*$-$A$-module $E$ (written $E_A$ when we want to emphasise the
coefficient algebra), we denote the $C^*$-algebra of adjointable operators on $E$ by
$\End_A(E)$. For $e,f \in E$ we write $\Theta_{e,f} \in \End_A(E)$ for the rank-one
operator $\Theta_{e,f}(g) = e \cdot (f \mid g)$. We write $\End^0_A(E)$ for the closed
2-sided ideal
\[
    \End^{0}_A(E) := \overline{\linspan}\{\Theta_{e,f} : e,f \in E\} \subseteq \End_A(E)
\]
of generalised compact operators on $E$.

We denote by $\overline{E}$ a copy $\{\bar{e} : e \in E\}$ of $E$ as a set with
vector-space structure given by $\lambda\overline{e} + \overline{f} =
\overline{\bar{\lambda}e + f}$ for $\lambda\in\C$ and $\bar{e},\,\bar{f}\in
\overline{E}$. The vector space $\overline{E}$ is a left-Hilbert $A$-module with $a
\cdot\bar{e} = \overline{e \cdot a^*}$ and ${}_A(\bar{e} \mid \bar{f}) = (e \mid f)_A$.
We call $\overline{E}$ with this structure the \emph{conjugate module} of $E$. 

Given a $C^*$-algebra $A$, we write $\ell^2(A)$ for the standard $C^*$-module
\[
\ell^2(A) := \Big\{(a_n)^\infty_{n=1} \in \prod_\N A \mathbin{\Big|} \sum_n a^*_n a_n\text{ converges in $A$}\Big\}
\]
endowed with the diagonal right action of $A$ and with inner product $((a_n) \mid (b_n))
= \sum_n a^*_n b_n$. 

We make regular use of frames. A countable frame $\{e_j\}_{j\geq 1}\subset E_A$ for a
countably generated right Hilbert $C^*$-$A$-module $E$ is a sequence such that
\begin{equation}\textstyle
\sum_{j\geq 1}\Theta_{e_j,e_j}(e) = e\quad\text{ for all $e \in E$};
\label{eq:frame}
\end{equation}
that is, $\sum_j \Theta_{e_j, e_j}$ converges strictly to ${\rm Id}_E$ in $\End_A(E) =
\operatorname{Mult}(\End^0_A(E))$.
Frames  provide a stabilisation map in the sense of Kasparov as follows. Let
$\{e_j\}$ be a frame for $E_A$. A quick calculation using~\eqref{eq:frame} shows that
there is an isometry
\begin{equation}
\label{eq:isometry}
v : E_A\to \ell^2(A) \quad\text{ such that }\quad v(e)=((e_j\mid e)_A)_{j\geq 1}
\quad\text{ for all }e \in E.
\end{equation}
So $p := vv^* \in \End_A(\ell^2(A))$ is a projection and $E\cong p\ell^2(A)$. Writing
$\{\delta_i\}$ for the orthonormal basis of the separable Hilbert space $\ell^2$, and
writing $\Theta_{i,j}$ for the rank-one operator $h \mapsto \delta_i(\delta_j \mid h)$ on
$\ell^2$, we can express $p$ as the strict limit $p := \sum_{i,j} \Theta_{i,j} \otimes
(e_i\mid e_j)_A$.

\subsection{Toeplitz--Pimsner and Cuntz--Pimsner algebras}
\label{subsec:TE-OE}

\begin{defn}\label{defn:corr}
Let $A$ be a $C^*$-algebra. An \emph{$A$--$A$-correspondence}, or a \emph{correspondence
over $A$}, is a right $C^*$-$A$-module together with a homomorphism $\phi :A \to
\End_A(E)$, which we regard as defining a left action of $A$ on $E$. Given $a \in A$ and
$e \in E$, we frequently write $a \cdot e$ for $\phi(a)e$.
\end{defn}


{\em For all correspondences $E$ in this paper, we assume that $\phi$ is injective and
takes values in $\End_A^0(E)$.} 

The latter is automatic when $A$ is unital and $E$ is
finitely generated. These hypotheses are not necessary for constructing the
Toeplitz--Cuntz--Pimsner algebra (often just called the Toeplitz algebra) and the
Cuntz--Pimsner algebra of $E$ \cite{Katsura}, but we will need them for our later
results.

Given correspondences $E,\,F$ over $A$,
the formula $\big(e \otimes f \mid e' \otimes f')_A
= \big(f \mid (e \mid e')_A \cdot f')_A$ determines a positive-semidefinite sesquilinear
form on $E \odot F$. Taking the quotient by the subspace of vectors of length zero and
then completing
yields the balanced
tensor product $E \otimes_A F$, which is a correspondence over $A$
with left action $a\cdot(e \otimes f) = (a \cdot e) \otimes f$: see \cite[Proposition 4.5]{Lance}.

If $(e_i)$ and $(f_j)$ are frames for $E$ and $F$, then $(e_i \otimes f_j)_{i,j}$ is a
frame for $E \otimes_A F$: this is immediate for finitely generated modules, but needs a
little thought in general. If the left actions on $E$ and $F$ are implemented by
injective homomorphisms into the compacts, then so is the left action on $E \otimes_A F$.

We define $E^{\otimes 0} := {_A}A_A$, $E^{\otimes 1} := E$, and $E^{\otimes n+1} :=
E^{\otimes n} \otimes_A E$ for $n \ge 1$. The \emph{Fock module} of $E$ is the
$\ell^2$-direct sum $\Fock_E := \bigoplus_{n=0}^\infty E^{\otimes n}$ regarded as a
correspondence over $A$ with diagonal left action.

As in \cite{Pimsner}, the \emph{Toeplitz algebra} $\Toep_E$ is the $C^*$-subalgebra of
$\End_A(\Fock_E)$ generated by the creation operators $T_e$, $e\in E$ given by
$$
T_e(e_1\ox e_2\ox\cdots\ox e_k):=e\ox e_1\ox e_2\ox\cdots\ox e_k.
$$
The adjoint $T^*_e$ of $T_e$ is called the
annihilation operator associated to $e$ and
satisfies
$T^*_e(e_1 \ox \cdots \ox e_k) = (e\mid e_1)_A\cdot e_2 \ox \cdots \ox e_k$
for
$k \ge 1$, and $T^*_e|_{E^{\otimes0}} = 0$. We let
$T_a$ be the operator of left multiplication by $a\in A$ given on simple tensors by
$T_a(e_1\ox\cdots\ox e_k)=a\cdot e_1\ox\cdots\ox e_k$.

By \cite[Remark~1.2(4)]{Pimsner}, that each $\phi(a) \in \End_A^0(E)$ ensures that
$\End_A^0(\Fock_E) \subseteq \Toep_E$. The \emph{Cuntz--Pimsner algebra}
$\CPa_E$ is defined
to be the quotient $\Toep_E/\End_A^0(\Fock_E)$. Thus we have an exact sequence
\begin{equation}
0\to \End_A^0(\Fock_E) \longrightarrow \Toep_E \stackrel{q}{\longrightarrow} \CPa_E\to 0.
\label{eq:starting-sequence}
\end{equation}

Pimsner shows that $\Toep_E$ and $\CPa_E$ each enjoy a natural universal property, which
we now describe. A \emph{representation} of $E$ in a $C^*$-algebra $B$ is a pair $(\psi,
\pi)$ consisting of a linear map $\psi : E \to B$ and a homomorphism $\pi : A \to B$ such
that $\psi(a \cdot e) = \pi(a)\psi(e)$, $\psi(e \cdot a) = \psi(e)\pi(a)$ and $\pi((e\mid
f)_A)=\psi(e)^*\psi(f)$ for all $e,f \in E$ and $a \in A$. The maps $e \mapsto T_e$ and
$a \mapsto T_a$ constitute a representation of $E$ whose image generates $\Toep_E$. This
representation is universal, meaning that for any 
representation $(\psi, \pi)$ of $E$ in a $C^*$-algebra $B$, there is a homomorphism
$\psi \times \pi : \Toep_E \to B$ such that $\psi \times \pi(T_e) = \psi(e)$ and
$\psi\times \pi(T_a) = \pi(a)$ for all $e \in E$ and $a \in A$ (see
\cite[Theorem~3.4]{Pimsner}).

To describe the universal
property of $\CPa_E$, recall from \cite{Pimsner} that each representation $(\psi, \pi)$
of $E$ in a $C^*$-algebra $B$ determines  a homomorphism $\psi^{(1)} : \End_A^0(E) \to
B$ such that $\psi^{(1)}(\Theta_{e,f}) = \psi(e) \psi(f)^*$ for all $e,f \in E$. Under
our assumption that each $\phi(a) \in \End_A^0(E)$, the pair $(\psi, \pi)$ is called
\emph{covariant} if $\psi^{(1)}\circ \phi = \pi$. 
The Cuntz--Pimsner algebra $\CPa_E$ is
generated by the covariant representation of $E$ given by
$E\ni e\mapsto S_e:=q(T_e)$ and $A\ni a\mapsto S_a:=q(T_a)$ that is universal in the following sense.
For every covariant representation $(\psi, \pi)$ 
of $E$ in  a $C^*$-algebra $B$ there is a homomorphism
$\psi \times \pi : \CPa_E \to B$ such that $(\psi \times \pi)(S_e) = \psi(e)$ and $(\psi
\times \pi)(S_a) = \pi(a)$ for $e \in E$ and $a \in A$.

For each $z \in \mathbb{T}$ there is a unitary $U_z : \Fock_E \to \Fock_E$ satisfying
$U_z(\xi) = z^n\xi$ for $\xi \in E^{\otimes n}$. Writing $\operatorname{Ad} U_z \in
\Aut(\End_A(\Fock_E))$ for conjugation by $U_z$ it is routine to check that
$\operatorname{Ad} U_z$ restricts to an automorphism $\gamma_z$ of $\Toep_E$, and that
the map $z \mapsto \gamma_z$ is a strongly continuous action of $\mathbb{T}$ on
$\Toep_E$, called the \emph{gauge action}. This action satisfies $\gamma_z(T_e) =zT_e$
and $\gamma_z(T_a) = T_a$ for $e \in E$ and $a \in A$. Since $\End^0_A(\Fock_E)$ is
$\gamma$-invariant, $\gamma$ descends to an action, also denoted $\gamma$ and called the
gauge action, of $\mathbb{T}$ on $\CPa_E$.  Writing $S_e=q(T_e)$ and $S_a=q(T_a)$ we have
\begin{equation}
\label{eq:gauge}
\gamma_z(S_e)= zS_e\quad\text{ and }\quad \gamma_z(S_a)
= S_a\quad\text{ for all $e \in E$ and $a \in A$.}
\end{equation}

\subsection{Pimsner's six-term sequences}
\label{sec:CP-Kas-mods}

Some important $KK$-classes arise in the study of Cuntz--Pimsner algebras, and we
summarise them now. The first is the class of the Morita-equivalence  module $\Fock_E$ given by
$$
[\Fock_E]=[\End^0_A(\Fock_E),(\Fock_E)_A,0]\in KK(\End_A^0(\Fock_E),A).
$$
Since this $KK$-class is given by a Morita-equivalence bimodule, it is invertible in $KK$
with inverse given by the class of the conjugate module $[\overline{\Fock_E}] = [A,
\overline{(\Fock_E)}_{\End_A^0(\Fock_E)}, 0]$.

The next two $KK$-classes of particular importance to us arise from the
inclusions $\iota_{A,\Toep}:A\hookrightarrow \Toep_E$ and $\iota_{E, \Toep} :
\End_A^0(\Fock_E) \hookrightarrow \Toep_E$. These homomorphisms define classes
\[
[\iota_{A,\Toep}] = \big[A,(\Toep_E)_{\Toep_E},0\big]
\qquad\text{ and }\qquad
[\iota_{E,\Toep}] = \big[\End_A^0(\Fock_E),(\Toep_E)_{\Toep_E},0\big]
\]
in $KK(A,\Toep_E)$ and $KK(\End_A^0(\Fock_E),\Toep_E)$ respectively.

The fourth key $KK$-class was introduced by
Pimsner in \cite{Pimsner}. Let $P$ denote the
projection onto
$\Fock_E \ominus A := \bigoplus^\infty_{n=1} E^{\ox n} \subseteq\Fock_E$.
Let $\pi_0$ be the inclusion $\Toep_E \hookrightarrow \End_A(\Fock_E)$.
There is a representation $(\psi, \rho)$ of $E$
on $\End_A(\Fock_E)$ such that $\psi(e) = T_eP$
and $\rho(a) = T_aP$. So the universal property of
$\Toep_E$ gives a homomorphism
$\pi_1 : \Toep_E \to \End_A(\Fock_E)$ such that
$\pi_1(T_a) = \rho(a)$ and $\pi_1(T_e) =\psi(e)$ for all
$a \in A$ and $e \in E$. To describe $\pi_1$ explicitly, observe that
$\Fock_E \ominus A$ is  isomorphic to $\Fock_E \otimes_A E$. Under this
isomorphism, $\pi_1$ is identified with $\pi_0 \otimes 1_E$. That is
\begin{equation}
\label{eq:pi1}
\pi_1(b)(e_1 \ox \cdots \ox e_n)
    = \pi_0(b)(e_1 \ox \cdots \ox e_{n-1}) \ox e_n
        \quad\text{ for $b \in \Toep_E$ and $e_i \in e$.}
\end{equation}
In particular, the essential subspace of $\pi_1$
is $\Fock_E \ominus A \subseteq\Fock_E$.

Pimsner defines a class $[P]\in KK(\Toep_E,A)$ as the $KK$-class $\Big[\Big(\Toep_E,
{_{\pi_0 \oplus \pi_1}(\Fock_E\oplus \Fock_E)}, \Big(\begin{smallmatrix} 0 & \Id\\ \Id &
0\end{smallmatrix}\Big)\Big)\Big]$. We obtain an equivalent Kasparov module by
restricting to the essential submodule for $\pi_0 \oplus \pi_1$ (see for instance
\cite[Lemma~8.3.8]{HigsonRoe}). Explicitly, let $\iota_\Fock : \Fock_E \ominus A
\to\Fock_E$ be the inclusion map. Regard $\pi_0 \oplus \pi_1$ as an
adjointable left action of $\Toep_E$ on $\Fock_E \oplus \Fock_E \ominus A$. Then $[P]$ is
represented by the nondegenerate Kasparov module
\begin{equation}
\label{eq:P-module}
\left(\Toep_E, \begin{pmatrix} \Fock_E\\ \Fock_E\ominus A\end{pmatrix}_A,
\begin{pmatrix} 0 & \iota_\Fock\\ P & 0\end{pmatrix} \right).
\end{equation}
It turns out (see Theorem~4.4 of \cite{Pimsner}) that $[P]$ and $[\iota_{A,\Toep}]$ are
mutually inverse $KK$-equivalences:
$$
[\iota_{A,\Toep}]\otimes_{\Toep_E}[P]=\Id_{KK(A,A)},\qquad\text{ and }\qquad
[P]\otimes_{A}[\iota_{A,\Toep}]=\Id_{KK(\Toep_E,\Toep_E)}.
$$
In particular, $[P]\otimes_A\cdot\,:\,K^*(A)\to K^*(\Toep_E)$ is an isomorphism in
$K$-homology.

The fifth and final $KK$-class we will need later arises from the module $E_A$ itself.
Recalling that we assume that the left action of $A$ on $E$ is by compact endomorphisms,
we obtain a class
\[
    [E] := \big[A, E_A, 0\big] \in KK(A,A).
\]
Applying this construction with $E = {_AA_A}$, yields the Kasparov module $(A,A_A,0)$,
which is the identity element in the ring $KK(A,A)$. We will denote this variously by
$[A]$, ${\rm Id}_{KK(A,A)}$, ${\rm Id}_A$ or even just $[1]$.

Pimsner \cite[Theorem~4.9]{Pimsner} combines the $KK$-equivalences $[\Fock_E]$ and
$[\iota_{A,\Toep}]$ with the six-term exact sequence in $KK$ (in the first variable) for
the defining extension \(0 \to \End_A^0(\Fock_E) \longrightarrow \Toep_E
\stackrel{q}{\longrightarrow} \CPa_E \to 0\) to show that for any separable 
$C^*$-algebra $B$ we obtain the exact sequence
\begin{equation}
\parbox[c]{0.9\textwidth}{\centerline{\(
\xymatrix{KK^0(\CPa_E,B)\ar[r]^{(\iota_{A,\CPa})^*}
&KK^0(A,B)\ar[rr]^{(\Id_{KK(A,A)}-[E])\ox_A\cdot}&
& KK^0(A,B)\ar[d]^{\partial}\\
KK^1(A,B)\ar[u]^{\partial} & &KK^1(A,B)\ar[ll]^{(\Id_{KK(A,A)}-[E])\ox_A\cdot} & KK^1(\CPa_E,B)\ar[l]^{(\iota_{A,\CPa})^*}}
\)}}\label{eq:PV}
\end{equation}
in which the boundary maps $\partial$ are given by the taking the Kasparov product with
the class of the defining extension \(0 \to \End_A^0(\Fock_E) \longrightarrow \Toep_E
\stackrel{q}{\longrightarrow} \CPa_E \to 0\). More explicitly, the extension \(0 \to
\End_A^0(\Fock_E) \longrightarrow \Toep_E \stackrel{q}{\longrightarrow} \CPa_E \to 0\)
yields a class $\varepsilon \in KK^1(\CPa_E, \End_A^0(\Fock_E))$. Taking the Kasparov
product with the Morita-equivalence bimodule $\Fock_E$, yields a class
\begin{equation}
\label{eq:extdef}
\extcls := \varepsilon \otimes_{\End} [\Fock_E] \in KK^1(\CPa_E, A).
\end{equation}
The boundary maps $\partial :K^i(A,B) \to KK^{1-i}(\CPa_E, B)$ are then given by
$\partial(\Theta) = \extcls \otimes_A \Theta$. Explicit representatives of the class
$\extcls\in KK^1(\CPa_E,A)$ appear in \cite{RRSext,GMRkappa}. We use them to produce
concrete representatives of various classes discussed in this paper.

We conclude this section by using the exact sequence~\eqref{eq:PV} to see that for non-trivial dynamics the class
$[P]$ of~\eqref{eq:P-module} does not arise from a class
over the Cuntz-Pimsner algebra. Specifically, we obtain the following non-lifting result.

\begin{prop}
\label{prop:no-way} 
Let $E$ be a $C^*$-correspondence over $A$ with $A$ nuclear. Suppose that $[E] \neq
\Id_{KK(A,A)}$ in $KK(A,A)$. Then there is no class $x\in KK^0(\CPa_E,A)$ such that
$q^*x=[P]$. If $(A,E_A,0)=(A,A_A,0)$ then $\CPa_E\cong A\otimes C(S^1)$ and, writing $\ev :
C(S^1) \to \C$ for evaluation at~1, the class $x=[(\CPa_E,{}_{\ev}A_A,0)]$ satisfies
$q^*x = [P]$.
\end{prop}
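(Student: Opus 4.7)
The plan is to extract the non-lifting statement as a direct obstruction from the Pimsner six-term sequence~\eqref{eq:PV}. The key functoriality observation is that $\iota_{A,\CPa}=q\circ\iota_{A,\Toep}$ as homomorphisms $A\to\CPa_E$, so the induced pullbacks in $KK$ satisfy $(\iota_{A,\CPa})^* = (\iota_{A,\Toep})^*\circ q^*$. Hence if some $x\in KK^0(\CPa_E,A)$ satisfied $q^*x=[P]$, then Pimsner's $KK$-equivalence $[\iota_{A,\Toep}]\otimes_{\Toep_E}[P]=\Id_{KK(A,A)}$ would give
\[
(\iota_{A,\CPa})^*x \;=\; (\iota_{A,\Toep})^*[P] \;=\; \Id_{KK(A,A)}.
\]
Nuclearity of $A$ makes the sequence~\eqref{eq:PV} available with $B=A$, and exactness says the image of $(\iota_{A,\CPa})^*$ is the kernel of $(\Id_{KK(A,A)}-[E])\otimes_A\cdot$. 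Thus $\Id_A$ being in this image forces $(\Id_{KK(A,A)}-[E])\otimes_A\Id_A = \Id_A-[E]=0$, contradicting the hypothesis $[E]\neq\Id_{KK(A,A)}$.

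For the second statement I would first identify the Cuntz--Pimsner algebra explicitly when $E={}_A A_A$. Each tensor power $E^{\otimes n}$ is canonically isomorphic to $A_A$, so $\Fock_E\cong A\otimes\ell^2(\N)$; under this identification $T_a$ becomes $a\otimes\Id$ while $T_e$ for $e\in E=A$ becomes $e\otimes S$, with $S$ the unilateral shift. A short computation then gives $\Toep_E\cong A\otimes\Toep$ and $\End^0_A(\Fock_E)\cong A\otimes\K$, so the defining extension of $\CPa_E$ is the tensor product by $A$ of the classical Toeplitz extension $0\to\K\to\Toep\to C(S^1)\to 0$, yielding $\CPa_E\cong A\otimes C(S^1)$ with $\iota_{A,\CPa}(a)=a\otimes 1$.

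With these identifications, the candidate class $x=[(\CPa_E,{}_{\ev} A_A,0)]$ pulls back as
\[
(\iota_{A,\CPa})^*x \;=\; \big[\big(A,{}_{\ev\circ\iota_{A,\CPa}} A_A,0\big)\big] \;=\; \big[(A,A_A,0)\big] \;=\; \Id_{KK(A,A)},
\]
because $\ev(a\otimes 1)=a$. Since $(\iota_{A,\Toep})^*\colon KK^0(\Toep_E,A)\to KK^0(A,A)$ is an isomorphism that sends $[P]$ to $\Id_A$, the equality $(\iota_{A,\Toep})^*(q^*x)=(\iota_{A,\CPa})^*x=\Id_A=(\iota_{A,\Toep})^*[P]$ forces $q^*x=[P]$.

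No step is genuinely hard: the whole argument amounts to functorial bookkeeping on top of the Pimsner six-term sequence and the $KK$-equivalence $[P]\otimes_A[\iota_{A,\Toep}]=\Id$. The main delicacy is keeping the directions of the pullbacks straight in the first variable of $KK$; pinning down the identification $\CPa_{{}_A A_A}\cong A\otimes C(S^1)$ is standard.
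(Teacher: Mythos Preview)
Your argument is correct and follows essentially the same route as the paper: both use the factorisation $(\iota_{A,\CPa})^*=(\iota_{A,\Toep})^*\circ q^*$ together with $(\iota_{A,\Toep})^*[P]=\Id_A$ to place $\Id_A$ in the image of $(\iota_{A,\CPa})^*$, and then invoke exactness of~\eqref{eq:PV} at $KK^0(A,A)$ to force $[E]=\Id_A$. For the second part you supply more detail than the paper (the explicit identification $\CPa_{{}_AA_A}\cong A\otimes C(S^1)$ and the final use of injectivity of $(\iota_{A,\Toep})^*$), but the strategy is identical.
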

\begin{proof}
We prove the contrapositive. Suppose that
$x \in KK^0(\CPa_E, A)$ satisfies $q^*x = [P]$.
Using the relations $(\iota_{A,\CPa})^*=(\iota_{A,\Toep})^*\circ q^*$ and
$[\iota_{A,\Toep}]=[P]^{-1}$ we see that
\[
(\iota_{A,\CPa})^*x = (\iota_{A,\Toep})^*q^*x = (\iota_{A,\Toep})^*[P] = \Id_{KK(A,A)}.
\]
So exactness of Pimsner's $K$-homology exact sequence
\[
\cdots\to KK(\CPa_E,A)\stackrel{(\iota_{A,\CPa})^*}{\longrightarrow}
KK(A,A)\stackrel{[A]-[E]}{\longrightarrow}\cdots
\]
implies that $([A]-[E]) \otimes_A \Id_{KK(A,A)} = 0$, forcing $[A]-[E] = 0$. Now suppose
that $(A,E_A,0)=(A,A_A,0)$. Then the class $x$ of the cycle $(\CPa_E,{}_{\ev}A,0)$
satisfies $(\iota_{A,\CPa})^*x=(A,A,0)=\Id_{KK(A,A)}$.
\end{proof}

\subsection{Bi-Hilbertian bimodules and their Cuntz--Pimsner algebras} \label{subsec:BHB}

Some of our results require 
bimodules that admit $A$--$A$-correspondence structures for
both the left and right
actions. Exploiting this bi-Hilbertian structure allows us to associate multiple
Cuntz--Pimsner algebras to each such module. Here we clarify the relationships between
these Cuntz--Pimsner algebras.

\begin{defn} \label{defn:bimod}
Let $A$ be a separable $C^*$-algebra. Following \cite{KajPinWat}, a
\emph{bi-Hilbertian $A$-bimodule} is a countably generated full right $C^*$-$A$-module
with inner product $(\cdot\mid \cdot)_A$ which is also a countably generated full left
$C^*$-$A$-module with inner product ${}_A(\cdot\mid \cdot)$ such that the left $A$-action
is adjointable for $(\cdot\mid \cdot)_A$, and the right $A$-action is adjointable for
${}_A(\cdot\mid \cdot)$.
\end{defn}

If $E$ is a bi-Hilbertian $A$-bimodule, then it is complete in the norms induced by both
inner products, and hence those norms are equivalent (see \cite[Lemma~2.2]{RRSext}).

When the coefficient algebra $A$ is unital, our main results apply to finitely generated
modules. For nonunital $A$, our results apply to modules $E$ arising as the restriction
to $A$ of a finitely generated module over some unitisation of $A$. As detailed in
\cite{RSims}, this condition is closely related to the 
finiteness of the right Watatani index of $E$, defined in
\cite{KajPinWat} as follows.

\begin{defn}[Kajiwara--Pinzari--Watatani]
\label{def:wat} A countably generated bi-Hilbertian $A$--$B$ bimodule has \emph{finite
right numerical index} if there is a constant $\lambda$ such that $\|\sum_j {_A(e_j \mid
e_j)}\| \le \lambda \|\sum_j \Theta_{e_j, e_j}\|$ for all finite subsets $\{e_j\}$ of
$E$. It has \emph{finite right Watatani index} if it has finite right numerical index and
there is a frame $(e_j)$ for $E_B$ such that
\[\textstyle
\sum_j{}_A(e_j\mid e_j)
\]
converges strictly in the multiplier algebra $\operatorname{Mult}(A)$.
\end{defn}

\begin{rmk}
By \cite[Corollaries 2.24~and~2.28]{KajPinWat}, the bi-Hilbertian bimodule ${}_AE_B$ has finite right
Watatani index if and only if the left action of $A$ is by compacts, and then the strict
limit
\[\textstyle
e^\beta := \sum_j{}_A(e_j\mid e_j)
\]
is independent of the choice of frame $(e_j)$, and is a positive central element of
$\operatorname{Mult}(A)$. We call this element the right Watatani index of $E$. When the
left action is injective, $e^\beta$ is also invertible (justifying our notation). For
unital algebras, we have $\operatorname{Mult}(A)=A$ and the strict topology is the norm
topology. In this case, bi-Hilbertian $A$-bimodules with finite right Watatani index are
finitely generated and projective as right modules, \cite[Corollary 2.25]{KajPinWat}.
Left Watatani index is defined analogously.
\end{rmk}

\begin{rmk}
Each left-Hilbert $A$-module $E$ determines a right-Hilbert $A^\op$-module $E^\op$, and
vice-versa. More precisely, let $E^\op$ be a copy of the Banach space $E$ (we write
$e^\op \in E^\op$ for the element corresponding to $e \in E$). There is a right action of
$A^\op$ on $E^\op$ given by $e^\op \cdot a^\op = (a \cdot e)^\op$, and there is an
$A^\op$-valued inner-product on $E^\op$ (see \cite[page~1625]{LRV}) given by
\[
(e^\op \mid f^\op)_{A^\op} = {}_A(e \mid f)^{*\op} = {}_A(f\mid e)^\op.
\]

So given a right Hilbert $A$-module $E_A$, the conjugate module $\overline{E}$ of
Section~\ref{sec:bimods} determines a right-Hilbert $A^\op$ module $\overline{E}^\op$.
\end{rmk}

\begin{lemma}
\label{lem:half-a-bar}
Given $A$--$A$-correspondences $E_1, \dots, E_n$, there is an
isometric conjugate-linear map from $E_1\ox_A\cdots\ox_AE_n$ to 
$\ol{E}^\op_1\ox_{A^\op}\cdots\ox_{A^\op}\ol{E}^\op_n$ such that
\begin{equation}\label{eq:tensor iso}
e_1\ox e_2\ox\cdots\ox e_n\mapsto \ol{e_1}^\op \ox \ol{e_2}^\op \ox \cdots \ox \ol{e_n}^\op
\end{equation}
for all $e_i \in E_i$. If the modules $E_i$ are bi-Hilbertian $A$-bimodules, then
\begin{equation}
\label{eq:other-tensor-iso}
e_1\ox e_2\ox\cdots\ox e_n\mapsto e_n^\op\ox \cdots\ox e_1^\op
\end{equation}
is a linear isomorphism from the left inner product $A$-module $({}_AE_1){}_A\ox\cdots{}_A\ox({}_AE_n)$ to the right inner product $A^\op$-module $E_{1}^\op\ox_{A^\op}\cdots\ox_{A^\op}E_n^\op$.
\end{lemma}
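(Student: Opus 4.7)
My plan for both parts is the same: define the prescribed map on algebraic balanced tensors using the formula on simple tensors, verify that it is (conjugate-)linear and compatible with the balancing, compute its effect on inner products to establish norm-preservation, and extend by continuity to the completion.

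For Part 1, I would begin by observing that $(e_1,\ldots,e_n)\mapsto\ol{e_1}^\op\ox\cdots\ox\ol{e_n}^\op$ is conjugate-multilinear, since each assignment $e_i\mapsto\ol{e_i}^\op$ is conjugate-linear. To see compatibility with the $A$-balancing, I would use the identities $\ol{e\cdot a} = a^*\cdot\ol{e}$ and $\ol{a\cdot e} = \ol{e}\cdot a^*$, which follow directly from the definitions of the two actions on $\ol{E}$, together with the defining formulas $e^\op\cdot a^\op = (a\cdot e)^\op$ and $a^\op\cdot e^\op = (e\cdot a)^\op$ for the $A^\op$-actions. These combine to give $\ol{e_i\cdot a}^\op = \ol{e_i}^\op\cdot a^{*\op}$ and $\ol{a\cdot e_{i+1}}^\op = a^{*\op}\cdot\ol{e_{i+1}}^\op$, so the $A$-balancing in the domain maps to the $A^\op$-balancing in the codomain. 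Induction on $n$, using the formula $(e\ox f\mid e'\ox f')_A = (f\mid (e\mid e')_A\cdot f')_A$ and the corresponding $A^\op$-version, should then yield
\[
(\ol{e_1}^\op\ox\cdots\ox\ol{e_n}^\op\mid \ol{f_1}^\op\ox\cdots\ox\ol{f_n}^\op)_{A^\op} = \big((f_1\ox\cdots\ox f_n\mid e_1\ox\cdots\ox e_n)_A\big)^\op,
\]
and specialising to $f_i = e_i$ will show the map is an isometry, so it extends to a conjugate-linear isometry of completions.

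For Part 2, the map $(e_1,\ldots,e_n)\mapsto e_n^\op\ox\cdots\ox e_1^\op$ is $\C$-linear, and the order reversal is what makes the balancing work. Indeed, the right $A$-action on ${}_AE_i$ becomes the left $A^\op$-action on $E_i^\op$ via $(e_i\cdot a)^\op = a^\op\cdot e_i^\op$, while the left $A$-action on ${}_AE_{i+1}$ becomes the right $A^\op$-action on $E_{i+1}^\op$ via $(a\cdot e_{i+1})^\op = e_{i+1}^\op\cdot a^\op$, so the relation $e_i\cdot a\ox e_{i+1} = e_i\ox a\cdot e_{i+1}$ in the domain maps to $e_{i+1}^\op\cdot a^\op\ox e_i^\op = e_{i+1}^\op\ox a^\op\cdot e_i^\op$ in the codomain. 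A parallel inductive inner-product calculation, using the left-module balanced-tensor formula ${}_A(e\ox f\mid e'\ox f') = {}_A(e\cdot{}_A(f\mid f')\mid e')$, should yield
\[
(e_n^\op\ox\cdots\ox e_1^\op\mid f_n^\op\ox\cdots\ox f_1^\op)_{A^\op} = \big({}_A(f_1\ox\cdots\ox f_n\mid e_1\ox\cdots\ox e_n)\big)^\op,
\]
giving norm-preservation. The analogous map $f_1^\op\ox\cdots\ox f_n^\op\mapsto f_n\ox\cdots\ox f_1$ serves as the inverse after extension, so on completions I obtain a linear isomorphism.

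The main obstacle is really only bookkeeping: tracking how the bar construction, the opposite construction, and the interchange between left and right actions interact as one passes through the various formulas, and correctly verifying the inner-product identities. The induction from $n=2$ to general $n$ is formal — factor an $n$-fold balanced tensor as a $2$-fold tensor in which one factor is an $(n-1)$-fold tensor and invoke the induction hypothesis — so the real work is concentrated in pinning down the conventions and executing the $n=2$ case cleanly.
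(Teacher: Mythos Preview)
Your proposal is correct and follows essentially the same approach as the paper: both reduce to the case $n=2$ by induction and verify the map is well-defined and isometric via a direct inner-product computation on simple tensors. You are somewhat more explicit than the paper about checking compatibility with the balancing relations and about producing an inverse in Part~2, but these are natural elaborations of the same strategy rather than a different route.
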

\begin{proof}
The result follows by an induction argument from the case $n = 2$. For $e_1, e_2 \in E_1$
and $f_1, f_2 \in E_2$, we have
\begin{align*}
\big(\ol{e_1}^\op\ox\ol{f_1}^\op\mid \ol{e_2}^\op\ox\ol{f_2}^\op\big)_{A^\op}
&=\big(\ol{f_1}^\op\mid (\ol{e_1}^\op\mid \ol{e_2}^\op)_{A^\op}\cdot\ol{f_2}^\op\big)_{A^\op}
=\big(\ol{f_1}^\op \mid \big((e_1\mid e_2)^*_A\big)^\op\cdot \ol{f_2}^\op\big)_{A^\op}\\
&=\big(\ol{f_1}^\op\mid \ol{(e_1\mid e_2)_A\cdot f_2}^\op\big)_{A^\op}
=\big((f_1 \mid (e_1\mid e_2)_A\cdot f_2)^*_A\big)^\op\\
&=\big((e_1\ox f_1\mid e_2\ox f_2)^*_A\big)^\op.
\end{align*}

So for any finite collection of elementary tensors $e_{1,1} \ox \cdots \ox e_{n,1},
\dots, e_{1,J} \ox \cdots \ox e_{n,J}$, we have $\big\|\sum_{j=1}^J e_{1,j} \ox \cdots \ox
e_{n,j}\big\|^2 = \big\|\sum_{j=1}^J \ol{e_{1,j}}^\op \ox \cdots \ox \ol{e_{n,j}}^\op\big\|^2$.
Hence the formula~\eqref{eq:tensor iso} determines a well-defined isometric linear map
from $E^{\ox n}$ to $(\overline{E}^\op)^{\ox n}$.

For the second statement we again consider the case $n=2$. We have
\begin{align*}
(e_2^\op\ox e_1^\op\mid f_2^\op\ox f_1^\op)_{A^\op}
&=(e_1^\op\mid (e_2^\op\mid f_2^\op)_{A^\op}f_1^\op)_{A^\op}
=(e_1^\op\mid {}_A(f_2\mid e_2)^\op f_1^\op)_{A^\op}\\
&=(e_1^\op\mid (f_1{}_A(f_2\mid e_2))^\op)_{A^\op}
={}_A((f_1{}_A(f_2\mid e_2)\mid e_1)^\op\\
&=({}_A(e_1\ox e_2\mid f_1\ox f_2))^{*\op}.
\end{align*}
The remainder of the argument is as before.
\end{proof}

In general, given a bi-Hilbertian $A$-bimodule, 
the Cuntz--Pimsner algebra of $E^\op$ can
be quite different from that of $E$ 
(see Example~\ref{eg:odd opposite}). By contrast, the
Cuntz--Pimsner algebras of $E$ 
and $\overline{E}^\op$ are anti-isomorphic.

\begin{lemma}
\label{lem:E-barE}
Let $E$ be a countably generated right $A$-module with an adjointable and non-degenerate
left action of $A$. Then the conjugate module $\ol{E}^\op$ is a correspondence over
$A^\op$ and there is an isomorphism $\CPa_E^\op \cong \CPa_{\ol{E}^\op}$ that carries
$S_e^\op$ to $S^{*}_{\overline{e}^\op}$ for all $e \in E$.
\end{lemma}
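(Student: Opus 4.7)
The plan is to build the isomorphism via the universal property of $\CPa_E$. First, I would verify that $\ol{E}^\op$ is an $A^\op$-correspondence. The right-Hilbert $A^\op$-module structure is recorded in the preceding remark, and a left $A^\op$-action is defined by $a^\op \cdot \ol{e}^\op := (\ol{e} \cdot a)^\op = \ol{a^* e}^\op$, obtained by converting the left $A$-action on $E$ first to a right $A$-action on $\ol{E}$ and then to a left $A^\op$-action on $\ol{E}^\op$. Injectivity of this action, and its taking values in $\End^0_{A^\op}(\ol{E}^\op)$, are inherited from the corresponding properties of $\phi_E$.

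Next, I would construct the desired map as a $*$-homomorphism $\CPa_E \to \CPa_{\ol{E}^\op}^\op$, equivalently a $*$-anti-homomorphism $\CPa_E \to \CPa_{\ol{E}^\op}$, by exhibiting a covariant representation $(\psi, \pi)$ of $E$ in $\CPa_{\ol{E}^\op}^\op$. Writing $S$ for the generators of $\CPa_{\ol{E}^\op}$, I set $\psi(e) := S^*_{\ol{e}^\op}$ and $\pi(a) := S_{a^\op}$. The map $\pi$ is a $*$-homomorphism into $\CPa_{\ol{E}^\op}^\op$ since $a \mapsto a^\op$ is anti-multiplicative. The module-covariance identities unfold, in $\CPa_{\ol{E}^\op}$, to $\psi(a\cdot e) = \psi(e)\pi(a)$ and $\psi(e\cdot a) = \pi(a)\psi(e)$; these follow routinely from $\ol{a\cdot e} = \ol{e}\cdot a^*$ and $\ol{e\cdot a} = a^*\cdot\ol{e}$ together with the defining relations in $\CPa_{\ol{E}^\op}$. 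The inner-product identity reduces to $S_{(e\mid f)^\op_A} = S^*_{\ol{f}^\op}S_{\ol{e}^\op}$, which is exactly the canonical covariance relation for $(\ol{f}^\op\mid\ol{e}^\op)_{A^\op} = (e\mid f)^\op_A$.

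The subtle step, which I expect to be the main obstacle, is verifying the Cuntz--Pimsner covariance $\psi^{(1)} \circ \phi_E = \pi$ inside $\CPa_{\ol{E}^\op}^\op$. My plan is to first show that $\psi^{(1)}$ factors as $\psi^{(1)}_{\ol{E}^\op} \circ \Psi$, where $\Psi \colon \End^0_A(E) \to \End^0_{A^\op}(\ol{E}^\op)$ is the $*$-anti-isomorphism determined on rank-ones by $\Theta_{e,f} \mapsto \Theta_{\ol{f}^\op, \ol{e}^\op}$, and then to verify $\Psi(\phi_E(a)) = \phi_{\ol{E}^\op}(a^\op)$ by computing on a generic $\ol{g}^\op$:
\[
\Psi(\phi_E(a))\,\ol{g}^\op = \ol{a^* g}^\op = a^\op \cdot \ol{g}^\op = \phi_{\ol{E}^\op}(a^\op)\,\ol{g}^\op.
\]
Once this is established, the universal property of $\CPa_E$ yields a $*$-homomorphism $\rho\colon \CPa_E^\op \to \CPa_{\ol{E}^\op}$ with $\rho(S_e^\op) = S^*_{\ol{e}^\op}$ and $\rho(S_a^\op) = S_{a^\op}$. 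To show $\rho$ is an isomorphism, I would apply the same construction to the $A^\op$-correspondence $\ol{E}^\op$, using the canonical identification $\ol{\ol{E}^\op}^\op \cong E$ of $A$-correspondences, to produce a candidate inverse; a check on generators then combines with uniqueness in the universal property to force the two maps to be mutually inverse.
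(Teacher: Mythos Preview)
Your argument is correct and follows the same broad strategy as the paper: build a covariant representation $(\psi,\pi)$ of $E$ in $\CPa_{\ol{E}^\op}^\op$ and invoke the universal property of $\CPa_E$. The paper simply asserts that checking covariance is ``routine,'' whereas you spell out the factorisation $\psi^{(1)} = (\text{universal map})\circ\Psi$ through the $*$-anti-isomorphism $\Psi:\End^0_A(E)\to\End^0_{A^\op}(\ol{E}^\op)$, which is a clean way to organise that verification.

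The one genuine point of divergence is how you establish that the resulting map is an isomorphism. The paper observes that the homomorphism $\CPa_E\to\CPa_{\ol{E}^\op}^\op$ is surjective on generators and intertwines the gauge actions, then appeals to Katsura's gauge-invariant uniqueness theorem for injectivity. You instead run the same construction with $\ol{E}^\op$ in place of $E$, use the identification $\ol{\ol{E}^\op}^\op\cong E$ of $A$-correspondences, and verify that the two maps are mutual inverses on generators. Your route is more self-contained (it avoids importing the gauge-invariant uniqueness machinery) at the cost of having to check carefully that the double bar-op really returns the original correspondence structure; the paper's route is shorter but leans on a substantial external result.
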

\begin{proof}
Define $\psi : E \to \CPa^\op_{\ol{E}^\op}$ by $\psi(e) = S^{*\op}_{\overline{e}^\op}$,
and $\pi : A \to \CPa^\op_{\ol{E}^\op}$ by $\pi(a) = \iota_{A^{\op},\CPa_{\ol{E}^\op}}(a^\op)^\op$. It is routine to
check that $(\psi, \pi)$ is a covariant representation of $E$, and so determines a
homomorphism $\CPa_E \to \CPa^\op_{\ol{E}^\op}$. The image of this representation clearly
contains all the generators of $\CPa^\op_{\ol{E}^\op}$, so it is surjective, and since it
intertwines the two gauge actions, it is injective by the gauge-invariant uniqueness
theorem \cite[Section 6]{Katsura}. 
Taking opposite algebras now gives the result.
\end{proof}

\subsection{Invertible $A$--$A$ bimodules}
\label{sub:SMEBS} An important special case of bi-Hilbertian bimodules is the class of
$C^*$-$A$--$A$-correspondences that are invertible in the category whose objects are
$C^*$-algebras and whose morphisms are isomorphism classes of $C^*$-correspondences
\cite[Lemma~2.4]{EKQR}. We will call these \emph{invertible $A$--$A$ bimodules}, or just
\emph{invertible bimodules}; they are commonly called imprimitivity $A$--$A$-bimodules or
self-Morita-equivalence bimodules. The morphisms in the $KK$-category defined by
invertible bimodules are also invertible.
\begin{defn}
\label{defn:SMEB} Let $A$ be a $C^*$-algebra. An \emph{invertible bimodule} over $A$ is a
bi-Hilbertian $A$-bimodule $E$ whose inner products are both full and satisfy the
imprimitivity condition
$$
{}_A(e\mid f)g=e(f\mid g)_A,\quad\mbox{ for all}\ e,\,f,\,g\in E.
$$
\end{defn}
These include the modules of sections of complex line bundles over locally compact
spaces, and modules arising from automorphisms of $C^*$-algebras. Hence, the class of
Cuntz--Pimsner algebras of invertible bimodules includes all crossed products by $\Z$
(see \cite[Section~2.1]{RRSext} for examples).

Pimsner identified $\CPa_E$ with $\CPa_{E\ox_A\CPa_E^\gamma}$, where $\CPa_E^\gamma$ is
the fixed point algebra for the gauge action~\eqref{eq:gauge}. In \cite{AR} it was shown
that $E\ox_A\CPa_E^\gamma$ is an invertible bimodule over $\CPa^\gamma_E$. Unfortunately,
the relationship between $A$ and $\CPa^\gamma_E$ is typically fairly complicated. For
example, in the standard realisation of the Cuntz algebra $\CPa_n$ as the Cuntz--Pimsner
algebra of $\C^n$, the coefficient algebra $A$ is $\C$, while $\CPa^\gamma_E$ is the UHF
algebra $M_{n^\infty}$. So the isomorphism $\CPa_E \cong \CPa_{E \ox_A \CPa_E^\gamma}$
does not help us relate $\CPa_E$ to $A$ except when $E$ is already an invertible
bimodule, which happens if and only if $\CPa_E^\gamma\cong A$, \cite{Katsura}.

If $E$ is an invertible bimodule, the conjugate module $\overline{E}$ satisfies $E
\otimes_A \overline{E} \cong A \cong \ol{E} \otimes_A E$. In particular, writing
$E^{-\ox n} := \overline{E}^{\otimes n}$ for $n \ge 1$, we have $E^{\otimes m} \otimes_A
E^{\otimes n} \cong E^{\otimes (m+n)}$ for all $m,n \in \Z$. We define the integer-graded
Fock space of an invertible bimodule over $A$ to be the module $\Fock_{E,\Z}=\bigoplus_{n
\in \Z} E^{\ox n}$.

\begin{lemma} \label{lem:rank-one}
Suppose the $A$-bimodule $E$ is an invertible bimodule.
Let $(\psi, \pi)$ and $(\psi^\op, \pi^\op)$
denote the universal representations of $E$ and $E^\op$ in their Cuntz--Pimsner algebras.
There is a left action $L$ of $\CPa_E$ on $\Fock_{E,\Z}$ by adjointable operators for the
right $A$-module inner-product such that $L(\pi(a))\xi =a\cdot \xi$ for all $\xi$ and
\[
L(\psi(e))(\xi) = \begin{cases}
    e \ox_A \xi &\text{ if $\xi \in \bigcup^\infty_{n=1} E^{\ox n}$}\\
    e\cdot a \in E &\text{ if $\xi = a \in A = E^{\ox 0}$} \\
    {}_A(e\mid e_1)\cdot \bar{e}_2 \ox_A \cdots \ox_A \bar{e}_n
        &\text{ if $\xi = \bar{e}_1 \ox_A \cdots \ox_A \bar{e}_n
        \in \bigcup^\infty_{n=1} \overline{E}^{\ox n}$.}
    \end{cases}
\]
There is a right action of $\CPa_{E^\op}$ on $\Fock_{E, \Z}$ by adjointable operators for
the left $A$-module inner-product such that $R(\pi^\op(a^\op))\xi = \xi\cdot a$ and
\[
R(\psi^\op(e^\op))(\xi)
    = \begin{cases}
    \xi \ox_A e &\text{ if $\xi \in \bigcup^\infty_{n=1} E^{\ox n}$} \\
    a \cdot e &\text{ if $\xi = a \in A = E^{\ox 0}$} \\
    \bar{e}_1\ox_A\cdots\ox_A \bar{e}_{n-1}\cdot(e_n\mid e)_A
        &\text{ if $\xi = \bar{e}_1 \ox_A \cdots \ox_A \bar{e}_n
        \in \bigcup^\infty_{n=1} \overline{E}^{\ox n}$.}
    \end{cases}
\]
Moreover, these actions commute.
\end{lemma}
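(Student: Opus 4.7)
The plan is to construct $L$ and $R$ via the universal properties of $\CPa_E$ and $\CPa_{E^\op}$, and then to verify commutation on generators. Since $E$ is invertible, every summand $E^{\ox n}$ and $\ol{E}^{\ox n}$ of $\Fock_{E,\Z}$ carries both right and left $A$-valued inner products; the adjointability statements for $L$ and $R$ refer respectively to the right- and left-module structures.

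For $L$, let $\lambda : A \to \End_A(\Fock_{E,\Z})$ be diagonal left multiplication, and define $\ell(e)$ on each homogeneous summand by the formulas in the statement. The first step is to check that each $\ell(e)$ extends to an adjointable operator on $\Fock_{E, \Z}$ for the right-module structure. On $E^{\ox n}$ with $n \ge 0$ it is the usual Toeplitz creation operator; on $\ol{E}^{\ox n}$ with $n \ge 1$ the formula factors as the bounded map $E \ox_A \ol{E} \to A$, $e \ox \bar{f} \mapsto {}_A(e \mid f)$, tensored with the identity on $\ol{E}^{\ox n-1}$, and is therefore adjointable. A direct computation identifies $\ell(e)^*$ as annihilation on the non-negative part and an appropriate creation into $\ol{E}$ on the negative part. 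Next I would show that $(\ell, \lambda)$ is a representation of $E$ (the bimodule relations and $\lambda((e\mid f)_A) = \ell(e)^*\ell(f)$ checked summand by summand) and that it is covariant: writing $\phi(a) = \sum_i \Theta_{e_i, f_i}$ via the identity $a = \sum_i {}_A(e_i \mid f_i)$ available for invertible bimodules, the equality $\sum_i \ell(e_i)\ell(f_i)^* = \lambda(a)$ reduces on each summand to the imprimitivity condition ${}_A(e \mid f) g = e (f \mid g)_A$. The universal property of $\CPa_E$ then produces $L$.

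The construction of $R$ is symmetric: by Lemma~\ref{lem:half-a-bar}, $E^\op$ is an invertible bimodule over $A^\op$ with the roles of the two inner products interchanged, and the same argument provides a covariant representation of $E^\op$ by operators adjointable for the left-module structure on $\Fock_{E,\Z}$. For commutation, since $L$ acts by tensoring on the left and $R$ by tensoring on the right, it suffices to check $[L(g), R(h)] = 0$ for $g, h$ ranging over generators of $\CPa_E$ and $\CPa_{E^\op}$ respectively, applied to homogeneous elementary tensors. This reduces to associativity of the balanced tensor product, together with the imprimitivity condition at the junction summand $A = E^{\ox 0}$ where the cases in the formulas change (for instance, on $\bar{g} \in \ol{E}$ one equates $e \cdot (g \mid f)_A$ with ${}_A(e \mid g) \cdot f$).

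The main obstacle will be the piecewise nature of the formulas at this junction: one must verify that the three-case formulas define globally adjointable operators, and that the identifications $E \ox_A \ol{E} \cong A$ and $\ol{E} \ox_A E \cong A$ are applied coherently across all summands so that the adjoint genuinely matches creation on one side and annihilation on the other at degree zero. Once the boundary bookkeeping is settled, everything else falls out of the imprimitivity identity.
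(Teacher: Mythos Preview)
Your proposal is correct and follows essentially the same route as the paper: define the creation operators summand by summand, exhibit their adjoints explicitly, verify Cuntz--Pimsner covariance via the identification $\End^0_A(E)\cong A$, $\Theta_{e,f}\mapsto{}_A(e\mid f)$ (which is exactly your $\sum_i\ell(e_i)\ell(f_i)^*=\lambda(a)$ from $a=\sum_i{}_A(e_i\mid f_i)$), invoke the universal property, and then check commutation on generators at the boundary degrees where the imprimitivity identity does the work. The paper isolates the same two non-trivial junction cases you anticipate, namely $L(\psi(e))$ against $R(\psi^\op(f^\op))$ on $\ol{E}$ and against $R(\psi^\op(f^\op))^*$ on $A$.
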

\begin{proof}
The formulas in the lemma define left- and right-creation operators $L_e, R_{e^\op}$ for
each $e \in E$. Routine calculations show that $L_e$ is adjointable for $(\cdot\mid
\cdot)_A$ with adjoint given by
\begin{align*}
L_e^*(a)&=\bar{e}\cdot a,\quad L_e^*(e_1\ox\cdots\ox e_n)
=(e\mid e_1)_Ae_2\ox\cdots\ox e_n,\\
& L_e^*(\bar{e}_1\ox\cdots\ox \bar{e}_k)
=\bar{e}\ox \bar{e}_1\ox\bar{e}_2\ox\cdots\ox \bar{e}_k,\quad\text{ and }
\quad L_e^*(f)=(e\mid f)_A.
\end{align*}
Similarly each $R_{e^\op}$ is adjointable for ${}_A(\cdot\mid \cdot)$ with
\begin{align*}
R_{e^\op}^*(a)&=a\bar{e},\quad R_{e^\op}^*(e_1\ox\cdots\ox e_n)
=e_1\ox\cdots\ox e_{n-1}{}_A(e_n\mid e),\\
& R_{e^\op}^*(\bar{e}_1\ox\cdots\ox \bar{e}_k)
=\bar{e}_1\ox\cdots\ox \bar{e}_k\ox \bar{e},\quad\text{ and }
\quad R_{e^\op}^*(f)={}_A(f\mid e).
\end{align*}

Straightforward calculations using the imprimitivity condition and the formulas for $L_e$
and $L^*_f$ above show that $L_e L^*_f = L\big(\pi({}_A(e \mid f))\big)$ for all $e,f \in
E$. Since, for an invertible bimodule, the map $\Theta_{e,f} \mapsto {}_A(e \mid
f)$ is an isomorphism of $\End_A^0(E_A)$ onto $A$, it follows that $L$ is Cuntz--Pimsner
covariant. Similarly, $R$ is Cuntz--Pimsner covariant.

To see that $(L(x) \xi) R(y) = L(x) (\xi R(y))$, it suffices to consider $\xi$ an
elementary tensor in some $E^{\ox n}$ and (by symmetry) to consider $x$ of the form $S_e$
and $y$ either of the form $\psi^\op(f^\op)$ or of the form $\psi^\op(f^\op)^*$. This is
trivial for $|n| \ge 2$, and also when $y = \psi^\op(f^\op)$ and $n \not= -1$ and when $y
= \psi^\op(f^\op)^*$ and $n \not= 0$. When $y = \psi^\op(f^\op)$ and $n = -1$,
commutation is exactly the imprimitivity condition. The last case is $y =
\psi^\op(f^\op)^*$ and $n = 0$, so $\xi = a \in A$, with
\[
R(\psi^\op(f^\op)^*)(L(\psi(e)) a) = R(\psi^\op(f^\op)^*) (e\cdot a) = {}_A(e \cdot a \mid
f) = {}_A(e \mid f \cdot a^*),
\]
and
\[
L(\psi(e))(R(\psi^\op(f^\op))^* a)
    = L(\psi(e))(a\cdot \bar{f})
    = L(\psi(e))(\ol{f\cdot a^*})
    = {}_A(e \mid f\cdot a^*).\qedhere
\]
\end{proof}

\subsection{Relations between the two potential dual algebras} 
\label{subsec:dual-alg}
The Poincar\'e duality results of \cite{KP,PZ} suggest that one might hope to prove that
$\CPa_E$ and $\CPa_{E^\op}$ are Poincar\'e dual for suitable modules $E$. On the other
hand, Connes' picture of Poincar\'e self-duality, for the rotation algebras for example
\cite{ConnesGrav}, suggests that we should aim to decide whether $\CPa_E$ and
$\CPa_E^\op$ are Poincar\'e dual. While we always have an isomorphism $\CPa_E^\op\cong
\CPa_{\ol{E}^\op}$, our next example shows that $\CPa_{E^\op}$ can be quite different.

\begin{example}\label{eg:odd opposite}
Let $G$ be the directed graph with two vertices $G^0=\{v,w\}$,
four edges $G^1=\{g_1, \dots, g_4\}$, and
range and source maps given by $r(g_1) = s(g_1) = r(g_2) = s(g_2) = r(g_3) = v$ and
$s(g_3) = r(g_4) = s(g_4) = w$:
\[
\begin{tikzpicture}
    \node[circle, inner sep=1pt] (v) at (0,0) {$v$};
    \node[circle, inner sep=1pt] (w) at (2,0) {$w$};
    \draw[-stealth] (v) .. controls +(0.75,0.75) and +(-0.75, 0.75) .. (v) node[pos=0.5, above] {$g_1$};
    \draw[-stealth] (v) .. controls +(0.75,-0.75) and +(-0.75, -0.75) .. (v) node[pos=0.5, below] {$g_2$};
    \draw[-stealth] (w)--(v) node[pos=0.5, above] {$g_3$};
    \draw[-stealth] (w) .. controls +(0.75,0.75) and +(-0.75, 0.75) .. (w) node[pos=0.5, above] {$g_4$};
\end{tikzpicture}
\]
It is routine to check that the graph module (see Proposition~\ref{prp:graph classes})
 $E=E_G=C(G^1)$ over $C(G^0)$
has opposite module $E^\op = E_{G^\op}$ where $G^\op$ is the graph obtained by reversing
the edges in $G$. We use the conventions for graph algebras of \cite{Raeburn}; so $s_e^*
s_e = p_{s(e)}$. We have $\CPa^\op_E \cong C^*(G)^\op$ and $\CPa_{E^{\op}} \cong
C^*(G^\op)$. To see that these are not isomorphic, first observe that $C^*(G)$ can be
realised as the $C^*$-algebra $C^*(H_G)$ of a groupoid. In particular, the map $\theta :
C_c(H_G) \to C_c(H_G)$ given by $\theta(f)(\gamma) = f(\gamma^{-1})$ extends to an
isomorphism $C^*(G) \cong C^*(G)^\op$, \cite[Theorem 2.1]{BussSims}. So $\CPa^\op_E \cong C^*(G)$. So it suffices to
show that $C^*(G^\op) \not\cong C^*(G)$.

By the universal property of $C^*(G^\op)$, there is a $1$-dimensional
representation of $C^*(G^\op)$ given by $\pi(p_w) = \pi(S_{g_4}) = 1 \in \C$ and
$\pi(p_v) = \pi(S_{g_i}) = 0$ for $1 \le i \le 3$. On the other hand, if $\pi$ is any
nonzero representation of $C^*(G)$, then $\pi(p_v)$ is nonzero: since $S_{g_i} = p_v
S_{g_i}$ for $i \le 3$, and since $p_w = S_{g_3}^* S_{g_3}$ and $S_{g_4} = p_w S_{g_4}$,
we see that $p_v$ generates $C^*(G)$ as an ideal. So any nonzero representation $\pi$ of
$C^*(G)$ restricts to a nonzero representation of $p_v C^*(G) p_v \cong \CPa_2$, which
cannot be 1-dimensional.
\end{example}
For invertible bimodules, the ambiguity between potential dual algebras disappears.
\begin{prop}
\label{prop:op-is-op} Let $E$ be an invertible bimodule over $A$. Then $\CPa_{E^\op}
\cong \CPa_E^\op$.
\end{prop}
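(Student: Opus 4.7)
The plan is to mirror the strategy used in the proof of Lemma~\ref{lem:E-barE}: exhibit a covariant representation of $E^\op$ in $\CPa_E^\op$, apply the universal property of $\CPa_{E^\op}$ to get a homomorphism, and then invoke the gauge-invariant uniqueness theorem to promote it to an isomorphism. Concretely, I would define $\pi\colon A^\op \to \CPa_E^\op$ by $\pi(a^\op) = S_a^\op$ and $\psi\colon E^\op \to \CPa_E^\op$ by $\psi(e^\op) = S_e^\op$. Since $E^\op$ has the same underlying Banach space structure as $E$, linearity of $\psi$ is automatic, and the bimodule identities $\psi(a^\op \cdot e^\op) = \pi(a^\op)\psi(e^\op)$ and $\psi(e^\op \cdot a^\op) = \psi(e^\op)\pi(a^\op)$ follow mechanically from $S_a S_e = S_{a\cdot e}$, $S_e S_a = S_{e\cdot a}$, and the reversal of multiplication in the opposite algebra.

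The substantive content is verifying the Pimsner inner-product relation and Cuntz--Pimsner covariance of $(\psi,\pi)$. For the first, one computes $\psi(e^\op)^*\psi(f^\op) = (S_f S_e^*)^\op$; since $E$ is invertible, the imprimitivity condition identifies $\Theta_{f,e}$ with $\phi({}_A(f\mid e))$, so Cuntz--Pimsner covariance in $\CPa_E$ yields $S_f S_e^* = S_{{}_A(f\mid e)}$. Combined with $(e^\op\mid f^\op)_{A^\op} = {}_A(f\mid e)^\op$, this gives the Pimsner relation for $(\psi,\pi)$. Cuntz--Pimsner covariance is handled symmetrically: since $E^\op$ is again an invertible bimodule (now over $A^\op$), it suffices to check the dual rank-one identity $\psi(e^\op)\psi(f^\op)^* = \pi\bigl({}_{A^\op}(e^\op \mid f^\op)\bigr)$, and using ${}_{A^\op}(e^\op\mid f^\op) = (f\mid e)_A^\op$ together with Pimsner's relation $S_f^* S_e = S_{(f\mid e)_A}$ in $\CPa_E$, this reduces to $(S_f^* S_e)^\op = S_{(f\mid e)_A}^\op$.

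The universal property of $\CPa_{E^\op}$ then produces a homomorphism $\Psi\colon \CPa_{E^\op} \to \CPa_E^\op$, which is surjective because its image contains all the generators $S_e^\op, S_a^\op$. For injectivity, $\Psi$ intertwines the gauge action on $\CPa_{E^\op}$ with the gauge action on $\CPa_E^\op$ inherited from the one on $\CPa_E$, and it is injective on the coefficient algebra $A^\op$ because the standing injectivity of the left action $\phi\colon A\to\End_A^0(E)$ implies that $a\mapsto S_a$ is injective on $A$. Katsura's gauge-invariant uniqueness theorem \cite[Section 6]{Katsura} then delivers injectivity. The main obstacle I anticipate is bookkeeping: keeping straight the various left and right inner products on $E^\op$ as they pass through the opposite-algebra structure. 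However, every identity required reduces to the single imprimitivity relation in $E$, which is exactly the structural feature that sets invertible bimodules apart from general bi-Hilbertian ones, so no new input beyond the invertibility hypothesis is needed.
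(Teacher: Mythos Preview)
Your proposal is correct and follows essentially the same strategy as the paper's proof: build a covariant representation of $E^\op$ in $\CPa_E^\op$ via $e^\op \mapsto S_e^\op$, verify the Pimsner and covariance relations using the imprimitivity identity $\Theta_{e,f} = \phi({}_A(e\mid f))$, and conclude with gauge-invariant uniqueness. The only cosmetic difference is that the paper first realises $\CPa_E$ concretely as $C^*(L_e : e \in E)$ on the two-sided Fock space $\Fock_{E,\Z}$ via Lemma~\ref{lem:rank-one} and performs the same computations with the operators $L_e$ in place of the abstract generators $S_e$; your direct approach avoids that detour but uses exactly the same algebraic identities.
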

\begin{proof}
The left inner-product gives an isomorphism $\End_A^0(E)\cong A$ satisfying $\Theta_{e,f}
\mapsto {}_A(e\mid f)$. So by Lemma~\ref{lem:rank-one} and an application of the
gauge-invariant uniqueness theorem \cite[Section 6]{Katsura}, we have $\CPa_E \cong C^*(L_e :e\in E) \subset
\End_A(\Fock_{E,\Z})$. We claim that $\psi: e^\op \mapsto L_{e^\op}^\op, a \mapsto
\pi(a)^\op$ is a covariant Toeplitz representation of $E^\op$ in $\End_A(\Fock_{E,
\Z})^\op$. To see this, fix $e^\op,f^\op \in E^\op$. Then
\begin{align*}
 \psi( (e^\op\mid f^\op )_{A^\op}) &= \psi\big({}_A( f\mid e )^\op\big)
 = \pi\big({}_A( f\mid e)\big)^\op
 = (L_{f^\op} L_{e^\op}^*)^\op
= L_{e^\op}^{*\op} L_{f^\op}^\op
= \psi(e)^*\psi(f),
\end{align*}
and for covariance, we calculate that
\[
 \psi^{(1)}(\Theta_{e,f}) = L_e^\op L_f^{* \op} = (L_f^* L_e)^\op
 = \pi\big((f\mid e)_A\big)^\op = \psi\big((f\mid e)_A^\op\big)
 = \psi\big({}_{A^\op}(e\mid f)\big).
\]

The universal property of $\CPa_{E^\op}$ gives a homomorphism $\psi \times \pi :
\CPa_E^\op \to C^*(\{L_e^\op : e \in E\})$, and therefore induces a homomorphism
$\widetilde{\psi} : \CPa_{E^\op} \to \CPa_E^\op$. This $\widetilde{\psi}$ is surjective
because the elements $\widetilde{\psi}(S_{e^{\op}})$ generate $\CPa_E^\op$. Injectivity
follows from the gauge-invariant uniqueness theorem since $\widetilde{\psi} : A^\op \to
\CPa_E^\op$ is injective and the gauge action on $\CPa_E$ induces a gauge action on
$\CPa_E^\op$.
\end{proof}
\section{Basic criteria for Poincar\'e duality of Cuntz--Pimsner algebras}
\label{sec:funky-diagram}
In this section we derive  conditions under which Poincar\'e
self-duality of a $C^*$-algebra $A$ induces Poincar\'e self-duality for the
Cuntz--Pimsner algebra $\CPa_E$ of a $C^*$-correspondence $E$ over $A$. When $E$ is a
bi-Hilbertian bimodule, we also investigate when Poincar\'e self-duality for $A$ induces
Poincar\'e duality between $\CPa_E$ and $\CPa_{E^\op}$.

Following \cite{KasparovTech}, we say that $C^*$-algebras $A$ and $B$ are
\emph{Poincar\'e dual} if there exist classes $\mu\in KK^d(A\ox B,\C)$ (the Dirac class)
and $\beta\in KK^d(\C,A\ox B)$ (the Bott or dual-Dirac class) such that
\begin{align}
\beta\ox_A\mu=(-1)^d\Id_{KK(B,B)}\qquad\text{ and }\qquad \beta\ox_{B}\mu=\Id_{KK(A,A)}.
\label{eq:PD}
\end{align}
The classes $\mu$ and $\beta$ implement isomorphisms
\begin{align}
\cdot\otimes_{B}\mu &: K_*(B)\stackrel{\cong}{\longrightarrow} K^{*+d}(A),\qquad&
    \cdot\otimes_A\mu &: K_*(A)\stackrel{\cong}{\longrightarrow} K^{*+d}(B),\qquad\text{
    and}\label{eq:mu isos}\\
\beta \otimes_{B} \cdot &: K^*(B)\stackrel{\cong}{\longrightarrow} K_{*+d}(A),\qquad&
    \beta \otimes_A \cdot &: K^*(A)\stackrel{\cong}{\longrightarrow} K_{*+d}(B).\label{eq:beta isos}
\end{align}
We call a class $\mu \in KK(A \otimes B, \C)$ implementing isomorphisms as
in~\eqref{eq:mu isos} a \emph{$K$-homology fundamental class}, even if there is no corresponding class $\beta$. Similarly, we call a class $\beta$
implementing isomorphisms as in~\eqref{eq:beta isos} a \emph{$K$-theory fundamental
class}.
If $A$ and $A^\op$ are Poincar\'e dual, then we say that $A$ is \emph{Poincar\'e
self-dual}.

Strictly speaking, the formulation here is appropriate only for
unital algebras. For non-unital algebras Poincar\'e duality should be formulated using an appropriate 
analogue of
compactly supported $K$-homology in Equations \eqref{eq:mu isos} and \eqref{eq:beta isos}, 
though $\beta$ and $\mu$ need not be and usually are not compactly supported. For 
commutative algebras and $C(X)$-algebras,
a suitable compactly supported theory is provided by $RKK$, defined by Kasparov in \cite{KasparovEqvar}. A version 
for some non-commutative algebras is presented in \cite{RennieSmooth}.
We will restrict our discussion to the formulation of
Poincar\'e duality above, but include 
some additional non-unital
results  for future use.

Suppose that $A$ is Poincar\'e self-dual, and consider a bi-Hilbertian $A$-bimodule $E$.
Recall that 
$[P]$ denotes the Kasparov class 
of the Kasparov
module described in \eqref{eq:P-module}. The $KK$-equivalences between $A$ and $\Toep_E$ and between
$A$ and $\End_A^0(\Fock_E)$ described in Section~\ref{sec:CP-Kas-mods}, and the
corresponding equivalences between $A^\op$ and $\Toep_{E}^\op$ and between $A^\op$ and
$\End_{A}^0(\Fock_{E})^\op$ lift $(\mu,\beta)$ to Poincar\'e self-dualities $(\mu_\Toep,
\beta_\Toep)$ for $\Toep_E$ and $(\mu_E, \beta_E)$ for $\End_A^0(\Fock_E)$ as follows:
\begin{align*}
\mu_\Toep&=([P]\ox[P^\op])\ox_{A\ox A^\op}\mu_A,\quad
\beta_\Toep=\beta_A\ox_{A\ox A^\op}(\iota_{A,\Toep}\ox\iota_{A^\op,\Toep^\op})\\
\mu_E&=([\Fock_E]\ox[\Fock_{E}^\op])\ox_{A\ox A^\op}\mu_A,\quad
\beta_E=\beta_A\ox_{A\ox A^\op}([\overline{\Fock_E}]\ox[\overline{\Fock_{E}^\op}]).
\end{align*}

We cannot expect simple formulae of the same sort to describe classes implementing
Poincar\'e self-duality for $\CPa_E$. One reason for this is
Proposition~\ref{prop:no-way}. Another is that we expect a shift in parity: the algebra
$\CPa_E$ is in important respects like a crossed product of $A$ by $\Z$, so the passage
from $A$ to $\CPa_E$ should add a noncommutative dimension, leading us to expect that the
fundamental class for $\CPa_E$ has parity $d+1$ if $\mu$ has parity $d \in \Z/2\Z$.

\subsection{Lifting Poincar\'e duality from the coefficient algebra}
\label{subsec:lift}

In this subsection, assuming Poincar\'e self-duality for the coefficient algebra $A$, we
produce sufficient conditions for the existence of fundamental classes implementing a Poincar\'e duality
$$
\delta\in KK^{d+1}(\C,\CPa_E\ox \CPa_{E^\op}) \quad\mbox{and}\quad \Delta\in
KK^{d+1}(\CPa_E\ox \CPa_{E^\op},\C),
$$
and also for the existence of fundamental classes implementing a Poincar\'e duality
$$
\ol{\delta}\in KK^{d+1}(\C,\CPa_E\ox \CPa_{E}^\op)
\quad\mbox{and}\quad
\ol{\Delta}\in KK^{d+1}(\CPa_E\ox \CPa_{E}^\op,\C).
$$
Our sufficient conditions involve the dynamics encoded by $E$, the modules $E^\op$ and
$\ol{E}^\op$, and the existence of suitably $E$-invariant Poincar\'e self-duality classes
for $A$. First, starting from the extension
\begin{equation}
0\to \End^0_A(\Fock_E)\stackrel{\iota_{E,\Toep}}{\longrightarrow}
\Toep_E\stackrel{q}{\longrightarrow} \CPa_E\to 0
\label{eq:ext}
\end{equation}
and the analogous extensions for $E^\op$ and $\overline{E}^\op$, we describe sufficient
conditions under which classes $\overline{\Delta}\in KK^{d+1}(\CPa_E\ox \CPa_{E}^\op,\C)$
and $\overline{\delta}\in KK^{d+1}(\C,\CPa_E\ox \CPa_{E}^\op)$ yield isomorphisms
\begin{equation}\label{eq:isos}
\begin{split}
\cdot\ox_{\CPa_E}\overline{\Delta}\,:\,K_*(\CPa_{E})\to K^{*+d+1}(\CPa_{E}^\op),
    &\qquad \cdot\ox_{\CPa_{E^\op}}\overline{\Delta}\,:\,K_*(\CPa_{E}^\op)\to K^{*+d+1}(\CPa_{E})\\
\overline{\delta}\ox_{\CPa_E}\cdot\,:\,K^*(\CPa_E)\to K_{*+d+1}(\CPa_{E}^\op),
    &\qquad \overline{\delta}\ox_{\CPa_{E^\op}}\cdot\,:\,K^*(\CPa_{E}^\op)\to K_{*+d+1}(\CPa_{E}).
\end{split}
\end{equation}
We then give sufficient conditions for classes $\Delta\in KK^{d+1}(\CPa_E\ox \CPa_{E}^\op,\C)$
and $\delta\in KK^{d+1}(\C,\CPa_E\ox \CPa_{E}^\op)$ to yield isomorphisms
\begin{equation}
\label{eq:isos-op}
\begin{split}
\cdot\ox_{\CPa_E}{\Delta}\,:\,K_*(\CPa_{E})\to K^{*+d+1}(\CPa_{E^\op}),
    &\qquad \cdot\ox_{\CPa_{E^\op}}{\Delta}\,:\,K_*(\CPa_{E^\op})\to K^{*+d+1}(\CPa_{E})\\
{\delta}\ox_{\CPa_E}\cdot\,:\,K^*(\CPa_E)\to K_{*+d+1}(\CPa_{E^\op}),
    &\qquad {\delta}\ox_{\CPa_{E^\op}}\cdot\,:\,K^*(\CPa_{E^\op})\to K_{*+d+1}(\CPa_{E}).
\end{split}
\end{equation}

Of course, the maps~\eqref{eq:isos} must be isomorphisms if $\overline{\delta}$ and
$\overline{\Delta}$ implement a Poincar\'e self-duality. The converse is false, but we
show that the existence of classes $(\overline{\delta}, \overline{\Delta})$ satisfying
our sufficient conditions for~\eqref{eq:isos} implies that $\CPa_E$ and
$\CPa_{\ol{E}^\op}$ \emph{are} Poincar\'e dual, via a suitably modified pair of
$KK$-classes. A related result, that 
isomorphisms $KK(A\ox B,C)\to KK(B,A^\op\ox C)$
for all $B,\,C$ ensures $A$ and $A^\op$ 
are Poincar\'e dual
appears in \cite{EEK}.

Similarly, while not every pair $(\delta, \Delta)$ satisfying our sufficient condition
guaranteeing~\eqref{eq:isos-op} implements a Poincar\'e duality between $\CPa_E$ and
$\CPa_{E^\op}$, the existence of such a pair does guarantee Poincar\'e duality of
$\CPa_E$ and $\CPa_{E^\op}$, again via a modified pair of $KK$-classes.

We begin by  combining the $K$-theory exact sequence arising from the  exact sequence~\eqref{eq:ext} for $\ol{E}^\op$
with the $K$-homology exact sequence arising from
\eqref{eq:ext} using the Poincar\'e self-duality of $A$.

\begin{thm}\label{thm:(OE)op sufficient}
Let $E$ be a correspondence over $A$ with compact and non-degenerate left action of $A$.
Suppose that $\mu\in KK^d(A\ox A^\op,\C)$ and $\beta\in KK^d(\C,A\ox A^\op)$ implement a
Poincar\'e self-duality for $A$. Let $\iota_{A, \CPa} : A \to \CPa_E$ be the canonical
inclusion.
\begin{enumerate}
\item Suppose that $[E]\ox_A\mu=[\ol{E}^\op]\ox_{A^\op}\mu \in KK(A \ox A^{\op},
    \C)$. Suppose that $\overline{\Delta}\in KK^1(\CPa_E\ox \CPa_{E}^{\op},\C)$
    satisfies
    \begin{equation}
    \iota_{A,\CPa}\ox_{\CPa_E}\overline{\Delta} =-\extobarcls\otimes_{ A^{\op}}\mu
        \quad\text{ and }\quad
    \iota_{A^\op,\CPa^\op}\ox_{\CPa_E^\op}\overline{\Delta} = \extcls\ox_{A}\mu.
    \label{eq:comm1}
    \end{equation}
    Then the maps defined by $\overline{\Delta}$ in~\eqref{eq:isos} are isomorphisms,
    so $\ol{\Delta}$ is a $K$-homology fundamental class.
\item Suppose that $\beta\ox_A[E]=\beta\ox_{A^\op}[\ol{E}^\op] \in KK(\C, A \ox
    A^{\op})$. Suppose that $\overline{\delta}\in KK^{d+1}(\C,\CPa_E\ox
    \CPa_{E}^\op)$ satisfies
    \begin{equation}
    \beta\ox_A \iota_{A,\CPa}=-\overline{\delta}\ox_{\CPa_{E}^\op}\extobarcls
    \quad\text{ and }\quad
    \beta\ox_{A^\op} \iota_{A^\op,\CPa^\op}=-\overline{\delta}\ox_{\CPa_{E}}\extcls.
    \label{eq:comm2}
    \end{equation}
    Then the maps defined by $\overline{\delta}$ in~\eqref{eq:isos} are isomorphisms,
    so $\ol{\delta}$ is a $K$-theory fundamental class.
\item Suppose that there exist classes $\overline{\Delta}$ and $\overline{\delta}$ satisfying
    the conditions in (1)~and~(2). Then $\CPa_E$ is Poincar\'e self-dual.
\end{enumerate}
\end{thm}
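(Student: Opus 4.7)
The plan is to form the rotation classes
$$
y := \overline{\delta} \otimes_{\CPa_E^\op} \overline{\Delta} \in KK^0(\CPa_E, \CPa_E)
\quad\text{and}\quad
y' := \overline{\delta} \otimes_{\CPa_E} \overline{\Delta} \in KK^0(\CPa_E^\op, \CPa_E^\op),
$$
and to show that $y = \Id_{\CPa_E}$ and $y' = (-1)^{d+1}\Id_{\CPa_E^\op}$, which are precisely the Poincar\'e self-duality relations~\eqref{eq:PD} for the pair $(\CPa_E, \CPa_E^\op)$. If the computation only produces invertible classes rather than identities, we may replace $\overline{\delta}$ by $\overline{\delta} \otimes_{\CPa_E} y^{-1}$; this is the ``suitable modification of $KK$-classes'' alluded to in the discussion preceding the theorem. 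Thus it suffices to establish invertibility of $y$ and $y'$.

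I would reduce these identities to identities over the coefficient algebra $A$ using Pimsner's six-term exact sequence~\eqref{eq:PV}. Associativity of the Kasparov product gives
$$
\iota_{A,\CPa} \otimes_{\CPa_E} y = \bigl(\iota_{A,\CPa} \otimes_{\CPa_E} \overline{\delta}\bigr) \otimes_{\CPa_E^\op} \overline{\Delta}.
$$
The second relation in~\eqref{eq:comm2} rewrites the bracketed factor (up to sign) as a product involving $\beta$ and $\extcls$; substituting this, and then applying the first relation in~\eqref{eq:comm1} to replace the surviving $\overline{\Delta}$-factor by $\extobarcls \otimes_{A^\op}\mu$, collapses the whole expression via the self-duality identity $\beta \otimes_{A^\op} \mu = \Id_A$ down to $\iota_{A,\CPa}$ itself. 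A mirror argument on the opposite algebra, using the companion relations of~\eqref{eq:comm1}--\eqref{eq:comm2} together with $\beta \otimes_A \mu = (-1)^d \Id_{A^\op}$, handles $y'$ and supplies the sign $(-1)^{d+1}$. Combined with parts (1) and (2), which already force $y$ to act as the identity on $K_*(\CPa_E)$, Pimsner's sequence in the first variable then promotes the pullback identity $(\iota_{A,\CPa})^* y = (\iota_{A,\CPa})^*$ to the full $KK$-equality $y = \Id_{\CPa_E}$, since the kernel of $(\iota_{A,\CPa})^*$ is the image of the boundary $\cdot \otimes \extcls$, which is in turn controlled by the same compatibility conditions.

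The principal obstacle is the sign and degree bookkeeping: both $\overline{\delta}$ and $\overline{\Delta}$ sit in odd degree, the minus signs in~\eqref{eq:comm1} and~\eqref{eq:comm2} must conspire with the $(-1)^d$ from~\eqref{eq:PD} to deliver exactly $(-1)^{d+1}$ on the opposite side, and each step of the product manipulation introduces a Koszul sign from crossing odd-degree classes. A secondary technical point is that, if forcing the calculation all the way to an equality of $KK$-classes proves unwieldy, then mere invertibility of $y$ and $y'$ already follows from the K-theoretic isomorphisms of (1) and (2) combined with a UCT argument exploiting nuclearity of $A$; the modification step $\overline{\delta} \mapsto \overline{\delta} \otimes y^{-1}$ then completes the proof.
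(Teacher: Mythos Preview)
Your overall architecture for part~(3) matches the paper's: form the rotation $v = \overline{\delta}\ox_{\CPa_E^\op}\overline{\Delta}$, verify the two compatibility identities $\iota_{A,\CPa}\ox_{\CPa_E}v = \iota_{A,\CPa}$ and $v\ox_{\CPa_E}\extcls = \extcls$ (this is the paper's Lemma~3.3, and your associativity computation is aimed at exactly this, though the expression $\iota_{A,\CPa}\ox_{\CPa_E}\overline{\delta}$ as written is ill-typed), then replace $\overline{\delta}$ by $\overline{\delta}\ox_{\CPa_E}v^{-1}$.

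There are two genuine gaps. First, the claim that Pimsner's sequence promotes $\iota_{A,\CPa}\ox v = \iota_{A,\CPa}$ to $v = \Id_{\CPa_E}$ is not justified: that single equation only says $(\iota_{A,\CPa})^*(v-\Id) = 0$, so $v - \Id$ lies in the image of $\extcls\ox_A(\cdot)$, and the second identity $v\ox\extcls = \extcls$ does not kill that image. What the two identities \emph{do} give, via the five lemma applied to Pimsner's sequence $KK^*(B,A)\to KK^*(B,A)\to KK^*(B,\CPa_E)\to KK^{*+1}(B,A)$ with $B$ arbitrary, is that right Kasparov product by $v$ is an isomorphism on every $KK^*(B,\CPa_E)$; hence $v$ is a $KK$-equivalence. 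No nuclearity or UCT is needed, so your fallback is both unnecessary and introduces hypotheses absent from the theorem.

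Second, and more seriously: once you modify to $\overline{\delta}' = \overline{\delta}\ox_{\CPa_E}v^{-1}$ so that $\overline{\delta}'\ox_{\CPa_E^\op}\overline{\Delta} = \Id_{\CPa_E}$, you cannot simply run a ``mirror argument'' to obtain the second Poincar\'e relation, because you have already spent your one degree of freedom. The paper instead uses an idempotent trick: from $\overline{\delta}'\ox_{\CPa_E^\op}\overline{\Delta} = \Id$ and the graded antisymmetry of the external product one deduces that $(-1)^{d+1}\overline{\delta}'\ox_{\CPa_E}\overline{\Delta}$ is an idempotent lying in the group of units of $KK(\CPa_E^\op,\CPa_E^\op)$, hence equals $\Id$. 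This step is what actually delivers the sign $(-1)^{d+1}$ and completes the duality; your sketch does not contain it.
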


To prove the theorem, we first need two lemmas.

\begin{lemma}
\label{lem:diagram-commutes} Let $E$ be a correspondence over $A$ with compact and
non-degenerate left action of $A$. Suppose that $\mu\in KK^d(A\ox A^\op,\C)$ and
$\beta\in KK^d(\C,A\ox A^\op)$ implement a Poincar\'e self-duality for $A$. Resume the
notation of Section~\ref{sec:CP-Kas-mods}, and write $\partial$ for all boundary maps in
Pimsner's six-term $K$-theory and $K$-homology sequences. If
$[E]\ox_A\mu=[\ol{E}^\op]\ox_{A^\op}\mu$ then all subdiagrams consisting of solid arrows
in the diagram
\begin{equation}
\parbox[c]{0.9\textwidth}{\centerline{
\xymatrix{ &  & K_0(A^\op)\ar[ddrr]^{\ \quad\cdot\ox_{A^\op}({\rm Id}_{A^\op}-[\ol{E}^\op])} \ar[d]^{\ox_{A^\op}\mu}  &  & \\
 &  & K^d(A) \ar[dr]^{\ \ ({\rm Id}_{A}-[E])\ox_A\cdot} &  & \\
K_1(\CPa_{E}^\op)\ar@{-->}[r]^\theta\ar[uurr]^{\partial^\op}& K^{d}(\CPa_E)\ar[ur]^{\iota_{A,\CPa}^*} & & K^d(A)\ar[d]^\partial &
K_0(A^\op)\ar[l]^{\ox_{A^\op}\mu} \ar[d]^{\iota_{A^\op,\CPa^\op*}} \\
K_1(A^\op)\ar[u]^{\iota_{A^\op,\CPa^\op*}} \ar[r]^{\ox_{A^\op}\mu}& K^{d+1}(A)\ar[u]^\partial &  & K^{d+1}(\CPa_E)\ar[dl]^{\iota_{A,\CPa}^*} & K_0(\CPa_{E}^\op) \ar@{-->}[l]^\theta\ar[ddll]^{\partial^\op}\\
 &  & K^{d+1}(A) \ar[ul]^{({\rm Id}_{A}-[E])\ox_A\cdot\ \ } &  & \\
 &  & K_1(A^\op)\ar[u]^{\ox_{A^\op}\mu}\ar[uull]^{\cdot\ox_{A^\op}({\rm Id}_{A^\op}-[\ol{E}^\op])\quad\ } &  & }}}
\label{eq:diagram}
\end{equation}
commute. Any homomorphism $\theta : K_*(\CPa_{E}^\op)\to K^{*+1+d}(\CPa_E)$ making the
whole diagram commute is an isomorphism.
\end{lemma}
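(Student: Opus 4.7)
The plan is to view the diagram as a morphism between two Pimsner--Voiculescu six-term exact sequences linked by Poincar\'e duality, verify that every solid subdiagram commutes, and then invoke the Five Lemma to conclude that $\theta$ is an isomorphism.

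First I would identify the two PV sequences being compared. The inner column displays Pimsner's $K$-homology sequence~\eqref{eq:PV} for $\CPa_E$ with $B=\C$, in which the horizontal maps are $({\rm Id}-[E])\otimes_A\cdot$, $\partial$ (Kasparov product with $\extcls$), and $\iota^*_{A,\CPa}$. The outer loop displays Pimsner's $K$-theory sequence for $\CPa_{\overline{E}^{\op}}$, which is isomorphic to $\CPa_E^{\op}$ by Lemma~\ref{lem:E-barE}, with horizontal maps $\cdot\otimes_{A^{\op}}({\rm Id}-[\ol{E}^{\op}])$, $\iota_{A^{\op},\CPa^{\op}*}$, and $\partial^{\op}$ (Kasparov product with $\extobarcls$). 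The two sequences are tied together at the $A$-level terms by the Poincar\'e duality isomorphisms $\cdot\otimes_{A^{\op}}\mu$, and at the Cuntz--Pimsner-level terms by $\theta$.

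Second, I would verify commutativity of each solid subdiagram, organised into three types. The two trapezoids relating $({\rm Id}-[E])\otimes_A\cdot$ to $\cdot\otimes_{A^{\op}}({\rm Id}-[\ol{E}^{\op}])$ through $\otimes_{A^{\op}}\mu$ commute because associativity of Kasparov products reduces the assertion to the hypothesis $[E]\otimes_A\mu=[\ol{E}^{\op}]\otimes_{A^{\op}}\mu$. The triangular subdiagrams in which $\iota^*_{A,\CPa}$ and $\partial$ appear alongside PV edge maps commute by exactness and functoriality of Pimsner's $K$-homology sequence. Finally, the subdiagrams involving $\iota_{A^{\op},\CPa^{\op}*}$ or $\partial^{\op}$ reduce, after transporting via $\otimes_{A^{\op}}\mu$, to the corresponding exactness/functoriality statements for the $K$-theory PV sequence of $\CPa_{\ol{E}^{\op}}$.

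Once commutativity of all solid subdiagrams is established, the diagram furnishes a morphism of cyclic six-term exact sequences in which, at every $A^{\op}$-versus-$A$ position, the vertical map is the Poincar\'e duality isomorphism $\cdot\otimes_{A^{\op}}\mu$. A standard (periodic) Five Lemma then forces both occurrences of $\theta$, namely $\theta : K_1(\CPa_E^{\op})\to K^d(\CPa_E)$ and $\theta : K_0(\CPa_E^{\op})\to K^{d+1}(\CPa_E)$, to be isomorphisms. The main obstacle is the bookkeeping required in the third type of subdiagram: one must check that the isomorphism $\CPa_E^{\op}\cong\CPa_{\ol{E}^{\op}}$ of Lemma~\ref{lem:E-barE} intertwines $\iota_{A^{\op},\CPa^{\op}}$ with $\iota_{A^{\op},\CPa_{\ol{E}^{\op}}}$ and sends $\extobarcls$ to the boundary class of the defining extension of $\CPa_{\ol{E}^{\op}}$, so that the outer loop really is the PV $K$-theory sequence for $\CPa_E^{\op}$ in the form appearing in the diagram.
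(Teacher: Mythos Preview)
Your overall strategy---view the diagram as a morphism of two Pimsner six-term sequences linked by the Poincar\'e duality isomorphisms $\cdot\otimes_{A^\op}\mu$, then apply the Five Lemma---is exactly the paper's approach, and your treatment of the two trapezoids involving $({\rm Id}-[E])$ and $({\rm Id}-[\ol{E}^\op])$ is correct.

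However, your handling of the remaining solid subdiagrams is muddled. There are no ``triangular subdiagrams'' among the solid arrows, and ``functoriality'' is not the operative principle. The paper's observation is simpler: apart from the two trapezoids, each remaining solid subdiagram is a rectangle whose two long sides are each a composite of \emph{two consecutive maps} in one of the exact hexagons (for instance $\iota_{A,\CPa}^*\circ\partial$ on the inner hexagon, or $\partial^\op\circ\iota_{A^\op,\CPa^\op*}$ on the outer). By exactness these composites are zero, so the rectangles commute trivially. Your invocation of ``exactness'' hints at this, but your phrasing obscures that both routes around each such rectangle are simply the zero map.

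Your final ``bookkeeping'' concern---checking that Lemma~\ref{lem:E-barE} intertwines the PV data for $\CPa_E^\op$ and $\CPa_{\ol{E}^\op}$---is a legitimate point about how the outer hexagon is set up, but it is not needed to verify commutativity of the solid subdiagrams, which is what the lemma asserts. The paper treats the outer loop as the PV $K$-theory sequence for $\CPa_E^\op$ directly.
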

\begin{proof}
The completely defined squares in the upper right and lower left commute by the
hypothesis on $\mu$. The remaining solid rectangles trivially commute because exactness
of the two hexagonal sequences shows that the long sides of these rectangles are zero
maps. The final assertion follows from the five lemma.
\end{proof}

\begin{lemma} \label{lem:oh-yeah}
Suppose that $\overline{\Delta}$ and $\overline{\delta}$ are as in
Theorem~\ref{thm:(OE)op sufficient}(1)~and~(2) respectively. Write $\sigma_{12} :
\CPa^\op_E \otimes \CPa_E \to \CPa_E \otimes \CPa^\op_E$ for the flip
isomorphism, and define $v
\in KK(\CPa_E, \CPa_E)$ by
\begin{equation}\label{eq:vdef}
v=(\overline{\delta}\hox_\C\Id_{KK(\CPa_E,\CPa_E)})\hox_{\CPa_E\ox \CPa_E^\op\ox \CPa_E}
(\Id_{KK(\CPa_E,\CPa_E)}\hox_{\C}\sigma_{12}^*\overline{\Delta}).
\end{equation}
Then
$$ \iota_{A,\CPa_E}\ox_{\CPa_E}v=\iota_{A,\CPa_E}\quad\text{ and }\quad v\ox_{\CPa_E}\extcls=\extcls.
$$
\end{lemma}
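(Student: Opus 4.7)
The strategy for both identities is to apply associativity of the Kasparov product to reduce each to the Poincar\'e self-duality relation $\beta\ox_{A^\op}\mu=\Id_A$ from~\eqref{eq:PD}, invoking the hypotheses~\eqref{eq:comm1} and~\eqref{eq:comm2}. First I would simplify the expression in~\eqref{eq:vdef}: the two external $\Id_{\CPa_E}$ factors serve only to carry the source and target $\CPa_E$-slots through the triple-tensor contraction, so that $v$ coincides with the partial Kasparov product $\overline{\delta}\ox_{\CPa_E^\op}\sigma_{12}^*\overline{\Delta}$, with its source $\CPa_E$ taken from the free $\CPa_E$-slot of $\sigma_{12}^*\overline{\Delta}$ and its target $\CPa_E$ from the free $\CPa_E$-slot of $\overline{\delta}$.

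For the first identity I would expand by associativity:
\[
\iota_{A,\CPa}\ox_{\CPa_E}v = \overline{\delta}\ox_{\CPa_E^\op}\bigl(\iota_{A,\CPa}\ox_{\CPa_E}\sigma_{12}^*\overline{\Delta}\bigr),
\]
observe that the flip $\sigma_{12}^*$ is transparent to a subsequent contraction over $\CPa_E^\op$, and apply the first equation of~\eqref{eq:comm1} to rewrite the inner factor as $-\extobarcls\ox_{A^\op}\mu$. Reassociating and then invoking the first equation of~\eqref{eq:comm2} to replace $\overline{\delta}\ox_{\CPa_E^\op}\extobarcls$ with $-\beta\ox_A\iota_{A,\CPa}$ causes the two minus signs to cancel, and a final associativity step yields
\[
\iota_{A,\CPa}\ox_{\CPa_E}v = \iota_{A,\CPa}\ox_A(\beta\ox_{A^\op}\mu) = \iota_{A,\CPa}.
\]

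The identity $v\ox_{\CPa_E}\extcls=\extcls$ is established by an analogous chain of moves, but using instead the \emph{second} equations in~\eqref{eq:comm1} and~\eqref{eq:comm2}: associate $\extcls$ into the free $\CPa_E$-slot of $\overline{\delta}$, rewrite $\overline{\delta}\ox_{\CPa_E}\extcls$ via~\eqref{eq:comm2}, rewrite $\iota_{A^\op,\CPa^\op}\ox_{\CPa_E^\op}\sigma_{12}^*\overline{\Delta}$ via~\eqref{eq:comm1}, and conclude with $\beta\ox_{A^\op}\mu=\Id_A$. The main obstacle is careful tracking of graded signs: both the flip $\sigma_{12}^*$ and the repeated reordering of partial Kasparov products in $\Z/2$-graded $KK$-theory can generate sign factors, and one must verify that these combine with the asymmetric placement of minus signs in the hypotheses to produce the stated unsigned identities.
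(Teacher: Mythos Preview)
Your plan is correct and matches the paper's proof essentially step for step: for the first identity the paper moves $\iota_{A,\CPa_E}$ (degree~$0$, so no sign) through the flip into the $\CPa_E$-slot of $\overline{\Delta}$, applies \eqref{eq:comm1} and then \eqref{eq:comm2}, and finishes with $\beta\ox_{A^\op}\mu=\Id_A$; for the second identity the paper writes $v\ox_{\CPa_E}\extcls=\overline{\delta}\ox_{\CPa_E^\op}\overline{\Delta}\ox_{\CPa_E}\extcls$, commutes $\extcls$ past $\overline{\Delta}$ picking up $(-1)^{d+1}$, applies the second equations of \eqref{eq:comm2} and \eqref{eq:comm1}, and then commutes $\beta$ past $\extcls$ for a further $(-1)^d$ so that all signs cancel. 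Your caveat about graded signs is exactly the point that needs care, and the paper's explicit sign chase confirms your outline.
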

\begin{proof}
We calculate:
\begin{align*}
\iota&_{A,\CPa_E}\hox_{\CPa_E}v
    =(\overline{\delta}\hox_\C\,\iota_{A,\CPa_E})\hox_{\CPa_E\ox \CPa_E^\op\ox \CPa_E}
        (\Id_{KK(\CPa_E,\CPa_E)}\hox_{\C}\,\sigma_{12}^*\overline{\Delta})\\
    &=(\overline{\delta}\hox_{\CPa_E\ox \CPa_E^\op}(\Id_{KK(\CPa_E,\CPa_E)}\hox_\C
        \Id_{KK(\CPa_E^\op,\CPa_E^\op)}\hox_\C\,\iota_{A,\CPa_E}))\hox_{\CPa_E\ox \CPa_E^\op\ox \CPa_E}
        (\Id_{KK(\CPa_E,\CPa_E)}\hox_{\C}\,\sigma_{12}^*\overline{\Delta})\\
    &=(\overline{\delta}\hox_{\CPa_E\ox \CPa_E^\op}(\Id_{KK(\CPa_E,\CPa_E)}\hox_\C
        \iota_{A,\CPa_E}\hox_\C\Id_{KK(\CPa_E^\op,\CPa_E^\op)}))\hox_{\CPa_E\ox \CPa_E\ox \CPa_E^\op}
        (\Id_{KK(\CPa_E,\CPa_E)}\hox_{\C}\,\overline{\Delta})\\
    &=-\overline{\delta}\hox_{\CPa_E\ox \CPa_E^\op}\Id_{KK(\CPa_E,\CPa_E)}\hox_\C\extobarcls\hox_{A^\op}\mu\\
    &=\beta\ox_A\iota_{A,\CPa_E}\ox_{A^\op}\mu\\
    &=\iota_{A,\CPa_E}.
\end{align*}
The second equality is proved similarly, remembering the anti-symmetry of the external
product to see  that
\begin{align*}
v\ox_{\CPa_E}\extcls&=\ol{\delta}\ox_{\CPa_{E}^\op}\ol{\Delta}\ox_{\CPa_E}\extcls
=(\ol{\delta}\ox_{\CPa_E}\extcls)\ox_{\CPa_E^\op}\ol{\Delta}(-1)^{d+1}\\
&=-\beta\ox_{A^\op}\iota_{A^\op,O^\op}\ox_{\CPa_E^\op}\ol{\Delta}(-1)^{d+1}
=-\beta\ox_{A^\op}(\extcls\ox_A\mu)(-1)^{d+1}\\
&=\extcls\ox_A(\beta\ox_{A^\op}\mu)
=\extcls.\qedhere
\end{align*}
\end{proof}

\begin{proof}[Proof of Theorem~\ref{thm:(OE)op sufficient}]
(1) By \cite[Section 7]{KasparovTech}, the boundary maps $\partial$ and $\partial^\op$
are implemented by Kasparov products with $\extcls$ and $\extobarcls$ respectively. So
the conditions in \eqref{eq:comm1} are equivalent to commutation of the
diagram~\eqref{eq:diagram}.

(2) There is a diagram dual to~\eqref{eq:diagram} in which $\cdot\ox_A\mu$ and
$\cdot\ox_{A^\op}\mu$ are replaced by maps $\beta\ox_A\cdot:\,K^*(A)\to K_{*+d}(A^\op)$
and $\beta\ox_{A^\op}\cdot:\,K^*(A^\op)\to K_{*+d}(A)$ from the inner exact hexagon to
the outer one (and the arrows $\theta$ are reversed). The proof of~(2) is the same as
that of~(1) but applied to this dual diagram and the conditions in \eqref{eq:comm2}.

(3) Let $v \in KK(\CPa_E, \CPa_E)$ denote the product $v := \overline{\delta}
\ox_{\CPa_E^\op} \overline{\Delta}$ (see~\eqref{eq:vdef}). Define $\overline{\delta}' :=
\overline{\delta}\ox_{\CPa_E}v^{-1}$. We will show that $\CPa_E$ is Poincar\'e self-dual
with duality implemented by the pair $\overline{\delta}'$, $\overline{\Delta}$. We
calculate:
\begin{align*}
\overline{\delta}'\ox_{\CPa_E^\op}\!\overline{\Delta}\!
&:=\!(\overline{\delta}\hox_{\CPa_E\ox \CPa_E^\op}(v^{-1}\hox_\C\Id_{KK(\CPa_E^\op,\CPa_E^\op)}))
\hox_\C\Id_{KK(\CPa_E,\CPa_E)}\hox_{\CPa_E\ox \CPa_E^\op\ox \CPa_E}(\Id_{KK(\CPa_E,\CPa_E)}\hox\,
\sigma_{12}^*\overline{\Delta})\\
&=(\overline{\delta}\hox_\C\Id_{KK(\CPa_E,\CPa_E)})\hox_{\CPa_E\ox \CPa_E^\op\ox \CPa_E}\\
&\qquad\qquad\qquad(v^{-1}\hox_\C\Id_{KK(\CPa_E^\op,\CPa_E^\op)}\hox_\C\Id_{KK(\CPa_E,\CPa_E)})
\hox_{\CPa_E\ox \CPa_E^\op\ox \CPa_E}(\Id_{KK(\CPa_E,\CPa_E)}\hox\,\sigma_{12}^*\overline{\Delta})\\
&=(\overline{\delta}\hox_\C\Id_{KK(\CPa_E,\CPa_E)})\hox_{\CPa_E\ox \CPa_E^\op\ox \CPa_E}
(v^{-1}\hox_\C\,\sigma_{12}^*\overline{\Delta})\\
&=(\overline{\delta}\hox_\C\Id_{KK(\CPa_E,\CPa_E)})\hox_{\CPa_E\ox \CPa_E^\op\ox \CPa_E}
(\Id_{KK(\CPa_E,\CPa_E)}\hox_\C\,\sigma_{12}^*\overline{\Delta})\hox_{\CPa_E}v^{-1}\\
&=v\hox_{\CPa_E}v^{-1}=\Id_{KK(\CPa_E,\CPa_E)}.
\end{align*}
Combining $\overline{\delta}'\ox_{\CPa_E^\op}\overline{\Delta}=\Id_{KK(\CPa_E,\CPa_E)}$
with the anti-symmetry of the external product, we see that
\begin{align*}
\overline{\delta}'\ox_{\CPa_E}\!\overline{\Delta}
    &= \ol{\delta}'\ox_{\CPa_E}(\ol{\delta}'\ox_{\CPa_E^\op}\ol{\Delta})\ox_{\CPa_E}\ol{\Delta}\\
    &=((-1)^{d+1}\ol{\delta}'\ox_{\CPa_E^\op}(\ol{\delta}'\ox_{\CPa_E}\ol{\Delta}))\ox_{\CPa_E}\ol{\Delta}
    =(-1)^{d+1}(\ol{\delta}'\ox_{\CPa_E}\ol{\Delta})\ox_{\CPa_E^\op}(\ol{\delta}'\ox_{\CPa_E}\ol{\Delta}).
\end{align*}
Multiplying through by $(-1)^{d+1}$ gives $\big((-1)^{d+1}\overline{\delta}' \ox_{\CPa_E}\overline{\Delta}\big)^2 
=
(-1)^{d+1}\overline{\delta}' \ox_{\CPa_E}\overline{\Delta}$. So
$(-1)^{d+1}\overline{\delta}' \ox_{\CPa_E}\overline{\Delta}$ is an idempotent  in
the group of units of the ring $KK(\CPa_E^\op, \CPa_E^\op)$, and therefore equal to
$\Id_{KK(\CPa_E^\op, \CPa_E^\op)}$. Hence $\overline{\delta}'
\ox_{\CPa_E}\overline{\Delta}  = (-1)^{d+1}\Id_{KK(\CPa_E^\op, \CPa_E^\op)}$.
%
%
\end{proof}

If $E$ is a bi-Hilbertian $A$-bimodule with left action by compacts,
we can repeat the discussion above using the module $E^\op$, replacing
$\ol{E}^\op$ by 
$E^\op$, $\CPa_E^\op$ by $\CPa_{E^\op}$, and $\extobarcls$ by $\extocls$.
We obtain the following result.

\begin{thm}
\label{thm:(OEop) sufficient}
Let $E$ be a bi-Hilbertian bimodule over $A$ with compact and non-degenerate left action
of $A$. Suppose that $\mu\in KK^d(A\ox A^\op,\C)$ and $\beta\in KK^d(\C,A\ox A^\op)$
implement a Poincar\'e self-duality for $A$. Let $\iota_{A, \CPa} : A \to \CPa_E$ be the
canonical inclusion.
\begin{enumerate}
\item Suppose that $[E]\ox_A\mu=[E^\op]\ox_{A^\op}\mu$. Suppose that $\Delta\in KK^1(\CPa_E\ox
    \CPa_{E^{\op}},\C)$ satisfies
    \[
    \iota_{A,\CPa_E}\ox_{\CPa_E}\Delta = \extocls\otimes_{ A^{\op}}\mu \quad\text{ and }\quad
    \iota_{A^\op,\CPa_{E^\op}}\ox_{\CPa_{E^\op}}\Delta = \extcls\ox_{A}\mu.
    \]
    Then the maps defined by $\Delta$ in~\eqref{eq:isos-op} are isomorphisms, so $\Delta$ is a $K$-homology fundamental class.
\item Suppose that $\beta\ox_A[E]=\beta\ox_{A^\op}[E^\op]$. Suppose that $\delta\in
    KK^{d+1}(\C,\CPa_E\ox \CPa_{E^\op})$ satisfies
    \[
    \beta\ox_A \iota_{A,\CPa_E}=\delta\ox_{\CPa_{E^\op}}\extocls
    \quad\text{ and }\quad
    -\beta\ox_{A^\op} \iota_{A^\op,\CPa_{E^\op}}=\delta\ox_{\CPa_{E}}\extcls.
    \]
    Then the maps defined by $\delta$ in~\eqref{eq:isos-op} are isomorphisms, so $\delta$ is a $K$-theory fundamental class.
\item Suppose that there exist classes $\delta$ and $\Delta$ satisfying the conditions in parts
    (1)~and~(2). Then $\CPa_E$ and $\CPa_{E^\op}$ are Poincar\'e dual.
\end{enumerate}
\end{thm}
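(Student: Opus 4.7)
The plan is to adapt the proof of Theorem~\ref{thm:(OE)op sufficient} essentially verbatim, performing the substitutions $\ol{E}^\op \to E^\op$, $\CPa_E^\op \to \CPa_{E^\op}$, and $\extobarcls \to \extocls$ throughout, while tracking how the slightly different sign conventions in the hypotheses propagate through the diagram chase and the idempotent calculation. For part~(1), I mirror Lemma~\ref{lem:diagram-commutes}: I construct the analogue of the large diagram~\eqref{eq:diagram} whose inner hexagon is Pimsner's $K$-homology sequence for $\CPa_E$ (with boundary the Kasparov product with $\extcls$), and whose outer hexagon is the $K$-theory sequence for $\CPa_{E^\op}$ (with boundary the Kasparov product with $\extocls$), translated into the correct graded degree via $\cdot\ox_{A^\op}\mu$. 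The hypothesis $[E]\ox_A\mu=[E^\op]\ox_{A^\op}\mu$ makes the completely-defined squares commute; the remaining solid rectangles commute trivially by exactness. The stated conditions on $\Delta$ then say precisely that the dashed arrow $\theta := \cdot\ox_{\CPa_E}\Delta : K_*(\CPa_{E^\op}) \to K^{*+d+1}(\CPa_E)$ (and its partner with the roles of $\CPa_E$ and $\CPa_{E^\op}$ exchanged) completes the diagram compatibly, so the five lemma yields the first pair of isomorphisms in~\eqref{eq:isos-op}.

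For part~(2), I dualise: replace $\cdot\ox_A\mu$ and $\cdot\ox_{A^\op}\mu$ by $\beta\ox_A\cdot$ and $\beta\ox_{A^\op}\cdot$ running from $K$-homology to $K$-theory, reverse the dashed arrow, and use the hypothesis $\beta\ox_A[E]=\beta\ox_{A^\op}[E^\op]$ together with the stated conditions on $\delta$. The five lemma then gives the two isomorphisms involving $\delta$. For part~(3), I set
\[
v := (\delta\hox_\C\Id_{KK(\CPa_E,\CPa_E)})\hox_{\CPa_E\ox\CPa_{E^\op}\ox\CPa_E}(\Id_{KK(\CPa_E,\CPa_E)}\hox_\C\sigma_{12}^*\Delta)\in KK(\CPa_E,\CPa_E),
\]
and prove the analogue of Lemma~\ref{lem:oh-yeah}, namely $\iota_{A,\CPa_E}\ox_{\CPa_E}v=\iota_{A,\CPa_E}$ and $v\ox_{\CPa_E}\extcls=\extcls$. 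Then, following the final calculation in the proof of Theorem~\ref{thm:(OE)op sufficient}(3), I define $\delta' := \delta\ox_{\CPa_E}v^{-1}$, verify by the same associativity manipulation that $\delta'\ox_{\CPa_{E^\op}}\Delta = \Id_{KK(\CPa_E,\CPa_E)}$, and deduce $\delta'\ox_{\CPa_E}\Delta = (-1)^{d+1}\Id_{KK(\CPa_{E^\op},\CPa_{E^\op})}$ from antisymmetry of the external product and the fact that an idempotent in a unit group is the identity.

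The main obstacle is the sign bookkeeping. The hypotheses of the present theorem distribute the minus signs differently from those of Theorem~\ref{thm:(OE)op sufficient} (compare $-\extobarcls\ox_{A^\op}\mu$ and $\extcls\ox_A\mu$ in~\eqref{eq:comm1} with $\extocls\ox_{A^\op}\mu$ and $\extcls\ox_A\mu$ here, and similarly for the $\beta$-relations in~\eqref{eq:comm2}). One must verify that the signs conspire inside the analogue of Lemma~\ref{lem:oh-yeah} so that both relations $\iota\ox v = \iota$ and $v\ox\extcls = \extcls$ emerge with no residual signs; this is what allows the part~(3) idempotent calculation to go through unchanged. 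A secondary concern is the invertibility of $v$ needed to define $\delta'$: since the identities just established determine $v$ up to the $KK$-class of the identity via Pimsner's sequence~\eqref{eq:PV}, and since $v$ induces the composite of the $K$-theory and $K$-homology isomorphisms produced in parts~(1)~and~(2), $v$ is indeed a $KK$-equivalence.
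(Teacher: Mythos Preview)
Your proposal is correct and takes essentially the same approach as the paper, which simply states that one repeats the discussion preceding Theorem~\ref{thm:(OE)op sufficient} with the substitutions $\ol{E}^\op \to E^\op$, $\CPa_E^\op \to \CPa_{E^\op}$, and $\extobarcls \to \extocls$. Your explicit tracking of the sign differences between the hypotheses of the two theorems, and your observation that the invertibility of $v$ requires justification, go beyond what the paper provides; the sign chase is accurate, though your argument for the $KK$-invertibility of $v$ (via induced isomorphisms on $K$-groups) would strictly require a UCT hypothesis that neither you nor the paper states.
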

%
%
%
%

\subsection{Examples of Poincar\'e duality classes for the coefficient algebra}
\label{subsec:examples}

The restrictions on the classes $\beta$ and $\mu$ in the preceding section are
fairly stringent, so we discuss two key examples where they are satisfied: Cuntz--Krieger
algebras and crossed products by $\Z$ in geometric settings. We will carry these examples
through the remainder of the paper. We describe the Poincar\'e duality classes, and the
conditions for commutation of the diagram.

\subsubsection{Finite dimensional coefficient algebras and Cuntz--Krieger algebras}
\label{subsub:findim-alg}

Our first example is $A=\C$. Clearly $\C = \C^\op = \C \ox \C^\op$, but we avoid these
identifications in the first instance to clarify how the components of our discussion
relate to the general setting. The $K$-homology fundamental class is
$\mu=[\C\ox\C^\op,\C,0]$, where the left action is $(w_1 \ox w^\op_2)\cdot z = w_1w_2z$.
The $K$-theory fundamental class is $\beta=[\C,\C_{\C \ox \C^\op},0]$ where the right
action is $z\cdot (w_1 \otimes w^\op_2) = zw_1w_2$ and the inner-product is $(w \mid z) =
(\bar{w} \ox z^\op)$. These classes trivially constitute a Poincar\'e self-duality for
$\C$.

Tensoring with the explicit Morita equivalence cycle $(M_n(\C),\C^n_\C,0)$ and its
inverse $(\C,\C^n_{M_n(\C)},0)$ yields a Poincar\'e self-duality for $M_n(\C)$. We also
obtain Poincar\'e duality classes for the compact operators, on $\ell^2(\N)$ say, but
this takes us out of the finite-index setting needed later: 
see \cite{RSims}.

To extend the preceding paragraph from $A = \C$ to $A = \C^r$, we record the following
easy lemma.
\begin{lemma} \label{lem:oplus PD}
Let $A$ be a $C^*$-algebra, and suppose that
\[
\mu \in KK(A\ox A^\op,\C),\quad\text{ and }\quad \beta \in KK(\C,A\ox A^\op)
\]
implement a Poincar\'e self-duality for $A$. Then for each $r\geq 1$, the algebra $A^r :=
\oplus_{i=1}^r A$ is Poincar\'e self-dual with respect to the classes
\[\textstyle
\tilde\mu =\bigoplus_{i=1}^r\mu\in KK(A^r\ox (A^{\op })^r,\C),\quad\text{ and }\quad
    \tilde\beta = \bigoplus_{i=1}^r\beta\in KK(\C,A^r \ox (A^{\op })^r).
\]
\end{lemma}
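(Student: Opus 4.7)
The plan is to verify the two Poincar\'e duality identities
\[
\tilde\beta \otimes_{A^r} \tilde\mu = (-1)^d \Id_{KK((A^\op)^r, (A^\op)^r)} \quad\text{and}\quad \tilde\beta \otimes_{(A^\op)^r} \tilde\mu = \Id_{KK(A^r, A^r)}
\]
by reducing them, summand by summand, to the corresponding hypothesized identities for $\mu$ and $\beta$. The key observation is that the direct-sum decomposition $A^r \otimes (A^\op)^r \cong \bigoplus_{i,j} A \otimes A^\op$ yields a matching additive decomposition of the relevant $KK$-groups, under which $\tilde\mu$ and $\tilde\beta$ are supported on the diagonal $i = j$.

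First, I would choose explicit direct-sum representatives: if $\mu = [\mathcal{H}_\mu, \phi_\mu, F_\mu]$, then $\tilde\mu$ is represented by $\bigl(\bigoplus_{i=1}^r \mathcal{H}_\mu,\, \bigoplus_i \phi_\mu^{(i)},\, \bigoplus_i F_\mu\bigr)$, in which the $i$-th component of $A^r$ and the $i$-th component of $(A^\op)^r$ act on the $i$-th copy of $\mathcal{H}_\mu$ via $\phi_\mu$, while all off-summand components act by zero. The representative for $\tilde\beta$ is entirely analogous.

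Next, compute $\tilde\beta \otimes_{A^r} \tilde\mu$ directly from these representatives. The internal tensor product of the underlying $C^*$-modules over $A^r$ splits as a direct sum indexed by pairs $(i,j)$; the middle $A^r$ acts on the left factor through the projection $p_i$ and on the right factor through the projection $p_j$, so only the diagonal terms $i = j$ contribute. Each surviving diagonal summand contributes the Kasparov product $\beta \otimes_A \mu = (-1)^d \Id_{KK(A^\op, A^\op)}$, acting on the $i$-th copy of $A^\op$. Summing over $i$ yields $(-1)^d \Id_{KK((A^\op)^r, (A^\op)^r)}$, as required. The identity $\tilde\beta \otimes_{(A^\op)^r} \tilde\mu = \Id_{KK(A^r, A^r)}$ follows by exactly the same argument with the roles of $A$ and $A^\op$ interchanged.

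The main (modest) obstacle is pure bookkeeping: correctly identifying the direct-sum decomposition of the balanced tensor product and confirming the vanishing of off-diagonal cross terms. An alternative, more abstract route would be to first establish two auxiliary facts --- that tensor products of Poincar\'e self-dual algebras are self-dual via exterior products of fundamental classes, and that $\C^r$ is self-dual with the stated classes --- and then apply these to the identification $A^r \cong A \otimes \C^r$.
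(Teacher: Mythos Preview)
Your proposal is correct and takes essentially the same approach as the paper: decompose $\tilde\beta \otimes_{A^r} \tilde\mu$ as a sum over pairs $(i,j)$, observe that off-diagonal terms vanish, and identify each diagonal term with $\beta \otimes_A \mu$. The paper's proof is a single displayed line that performs this computation without commenting on the off-diagonal vanishing, so your explicit treatment of that point is a harmless elaboration; note also that in the lemma as stated $\mu \in KK(A\otimes A^\op,\C)$ so $d=0$ and your factor $(-1)^d$ is trivially $1$.
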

\begin{proof}
We compute
\[\textstyle
\tilde\beta\ox_{A^r}\tilde\mu
    = \bigoplus_{i,j=1}^r\beta\ox_A\mu
    = \bigoplus_{i=1}^r\Id_{KK(A^\op,A^\op)}
    = \Id_{KK((A^\op)^r,(A^\op)^r)},
\]
and similarly for $\tilde{\beta} \ox_{(A^{\op })^r} \tilde{\mu}$.
\end{proof}

\begin{rmk}\label{rmk:Cr PD}
Lemma~\ref{lem:oplus PD} shows that for any finite set $X$, the algebra $C(X) := \C^X$ is
Poincar\'e self-dual with respect to the classes
\begin{equation}\label{eq:PD-for-KP}
\begin{split}
\mu&=[(C(X)\ox C(X),C(X)_\C,0)]\in KK(C(X)\ox C(X),\C),\quad\text{ and}\\
\beta&=[(\C, C(X)_{C(X)\ox C(X)}, 0)]\in KK(\C, C(X)\ox C(X)).
\end{split}
\end{equation}
\end{rmk}

The inner product is given on basis vectors
of $C(X)$ by 
$$
(\delta_x\mid \delta_y)_{C(X) \ox
C(X)} = \left\{\begin{array}{ll} \delta_x \ox \delta_x & x=y\\ 0 & \mbox{otherwise}\end{array}\right.,
$$ 
so the action of $C(X)$ is diagonal. In the language of projections,
\begin{equation}
\beta = \sum_{x \in X} [\delta_x] \ox [\delta_x^\op].
\label{eq:beta-proj}
\end{equation}

\begin{rmk}
As discussed above for the 1-summand case, the Poincar\'e self-dualities in
Remark~\ref{rmk:Cr PD} determine Poincar\'e self-dualities for algebras of the form
$\bigoplus_{x \in X} \K(\ell^2(I_x))$ for any collection of finite index
sets $I_x$ just by tensoring with the $KK$-classes of the invertible bimodules
$$
{_{\bigoplus_x \K(\ell^2(I_x))}}\Big(\bigoplus_x \ell^2(I_x)\Big)_{C(X)}.
$$
\end{rmk}

The following result shows that the hypotheses on $\beta$ and $\mu$ in
Theorem~\ref{thm:(OEop) sufficient} hold for finite graph algebras where $A=C(G^0)$,
whether we use $E^\op$ or $\ol{E}^\op$. We build on this to produce Poincar\'e duality
classes for Cuntz--Krieger algebras and appropriate graph algebras in Sections
\ref{ex:cuntz-krieger}~and~\ref{subsub:GMI}.

\begin{prop}
\label{prp:graph classes}
Let $G = (G^0, G^1, r, s)$ be a finite directed graph
with no sources. Let
$\mu,\beta$ be as in Remark~\ref{rmk:Cr PD}
for $X = G^0$. Let $E$ be the edge module $E
= C(G^1)$ with
\[
(a\cdot e\cdot b)(g)
=a(r(g))e(g)b(s(g))\qquad\text{ for all $a,b\in A$, all $e\in E$ and all $g\in G^1$}
\]
and
\[
(e \mid f)_{A}(v) = \sum_{s(g) = v} \overline{e(g)}f(g)\qquad\mbox{and}\qquad
{}_{A}(e \mid f)(v) = \sum_{r(g) = v} e(g)\overline{f(g)}.
\]
Then $\beta \otimes_A [E] = \beta \otimes_{A^\op}[\overline{E}^\op]$ and $[E] \otimes_A
\mu = [\ol{E}^\op] \otimes_{A^\op} \mu$.
If 
$G$ has no sinks then we also have
$\beta \otimes_A [E]=\beta \otimes_{A^\op} [E^\op]$
and
$[E] \otimes_A\mu=[E^\op]\otimes_{A^\op}\mu$.
\end{prop}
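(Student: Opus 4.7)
The plan is to verify each of the four equalities by a direct matrix computation in the finite-dimensional $KK$-groups attached to $A = C(G^0)$.  Since $A \cong \C^{G^0}$ is commutative, $K_0(A \ox A^\op) \cong \Z^{G^0 \times G^0}$ has basis $\{[\delta_v \ox \delta_w^\op]\}$; by Remark~\ref{rmk:Cr PD}, $\beta = \sum_v [\delta_v \ox \delta_v^\op]$ and $\mu$ pairs with $[\delta_v \ox \delta_w^\op]$ to give $\delta_{v,w}$.  The class $[E] \in KK(A,A)$ acts on $K_0(A)$ by the matrix $M$ with $M_{u,v} = |\{g \in G^1 : r(g) = v,\ s(g) = u\}|$: indeed, the left action of $\delta_v$ on $E$ picks out the submodule spanned by $\{\delta_g : r(g) = v\}$, and as a right $A$-module each such $\delta_g$ is supported at $s(g)$, so that $[\delta_v] \ox_A [E] = \sum_{g:r(g)=v}[\delta_{s(g)}]$.

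The central step is to identify the matrices of $[\ol{E}^\op]$ and (when $G$ has no sinks) of $[E^\op]$ acting on $K_0(A^\op) \cong K_0(A)$.  Tracking through the definition of $\ol{E}^\op$ as a correspondence over $A^\op$ --- whose right $A^\op$-action comes from the left $A$-action on $\ol{E}$ (equivalently, the right $A$-action on $E$, which detects source), and whose left $A^\op$-action is induced from the right $A$-action on $\ol{E}$ (equivalently, the left $A$-action on $E$, which detects range) --- together with the swap of left and right actions intrinsic to the opposite-module construction, one finds that $[\ol{E}^\op]$ acts by the transpose $M^T$.  The no-sources hypothesis is precisely what makes the induced left action of $A^\op$ on $\ol{E}^\op$ injective, so the class is well defined under our standing assumptions.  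When $G$ also has no sinks, the analogous analysis applies to $E^\op$: by Example~\ref{eg:odd opposite} this is the graph module of the reversed graph $G^\op$, whose adjacency matrix is $M^T$ as well.

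With these matrices in place, each equality reduces to the elementary identity
\[
\sum_{u,v} M_{u,v}[\delta_u \ox \delta_v^\op] = \sum_{u,v}(M^T)_{u,v}[\delta_v \ox \delta_u^\op]
\]
in $K_0(A \ox A^\op)$, which holds by the relabeling $(u,v) \leftrightarrow (v,u)$ and $(M^T)_{u,v} = M_{v,u}$.  The $\mu$-side identities $[E] \ox_A \mu = [\ol{E}^\op] \ox_{A^\op} \mu$ and (under no sinks) $[E] \ox_A \mu = [E^\op] \ox_{A^\op} \mu$ follow by an exactly parallel computation in $K^0(A \ox A^\op)$ using the pairing $\mu([\delta_v \ox \delta_w^\op]) = \delta_{v,w}$.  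The main technical obstacle is the intermediate step: several conventions --- opposite algebra, conjugate module, and the interplay of left versus right inner products --- collude to produce the transposition, and tracking them carefully is where the argument demands attention.  Once the matrices are pinned down as $M$ and $M^T$, the remainder is routine linear algebra.
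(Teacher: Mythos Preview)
For the $E^\op$ identities (the no-sinks case) your approach is correct and cleaner than the paper's: since $E^\op$ is the edge module of $G^\op$, it acts on $K_0(A^\op)$ by $M^T$, and both equalities collapse to the tautology $\sum_{g}[\delta_{s(g)}\ox\delta_{r(g)}^\op]=\sum_{g}[\delta_{s(g)}\ox\delta_{r(g)}^\op]$.  This bypasses the explicit balanced-tensor-product identifications the paper carries out, at the cost of using that $K_0(A\ox A^\op)$ is free on the obvious basis---harmless here since $A=\C^{G^0}$.

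The $\ol{E}^\op$ case contains a genuine error.  You correctly track that in $\ol{E}^\op$ the left $A^\op$-action detects range and the right $A^\op$-action detects source.  But you then invoke an additional ``swap of left and right actions intrinsic to the opposite-module construction'' to conclude the matrix is $M^T$.  There is no such extra swap: the passage $E\to\ol{E}$ interchanges left and right once, and $\ol{E}\to\ol{E}^\op$ interchanges them again, so the composite leaves them as they were.  Concretely, since $A$ is commutative the identification $A^\op=A$ makes $\ol{E}^\op$ isomorphic to $E$ itself as an $A$-correspondence (left via range, right via source), and $[\ol{E}^\op]$ therefore acts on $K_0(A^\op)$ by the \emph{same} matrix $M$, not $M^T$.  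With this correction your computation yields
\[
\beta \ox_{A^\op} [\ol{E}^\op] \;=\; \sum_{g\in G^1}[\delta_{r(g)} \ox \delta_{s(g)}^\op],
\]
which agrees with $\beta \ox_A [E]=\sum_{g}[\delta_{s(g)} \ox \delta_{r(g)}^\op]$ only when the adjacency matrix of $G$ is symmetric.  So your argument does not establish the $\ol{E}^\op$ equalities as stated.  It would be worth going back to the paper's explicit module map $V$ for $\beta\ox_{A^\op}[\ol{E}^\op]$ and checking carefully whether the evaluation points in that formula are consistent with the balancing relations; your matrix method---once the matrix of $[\ol{E}^\op]$ is corrected---is exactly the right tool to test this.
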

\begin{rmk}
The hypothesis of of `no sources' is necessary for the
injectivity of the left action of $A$ on $E$. 
It also implies the injectivity of the
left action of $A^\op$ on $\ol{E}^\op$, 
and the right action of $A^\op$ on $E^\op$.
The hypothesis of `no sinks' gives injectivity of the
left action of $A^\op$ on $E^\op$.
\end{rmk}
\begin{proof}
We first compute $\beta\ox_{A}[E]:=\beta\ox_{A\ox A^\op}([E]\ox[A^\op])$. Using the
diagonality of  the action and the inner product for the class $\beta$, one checks
that $C(G^0)\ox_{C(G^0)\ox C(G^0)}\big(C(G^1)\ox C(G^0)\big)_{C(G^0)\ox C(G^0)}$ is isomorphic to
$C(G^1)$ as a linear space. The product module has action and inner product
\begin{equation}
(h\cdot(f_1\ox f_2))(g) = h(g)f_1(s(g))f_2(r(g)), \quad
(h_1\mid h_2)_A(v,w) = \sum_{s(g) = v,\,r(g) = w} \ol{h_1(g)}h_2(g)
\label{eq:s-and-r}
\end{equation}
for all $h, h_1, h_2\in C(G^1)$, all $f_1,f_2\in C(G^0)$ and all $g\in G^1$. Hence 
\begin{equation}\label{eq:firstprodbeta}
\beta\ox_A [E]
    = [(\C,C(G^1)_{{}_sC(G^0)\ox {}_rC(G^0)},0)],
\end{equation}
where we have labelled the actions by range and source to indicate that the right action
is defined as in Equation \eqref{eq:s-and-r}.
A similar line of reasoning shows that
\begin{equation}
\label{eq:secondprodbeta}
\beta\ox_{A^\op}[E^\op]
:= \beta\ox_{A\ox A^\op}([A]\ox[E^\op]) = [(\C,C(G^1)_{{}_sC(G^0)\ox {}_rC(G^0)},0)],
\end{equation}
where the product module has action and inner product
$$
(h\cdot(f_1\ox f_2))(g)=h(g)f_1(s(g))f_2(r(g)), \quad\text{and}\quad
(h_1\mid h_2)_A(v,w)=\sum_{r(g)=w,\,s(g)=v}\ol{h_1(g)}h_2(g)
$$
for all $h, h_1, h_2\in C(G^1)$, all $f_1, f_2\in C(G^0)$, and all $g\in G^1$.
Hence $\beta\ox_AE=\beta\ox_{A^\op}E^\op$.

The analogous identification $V:C(G^0)\ox_{C(G^0)\ox C(G^0)}C(G^0)\ox
\ol{C(G^1)}^\op_{C(G^0)\ox C(G^0)}\to C(G^1)$ is given by  $V(a\ox b\ox\ol{h})(g)
=a(s(g))b(s(g))\ol{h(g)}$. This linear identification of modules shows that the class
$\beta \ox_{A^\op} [\ol{E}^\op] := \beta\ox_{A\ox A^\op}([A]\ox[\ol{E}^\op])$ is
represented by
\begin{equation}
\label{eq:thirdprodbeta}
\beta \ox_{A^\op} [\ol{E}^\op] = [(\C,C(G^1)_{{}_sC(G^0)\ox {}_rC(G^0)},0)],
\end{equation}
where
$$
(h\cdot(f_1\ox f_2))(g)=h(g)f_1(s(g))f_2(r(g))\quad\mbox{and}\quad
(h_1\mid
h_2)_A(v,w)=\sum_{r(g)=w,\,s(g)=v}\ol{h_1(g)}h_2(g).
$$
So $\beta\ox_{A^\op}[\ol{E}^\op]=\beta\ox_A[E]$.
For the $K$-homology statements, we first check,
using the same ideas as above, that
\begin{equation}\label{eq:E ox mu}
[E] \ox_A \mu
    = ([E]\ox[A^\op])\ox_{A\ox A^\op}\mu
    =[(A\ox A^\op,{}_{r\ox s}\ell^2(G^1),0)],
\end{equation}
where
\begin{equation}\label{eq:ip1}
\langle\xi_1,\xi_2\rangle = \sum_{v\in G^0}\sum_{s(g)=v}\ol{\xi_1(g)}\xi_2(g) = \sum_{g \in G^1} \ol{\xi_1(g)}\xi_2(g)
\end{equation}
is the standard $\ell^2$ inner-product, and the actions are given by
\begin{equation}\label{eq:dirac actions}
((a\ox b)\cdot \xi)(g) = a(r(g))b(s(g))\xi(g).
\end{equation}

On the other hand,
\[
[E^\op] \ox_{A^\op} \mu
    = ([A]\ox[E^\op])\ox_{A\ox A^\op}\mu
    = (A\ox A^\op,{}_{r\ox s}\ell^2(G^1),0),
\]
where the actions are given by exactly the same formula as for $[E] \ox_A \mu$, but the
inner product is given by
\begin{equation*}
\langle\xi_1,\xi_2\rangle = \sum_{v\in G^0}\sum_{r(g)=v}\ol{\xi_1(g)}\xi_2(g) = \sum_{g \in G^1} \ol{\xi_1(g)}\xi_2(g),
\end{equation*}
which is~\eqref{eq:ip1}, giving $[E]\ox_A\mu=[E^\op]\ox_{A^\op}\mu$.

For $[\ol{E}^\op]\ox_{A^\op}\mu$ there is a unitary $U:C(G^0)\ox \ol{C(G^1)}^\op
\ox_{A\ox A^\op}\ell^2(G^0)\to \ell^2(G^1)$ such that
$U(f\ox\ol{h}\ox\xi)(g)=f(r(g))\ol{h(g)}\xi(r(g))$ for $g \in G^1$. The resulting unitary
equivalence of Kasparov modules and Equation~\ref{eq:E ox mu} yield
\begin{equation*}
[A]\ox[\ol{E}^\op]\ox_{A\ox A^\op}\mu=[(A\ox A^\op,{}_{r\ox s}\ell^2(G^1),0)] = [E] \ox_A \mu.
\qedhere
\end{equation*}
\end{proof}

\subsubsection{Crossed products}
\label{subsub:cp-CP-commute}

Here we investigate the conditions of Theorem~\ref{thm:(OE)op sufficient} and
Theorem~\ref{thm:(OEop) sufficient} for modules constructed from automorphisms of
$C^*$-algebras. That is, we fix a $C^*$-algebra $A$ satisfying Poincar\'e self-duality
with respect to the classes $\mu$ and $\beta$, and we consider an automorphism $\alpha\in
\Aut(A)$. We put $E := {_\alpha A_A}$, which has right action given by multiplication,
inner-product  $(a \mid b)_A = a^*b$, and left action given by $a \cdot b = \alpha(a)b$.
A  left inner product making $E$ bi-Hilbertian is ${}_A(a\mid b)=\alpha^{-1}(ab^*)$.
These definitions make $E$ an invertible bimodule over $A$, so it yields an invertible
Kasparov class
$$
[E]=[(A,{}_{\alpha}A_A,0)].
$$
The opposite module $E^\op$ has a similar description: we have $E^\op = {}_{\id}
A^\op_{\alpha^\op}$, where $\alpha^\op$ is the automorphism of $A^\op$ implemented by
$\alpha$; that is $\alpha^\op(a^\op) = \alpha(a)^\op$. So the left action of $A^\op$ on
$E^\op$ is left multiplication (in $A^\op$), the right action of $a^\op \in A^\op$ is by
right-multiplication by $\alpha^\op(a)$, and the inner-product is $(e^\op \mid
f^\op)_{A^\op} = (\alpha^{\op})^{-1}(e^{*\op}f^\op) = \alpha^{-1}(fe^*)^\op$. We study
the commutation of the diagram~\eqref{eq:diagram} for $E^\op$.

\begin{lemma}
\label{lem:CP-conditions}
Let $A$ be a $C^*$-algebra, and take $\alpha \in \Aut(A)$. Suppose that $A$ satisfies
Poincar\'e self-duality with respect to the classes $\beta$ and $\mu$. Let $E$ and
$E^\op$ be as described above.
\begin{enumerate}
\item\label{it:autobeta} Suppose that $\beta$ has representative $\beta = [\C,X_{A\ox
    A^\op},T]$ for which there is an invertible $\C$-linear map $V:\,X\to X$ such
    that $V^{-1}(VT-TV)$ is a compact adjointable endomorphism, and such that for all
    $x,\,y\in X$, $a\in A$ and $b^\op\in A^\op$ we have
    \[
    V(x\cdot(a\ox b^\op))=V(x)(\alpha\ox\alpha^{\op})(a\ox b^\op),\quad\text{and}\quad
    (V(x)\mid V(y))_{A\ox A^\op}
    = (\alpha\ox\alpha^{\op})(x\mid y)_{A\ox A^\op}.
        \]
    Then $\beta \otimes_A [E] = \beta \otimes_{A^\op} [E^\op]$.
\item\label{it:automu}
Suppose that $\H$ is a Hilbert space and $\A \subseteq A$ is an $\alpha$-invariant dense
    $^*$-subalgebra.
    Suppose that $\phi : A\to \B(\H)$ and $\psi :
    \A^\op \to \B(\H)$ are $^*$-homomorphisms that determine a spectral triple
    $(\A\ox\A^\op,{}_{\phi\ox\psi}\H,\D)$ representing $\mu
    =[(\A\ox\A^\op,{}_{\phi\ox\psi}\H,\D)]$. Suppose that $W_\alpha \in
    \mathcal{B}(\H)$ is unitary and satisfies $W_\alpha^* \phi(a) W_\alpha =
\phi(\alpha(a))$ and $W_\alpha^* \psi(a^\op) W_\alpha = \psi(\alpha^\op(a^\op))$ for all
$a \in A$. If $[\D, W_\alpha]$ is bounded, then $[E] \otimes_A \mu = [E^\op]
\otimes_{A^\op} \mu$.
\end{enumerate}
\end{lemma}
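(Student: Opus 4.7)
The plan is to express both Kasparov products in each part as explicit cycles over $A\ox A^\op$, and then to use the intertwining data ($V$ for~(1), $W_\alpha$ for~(2)) to exhibit a unitary equivalence between them, modulo a compact (respectively bounded) perturbation of the Fredholm (respectively Dirac) operator. The shared preliminary step is to identify both $[E]$ and $[E^\op]$ as $KK$-classes of $*$-homomorphisms: clearly $[E]=[\alpha]\in KK(A,A)$, while the module isomorphism $b^\op\mapsto(\alpha^\op)^{-1}(b^\op)$ from the correspondence $E^\op={}_{\id}A^\op_{\alpha^\op}$ to ${}_{(\alpha^\op)^{-1}}A^\op$ yields $[E^\op]=[(\alpha^\op)^{-1}]\in KK(A^\op,A^\op)$.

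For Part~(1), these identifications express the products as pushforwards of $\beta$, namely $\beta\ox_A[E]=(\alpha\ox\id_{A^\op})_*\beta$ and $\beta\ox_{A^\op}[E^\op]=(\id_A\ox(\alpha^\op)^{-1})_*\beta$, so the required equality reduces to $(\alpha\ox\alpha^\op)_*\beta=\beta$. A representative of $((\alpha\ox\alpha^\op)^{-1})_*\beta$ is obtained from $(\C,X,T)$ by replacing the right action with $x\cdot_{\text{new}}c=x\cdot(\alpha\ox\alpha^\op)(c)$ and the inner product with the correspondingly twisted one. The two algebraic hypotheses on $V$ assert exactly that $V:X\to X$ is a Hilbert $A\ox A^\op$-module isomorphism from the given representative of $\beta$ to this pushforward representative; the hypothesis that $V^{-1}(VT-TV)$ is a compact adjointable endomorphism then makes $V$ an isomorphism of Kasparov modules modulo a compact perturbation of $T$, yielding $\beta=((\alpha\ox\alpha^\op)^{-1})_*\beta$, equivalently $(\alpha\ox\alpha^\op)_*\beta=\beta$.

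For Part~(2), the same identifications give cycle representatives $[E]\ox_A\mu=(\A\ox\A^\op,{}_{(\phi\circ\alpha)\ox\psi}\H,\D)$ and $[E^\op]\ox_{A^\op}\mu=(\A\ox\A^\op,{}_{\phi\ox(\psi\circ(\alpha^\op)^{-1})}\H,\D)$, differing only in the left action of the coefficient algebra. The intertwining relations for $W_\alpha$ rearrange to $W_\alpha\phi(\alpha(a))W_\alpha^*=\phi(a)$ and $W_\alpha\psi(a^\op)W_\alpha^*=\psi((\alpha^\op)^{-1}(a^\op))$, so conjugation by $W_\alpha$ converts the left action of $[E]\ox_A\mu$ into that of $[E^\op]\ox_{A^\op}\mu$. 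Thus $W_\alpha$ establishes a unitary equivalence between the first cycle and $(\A\ox\A^\op,{}_{\phi\ox(\psi\circ(\alpha^\op)^{-1})}\H,W_\alpha\D W_\alpha^*)$, and since $W_\alpha\D W_\alpha^*-\D=[W_\alpha,\D]W_\alpha^*$ is a bounded self-adjoint operator, the straight-line homotopy $\D_t=t\D+(1-t)W_\alpha\D W_\alpha^*$ is a valid operator homotopy of unbounded Kasparov cycles for the target action, giving the equality.

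The principal obstacle will be the identification $[E^\op]=[(\alpha^\op)^{-1}]$ (rather than $[\alpha^\op]$): the correspondence $E^\op$ naturally carries the \emph{standard} left $A^\op$-action together with the $\alpha^\op$-\emph{twisted} right action, whereas the conventional Kasparov presentation of a $*$-homomorphism places the twist on the left, and the transport of the twist via the module isomorphism $b^\op\mapsto(\alpha^\op)^{-1}(b^\op)$ inverts its direction. Careful bookkeeping of this inversion through the external products with $[A^\op]$ and $[A]$ will be essential for setting up the cycle comparisons underlying both parts correctly.
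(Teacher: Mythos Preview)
Your proposal is correct and follows essentially the same approach as the paper. The only cosmetic differences are that in part~(1) you first reduce the statement to the equivalent invariance $(\alpha\ox\alpha^\op)_*\beta=\beta$ and then use $V$ as a module isomorphism on $X$ itself, whereas the paper builds the induced map $\widetilde V$ directly on the balanced tensor products $X\ox_{A\ox A^\op}(E\ox A^\op)\to X\ox_{A\ox A^\op}(A\ox E^\op)$; and in part~(2) you pass directly from the cycle for $[E]\ox_A\mu$ to that for $[E^\op]\ox_{A^\op}\mu$ via conjugation by $W_\alpha$, whereas the paper instead shows that each of these is separately equal to $\mu$ (which yields the same conclusion with a tiny bit of extra information).
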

\begin{proof}
(\ref{it:autobeta}) The map $a^\op\mapsto \alpha^{-1}(a)^\op$ from ${}_{\rm
Id}A^{\op}_{\alpha^\op}$ to ${}_{\alpha^{\op-1}}A^\op_{A^\op}$ is an isomorphism of
invertible bimodules, and so determines a unitary isomorphism of Kasparov modules
$$
(A^\op,{}_{\rm Id}A^{\op}_{\alpha,A^\op},0)\to (A^\op,{}_{(\alpha^{\op})^{-1}}A^\op_{A^\op},0),\qquad
a^\op\mapsto \alpha^{-1}(a)^\op.
$$
Hence
$$
[E]\ox[A^\op]=(A\ox A^\op, {}_{\alpha\ox {\rm Id}}A\ox A^\op,0),\quad\text{ and }\quad
[A]\ox[E^\op]=(A\ox A^\op, {}_{{\rm Id}\ox \alpha^{\op-1}}A\ox A^\op,0),
$$
where the right actions are by multiplication and the inner products are the standard
ones $(a \mid b) = a^*b$. We claim that there is a linear map $\widetilde{V}:\,X\ox_{A\ox
A^\op}(E\ox A^\op)\to X\ox_{A\ox A^\op}(A\ox E^\op)$ such that
\[
\widetilde{V}(x\ox e\ox b^\op)=V(x)\ox e\ox b^\op\quad\text{ for all $x \in X$, $e \in E$ and $b \in A$.}
\]
To see this, fix $y_1, y_2 \in X$, $e_1, e_2 \in E$ and $a_1, a_2 \in A$ and calculate:
\begin{align*}
\big(V(y_1)\ox e_1\ox a_1^\op \mid {}& V(y_2)\ox e_2\ox a_2^\op\big)_{A\ox A^\op} \\
    &=(e_1\ox a_1^\op\mid (V(y_1)\mid V(y_2))_{A\ox A^\op} \cdot e_2\ox a_2^\op)_{A\ox A^\op}\nonumber \\
    &=(e_1\ox a_1^\op\mid (1\ox (\alpha^\op)^{-1})(V(y_1)\mid V(y_2))_{A\ox A^\op} e_2\ox a_2^\op)_{A\ox A^\op}\nonumber\\
    &=(e_1\ox a_1^\op\mid (\alpha\ox 1)(y_1\mid y_2)_{A\ox A^\op} e_2\ox a_2^\op)_{A\ox A^\op}\nonumber\\
    &=(y_1\ox e_1\ox a_1^\op\mid y_2\ox e_2\ox a_2^\op)_{A\ox A^\op}.\label{eq:isom}
\end{align*}
Consequently, given $y_i \in X$, $e_i \in E$ and $a_i \in A$, we have
\begin{align*}
\Big\|\sum_i V(y_i) \ox e_i \ox a_i\Big\|^2
    &= \sum_{i,j} \big(V(y_i) \ox e_i \ox a_i \mid  V(y_j) \ox e_j \ox a_j\big)_{A\ox A^\op} \\
    &= \sum_{i,j} \big(y_i \ox e_i \ox a_i \mid  y_j \ox e_j \ox a_j\big)_{A\ox A^\op} \\
    &= \Big\|\sum_i y_i \ox e_i \ox a_i\Big\|^2.
\end{align*}
Thus there is an isometric linear operator on $\linspan\{x\ox e\ox b^\op : x \in X, e \in
E, b \in A\}$ carrying each $x\ox e\ox b^\op$ to $V(x)\ox e\ox b^\op$, and this extends
to an isometric linear operator $\widetilde{V}$ on $X\ox_{A\ox A^\op}(E\ox A^\op)$. 

Since $V^{-1}(VT-TV)$ is a compact adjointable endomorphism, it is now straightforward to
check that $\widetilde{V}^{-1}(\widetilde{V}(T\ox 1)-(T\ox 1)\widetilde{V})$ is also.
Hence $\widetilde{V}(T\ox 1)\widetilde{V}^{-1}$ is homotopic to $T\ox 1$ via the straight
line path. Thus $(\C,X\ox E\ox A^\op, T\ox 1)$ is unitarily equivalent modulo compact
perturbation to $(\C,X\ox A\ox E^\op, T\ox 1)$, completing the proof of the first
statement.

(\ref{it:automu}) Let $\E\subset E$ and $\E^\op\subset E^\op$ be the submodules $\A$ and
$\A^\op$. Then,
\[
[E] \ox_A \mu
    =  [(\E\ox\A^\op)\ox_{\A\ox\A^\op}(\A\ox\A^\op,{}_{\phi\ox\psi}\H,\D)] = [\A\ox\A^\op,{}_{\phi\circ\alpha\ox\psi}\H,\D],
\]
and
$$
[E^\op] \ox_{A^\op} \mu
    = [(\A\ox\E^\op)\ox_{\A\ox\A^\op}(\A\ox\A^\op,{}_{\phi\ox\psi}\H,\D)]
    = [\A\ox\A^\op,{}_{\phi\ox\psi\circ\alpha^{\op-1}}\H,\D].
$$
Using that $\alpha$ is implemented by $W_\alpha$ and that $[W_\alpha, \D]$ is bounded, we
see that
\[
[(\A\ox\A^\op,{}_{\phi\circ\alpha\ox\psi}\H,\D)]
    = [(\A\ox\A^\op,{}_{\phi\ox\psi}\H,W_\alpha^*\D W_\alpha)]
    = [(\A\ox\A^\op,{}_{\phi\ox\psi}\H,\D+W_\alpha^*[\D, W_\alpha])].
\]
Hence
\[
[E] \ox_A \mu
    = [(\A\ox\A^\op,{}_{\phi\ox\psi}\H,\D)].
\]
A similar computation using that $\alpha^\op$ is also implemented by $W_\alpha$ shows
that
\[
[E^\op] \ox_{A^\op} \mu
    = \big[\A\ox\A^\op,{}_{\phi\ox\psi}\H,\D+W_\alpha[\D, W^*_\alpha]\big]
    = [(\A\ox\A^\op,{}_{\phi\ox\psi}\H,\D)]
    = [E] \ox_A \mu.\qedhere
\]
\end{proof}

\begin{rmk}
If the operator $T$ in the representative $(\C,X,T)$ of $\beta$ in
Lemma~\ref{lem:CP-conditions}(\ref{it:autobeta}) is unbounded and $V^{-1}TV-T$ bounded,
we can replace compact perturbation by bounded perturbation, as we did in the
$K$-homology case. And vice versa.
\end{rmk}

We will show next that the criterion appearing in
Lemma~\ref{lem:CP-conditions}\eqref{it:autobeta} holds for modules of the form ${_\alpha
C(M)}$ where $M$ is a compact Riemannian spin$^c$-manifold and $\alpha$ is 
an automorphism induced by a
spin$^c$-structure-preserving isometry on $M$.

\subsubsection{Spin$^c$ manifolds}
\label{subsub:mflds}

The classical examples of $C^*$-algebras satisfying Poincar\'e self-duality are algebras
of the form $C_0(M)$ where $(M,g)$ is a complete Riemannian spin$^c$ manifold of dimension $d$. Given such a
manifold $(M,g)$ with a fixed spin$^c$ structure, there is a spectral triple
\begin{equation*}
(C^\infty_0(M)\ox C^\infty_0(M),L^2(S,g),\D),
\label{eq:spin-Dirac}
\end{equation*}
where $S$ is the spinor bundle of the 
spin$^c$ structure and $\D$ the Dirac operator.
This spectral triple represents the Dirac class
$\mu \in KK^d(C_0(M)\ox C_0(M),\C)$ 
in a Poincar\'e self-duality for $C_0(M)$. When $M$ is non-compact,
the product with the Dirac class gives an isomorphism
$K_*(C_0(M))\stackrel{\cong}{\to} K^*_c(C_0(M))$, where
$K_c^*$ is compactly supported $K$-homology,
\cite{KasparovEqvar,RennieSmooth}.

Likewise the dual Bott class, described below, is well-defined for
complete spin$^c$ manifolds, giving a class in $KK^d(\C,C_0(M)\ox C_0(M))$. By \cite[Theorem
4.9]{KasparovEqvar}, together with the Morita equivalence \cite{Plymen} between $C_0(M)$
and $\Cliff_0(M)$, the Bott and Dirac classes provide a Poincar\'e duality pair for $C_0(M)$, provided one uses
compactly supported $K$-homology when $M$ is non-compact,
\cite[Corollary 31]{RennieSmooth}. 
For the non-spin$^c$ case, see Appendix \ref{sec:Magnus-made-me-do-it}.

For the $K$-theory fundamental class, we recall the key elements of Kasparov's Bott class from
\cite{KasparovEqvar}. Let $U\subset M\times M$ be a neighbourhood of the diagonal such
that for each $(x,y)\in U$ there is a unique geodesic from $x$ to $y$. Let
$\overrightarrow{xy}$ denote the tangent vector to this geodesic at $x$.

Let $p_2:U\to M$ be the projection on the second factor. Set $X=\Gamma_0(p_2^*S)$, a
(non-full) right $C^*$-module over 
$C_0(M)\otimes C_0(M)$.  There is a choice of numerical function $\rho(x,y)$ of
the distance $d(x,y)$ such that the self-adjoint operator $T\in \End_{C_0(M)\ox C_0(M)}(X)$ defined by
$$
(T\sigma)(x,y)=\rho(x,y)\gamma(\overrightarrow{xy})\sigma(x,y),\qquad \sigma\in L^2(S)
$$
has the property that $T^2-1$ is a compact endomorphism of $X$. 
Then the Bott class
$\beta\in KK^d(\C,C_0(M)\ox C_0(M))$ is represented
by the Kasparov module $(\C,X, T)$.

We consider a module $E=\Gamma_0(M,Z)$ 
of continuous sections vanishing at infinity 
of a locally trivial vector bundle $Z$
over $M$ vanishing at infinity. 
To give $E$ the structure of a bi-Hilbertian 
$C_0(M)$--$C_0(M)$-bimodule we fix a diffeomorphism
$\phi$ of $M$ defining an automorphism 
$\alpha\in \Aut(C_0(M))$ via
$\alpha(f)(x)=f(\phi^{-1}(x))$. 
We define ${_{C_0(M)}}(\cdot \mid \cdot)$ by
$$
{_{C_0(M)}}(e \mid f)(x) =
{_\C}(e(\phi(x)) \mid f(\phi(x)))=\alpha^{-1}((f \mid e)_{C_0(M)})(x)
$$
and we define left and right actions 
by $(a \cdot e\cdot b)(x) = a(\phi^{-1}(x))
e(x)b(x)=\alpha(a)(x)e(x)b(x)$. These definitions yield
$[E]\in KK(C_0(M),C_0(M))$.

\begin{prop}
\label{prop:spin-cee-hyp} Let $(M,g)$ be a 
complete Riemannian spin$^c$ manifold, $\mu$ and $\beta$ the fundamental classes described above, $Z\to
M$ a vector bundle  and $\phi:M\to M$ 
a diffeomorphism with dual
automorphism $\alpha:C_0(M)\to C_0(M)$. If $\phi$ is spin$^c$-structure preserving, then
$[E]\ox_{C_0(M)}\mu=[E^\op]\ox_{C_0(M)}\mu$. 
If $\phi$ is also an isometry then
$\beta\ox_{C_0(M)}[E]=\beta\ox_{C_0(M)}[E^\op]$.
\end{prop}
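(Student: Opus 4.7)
The plan is to verify both statements by invoking Lemma~\ref{lem:CP-conditions}, using the spin$^c$-lift of $\phi$ to produce the required unitary $W_\alpha$ for part 1 and the required linear map $V$ for part 2. Because $\phi$ is spin$^c$-structure preserving, it lifts to a bundle isomorphism $\tilde\phi\colon S\to S$ covering $\phi$. Multiplying by an appropriate Jacobian one obtains a unitary $W_\alpha\in \B(L^2(S,g))$ given on smooth sections by $(W_\alpha\sigma)(x)=J(x)^{1/2}\tilde\phi\bigl(\sigma(\phi^{-1}(x))\bigr)$. Since both the left representation $\phi\colon C_0(M)\to \B(L^2(S,g))$ and the right representation $\psi\colon C_0(M)^\op\to\B(L^2(S,g))$ of the Dirac spectral triple are pointwise multiplication, a direct computation shows $W_\alpha^*\phi(a)W_\alpha=\phi(\alpha(a))$ and $W_\alpha^*\psi(a^\op)W_\alpha=\psi(\alpha^\op(a^\op))$.

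For the first statement I would apply Lemma~\ref{lem:CP-conditions}(\ref{it:automu}) with $W_\alpha$ as above. The conjugate $W_\alpha^*\D W_\alpha$ is the Dirac operator for the Riemannian metric $\phi^*g$ and the pulled-back spin$^c$-structure, which, by the spin$^c$-preserving hypothesis, is canonically isomorphic to the original one. In the purely spin$^c$-preserving case, $\D$ and $W_\alpha^*\D W_\alpha$ represent the same $K$-homology class via the straight-line operator homotopy associated to the family of metrics $g_t:=(1-t)g+t\phi^*g$; when $\phi$ is an isometry this family is constant and the commutator $[\D,W_\alpha]$ vanishes identically, so Lemma~\ref{lem:CP-conditions}(\ref{it:automu}) applies directly. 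Either way, the resulting equality of Kasparov classes gives $[E]\ox_{C_0(M)}\mu=[E^\op]\ox_{C_0(M)}\mu$.

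For the second statement I would apply Lemma~\ref{lem:CP-conditions}(\ref{it:autobeta}) to Kasparov's representative $(\C,X,T)$ of $\beta$ described in Section~\ref{subsub:mflds}, where $X=\Gamma_0(p_2^*S)$ and $(T\sigma)(x,y)=\rho(x,y)\gamma(\overrightarrow{xy})\sigma(x,y)$. Define an invertible $\C$-linear map $V\colon X\to X$ by $(V\sigma)(x,y)=\tilde\phi\bigl(\sigma(\phi^{-1}(x),\phi^{-1}(y))\bigr)$ together with the Jacobian factors needed for adjointability for the right inner product. A routine calculation, using that $\phi$ acts diagonally on $M\times M$ and that $\tilde\phi$ is the spin$^c$ lift, shows that $V$ intertwines the $(C_0(M)\ox C_0(M))$-valued right inner product and right action with the automorphism $\alpha\ox\alpha^\op$. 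Because $\phi$ is an isometry, the geodesic distance $d(x,y)$ is $\phi$-invariant and hence so is $\rho$; the tangent vector $\overrightarrow{xy}$ is carried to $\overrightarrow{\phi^{-1}(x)\phi^{-1}(y)}$ by $d\phi^{-1}$, which is compatible with the Clifford action via the identity $\tilde\phi\,\gamma(d\phi^{-1}(v))=\gamma(v)\tilde\phi$ satisfied by spin$^c$ lifts of isometries. Hence $VT=TV$, so $V^{-1}(VT-TV)=0$ is trivially a compact adjointable endomorphism, and Lemma~\ref{lem:CP-conditions}(\ref{it:autobeta}) delivers $\beta\ox_{C_0(M)}[E]=\beta\ox_{C_0(M)}[E^\op]$.

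The main obstacle is the first statement: in the spin$^c$-preserving but non-isometric case, the commutator $[\D,W_\alpha]$ is genuinely unbounded (it is a first-order operator whose principal symbol measures the difference between $g$ and $\phi^*g$), so Lemma~\ref{lem:CP-conditions}(\ref{it:automu}) does not apply verbatim. The rigorous justification therefore requires the operator-homotopy argument sketched above, i.e., constructing a smooth family of Dirac-type spectral triples for $g_t$ and checking that it gives a valid unbounded Kasparov homotopy between the cycles representing $[E]\ox\mu$ and $[E^\op]\ox\mu$. The calculations for part 2 are routine once the Jacobians are chosen correctly, so the effort really concentrates on handling part 1 carefully.
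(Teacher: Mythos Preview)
Your treatment of the second statement (the $\beta$ equality) matches the paper's: both construct the linear map $V$ on $X=\Gamma_0(p_2^*S)$ from the spin$^c$ lift of $\phi$ acting diagonally on $U\subset M\times M$, use the isometry hypothesis to make $\rho$ and the geodesic tangent vectors $\overrightarrow{xy}$ invariant, deduce $VT=TV$, and invoke Lemma~\ref{lem:CP-conditions}(\ref{it:autobeta}).

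For the first statement the paper takes a different route from yours. It does not run a metric homotopy; instead it computes
\[
[\D,VfV^{-1}]=\gamma(df\cdot d\phi^{-1})+[[\D,V]V^{-1},f\circ\phi^{-1}],
\]
observes that $[\D,V]V^{-1}$ is at most a first-order differential operator (since the spin$^c$ lift $V$ is smooth), so that its commutator with the multiplication operator $f\circ\phi^{-1}$ is zero-order and bounded, and then asserts that the hypotheses of Lemma~\ref{lem:CP-conditions}(\ref{it:automu}) are satisfied. Your suspicion is well placed: this calculation establishes that the $\alpha$-twisted representation still has bounded commutators with $\D$ (so both Kasparov products are represented by honest spectral triples), but it does \emph{not} verify the literal hypothesis that $[\D,W_\alpha]$ is bounded---indeed, the paper's own observation that $[\D,V]V^{-1}$ is first-order is precisely the statement that it is unbounded when $\phi$ is not an isometry. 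So the paper's invocation of the Lemma is at best elliptic. Your proposed fix via the family $g_t=(1-t)g+t\phi^*g$ of metrics is one legitimate way to close the gap; an alternative in the same spirit as the paper is to note that what the proof of Lemma~\ref{lem:CP-conditions}(\ref{it:automu}) actually needs is that the cycles $(\A\ox\A^\op,{}_{\phi\ox\psi}\H,\D)$ and $(\A\ox\A^\op,{}_{\phi\ox\psi}\H,W_\alpha\D W_\alpha^*)$ define the same $KK$-class, which is the familiar metric-independence of the Dirac $K$-homology class within a fixed spin$^c$ structure. Either way, some homotopy input beyond the bounded-commutator criterion is required in the non-isometric case, and you have correctly located where the work lies.
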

\begin{proof}
If $\phi$ is spin$^c$-structure preserving, there exists $V:\Gamma_0(S)\to
\Gamma_0(S)$ such that for $f\in C_0(M)$ acting by multiplication we have
$VfV^{-1}\sigma(x)=f(\phi^{-1}(x))\sigma(x)$. For $v\in T^*M$, $\gamma(v\cdot
d\phi^{-1})=V\cdot\gamma(v)\cdot V^{-1}$, where $\gamma$ denotes 
the Clifford action of forms on spinors.
(The scalar ambiguity in this characterisation of $V$ is resolved precisely
by the choice of spin$^c$ structure.)

We can the compute the commutator
$$
[\D,VfV^{-1}]=V[\D,f]V^{-1}+[[\D,V]V^{-1},VfV^{-1}]
=\gamma(df\cdot d\phi^{-1}))+[[\D,V]V^{-1},f\circ\phi^{-1}].
$$
Since $V$ is a smooth map, $[\D,V]V^{-1}$ is at most a first order differential operator,
so $[\D,VfV^{-1}]$ is bounded, and the conditions of Lemma~\ref{lem:CP-conditions}(2)
are satisfied and so $[E]\ox_{C_0(M)}\mu=[E^\op]\ox_{C_0(M)}\mu$.

We now consider the $K$-theory fundamental class.
Given a spin$^c$-structure-preserving diffeomeorphism $\phi$ of $M$, we obtain a lift
$V:X\to X$ of $\phi$ satisfying $(VfV^{-1}\sigma)(x,y)=f\circ\phi^{-1}(x)\sigma(x,y)$ and
$V(\sigma(f\otimes g))(x,y)=(V\sigma)(x,y)f\circ\phi^{-1}(x)g\circ\phi^{-1}(y)$.

If $\phi$ is an isometry, so that the distance and hence also
$\rho$ is invariant, then we can compute the commutator
of $T$ and $V$ as follows: for each section $\sigma$, we have
\begin{align*}
(VT\sigma)(x,y)&=\rho(\phi^{-1}(x),\phi^{-1}(y))\gamma(\overrightarrow{xy}\cdot d\phi^{-1})(V\sigma)(x,y)\\
    &=\rho(x,y)\gamma(\overrightarrow{\phi^{-1}(x)\phi^{-1}(y)})V\sigma(x,y)
     =(TV\sigma)(x,y).
\end{align*}
So  the conditions of Lemma~\ref{lem:CP-conditions}(1) are satisfied
and so $\beta\ox_{C_0(M)}[E]=\beta\ox_{C_0(M)}[E^\op]$.
\end{proof}

\section{The $K$-theory fundamental class}
\label{subsec:K-theory-fun}

In this section we start with a $C^*$-algebra $A$ satisfying Poincar\'e self-duality with
fundamental classes $\beta,\,\mu$ of parity $d$.

We identify hypotheses on a bi-Hilbertian $A$-bimodule $E$ that allow us to apply
Theorem~\ref{thm:(OEop) sufficient} to construct fundamental classes $\delta \in
KK^{d+1}(\C, \CPa_E \ox \CPa_{\ol{E}^\op})$ and $\overline{\delta} \in KK^{d+1}(\C,
\CPa_E \ox \CPa_E^\op)$. In what follows, if $E_b$ is a full Hilbert module
over a unitisation $A_b$ of $A$, then the \emph{restriction} of
$E_b$ to $A$ is defined as $E := E_b \ox_{A_b} A$. This
condition arises as the most useful notion of 
(sections of) non-commutative vector bundles for non-unital algebras, \cite{RSims}.

To obtain the class $\delta$, we assume that:
\begin{enumerate}
\item $E$ is a bi-Hilbertian $A$-bimodule which is the restriction of a module
    $E_b$ over a unitisation $A_b$ which is finitely
    generated as a left and  right module over $A_b$; and
\item $\beta\ox_A[E] = \beta\ox_{A^\op}[E^\op]$.
\end{enumerate}

To obtain the class $\overline{\delta}$, we assume that:
\begin{enumerate}
\item $E$ is an $A$-$A$ correspondence, and is the restriction of a module
    $E_b$ over a unitisation $A_b$ that is finitely
    generated as a  right module over $A_b$ with injective left action of $A_b$; and
\item $\beta\ox_A[E] = \beta\ox_{A^\op}[\ol{E}^\op]$.
\end{enumerate}


\subsection{The $K$-theory fundamental class for $\CPa_{E^\op}$}
\label{subsec:ee-op}

To construct the $K$-theory fundamental class, 
we first need to recall the mapping cone
exact sequence in our setting, and some constructions 
from \cite{AR}.

The inclusion 
$\iota_{A, \CPa_E}:A\hookrightarrow \CPa_E$ 
gives rise to a mapping cone
algebra
$$
M(A,\CPa_E)=\{f \in C_0([0,\infty), \CPa_E) : f(0)\in \iota_{A, \CPa_E} (A)\}.
$$
Write $\S\CPa_E$ for the suspension $C_0((0,\infty))\ox\CPa_E$, and
$j:\,\S\CPa_E\to M(A,O_E)$ for the inclusion.
The evaluation map $\ev:M(A,\CPa_E)\to A$, 
given by $\ev(f)=f(0)$, induces a short exact sequence
\begin{equation}
0\to \S \CPa_E\stackrel{j}{\to}M(A,\CPa_E)\stackrel{\ev}{\to} A\to 0,
\label{eq:map-cone}
\end{equation}
where $j$ is the inclusion of the suspension $\S \CPa_E$ into the mapping cone. Thus we
obtain a $K$-theory exact sequence (see \cite{AR} and \cite[Section~3]{CPR1})
\begin{equation*}
\cdots\to K_1(A)\stackrel{-\iota_{A,\CPa_E*}}{\longrightarrow} K_1(\CPa_E)
\stackrel{j_*}{\longrightarrow} K_0(M(A,\CPa_E))
\stackrel{\ev_*}{\longrightarrow} K_0(A)
\stackrel{-\iota_{A,\CPa_E*}}{\longrightarrow} K_0(\CPa_E)\to\cdots.
\label{eq:m-cone}
\end{equation*}
Elements of $K_0(M(A,\CPa_E))$ can be described as homotopy classes of partial isometries
$v$ over $\widetilde{\CPa}_E$ whose range and source projections $v^*v$ and $vv^*$ are
projections over $\widetilde{A}$, \cite{Putnam}. In this language,
$\ev_*([v])=[v^*v]-[vv^*]$. 


If $E=E_b\ox_{A_b}A$ is the restriction of a module over the minimal
unitisation $A_b$ of $A$, then \cite[Section~6.2]{AR} describes an explicit
Kasparov module representing a class $[W] \in KK(A, M(A, \CPa_E))$. To describe this
representative and its key properties, we fix a frame $(x_j)_{j=1}^k\subset E_b$ (or just $E$ when $A$ is unital), and define $w \in
M_k(\CPa_{E_b})$ by
\begin{equation}
w=
\begin{pmatrix}S_{x_1}^* & 0 & \cdots & 0 & \\ S_{x_2}^* &  0 & \cdots & 0\\
\vdots &  & \ddots & \vdots\\
S_{x_k}^* &0& \cdots & 0\end{pmatrix}.
\label{eq:dubbya-from-S}
\end{equation}
We have $w^*w={\rm Id}_{\mathcal{O}_{\tilde{E}}} \oplus 0_{k-1}
=\iota_{A^\sim,\mathcal{O}_{\tilde{E}}}({\rm Id}_{A^\sim}) \oplus 0_{k-1}$ and 
$$
ww^*=\begin{pmatrix} (x_1|x_1)_{A_b} &(x_1|x_2)_{A_b} & \cdots &(x_1|x_k)_{A_b}\\
(x_2|x_1)_{A_b}  & (x_2|x_2)_{A_b} & \cdots & (x_2|x_k)_{A_b}\\
\vdots & &\ddots  &\vdots\\
(x_k|x_1)_{A_b} & (x_k|x_2)_{A_b} & \cdots & (x_k|x_k)_{A_b}
\end{pmatrix}= (x_i|x_j)_{i,j \geq 1} = : q\in M_{k}(A_b).
$$

%
Then $E_b\cong qA_b^k$, with isomorphism given by $e\mapsto ((x_1|e)_{A_b},\dots,(x_k|e)_{A_b}))^T$.
We can explicitly realise $[w]$ as a difference of classes of projections 
over the minimal unitisation $M(A_b,\CPa_{E_b})^\sim$ of the mapping cone $M(A_b,\CPa_{E_b})$.\footnote{
Since $e_w(\infty)=1_k$, we obtain a class 
in the $KK$ group for $M(A,\CPa_{E})$. See \cite[Corollary 1, Section 7]{KasparovTech}} 
Using \cite{Putnam}, we have an identification of classes $[w]=[e_w]-[1_k]$, where
\begin{equation}
e_w(t)=
\begin{pmatrix} 1_k-\frac{1}{1+t^2}q & \frac{-it}{1+t^2}w\\ 
\frac{it}{1+t^2}w^* & \frac{1}{1+t^2}{\rm Id}_{O_{E_b}}\end{pmatrix}
=
\begin{pmatrix} \frac{1}{1+t^2}(1_k-q)+\frac{t^2}{1+t^2}1_k & \frac{-it}{1+t^2}w\\ 
\frac{it}{1+t^2}w^* & \frac{1}{1+t^2}{\rm Id}_{O_{E_b}}\end{pmatrix}.
\label{eq:ee-dubbya}
\end{equation}

%
%

Then
\[
\varphi(a) \defeq \big((x_i \mid \phi(a)x_j)_{A_b}\big)_{i,j}
\]
defines a left action of $A_b$ on $q(A_b)^k$.
Since $w(0_{k-1}\oplus\phi(a))w^*=\varphi(a)$ and $w^*\varphi(a)w=0_{k-1}\oplus\phi(a)$,
it is straightforward to check that for all $t\in [0,\infty)$
$$
e_w(t)\begin{pmatrix} ((x_i \mid \phi(a)x_j)_{A_b})_{i,j} & 0\\ 0 & \phi(a)\end{pmatrix}
=\begin{pmatrix} ((x_i \mid \phi(a)x_j)_{A_b})_{i,j} & 0\\ 0 & \phi(a)\end{pmatrix}e_w(t)
$$
as operators on 
$\CPa_{\widetilde{E}}^{2k}$ (or $(A_b)^{2k}$ for $t=0$). 
The last ingredient is the unitisation of $M(A,\CPa_E)$
consisting of functions $f:[0,\infty)\to \CPa_E+A_b$ which have a limit at infinity lying in $A_b$,
so
\begin{equation}
M(A,\CPa_E)_b:=\{f:[0,\infty)\to \CPa_E+A_b\,:\,f\ \mbox{continuous},\ f(0)\in A_b,\ \lim_{t\to\infty}f(t)\in A_b\}.
\label{eq:unitise}
\end{equation}
Then,
using the injective and nondegenerate left action of $A_b$ and 
\cite[Lemma 6.1, Lemma 6.2]{AR},
gives a Kasparov class
$$
[W]=\left[\left(A,
\begin{pmatrix} e_w(M(A,\CPa_{E_{A}})_b)^{2k}\\ (M(A,\CPa_{E_{A}})_b)^k\end{pmatrix}, 0\right)\right]
\in KK(A,M(A,\CPa_{E_{A}})).
$$

An important ingredient in the following arguments is a class $\widehat{\extcls}\in
KK(M(A,\CPa_E),A)$ which is  $KK$-inverse to the class $W$, when $A$ belongs to the
bootstrap class. To describe $\widehat{\extcls}$, start from the mapping cone exact
sequence \eqref{eq:map-cone} to obtain the  exact sequence
$$
\cdots\stackrel{{\rm ev}^*}{\to}KK^0(M(A,\CPa_E),A)\stackrel{j^*}{\to}KK^0(\S \CPa_E,A)\stackrel{\partial}{\to}KK^1(A,A)\to\cdots.
$$
In this exact sequence, the boundary map $\partial$ is given (up to sign and Bott
periodicity) by the inclusion $\iota_{A,\CPa}:A\hookrightarrow \CPa_E$,
\cite[Lemma~3.1]{CPR1}. Restricting the extension class $\extcls$ to $A\subset\CPa_E$
gives the zero class in $KK(A,A)$, because the class of the extension $\extcls$
implements the boundary map in the Pimsner exact sequence in $K$-theory. Thus the
boundary map $\partial$ in the mapping cone exact sequence applied to $\extcls$ gives
zero. This implies the existence of a class $\widehat{\extcls}\in KK^0(M(A,\CPa_E),A)$
such that $j^*\widehat{\extcls}=j\ox_M\widehat{\extcls}=\extcls$.

We now recall the key relation between $[W]$ and the $KK$-class $\widehat{\extcls}$
described in \cite{AR}.

\begin{lemma}(\cite[Lemma~6.1]{AR})
\label{lem:pairing} Let $\extcls\in KK^1(\CPa_E,A)=KK(\S \CPa_E,A)$ be the class of the
defining extension for $\CPa_E$ and let $[W]\in KK(A,M(A,\CPa_E))$ be as above. Let
$\widehat{\extcls} \in KK(M(A,\CPa_E),A)$ be a class such that $j^*\widehat{\extcls} =
\extcls$ as above. Then
$$
[W]\ox_{M}\widehat{\extcls}=-\Id_{KK(A,A)}.
$$
\end{lemma}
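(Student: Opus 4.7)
The plan is to compute $[W]\otimes_M \widehat{\extcls}$ by choosing an explicit Kasparov representative of $\widehat{\extcls}$ that lifts the Toeplitz representative of $\extcls$ across the mapping cone short exact sequence, and then performing the Kasparov product against the representative of $[W]$ given in~\eqref{eq:ee-dubbya}.

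First, I would construct the representative of $\widehat{\extcls}$. The Toeplitz representative of $\extcls$ uses the Fock module $\Fock_E$ and the class of the defining extension $0\to \End_A^0(\Fock_E)\to \Toep_E\to \CPa_E\to 0$ twisted by $[\Fock_E]$. Since $\iota_{A,\CPa}\otimes_{\CPa_E}\extcls=0$ in $KK(A,A)$ (because the restriction of the Toeplitz extension to $A$ is split by $\iota_{A,\Toep}$), this cycle extends over $M(A,\CPa_E)$: one lets $f\in M(A,\CPa_E)$ act on a family of Fock modules parameterised by $t\in[0,\infty)$, where at $t=0$ the action factors through $\iota_{A,\Toep}$ and for $t>0$ through $f(t)$ acting via the Toeplitz representation. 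Pulling back along $j$ recovers the Toeplitz cycle for $\extcls$.

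Next, I would compute the Kasparov product with the representative~\eqref{eq:ee-dubbya}. Over the mapping cone algebra, the interior tensor product collapses the parameter $t$, leaving a Kasparov $(A,A)$-module with two summands coming respectively from $e_w(M(A,\CPa_E)_b)^{2k}$ and $(M(A,\CPa_E)_b)^k$. Using the frame relations $w^*w=\Id_{\CPa_E}\oplus 0_{k-1}$, $ww^*=q$, and the Cuntz--Pimsner covariance $\sum_j S_{x_j}S_{x_j}^*=\iota_{A,\CPa_E}(1)$ available because $E_b$ is finitely generated, the $e_w$-summand simplifies; the two summands then balance each other modulo a single leftover trivial $A$-module yielding $-\Id_{KK(A,A)}$.

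The main obstacle is pinning down the sign. The $-1$ originates from the standard sign convention for the boundary map in the mapping cone six-term exact sequence (see \cite[Section~3]{CPR1} and \cite{Putnam}), which enters through the identification $KK^1(\CPa_E,A)\cong KK(\S\CPa_E,A)$. A useful consistency check, and indeed an alternative route, is to evaluate the composition $K_0(A)\to K_0(M(A,\CPa_E))\to K_0(A)$ induced by $[W]\otimes \widehat{\extcls}$ on explicit projection classes and compare with the known boundary map in Pimsner's sequence; this verifies the identity at the level of $K$-theory for a concrete generator and, combined with naturality, determines the sign unambiguously.
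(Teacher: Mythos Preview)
The paper does not supply its own proof of this lemma; it cites \cite[Lemma~6.1]{AR} and, in the proof of Theorem~\ref{thm:yeah}, indicates that the underlying mechanism is the index formula of \cite[Theorem~2.11]{CNNR}: the product $[W]\ox_M\widehat{\extcls}$ is identified with $-\operatorname{Index}(PwP)$, where $P$ is the Fock projection and $w$ is the partial isometry of~\eqref{eq:dubbya-from-S}, and this index is computed directly from $\ker(PwP)$ and $\ker(Pw^*P)$ on Fock space. Your proposal takes a different route---constructing an explicit cycle for $\widehat{\extcls}$ and computing the interior product by hand---and there are genuine gaps.

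First, your construction of a representative of $\widehat{\extcls}$ is incomplete. Saying that $f\in M(A,\CPa_E)$ ``acts on a family of Fock modules parameterised by $t$'' does not specify a Kasparov $M(A,\CPa_E)$--$A$-module: you have not said what the operator is, nor verified that commutators with it are compact over the whole mapping cone (not just over $A$ at $t=0$ and over $\S\CPa_E$). The Toeplitz extension picture gives a representative of $\extcls$ as a Busby invariant, not directly as a family over $[0,\infty)$; promoting it to a mapping-cone cycle requires real work (this is part of what \cite{AR} does). Second, the phrase ``the interior tensor product collapses the parameter $t$'' is not a computation; the module $e_w (M(A,\CPa_E)_b)^{2k}$ tensored over $M$ with your putative cycle does not obviously simplify, and you have not produced the operator on the product module or checked Connes--Skandalis conditions. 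Finally, the ``consistency check'' at the end cannot rescue the argument: evaluating a $KK(A,A)$-class on generators of $K_0(A)$ and invoking naturality does not determine the class, since distinct elements of $KK(A,A)$ can induce the same map on $K_0$. The index-theoretic route of \cite{CNNR,AR} avoids all of this by packaging the product as a Fredholm index from the outset.
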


Let $\mathbb{M}:=M(A\ox A^\op,\CPa_E\ox \CPa_{E^\op})$ be the mapping cone algebra for the
inclusion $A\ox A^\op\hookrightarrow \CPa_E\ox \CPa_{E^\op}$. Using the canonical
identification $\S(\CPa_E\ox\CPa_E^\op) \cong \S\CPa_E\ox\CPa_{E^\op}$, we have an exact
sequence
\begin{equation}
0\to \S\CPa_E\ox\CPa_{E^\op}\stackrel{\mathbf{j}}{\to}\mathbb{M}
\stackrel{\mathbf{ev}}{\to}A\ox A^\op\to 0.
\label{eq:map-map-cone}
\end{equation}
By reasoning similar to that used to define $W$, we obtain a Kasparov module
$$
\mathbb{W}=\Big(A\ox A^\op,
\begin{pmatrix} (e_{w\ox{\rm Id}_{A^\op}})\mathbb{M}_b^{2k}\\
\mathbb{M}_b^k\end{pmatrix}_{\mathbb{M}},0\Big),
$$
where the mapping $\mathbb{M}$ has been unitised by considering
functions $f:[0,\infty)\to \CPa_E\ox\CPa_{E^\op}+A_b\ox A_b^\op$ as in Equation \eqref{eq:unitise}.

If $E$ is the restriction of a bimodule over $A_b$ that is finitely generated
on both sides, then $E^\op$ is similarly a restriction of a bimodule over
$A_b^\op$. So we may apply the discussion above to the module $E^\op$ over
$A^\op$ to obtain a partial isometry $w^\op$ and a class $[W^\op]$, and
\cite[Lemma~6.1]{AR} gives $[W] \ox_{M^\op} \widehat{\extcls}^\op = -\Id_{KK(A^\op,
A^\op)}$. So  we obtain a class
\[
\mathbb{W}^\op = \Big(A \ox A^\op,
\begin{pmatrix} (e_{{\rm Id}_{A}\ox w^\op})\mathbb{M}_b^{2k}\\
\mathbb{M}_b^k\end{pmatrix}_{\mathbb{M}},0\Big).
\]

\begin{defn}
\label{defn:K-fun} Suppose that $E$ is the restriction of a finitely generated
$A_b$-bimodule, and let $\beta$ be a $K$-theory fundamental class in $KK^d(\C,
A \ox A^\op)$. We define
$$
\hat\delta_{E, \beta} \defeq \beta\ox_{A\ox A^\op}\mathbb{W} -\beta\ox_{A\ox A^\op}\mathbb{W}^\op \in KK^{d}(\C,\mathbb{M}).
$$
We will generally suppress the subscripts $E, \beta$ and just denote this class by
$\hat\delta$.
\end{defn}

\begin{lemma}
\label{lem:eval=0} Suppose that $E$ is the restriction of a finitely generated
$A_b$-bimodule. Given a $K$-theory fundamental class $\beta\in KK^d(\C,A\ox
A^\op)$ satisfying $\beta\ox_A[E]=\beta\ox_{A^\op}[E^\op]$, the class $\hat\delta$
satisfies
$$
\hat\delta\ox_{\mathbb{M}}\ev=0.
$$
There exists a class ${\delta}\in K_d(\S \CPa_E\ox \CPa_{E^\op})$ such that
$\hat\delta={\delta}\ox_{\S \CPa_E\ox \CPa_{E^\op}}\mathbf{j}$ where $\mathbf{j}\in KK(\S
\CPa_E\ox \CPa_{E^\op},\mathbb{M})$ is the class of the inclusion.
\end{lemma}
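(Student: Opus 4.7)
The plan is to compute $\hat\delta \ox_{\mathbb{M}} \ev$ directly and show it vanishes, so that the $K$-theory exact sequence
\[
K_d(\S\CPa_E \ox \CPa_{E^\op}) \stackrel{\mathbf{j}_*}{\longrightarrow} K_d(\mathbb{M}) \stackrel{\ev_*}{\longrightarrow} K_d(A\ox A^\op)
\]
arising from~\eqref{eq:map-map-cone} immediately produces the required class $\delta$ in $K_d(\S\CPa_E \ox \CPa_{E^\op})$ satisfying $\delta\ox_{\S\CPa_E\ox\CPa_{E^\op}}\mathbf{j} = \hat\delta$.

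To identify the Kasparov products $\mathbb{W} \ox_{\mathbb{M}} \ev$ and $\mathbb{W}^\op \ox_{\mathbb{M}} \ev$ as elements of $KK(A\ox A^\op, A\ox A^\op)$, I evaluate the projection-valued Kasparov modules at $t=0$. At $t=0$ one has $e_{w\ox \Id_{A^\op}}(0) = \operatorname{diag}(1_k - q\ox 1,\,1_k)$, and the cancellation between the free summands of the even and odd graded pieces leaves the reduced bimodule $(1_k - q\ox 1)(A_b\ox A_b^\op)^k$. After identifying $q(A_b\ox A_b^\op)^k$ with $E\ox A^\op$ via the right-$A_b$-frame of $E$, the composition of this class with $\beta$ equals $\beta - \beta\ox_A [E]$; the mirror computation for $\mathbb{W}^\op$ uses the right-$A_b^\op$-frame of $E^\op$ and yields $\beta - \beta\ox_{A^\op}[E^\op]$. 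For the free-part cancellation it is convenient to choose frames of $E$ and of $E^\op$ of equal cardinality, which one may always arrange by padding with zeros.

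Subtracting the two expressions gives
\[
\hat\delta \ox_{\mathbb{M}} \ev = \beta\ox_{A^\op}[E^\op] - \beta\ox_A[E],
\]
which vanishes by the standing hypothesis on $\beta$, $[E]$ and $[E^\op]$. Exactness of the displayed $K$-theory sequence then places $\hat\delta$ in the image of $\mathbf{j}_*$, producing the desired $\delta$. The main obstacle is the first step, which requires careful bookkeeping of the left action of $A\ox A^\op$ (given by $\operatorname{diag}(\varphi,\phi)$ on the $2k$-component even part and by $\phi$ on the $k$-component odd part), combined with Cuntz--Pimsner covariance and the frame identities $ww^*=q$ and $w^*w = \iota(\Id_A)\oplus 0_{k-1}$ to justify the claimed $KK$-representative at the evaluated endpoint.
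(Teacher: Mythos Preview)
Your proposal is correct and follows essentially the same route as the paper: compute $\hat\delta\ox_{\mathbb{M}}\ev$ directly to obtain $\beta\ox_{A^\op}[E^\op]-\beta\ox_A[E]=0$, then invoke exactness of the $K$-theory sequence for~\eqref{eq:map-map-cone} to produce $\delta$. The paper is terser, simply citing the identification $\mathbb{W}\ox_{\mathbb{M}}\ev=([A]-[E])\ox[A^\op]$ (and its $\mathbb{W}^\op$ analogue) from the earlier description $\ev_*([v])=[v^*v]-[vv^*]$, whereas you unpack the evaluation of the explicit projection $e_{w\ox\Id}$ at $t=0$; the content is the same.
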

\begin{proof}
The second statement will follow from the first by exactness of the $K$-theory exact sequence.
So we compute
\begin{align*}
\hat\delta\ox_{\mathbb{M}}\ev
&=\beta\ox_{A\ox A^\op}(([A]-[E])\ox[A^\op])-\beta\ox_{A\ox A^\op}([A]\ox([A^\op]-[E^\op]))\\
&=\beta\ox_{A^\op}[E^\op]-\beta\ox_A[E]
=0
\end{align*}
by assumption on the class $\beta$.
\end{proof}

The preceding lemma provides us with the tools we need to check that the product of the
class $\delta\in KK^d(\C,\S \CPa_E\ox \CPa_{E^\op}) = KK^{d+1}(\C,\CPa_E\ox
\CPa_{E^\op})$ with the extension class satisfies the condition appearing in
Theorem~\ref{thm:(OE)op sufficient}(2).

To do this we consider the mapping cone exact sequence~\eqref{eq:map-map-cone} and apply
the same reasoning that we did for the `one-variable' mapping cone sequence
\eqref{eq:map-cone}. This shows that restricting the class $\extcls \ox
\Id_{KK(\CPa_{E^\op}, \CPa_{E^\op})}$ to $A\ox A^\op$ gives the zero class. Hence there
is  a lift  of $\extcls\ox{\rm Id}_{KK(\CPa_{E^\op}, \CPa_{E^\op})}$ in
$KK(\mathbb{M},A\ox \CPa_{E^\op})$. We claim that we can choose a lift
$\widehat{\widehat{\extcls}}$ such that
\begin{equation}\label{eq:intertwines js}
\mathbf{j}\ox_{\mathbb{M}}\widehat{\widehat{\extcls}} = j\ox_M\widehat{\extcls}\ox{\rm Id}_{KK(\CPa_{E^\op},\CPa_{E^\op})}.
\end{equation}
To see this, pick any representative $(M(A,\CPa_E),Y_A,S)$ of the class
$\widehat{\extcls}$ such that the action of $M(A,\CPa_E)$ on $Y_A$ is non-degenerate. We
compute the right hand side of~\eqref{eq:intertwines js} to find
\begin{align*}
j\ox_M\widehat{\extcls}\ox{\rm Id}_{KK(\CPa_{E^\op},\CPa_{E^\op})}
&=(\S \CPa_E,M_M,0)\ox_M(M,Y_A,S)\ox(\CPa_{E^\op},(\CPa_{E^\op})_{\CPa_{E^\op}},0)\\
&=(\S \CPa_E\ox \CPa_{E^\op},Y_A\ox (\CPa_{E^\op})_{\CPa_{E^\op}},S\ox 1)\\
&=(\S \CPa_E\ox \CPa_{E^\op}, \mathbb{M}_{\mathbb{M}},0)\ox_{\mathbb{M}}
(\mathbb{M},Y_A\ox (\CPa_{E^\op})_{\CPa_{E^\op}},S\ox 1).
\end{align*}
Let $\widehat{\widehat{\extcls}}$ be the class of the Kasparov module $(\mathbb{M},Y_A\ox
(\CPa_{E^\op})_{\CPa_{E^\op}},S\ox 1)$. Then
$$
j\ox_M\widehat{\extcls}\ox{\rm Id}_{KK(\CPa_{E^\op},\CPa_{E^\op})}
=\mathbf{j}\ox_{\mathbb{M}}\widehat{\widehat{\extcls}},
$$
and
$$
(\S \CPa_E\ox \CPa_{E^\op}, \mathbb{M}_{\mathbb{M}},0)\ox_{\mathbb{M}}
(\mathbb{M},Y_A\ox (\CPa_{E^\op})_{\CPa_{E^\op}},S\ox 1)
=(\S\CPa_E\ox\CPa_{E^\op}, Y_A\ox(\CPa_{E^\op}),S\ox 1),
$$
which represents $\extcls\ox{\rm Id}_{KK(\CPa_{E^\op},\CPa_{E^\op})}$. So any such
$\widehat{\widehat{\extcls}}$ provides the desired lift.
\begin{rmk}
The class $\delta$ of Lemma \ref{lem:eval=0} need
not be unique. With some additional
regularity properties on $E$ and $\extcls$, 
we can explicitly construct a concrete representative of such a lift: see \cite{AR,CPR1}.
\end{rmk}
\begin{thm}
\label{thm:yeah} Suppose that $E$ is the restriction of a finitely generated
$A_b$-bimodule for some unitisation $A_b$ of $A$. 
Suppose that $\beta$ is a $K$-theory fundamental class in
$KK^d(\C,A\ox A^\op)$, and suppose that $\beta \ox_A [E] = \beta \ox_{A^\op} [E^\op]$.
Let $\delta \in K_d(\S \CPa_E\ox \CPa_{E^\op})$ be the class obtained from
Lemma~\ref{lem:eval=0}. Then
$$
\delta\ox_{\CPa_E}\extcls=-\beta\ox_{A^\op}\iota_{A^\op,\CPa_{E^\op}}\quad\mbox{and}\quad
\delta\ox_{\CPa_{E^\op}}\extocls=\beta\ox_{A}\iota_{A,\CPa_E}.
$$
In particular, $\delta$ defines isomorphisms $K^*(\CPa_E)\to K_{*+d+1}(\CPa_{E^\op})$ and
$K^*(\CPa_{E^\op})\to K_{*+d+1}(\CPa_E)$.
\end{thm}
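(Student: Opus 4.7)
The strategy is to verify the two displayed identities using the construction of $\delta$ as a lift of $\hat\delta$ together with the explicit lift $\widehat{\widehat{\extcls}}$ from~\eqref{eq:intertwines js}, reducing everything to the single-variable pairing of Lemma~\ref{lem:pairing}. The conclusion about isomorphisms then follows from Theorem~\ref{thm:(OEop) sufficient}(2).

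For the first identity, the defining relation $\hat\delta = \delta\ox_{\S\CPa_E\ox\CPa_{E^\op}}\mathbf{j}$ from Lemma~\ref{lem:eval=0}, combined with~\eqref{eq:intertwines js}, gives
\[
\delta \ox_{\CPa_E} \extcls
    = \delta \ox_{\S\CPa_E \ox \CPa_{E^\op}} \bigl(\mathbf{j} \ox_{\mathbb{M}} \widehat{\widehat{\extcls}}\bigr)
    = \hat\delta \ox_{\mathbb{M}} \widehat{\widehat{\extcls}}.
\]
Expanding $\hat\delta$ via Definition~\ref{defn:K-fun} and distributing through the Kasparov product then yields
\[
\delta \ox_{\CPa_E} \extcls
    = \beta \ox_{A\ox A^\op}\bigl(\mathbb{W} \ox_{\mathbb{M}} \widehat{\widehat{\extcls}}\bigr)
    - \beta \ox_{A\ox A^\op}\bigl(\mathbb{W}^\op \ox_{\mathbb{M}} \widehat{\widehat{\extcls}}\bigr).
\]

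Now I would analyse the two summands separately. The module $\mathbb{W}$ is built from a frame of $E_b$ and is trivial on the $A^\op$ factor, so its Kasparov product with $\widehat{\widehat{\extcls}}$ (which combines the lifted $E$-side extension $\widehat{\extcls}$ with the identity on $\CPa_{E^\op}$) reduces to the external product $([W]\ox_M \widehat{\extcls})\ox [\iota_{A^\op,\CPa_{E^\op}}]$; invoking Lemma~\ref{lem:pairing} gives
\[
\mathbb{W}\ox_{\mathbb{M}}\widehat{\widehat{\extcls}} = -\Id_{KK(A,A)}\ox [\iota_{A^\op,\CPa_{E^\op}}].
\]
Dually, $\mathbb{W}^\op$ is built from a frame of $E_b^\op$ and is trivial on the $A$ factor; since $\widehat{\widehat{\extcls}}$ encodes only $E$-side information, the composition reduces to the external product of $[W^\op]$ with the restriction of $\widehat{\extcls}$ along $\iota_{A,\CPa_E}$, which vanishes because $\iota_{A,\CPa_E}^*\extcls = 0$ by exactness of Pimsner's sequence~\eqref{eq:PV}. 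Substituting these two computations gives the desired identity $\delta\ox_{\CPa_E}\extcls = -\beta\ox_{A^\op}\iota_{A^\op,\CPa_{E^\op}}$.

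The second identity is proved by the symmetric argument using the analogous lift $\widehat{\widehat{\extocls}}$ of $\Id_{\CPa_E}\ox\extocls$; this time $\mathbb{W}\ox_{\mathbb{M}}\widehat{\widehat{\extocls}} = 0$ and $\mathbb{W}^\op\ox_{\mathbb{M}}\widehat{\widehat{\extocls}} = -[\iota_{A,\CPa_E}]\ox\Id_{KK(A^\op,A^\op)}$, and the extra minus in the $-\mathbb{W}^\op$ summand of $\hat\delta$ produces $+\beta\ox_A\iota_{A,\CPa_E}$. Both identities together verify the hypotheses of Theorem~\ref{thm:(OEop) sufficient}(2), which immediately yields the isomorphisms $K^*(\CPa_E)\to K_{*+d+1}(\CPa_{E^\op})$ and $K^*(\CPa_{E^\op})\to K_{*+d+1}(\CPa_E)$. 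The main obstacle is the factorisation/vanishing claim for $\mathbb{W}^\op\ox_{\mathbb{M}}\widehat{\widehat{\extcls}}$: because $\mathbb{M}$ is not literally the external tensor product of the two one-variable mapping cones, this requires a direct Kasparov-product computation using the explicit representatives of $\mathbb{W}^\op$ from~\eqref{eq:ee-dubbya} and of $\widehat{\widehat{\extcls}}$ from the lift construction, rather than an abstract appeal to associativity of the external product.
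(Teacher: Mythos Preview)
Your proposal is correct and follows essentially the same route as the paper: reduce $\delta\ox_{\CPa_E}\extcls$ to $\hat\delta\ox_{\mathbb{M}}\widehat{\widehat{\extcls}}$, split into the $\mathbb{W}$ and $\mathbb{W}^\op$ summands, show the first gives $-\Id_{KK(A,A)}\ox\iota_{A^\op,\CPa_{E^\op}}$ and the second vanishes, then argue the second identity by symmetry and conclude via Theorem~\ref{thm:(OEop) sufficient}(2). The one difference worth noting is how the $\mathbb{W}$ summand is handled: the paper computes $\mathbb{W}\ox_{\mathbb{M}}\widehat{\widehat{\extcls}}$ directly as a Fredholm index via \cite[Theorem~2.11]{CNNR} (the same machinery that underlies Lemma~\ref{lem:pairing}), rather than by the external-product factorisation you sketch---this sidesteps precisely the issue you flag, that $\mathbb{M}$ is not literally a tensor product of one-variable cones. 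For the $\mathbb{W}^\op$ summand the paper's phrasing (``the restriction of $\widehat{\widehat{\extcls}}$ to $A\ox\CPa_{E^\op}$ is zero'') and yours (vanishing because $\iota_{A,\CPa_E}^*\extcls=0$) are the same observation.
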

\begin{proof}
We have
\begin{align*}
\delta\ox_{\S \CPa_E\ox \CPa_{E^\op}}\extcls
&\defeq\delta\ox_{\S \CPa_E\ox \CPa_{E^\op}}(\extcls\ox {\rm Id}_{\CPa_{E^\op}})
=\delta\ox_{\S \CPa_E\ox \CPa_{E^\op}}\big((j\ox_M\widehat\extcls)\ox {\rm Id}_{\CPa_{E^\op}}\big)\\
&=\delta\ox_{\S \CPa_E\ox \CPa_{E^\op}}\mathbf{j}\ox_{\mathbb{M}}\widehat{\widehat{\extcls}}
=\hat\delta\ox_{\mathbb{M}}\widehat{\widehat{\extcls}}.
\end{align*}

Let $X$ be any Stinespring dilation module for the Fock module of $E$, and let $P : X \to
\Fock_E$ denote the projection onto the Fock space. Let $w \in M_k(\CPa_E)$ be as in
\eqref{eq:dubbya-from-S} above. Define
\[
\widetilde{w} \defeq (P\ox{\rm Id}_{\CPa_{E^\op}}\ox1_k)(w\ox{\rm Id}_{\CPa_{E^\op}}) (P\ox{\rm Id}_{\CPa_{E^\op}}\ox 1_k)
    \colon w^*wX^k\ox \CPa_{E^\op} \to ww^*X^k\ox \CPa_{E^\op}.
\]
Regard $\operatorname{Index}(\tilde{w}) = [\ker(\tilde{w})]-[\ker(\tilde{w}^*)]$ as an element of
$KK(A\ox A^\op,A\ox\CPa_{E^\op})$ as in Lemma~\ref{lem:pairing}. Then
\cite[Theorem~2.11]{CNNR} gives
\[
\mathbb{W}\ox_{\mathbb{M}}\widehat{\widehat{\extcls}}
    = -\operatorname{Index}(\widetilde{w}),
\]
So, just as in Lemma~\ref{lem:pairing}, we have
\[
\mathbb{W}\ox_{\mathbb{M}}\widehat{\widehat{\extcls}}
 =-(A\ox A^\op,(A\ox \CPa_{E^\op})_{A\ox \CPa_{E^\op}},0)
 =-{\rm Id}_{KK(A,A)}\ox\iota_{A^\op,\CPa_{E^\op}}.
\]
The product $\mathbb{W}^\op\ox_\mathbb{M}\widehat{\widehat{\extcls}}$ is zero, because the
restriction of $\widehat{\widehat{\extcls}}$ to $A\ox \CPa_{E^\op}$ is zero. Thus
$$
\delta\ox_{\CPa_E}\extcls=- \beta\ox_{A^\op}\iota_{A^\op,\CPa_{E^\op}}.
$$
An analogous argument gives
$
\delta\ox_{\CPa_{E^\op}}\extocls= \beta\ox_A\iota_{A,\CPa_E}.
$
\end{proof}

\subsection{The $K$-theory fundamental class for $\CPa_{E}^\op$.}
\label{subsec:bar-ee-op}

Suppose that $E$ is the restriction to $A$ 
of a finitely generated right $A_b$-module
$E_b$ with injective and nondegenerate left action by $A_b$,
and $\beta\ox_A[E]=\beta\ox_{A^\op}[\ol{E}^\op]$. 
Then $\ol{E}^\op$ is
the restriction of a finitely generated right $A^\op$ module, and so we can produce a
class $\ol{\delta}$ analogous to $\delta$. The difference between this construction and
the one in the preceding section is illustrated by the following three lemmas. The first
is standard: we include it for completeness since we need an explicit description of the
isomorphism.

\begin{lemma}
\label{lem:K-op} 
For any $C^*$-algebra $B$ there is an isomorphism $K_0(B)\cong
K_0(B^\op)$. When $B$ is unital and $p^\op\in M_n(B^\op)$ is a projection, the
isomorphism sends $[p^\op]$ to $[p^T]$, where we now regard the entries of $p$ as
elements of $B$.
\end{lemma}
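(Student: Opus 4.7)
The plan is to construct the isomorphism explicitly as a matrix transpose, and to verify that it descends to $K$-theory. The starting observation is that for any $C^*$-algebra $B$, the transpose map $\tau : M_n(B^\op)\to M_n(B)$ given by $\tau(a)_{ij}=(a_{ji})^\natural$ (where $^\natural$ denotes the identification of underlying Banach spaces $B^\op\leftrightarrow B$) is a $*$-anti-isomorphism. Indeed, the anti-homomorphism property of the transpose precisely cancels the reversed multiplication on $B^\op$, and the involutions match because $(b^\op)^*=(b^*)^\op$ in a $C^*$-opposite.

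With this in hand, I would first check the explicit formula: if $p\in M_n(B^\op)$ satisfies $p\cdot_{\op}p = p$ and $p^* = p$, then, writing the entries of $p$ as elements of $B$, these two equations translate directly into the statements that $p^T$ is idempotent in $M_n(B)$ and that $p$ is self-adjoint as a matrix in $M_n(B)$. The latter in turn forces $p^T$ to be self-adjoint, so $p^T$ is a projection in $M_n(B)$. Next I would verify that the assignment $[p^\op]\mapsto [p^T]$ respects the relations defining $K_0$: it preserves direct sums (since $(p\oplus q)^T = p^T\oplus q^T$), and if $v\in M_n(B^\op)$ realises a Murray--von Neumann equivalence $v^*v = p$, $vv^* = q$, then $w:=(v^T)^*\in M_n(B)$ satisfies $w^*w = p^T$ and $ww^* = q^T$ by the same translation principle (using $\tau(v^*v) = \tau(v)\tau(v)^*$, etc.).

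This yields a well-defined homomorphism $K_0(B^\op)\to K_0(B)$. Since $(p^T)^T = p$, applying the analogous construction to $B$ gives a two-sided inverse, so the map is an isomorphism. For non-unital $B$, I would pass to unitisations: there is a canonical identification $(\widetilde{B})^\op \cong \widetilde{B^\op}$, and the transpose isomorphism intertwines the augmentation maps to $\C = \C^\op$, so it restricts to the kernels, giving $K_0(B^\op)\cong K_0(B)$ in general.

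The only obstacle is notational bookkeeping: keeping track of the interactions between $^\op$, $^T$, and $^*$, and in particular remembering that the involution on $B^\op$ coincides with that on $B$ under the identification of underlying vector spaces. Once the conventions are fixed, every step is an immediate calculation.
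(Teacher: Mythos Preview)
Your proof is correct and uses the same core idea as the paper: the transpose. The paper phrases it in the module picture---a finitely generated projective right $B^\op$-module $p^\op(B^\op)^n$ becomes, via transpose, a left $B^\op$-module $(B^\op)^n (p^\op)^T$, which is the same thing as the right $B$-module $p^T B^n$---whereas you work directly in the projection picture and verify by hand that $p^T$ is a projection and that Murray--von Neumann equivalence is preserved. These are two presentations of the same argument; the paper's is terser, yours is more explicit (and your treatment of the non-unital case via unitisations is more detailed than the paper's one-line reduction). One minor remark: the sentence ``the anti-homomorphism property of the transpose precisely cancels the reversed multiplication on $B^\op$'' is slightly misleading, since nothing cancels and $\tau$ really is an anti-isomorphism as you correctly state; but anti-isomorphisms preserve projections and Murray--von Neumann equivalence, your direct verifications are correct, and the argument goes through.
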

\begin{proof}
It suffices to prove the lemma for unital algebras. Given a finitely generated and
projective right $B^\op$ module $p^\op(B^\op)^n$ (thought of as columns) we obtain a
finitely generated and projective left $B^\op$ module $(B^\op)^n(p^\op)^T$, thought of as
rows.

A finitely generated and projective left $B^\op$ module is the same thing as a
finitely generated and projective right $B$ module, namely $p^TB^n$.

As this construction is plainly symmetric in $B$ and $B^\op$, we obtain the stated
isomorphism.
\end{proof}

\begin{lemma}
\label{lem:minus-dubbya} Use Lemma \ref{lem:E-barE} to identify $\CPa_E^\op$ and $\CPa_{\ol{E}^\op}$. The isomorphism
$$
K_0(M(A^\op,\CPa_E^\op))=K_0(M(A,\CPa_E)^\op)\cong K_0(M(A,\CPa_E))
$$
of Lemma~\ref{lem:K-op} carries the class of
$$
w_{\ol{E}^\op}=
\begin{pmatrix}S_{\ol{x_1}^\op}^* & 0 & \cdots & 0 & \\ S_{\ol{x_2}^\op}^* &  0 & \cdots & 0\\
\vdots &  & \ddots & \vdots\\
S_{\ol{x_k}^\op}^* &0& \cdots & 0\end{pmatrix}
$$
to the class of
$$
w^*_E=
\begin{pmatrix}S_{x_1} & S_{x_2} & \cdots & S_{x_k} \\ 0 &  0 & \cdots & 0\\
\vdots &  & \ddots & \vdots\\
0 &0& \cdots & 0\end{pmatrix}.
$$
\end{lemma}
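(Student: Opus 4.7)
The strategy is to chase $w_{\ol{E}^\op}$ through the two successive identifications comprising the claimed isomorphism. First, I apply the inverse of the $*$-isomorphism $\psi : \CPa_E^\op \to \CPa_{\ol{E}^\op}$ of Lemma~\ref{lem:E-barE}. Since $\psi(S_e^\op) = S_{\ol{e}^\op}^*$, we have $\psi^{-1}(S_{\ol{e}^\op}^*) = S_e^\op$, so applying $\psi^{-1}$ entrywise to $w_{\ol{E}^\op}$ yields the matrix $u \in M_k(\CPa_E^\op)$ with $u_{j,1} = S_{x_j}^\op$ in the first column and zeros elsewhere.

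Next, I use the isomorphism $K_0(M(A, \CPa_E)^\op) \cong K_0(M(A, \CPa_E))$ of Lemma~\ref{lem:K-op}, which at the level of algebras is implemented by the transpose $*$-isomorphism $M_k(\CPa_E^\op) \cong M_k(\CPa_E)^\op$ reinterpreting the entries of $X^\op$ as elements of $\CPa_E$ after exchanging rows and columns. Under this identification, $u$ is carried to the matrix with first row $(S_{x_1}, \dots, S_{x_k})$ and zeros elsewhere, which is precisely $w_E^*$.

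To upgrade these matrix identifications to an equality of $K$-theory classes, I work with the Putnam presentation $[w] = [e_w] - [1_k]$ for $e_w$ as in \eqref{eq:ee-dubbya}. A direct computation shows that $e_u^T$ (the transpose of the projection $e_u$ formed in $\CPa_E^\op$, reinterpreted in $\CPa_E$) has the same diagonal blocks as $e_{w_E^*}$ and differs from it only in the signs of its off-diagonal blocks; consequently it is unitarily equivalent to $e_{w_E^*}$ via conjugation by a block-diagonal unitary of the form $\operatorname{diag}(\pm I_k, \pm I_k)$. Combined with $[1_k^T] = [1_k]$, this gives $[w_{\ol{E}^\op}] \mapsto [w_E^*]$. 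The main technical obstacle lies in this last step: one must track carefully how transposition interacts with the involution and with the source and range projections of the partial isometries, and verify that the sign discrepancies between $e_u^T$ and $e_{w_E^*}$ are absorbed by a unitary conjugation compatible with the mapping-cone structure. Once the sign conventions are pinned down, the remaining verification is mechanical.
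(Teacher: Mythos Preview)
Your first two paragraphs are exactly the paper's proof: apply Lemma~\ref{lem:E-barE} entrywise to turn $S_{\ol{x_j}^\op}^*$ into $S_{x_j}^\op$, then transpose and drop the op's as in Lemma~\ref{lem:K-op} to get $w_E^*$. The paper stops there.

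Your third paragraph goes further, and there is a computational slip in it. If you actually form $e_u$ in $M_{2k}(\CPa_E^\op)$ and apply the anti-isomorphism (transpose and drop op's), the upper-left block becomes $1_k - \tfrac{1}{1+t^2}\,\Phi(uu^*)$, and since $(uu^*)_{ij}=(x_j\mid x_i)_A^\op$, one gets $\Phi(uu^*) = q = w_E w_E^*$. On the other hand, $e_{w_E^*}$ has upper-left block $1_k-\tfrac{1}{1+t^2}\,w_E^* w_E = 1_k - \tfrac{1}{1+t^2}(1\oplus 0_{k-1})$. These do \emph{not} agree; the diagonal blocks of $\Phi(e_u)$ match those of $e_{w_E}$, not $e_{w_E^*}$ (and the off-diagonal blocks of $\Phi(e_u)$ carry $w_E$ and $w_E^*$ in the positions where $e_{w_E}$ has them, with opposite signs). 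So your proposed conjugation by $\operatorname{diag}(\pm I_k,\pm I_k)$ cannot repair the discrepancy with $e_{w_E^*}$; a block swap would also be needed, and that changes the class. In short, your explicit projection computation does not land where you say it does, and the paper does not attempt such a verification at all---it is content with the two-step matrix identification.
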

\begin{proof}
The isomorphism $\CPa_{\ol{E}^\op}\to \CPa_E^\op$
of Lemma \ref{lem:E-barE} is given on generators by
$S_{\ol{x}}\mapsto S_x^{*\op}$. Thus
$$
w_{\ol{E}^\op}=
\begin{pmatrix}S_{\ol{x_1}^\op}^* & 0 & \cdots & 0 & \\ S_{\ol{x_2}^\op}^* &  0 & \cdots & 0\\
\vdots &  & \ddots & \vdots\\
S_{\ol{x_k}^\op}^* &0& \cdots & 0\end{pmatrix}
\mapsto
\begin{pmatrix}S_{x_1}^{\op} & 0 & \cdots & 0 & \\ S_{x_2}^\op &  0 & \cdots & 0\\
\vdots &  & \ddots & \vdots\\
S_{x_k}^\op &0& \cdots & 0\end{pmatrix}
\mapsto
\begin{pmatrix}S_{x_1} & S_{x_2} & \cdots & S_{x_k} \\ 0 &  0 & \cdots & 0\\
\vdots &  & \ddots & \vdots\\
0 &0& \cdots & 0\end{pmatrix}
=w_E^*,
$$
where the second isomorphism is the isomorphism of Lemma~\ref{lem:K-op}.
\end{proof}
We can now define analogues of the classes $W^\op,\,\mathbb{W}^\op$ and so forth using
$\ol{E}^\op$ in place of $E^\op$. We denote the resulting classes and their
representatives by $\ol{W}^\op,\,\ol{\mathbb{W}}^\op$ and so on.

An argument similar to Lemma~\ref{lem:eval=0}, but using Lemma~\ref{lem:minus-dubbya},
proves the following lemma.

\begin{lemma}
\label{lem:eval=0-op} Suppose that $E$ is the restriction of a finitely generated right
$A_b$-module $E_b$ to $A$. Let $\mathbf{j}\in KK(\S \CPa_E\ox
\CPa_{E}^\op,\mathbb{M})$ be the class of the inclusion. 
Given a $K$-theory fundamental
class $\beta\in KK^d(\C,A\ox A^\op)$ satisfying
$\beta\ox_A[E]=\beta\ox_{A^\op}[\ol{E}^\op]$, the class $\hat{\ol{\delta}}$ defined by
$$
\hat{\ol{\delta}}=\beta_{A\ox A^\op}\mathbb{W}+\beta\ox_{A\ox A^\op}\ol{\mathbb{W}}^\op
$$
satisfies
$$
\hat{\ol{\delta}}\ox_{\mathbb{M}}\ev=0.
$$
Hence there exists a class $\ol{\delta}\in K_d(\S \CPa_E\ox \CPa_{E}^\op)$ such that
$\hat{\ol{\delta}}=\ol{\delta}\ox_{\S \CPa_E\ox \CPa_{E}^\op}\mathbf{j}$.
\end{lemma}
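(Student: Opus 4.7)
The plan is to mirror the proof of Lemma~\ref{lem:eval=0}, using Lemma~\ref{lem:minus-dubbya} to account for the sign change that is encoded in the $+$ (rather than $-$) appearing in the definition of $\hat{\ol{\delta}}$. Once $\hat{\ol{\delta}}\ox_{\mathbb{M}}\ev=0$ is established, exactness of the $K$-theory six-term sequence for the mapping-cone extension~\eqref{eq:map-map-cone} furnishes a class $\ol{\delta}\in K_d(\S\CPa_E\ox\CPa_E^\op)$ with $\hat{\ol{\delta}}=\ol{\delta}\ox_{\S\CPa_E\ox\CPa_E^\op}\mathbf{j}$.

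First I would verify that $\{\ol{x_j}^\op\}$ is a frame for $\ol{E}^\op$ as a right $A^\op$-module. This is a short manipulation using the frame identity $\sum_j x_j(x_j\mid f)_{A_b}=f$ together with the formulas ${}_A(\ol{e}\mid \ol{f})=(e\mid f)_A$ and $\bar e^\op\cdot a^\op=(a\cdot\bar e)^\op$, and it ensures that the construction of Section~\ref{subsec:ee-op}, applied to $\ol{E}^\op$ in place of $E^\op$, produces well-defined analogues $\ol{W}^\op$ and $\ol{\mathbb{W}}^\op$ built from the partial isometry $w_{\ol{E}^\op}$ featured in Lemma~\ref{lem:minus-dubbya}.

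Next, as in Lemma~\ref{lem:eval=0}, the one-variable computation gives $\mathbb{W}\ox_{\mathbb{M}}\ev=([A]-[E])\ox[A^\op]$. For the opposite-variable contribution, Lemma~\ref{lem:minus-dubbya} identifies the class of $w_{\ol{E}^\op}$ with that of $w^*_E$ under the composite isomorphism $K_0(M(A^\op,\CPa_E^\op))\cong K_0(M(A,\CPa_E))$. Since the source and range projections of $w^*_E$ are the swap of those for $w_E$, in Putnam's picture one has $\ev_*[w^*_E]=-\ev_*[w_E]$; transporting this through the external product in the two-variable mapping cone yields the sign-flipped formula
\[
\ol{\mathbb{W}}^\op\ox_{\mathbb{M}}\ev=[A]\ox([\ol{E}^\op]-[A^\op]).
\]
Combining this with the computation for $\mathbb{W}$ and invoking the hypothesis $\beta\ox_A[E]=\beta\ox_{A^\op}[\ol{E}^\op]$,
\begin{align*}
\hat{\ol{\delta}}\ox_{\mathbb{M}}\ev
&=\beta\ox_{A\ox A^\op}\big(([A]-[E])\ox[A^\op]\big)+\beta\ox_{A\ox A^\op}\big([A]\ox([\ol{E}^\op]-[A^\op])\big)\\
&=-\beta\ox_A[E]+\beta\ox_{A^\op}[\ol{E}^\op]=0.
\end{align*}
Exactness of the $K$-theory six-term sequence for~\eqref{eq:map-map-cone} at $K_d(\mathbb{M})$ then produces the required $\ol{\delta}$.

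The main obstacle is the bookkeeping of signs in the formula for $\ol{\mathbb{W}}^\op\ox_{\mathbb{M}}\ev$. Three ingredients must be untangled: the algebra isomorphism $\CPa_{\ol{E}^\op}\cong\CPa_E^\op$ of Lemma~\ref{lem:E-barE}; the identification $K_0(B^\op)\cong K_0(B)$ of Lemma~\ref{lem:K-op}; and Putnam's partial-isometry description of $K_0$ of the mapping cone, which interacts with the opposite-algebra identifications by swapping the roles of source and range projections. Lemma~\ref{lem:minus-dubbya} packages exactly this calculation, and is the key technical input that makes the $+$ sign in the definition of $\hat{\ol{\delta}}$ the correct one.
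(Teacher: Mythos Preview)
Your proposal is correct and follows exactly the approach the paper intends: the paper's own proof is the single sentence ``An argument similar to Lemma~\ref{lem:eval=0}, but using Lemma~\ref{lem:minus-dubbya}'', and you have faithfully unpacked this, correctly identifying that Lemma~\ref{lem:minus-dubbya} supplies the sign flip (via $[w_{\ol{E}^\op}]\leftrightarrow[w_E^*]$ and $\ev_*[w_E^*]=-\ev_*[w_E]$) that accounts for the $+$ in the definition of $\hat{\ol{\delta}}$. Your closing paragraph pinpointing the interaction of Lemmas~\ref{lem:E-barE}, \ref{lem:K-op}, and Putnam's partial-isometry picture as the crux of the sign bookkeeping is exactly right.
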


So, just as before, we obtain a $K$-theory fundamental class.

\begin{thm}
\label{thm:yeah-op} Suppose that $E$ is the restriction of a finitely generated right
$A_b$-module $E_b$ to $A$. 
Given a $K$-theory fundamental class $\beta\in
KK^d(\C,A\ox A^\op)$ satisfying $\beta\ox_A[E]=\beta\ox_{A^\op}[\ol{E}^\op]$,
$$
\ol{\delta}\ox_{\CPa_E}\extcls=-\beta\ox_{A^\op}\iota_{A^\op,\CPa_E^\op}\quad\mbox{and}\quad
\ol{\delta}\ox_{\CPa_E^\op}\extobarcls=-\beta\ox_{A}\iota_{A,\CPa_E}.
$$
Consequently $\ol{\delta}$ defines isomorphisms $K^*(\CPa_E)\to K_{*+d+1}(\CPa_E^\op)$
and $K^*(\CPa_E^\op)\to K_{*+d+1}(\CPa_E)$.
\end{thm}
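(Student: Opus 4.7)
The plan is to adapt the proof of Theorem~\ref{thm:yeah} almost verbatim, using Lemma~\ref{lem:eval=0-op} in place of Lemma~\ref{lem:eval=0} and Lemma~\ref{lem:minus-dubbya} to account for the sign change in the second identity. Once the two identities have been verified, the $K$-theory isomorphisms follow from Theorem~\ref{thm:(OE)op sufficient}(1), since the conditions~\eqref{eq:comm1} are precisely what is being proved.

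For the first identity, I would construct a lift $\widehat{\widehat{\extcls}}\in KK(\mathbb{M},A\ox\CPa_E^\op)$ of $\extcls\ox\Id_{KK(\CPa_E^\op,\CPa_E^\op)}$ exactly as in the proof of Theorem~\ref{thm:yeah}, and then compute
\[
\ol{\delta}\ox_{\CPa_E}\extcls
= \hat{\ol{\delta}}\ox_{\mathbb{M}}\widehat{\widehat{\extcls}}
= \beta\ox_{A\ox A^\op}\mathbb{W}\ox_{\mathbb{M}}\widehat{\widehat{\extcls}}
+ \beta\ox_{A\ox A^\op}\ol{\mathbb{W}}^\op\ox_{\mathbb{M}}\widehat{\widehat{\extcls}}.
\]
The first summand reduces, via the same index computation as in Theorem~\ref{thm:yeah}, to $-\beta\ox_{A^\op}\iota_{A^\op,\CPa_E^\op}$. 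The second summand vanishes for the same reason that $\mathbb{W}^\op\ox_{\mathbb{M}}\widehat{\widehat{\extcls}}=0$ in the proof of Theorem~\ref{thm:yeah}: the class $\ol{\mathbb{W}}^\op$ is supported on the $\CPa_E^\op$-side of the mapping cone, while the restriction of $\widehat{\widehat{\extcls}}$ to $A\ox\CPa_E^\op$ is zero.

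For the second identity, by the symmetric argument, I would construct a lift $\widehat{\widehat{\extobarcls}}\in KK(\mathbb{M},\CPa_E\ox A^\op)$ of $\Id_{KK(\CPa_E,\CPa_E)}\ox\extobarcls$. In the resulting decomposition, the $\mathbb{W}$-term now vanishes (by the dual vanishing argument) and only the $\ol{\mathbb{W}}^\op$-term contributes. At this point Lemma~\ref{lem:minus-dubbya} is essential: it identifies the class of $w_{\ol{E}^\op}$ with the class of $w_E^*$ in $K_0(M(A,\CPa_E))$, and $[w_E^*]=-[w_E]$ by the standard sign rule for partial isometries in mapping-cone $K$-theory. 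Consequently, where the analogous computation for $\mathbb{W}^\op$ and $\extocls$ in Theorem~\ref{thm:yeah} produced $+\iota_{A,\CPa_E}\ox\Id_{KK(A^\op,A^\op)}$, the present computation yields the negative of that class, so $\ol{\delta}\ox_{\CPa_E^\op}\extobarcls=-\beta\ox_A\iota_{A,\CPa_E}$ as required.

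The most delicate step will be verifying that the sign flip supplied by Lemma~\ref{lem:minus-dubbya} propagates cleanly through the Kasparov product with $\widehat{\widehat{\extobarcls}}$: one must check that passing from $w_{\ol{E}^\op}$ to $w_E^*$ via the isomorphism $\CPa_{\ol{E}^\op}\cong\CPa_E^\op$ of Lemma~\ref{lem:E-barE} is compatible with the mapping-cone lift of $\extobarcls$, so that the sign recorded at the level of $K$-theory transports faithfully to the bivariant pairing. Beyond that, the argument is formal bookkeeping modelled on Section~\ref{subsec:ee-op}.
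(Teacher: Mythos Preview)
Your overall strategy is correct and matches the paper's intended argument, which is simply ``as in Theorem~\ref{thm:yeah}''. Two points are worth tidying up.

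First, a citation slip: the isomorphism statement follows from Theorem~\ref{thm:(OE)op sufficient}(2) and conditions~\eqref{eq:comm2}, not part~(1) and~\eqref{eq:comm1}. Part~(1) concerns the $K$-homology class $\ol{\Delta}$, whereas here you are verifying the hypotheses on the $K$-theory class $\ol{\delta}$.

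Second, your treatment of the sign in the second identity is more circuitous than necessary. You do not need Lemma~\ref{lem:minus-dubbya} at this stage. The class $\ol{\mathbb{W}}^\op$ is built from the partial isometry $w_{\ol{E}^\op}$ for the module $\ol{E}^\op$ over $A^\op$, and Lemma~\ref{lem:pairing} applies \emph{directly} to this module to give
\[
\ol{\mathbb{W}}^\op\ox_{\mathbb{M}}\widehat{\widehat{\extobarcls}}
= -\iota_{A,\CPa_E}\ox\Id_{KK(A^\op,A^\op)},
\]
exactly as the corresponding computation for $\mathbb{W}^\op$ and $\widehat{\widehat{\extocls}}$ gave $-\iota_{A,\CPa_E}\ox\Id$ in Theorem~\ref{thm:yeah}. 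The sign difference between the second identities of Theorems~\ref{thm:yeah} and~\ref{thm:yeah-op} comes entirely from the plus sign in the definition $\hat{\ol{\delta}}=\beta\ox\mathbb{W}+\beta\ox\ol{\mathbb{W}}^\op$ (Lemma~\ref{lem:eval=0-op}), as opposed to the minus in $\hat\delta=\beta\ox\mathbb{W}-\beta\ox\mathbb{W}^\op$. Lemma~\ref{lem:minus-dubbya} is what motivates that plus sign in Lemma~\ref{lem:eval=0-op}, but once $\hat{\ol{\delta}}$ is defined the proof of Theorem~\ref{thm:yeah-op} needs only Lemma~\ref{lem:pairing}. Your route via $[w_E^*]=-[w_E]$ would also work, but it requires transporting the pairing through the isomorphism $K_0(M(A^\op,\CPa_E^\op))\cong K_0(M(A,\CPa_E))$ of Lemma~\ref{lem:K-op}, which is an unnecessary extra step.
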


\subsection{The $K$-theory classes for an invertible bimodule}
\label{subsec:ee-smeb}

Comparing Lemma~\ref{lem:eval=0} with Lemma~\ref{lem:eval=0-op}, we see a discrepancy of
sign between the definitions of $\hat{\delta}$ and $\hat{\ol{\delta}}$. 
The following proposition reconciles
this difference in the context of our Poincar\'e duality 
pairings in the situation of
invertible bimodules $E$.

In this section, we write $\mathscr{O}_E$ for the dense $*$-subalgebra of $\CPa_E$ generated
by $A$ and the elements $\{S_e : e \in E\}$, so
\begin{equation}\label{eq:scriptO}\textstyle
\mathscr{O}_E = \linspan\big\{S_\eta S^*_\zeta : \eta,\zeta \in \bigcup_{n \ge 0} E^{\ox n}\big\}.
\end{equation}

\begin{prop}[\cite{RRSext}]
\label{prop:minus-enn} Suppose that $E$ is an invertible bimodule. Let $N$ be the
densely-defined number operator on $(\Fock_{E,\Z})_A$ such that $N \rho := n \rho$ and $N
\overline{\rho} = -n \overline{\rho}$ for $n \ge 0$ and $\rho \in E^{\ox n}$. Then
$\extcls\in KK^1(\CPa_E,A)$ has representative
$$
(\mathscr{O}_E,(\Fock_{E,\Z})_A,N).
$$
Similarly, the class $\extocls\ox_{\End^\op}[\Fock_{E^\op}]\in KK^1(\CPa_{E^\op},A^\op)$ is
represented by
$$
(\mathscr{O}_{E^\op},(F_{{E},\Z})_{A^\op},N)=(\mathscr{O}_E^\op, (\Fock_{E,\Z})_{A^\op},N).
$$
The class $\extobarcls\in KK^1(\CPa_E^\op,A^\op)$ has representative
$$
(\mathscr{O}_{\ol{E}^\op},(\Fock_{\ol{E}^\op,\Z})_{A^\op},N_{\ol{E}})
=(\mathscr{O}_E^\op, (\Fock_{E,\Z})_{A^\op},-N)
$$
where we identify algebras by
$\CPa_{\ol{E}^\op}\ni S_{\bar{e}}\mapsto S_e^{*\op}\in \CPa_E^\op$,
and we do nothing else except notice that $(\Fock_{\ol{E},\Z})_{A^\op}=(\Fock_{E,\Z})_{A^\op}$
and that $N_{\ol{E}}$ acts on $(\Fock_{E,\Z})_{A^\op}$ as $-N$.
\end{prop}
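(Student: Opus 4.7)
The plan is to build a concrete representative of $\extcls$ on the two-sided Fock module $\Fock_{E,\Z}$ using the action of $\CPa_E$ provided by Lemma~\ref{lem:rank-one}, verify that $(\mathscr{O}_E,(\Fock_{E,\Z})_A,N)$ is an unbounded Kasparov module, and then identify its class with $\extcls$ by comparison with the standard representative built from the defining extension.

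First I would check the unbounded Kasparov conditions. Under the left action from Lemma~\ref{lem:rank-one}, a creation operator $S_e$ raises the grading by~$1$, an annihilation operator $S_e^*$ lowers it by~$1$, and the action of $a\in A$ preserves the grading. Consequently, on each homogeneous summand $E^{\ox n}$ one computes $[N,S_e]=S_e$, $[N,S_e^*]=-S_e^*$ and $[N,a]=0$, so the commutators extend to bounded adjointable operators. Since $\mathscr{O}_E$ is generated as a $*$-algebra by $A$ and $\{S_e:e\in E\}$ (see \eqref{eq:scriptO}), the Leibniz rule yields a dense subalgebra of $\CPa_E$ with bounded commutators with $N$. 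For the resolvent, let $P_n:\Fock_{E,\Z}\to E^{\ox n}$ be the grading projection. Each $P_n$ is a finite sum of rank-one operators built from a frame for $E_b^{\ox |n|}$ (using the reduction to the unitisation), hence is a compact endomorphism of $(\Fock_{E,\Z})_A$; since $(1+N^2)^{-1/2}=\sum_n(1+n^2)^{-1/2}P_n$ converges in norm, it is compact.

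Second I would identify the $KK^1$-class. The bounded transform $\mathbb{F}_N=N(1+N^2)^{-1/2}$ agrees modulo compacts with $2P-1$, where $P$ is the projection onto $\Fock_E\subseteq\Fock_{E,\Z}$. This reduces the Kasparov module to one built from the inclusion $\Fock_E\hookrightarrow\Fock_{E,\Z}$, which is (up to the Stinespring-type dilation and the Morita equivalence $[\Fock_E]$) the Busby representative of the defining extension $0\to\End_A^0(\Fock_E)\to\Toep_E\to\CPa_E\to 0$ tensored with $[\Fock_E]$, namely $\extcls$ by definition~\eqref{eq:extdef}.

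For the remaining two claims, I would run the same construction with $E^\op$ in place of $E$. The Fock module $\Fock_{E^\op,\Z}$ over $A^\op$ agrees, as a set with $\Z$-grading and number operator, with $(\Fock_{E,\Z})_{A^\op}$, yielding the second representative. For $\extobarcls$, apply the construction to $\overline{E}^\op$: by Lemma~\ref{lem:E-barE} the generators $S_{\bar e}\in\CPa_{\ol E^\op}$ are identified with $S_e^{*\op}\in\CPa_E^\op$, so creation operators for $\ol E^\op$ correspond to annihilation operators for $E^\op$, which lowers the $E$-grading. Under this identification the grading on $\Fock_{\ol E^\op,\Z}$ is the reverse of the grading on $\Fock_{E,\Z}$, so $N_{\ol E}$ is identified with $-N$, as claimed.

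The main obstacle is the step of matching the bounded transform on $\Fock_{E,\Z}$ with the standard Busby representative of $\extcls$: the two-sided Fock module carries extra negative grades not present in the one-sided picture. This mismatch must be shown to be a compact perturbation that does not alter the $KK^1$-class; one way to do this cleanly is to realise both modules as compressions of a common Stinespring dilation and check that the difference of the two phase operators is supported on a submodule that the finite-rank approximation argument above shows is compact. This is the technical point carried out in detail in~\cite{RRSext}.
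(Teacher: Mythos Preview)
The paper does not supply a proof of this proposition; it is stated with attribution to \cite{RRSext}. Your sketch is essentially the argument one finds there: verify the unbounded Kasparov conditions for $(\mathscr{O}_E,(\Fock_{E,\Z})_A,N)$, pass to the bounded transform, and identify the resulting phase $2P-1$ with the Busby picture of the defining extension after the Morita equivalence $[\Fock_E]$. Your handling of the $E^\op$ and $\overline{E}^\op$ cases via the grading reversal is also the intended reasoning, and matches the paper's remark that $N_{\overline{E}}$ becomes $-N$ under the identification of Lemma~\ref{lem:E-barE}.

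One small correction: your compactness argument asserts that each grading projection $P_n$ is itself compact. For an invertible bimodule over a non-unital $A$, the summand $E^{\ox n}$ is an imprimitivity bimodule, so $P_n$ is compact only when $A$ is unital. What the Kasparov-module axioms actually require is that $a(1+N^2)^{-1}$ be compact for $a$ in the algebra; this follows since the left action of $A$ on each $E^{\ox n}$ is by compacts, so $aP_n\in\End_A^0(\Fock_{E,\Z})$ and $a(1+N^2)^{-1/2}=\sum_n(1+n^2)^{-1/2}aP_n$ converges in norm. The same repair is needed where you compare $\mathbb{F}_N$ with $2P-1$: the difference is a norm-convergent sum $\sum_n c_nP_n$ with $c_n\to0$, so $a(\mathbb{F}_N-(2P-1))$ is compact, which is what is required. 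With this adjustment your outline is sound, and you are right that the residual technical content---that the two-sided module $\Fock_{E,\Z}$ with phase $2P-1$ represents the extension class---is exactly what is established in \cite{RRSext}.
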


When $E$ is an invertible bimodule, we can identify $\CPa_{E^\op}\cong \CPa_E^\op$. So in
this situation we can compare the different extension classes, and so obtain a
relationship between $\delta$ and $\ol{\delta}$:

\begin{corl}
\label{cor:all-my-ops} Let $E$ be an invertible bimodule
which is the restriction to $A$ of an (invertible) $A_b$-bimodule 
$E_b$ for some unitisation $A_b$ of $A$. 
Identifying $\CPa_{E^\op}$ with
$\CPa_E^\op$ via the isomorphism of Proposition~\ref{prop:op-is-op}, we have
$$
\extocls=-\extobarcls\in KK^1(\CPa_E^\op,A^\op)
$$
and
$$
(\delta-\ol{\delta})\ox_{\CPa_E^\op}\extocls
=(\delta-\ol{\delta})\ox_{\CPa_E^\op}\extobarcls
=(\delta-\ol{\delta})\ox_{\CPa_E}\extcls=0.
$$
\end{corl}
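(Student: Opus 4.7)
The proof splits naturally into two stages, and essentially reduces to bookkeeping once the first identity is in hand. The first stage is to establish the sign identity $\extocls = -\extobarcls$ in $KK^1(\CPa_E^\op, A^\op)$. For this I would invoke Proposition~\ref{prop:minus-enn}, which provides explicit unbounded representatives for both classes: after transporting $\extocls$ along the identification $\CPa_{E^\op}\cong \CPa_E^\op$ of Proposition~\ref{prop:op-is-op}, and $\extobarcls$ along the identification $\CPa_{\ol{E}^\op}\cong \CPa_E^\op$ of Lemma~\ref{lem:E-barE}, the two classes are represented respectively by the odd cycles
$$
(\mathscr{O}_E^\op,\ (\Fock_{E,\Z})_{A^\op},\ N)\qquad\text{and}\qquad (\mathscr{O}_E^\op,\ (\Fock_{E,\Z})_{A^\op},\ -N).
$$
The algebra, the module, and the left action coincide; only the sign of the self-adjoint regular operator changes. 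Since negating the operator in an odd Kasparov cycle produces the additive inverse in $KK^1$ (via the standard operator homotopy $F_t = \cos(\pi t) N$ combined with a degenerate-cycle argument), this gives $\extocls = -\extobarcls$.

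The second stage combines this sign identity with Theorem~\ref{thm:yeah} and Theorem~\ref{thm:yeah-op}. Under the identification $\CPa_{E^\op}\cong \CPa_E^\op$ the inclusions $\iota_{A^\op,\CPa_{E^\op}}$ and $\iota_{A^\op,\CPa_E^\op}$ coincide, so the two formulas
$$
\delta\ox_{\CPa_E}\extcls = -\beta\ox_{A^\op}\iota_{A^\op,\CPa_{E^\op}},\qquad \ol{\delta}\ox_{\CPa_E}\extcls = -\beta\ox_{A^\op}\iota_{A^\op,\CPa_E^\op}
$$
agree, whence $(\delta-\ol{\delta})\ox_{\CPa_E}\extcls = 0$. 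For the products over the opposite algebra, Theorem~\ref{thm:yeah} gives $\delta\ox_{\CPa_{E^\op}}\extocls = \beta\ox_A\iota_{A,\CPa_E}$ and Theorem~\ref{thm:yeah-op} gives $\ol{\delta}\ox_{\CPa_E^\op}\extobarcls = -\beta\ox_A\iota_{A,\CPa_E}$. Using $\extocls = -\extobarcls$ to rewrite the first as $\delta\ox_{\CPa_E^\op}\extobarcls = -\beta\ox_A\iota_{A,\CPa_E}$, the difference
$$
(\delta-\ol{\delta})\ox_{\CPa_E^\op}\extobarcls = -\beta\ox_A\iota_{A,\CPa_E} - (-\beta\ox_A\iota_{A,\CPa_E}) = 0,
$$
and the vanishing of $(\delta-\ol{\delta})\ox_{\CPa_E^\op}\extocls$ follows by another application of the sign identity.

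The main (and essentially only) obstacle is the first stage: one must check that the two identifications $\CPa_{E^\op}\cong\CPa_E^\op$ (from invertibility of $E$) and $\CPa_{\ol{E}^\op}\cong\CPa_E^\op$ (from Lemma~\ref{lem:E-barE}) really do produce the same copy of $\mathscr{O}_E^\op$ acting identically on $(\Fock_{E,\Z})_{A^\op}$, so that the only distinction between the two representatives is the sign of $N$. Granted Proposition~\ref{prop:minus-enn}, this compatibility is already built in, but I would include a brief verification at the level of generators — tracing $S_{e^\op}\in \CPa_{E^\op}$ and $S_{\ol{e}^\op}\in\CPa_{\ol{E}^\op}$ to the same element of $\mathscr{O}_E^\op$ via the isomorphisms of Proposition~\ref{prop:op-is-op} and Lemma~\ref{lem:E-barE} — to make clear that the two halves of the argument are measuring the same extension up to sign.
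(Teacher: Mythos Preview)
Your argument is correct. The first stage matches the paper exactly: both invoke Proposition~\ref{prop:minus-enn} to see that the two classes have representatives differing only in the sign of the number operator, and hence differ by sign in $KK^1$.

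For the second stage you take a slightly different route from the paper. The paper works at the level of the building blocks $\mathbb{W}^\op$ and $\ol{\mathbb{W}}^\op$: it records the four pairings
\[
\mathbb{W}^\op\ox_{\CPa_E^\op}\extocls=-\Id,\quad
\ol{\mathbb{W}}^\op\ox_{\CPa_E^\op}\extobarcls=-\Id,\quad
\mathbb{W}^\op\ox_{\CPa_E^\op}\extobarcls=\Id,\quad
\ol{\mathbb{W}}^\op\ox_{\CPa_E^\op}\extocls=\Id,
\]
then uses $\delta-\ol{\delta}=-\beta\ox_{A\ox A^\op}(\mathbb{W}^\op+\ol{\mathbb{W}}^\op)$ together with associativity. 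You instead quote the already-packaged conclusions of Theorems~\ref{thm:yeah} and~\ref{thm:yeah-op} and subtract. This is equivalent mathematical content (the $\mathbb{W}$-level identities are precisely what underlies those theorems), but your packaging is arguably cleaner since it avoids re-deriving intermediate products. The paper's version, on the other hand, makes the cancellation mechanism more visible and handles all three extension classes uniformly without needing to separately match up the inclusions $\iota_{A^\op,\CPa_{E^\op}}$ and $\iota_{A^\op,\CPa_E^\op}$ under the identification.
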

\begin{proof}
Proposition~\ref{prop:minus-enn} gives the first statement. We know that
\begin{align*}
\mathbb{W}^\op\ox_{\CPa_E^\op}\extocls&=-{\rm Id}_{KK(A^\op,A^\op)}
\quad\mbox{and}\quad
\ol{\mathbb{W}}^\op\ox_{\CPa_E^\op}\extobarcls=-{\rm Id}_{KK(A^\op,A^\op)}.
\end{align*}
Using Proposition~\ref{prop:minus-enn} again yields
\begin{align*}
\mathbb{W}^\op\ox_{\CPa_E^\op}\extobarcls&={\rm Id}_{KK(A^\op,A^\op)}
\quad\mbox{and}\quad
\ol{\mathbb{W}}^\op\ox_{\CPa_E^\op}\extocls={\rm Id}_{KK(A^\op,A^\op)}.
\end{align*}
Since
$$
\delta-\ol{\delta}=-\beta\ox_{A\ox A^\op}(\mathbb{W}^\op+\ol{\mathbb{W}}^\op)
$$
the result follows from associativity of the Kasparov product.
\end{proof}

\subsection{Examples}
\label{subsub:egs} In the following examples, we construct explicit representatives
of the $K$-theory fundamental class. 

\subsubsection{The circle}
\label{ex:cee-to-circle} The simplest example is when $A=E=\C$ as algebra and bimodule.
Here clearly $\beta=[1_\C\ox 1_\C]$ is $E$-invariant. So we recover the fact that
$C(S^1)=\CPa_E$ satisfies Poincar\'e duality in the $K$-theory sense. Since $E$ is an
invertible bimodule, we can realise $\CPa_E$ as shift operators on
$F_{E,\Z}=\ell^2(\Z)\cong L^2(S^1)$. Since $E$ is singly generated by $1_\C$, the partial
isometry $w$ is the unitary given by the bilateral shift (i.e. multiplication by $z$).
Following the recipe for constructing $\delta$ we obtain
$$
\delta_{C(S^1)}=[z\ox 1_{C(S^1)}]-[1_{C(S^1)}\ox z]
=[z]\ox[\iota_{\C,C(S^1)}]-[\iota_{\C,C(S^1)}]\ox[z]\in KK^1(\C,C(S^1)\ox C(S^1)).
$$
\subsubsection{The rotation algebras}
\label{ex:circle-to-Atheta} 
Let $A=C(S^1)$ and let $E$ be the bimodule implementing the
automorphism of rotation by an angle $\theta$, defined as in
Subsection~\ref{subsub:cp-CP-commute}. That is, define 
$\alpha:\,C(S^1)\to C(S^1)$ by $\alpha(a)(e^{i\phi})=a(e^{i(\phi+\theta)})$,
and then define
$$
E=A,\qquad a\cdot e\cdot b\defeq\alpha(a)eb, \quad\mbox{and}\quad  a,\,b\in A,\ e\in E.
$$
Then $\CPa_E$ is  isomorphic to the rotation algebra $A_\theta$. Since
$\beta\defeq[z\ox 1_{C(S^1)}]-[1_{C(S^1)}\ox z]\in KK^1(C(S^1)\ox C(S^1),\C)$ satisfies
$\beta\ox_{C(S^1)}[E]=\beta\ox_{C(S^1)}[E^\op]=\beta$ (the class of the unitary $z$ is
invariant under rotations), we obtain a $K$-theory fundamental class 
$\delta$ for $A_\theta$ as follows.

Since $E$ is an invertible bimodule, $\CPa_E$ can be realised as the algebra generated by
the shift operators on $F_{E,\Z}=\ell^2(\Z)\ox C(S^1)$. Specifically, writing $\{e_n : n
\in \Z\}$ for the standard orthonormal basis of $\ell^2(\Z)$,
$$
a\cdot(e_n\ox b)=e_n\ox \alpha^n(a)b\quad\mbox{for all } a,\,b\in C(S^1).
$$
As $E$ is  generated by $1_{C(S^1)}$,  the partial isometry $w$ is the unitary
given by the bilateral shift (which we continue to denote by $w$). This $w$ does not
commute with the left action of $z\in C(S^1)$: we have $wz=e^{-i\theta}zw$.

To determine $\delta$,
we follow the recipe of
Section~\ref{sec:K-fun} (remembering the antisymmetry of the external product) and obtain
\begin{align*}
\delta_{A_\theta}&=([z\ox 1_{C(S^1)^\op}]-[1_{C(S^1)}\ox z^\op])\ox_{C(S^1)\ox C(S^1)^\op}[\mathbb{W}]\\
&\qquad\qquad-([z\ox 1_{C(S^1)^\op}]-[1_{C(S^1)}\ox z^\op])\ox_{C(S^1)\ox C(S^1)^\op}[\mathbb{W}^\op]\\
&=[z]\ox_{C(S^1)}[W]\ox_\C[\iota_{\C,A_\theta^\op}]+[w]\ox_\C[z^\op]
-[z]\ox_\C[w^\op]-[\iota_{\C,A_\theta}]\ox_\C[z^\op]\ox_{C(S^1)^\op}[W^\op].
\end{align*}
Up to sign, this expression agrees with the Bott element identified by
Connes in \cite{ConnesGrav}, and  identifies the class of the Powers-Rieffel projector
with $[z]\ox_{C(S^1)}[W]\in K_0(A_\theta)$.

\subsubsection{Automorphisms}
\label{ex:automorphisms}

More generally, given an algebra $A$ with Bott class $\beta$ that is invariant under an
automorphism $\alpha \in \Aut(A)$ in the sense that $\beta\ox_A {_\alpha A} =
\beta\ox_{A^\op} ({_\alpha A})^\op$, we obtain a Bott class $\delta$ for
$A\rtimes_\alpha\Z$. This applies in particular to isometric actions of $\Z$ on compact
spin$^c$ manifolds, and more generally when we have a Bott class satisfying the
conditions of Lemma~\ref{lem:CP-conditions}(1).

Let $U\in A\rtimes_\alpha\Z$ be the unitary implementing  $\alpha$. The
projection $e_w$ of Equation~\eqref{eq:ee-dubbya} is given by
$$
e_w(t)=\frac{1}{1+t^2}\begin{pmatrix} t^2 &-itU\\ itU^* & 1\end{pmatrix}.
$$
So we obtain an explicit representative of $\delta$ from an explicit representative of
$\beta$.

\subsubsection{Cuntz--Krieger algebras and graph algebras}\label{ex:cuntz-krieger}
Consider a finite directed graph $G$.
Suppose that there is at most one
(directed) edge between any pair of vertices if $G$. This is a constraint on the graphs
we consider, but not the algebras: replacing a graph with its dual graph does not change
the graph $C^*$-algebra, and the dual graph has at most one edge between any two
vertices, \cite{Raeburn}.

Let $E$ be the graph bimodule of $G$ as in Section~\ref{subsub:findim-alg}. We proved
there that the diagram~\ref{eq:diagram} for $E^\op$ commutes. Here we compute $\delta$ by
taking the products of $\beta=(\C,C_0(G^0),0)$ with 
the classes $\mathbb{W}$ and $\mathbb{W}^\op$
described in Definition~\ref{defn:K-fun} and Lemma~\ref{lem:eval=0}.


Enumerate $G^1 = g_1, \dots, g_{|G^1|}$. For $i \le |G^1|$,  write $e_i \in
C(G^1)$ for the point-mass function.

Let $e_{w\ox 1}$ be the projection over the mapping cone for $C(G^0)\ox
C(G^0)\hookrightarrow C^*(G)\ox C^*(G)$ determined by Equation~\eqref{eq:ee-dubbya} from
the partial isometry $w$ over $C^*(G)$ in Equation~\eqref{eq:dubbya-from-S}. Using the
Cuntz-Krieger relations it is not hard to show that $(p_v\ox p_v^\op) e_{w\ox 1}$ is the
projection $e_V$ associated as in Equation~\eqref{eq:ee-dubbya} to the partial isometry
$$
V=\sum_{s(e_i)=v}E_{i1}S_{g_i}^*\ox p_v^\op,
$$
where the $E_{i1}$ are matrix units. With this  observation and  $\beta=\sum_{v\in
G^0}p_v\ox p_v^\op$ one checks that
$$
\beta\ox_{A\ox A^\op}\mathbb{W}=\sum_{v\in G^0}\sum_{s(g_i)=v}[E_{i1}S_{e_i}^*\ox p_v^\op]
\in KK(\C,M(A\ox A^\op,\CPa_E\ox \CPa_{E^\op})).
$$
Similarly\footnote{This identification of left and right frames does not hold in general,
and relies heavily on the orthonormality of the frames in this example}, writing
$f_j^\op$ for the point mass function $\delta_j \in C(G^1)$ regarded as an element of
$E^\op$, we compute
$$
\beta\ox_{A\ox A^\op}\mathbb{W}^\op
=\sum_{v\in G^0}\sum_{r(g_i)=v}[p_v\ox E_{i1}S_{f_i^\op}^*]
\in KK(\C,M(A\ox A^\op,\CPa_E\ox \CPa_{E^\op})).
$$
We have
$$
\Big(\sum_{s(g_j)=w}E_{1j}S_{e_j}\ox p_w^\op\Big)\Big(\sum_{s(g_i)=v}E_{i1}S_{e_i}^*\ox p_v^\op\Big)
=p_v\ox p_v^\op
$$
and
$$
\Big(\sum_{r(g_j)=w}p_w\ox E_{1j}S_{f_j^\op}\Big)\Big(\sum_{r(g_i)=v}p_v\ox E_{i1}S_{f_i^\op}^*\Big)
=p_v\ox p_v^\op.
$$
So we can use \cite[Lemma~3.3]{CPR1} to compute that
\begin{align*}
\beta\ox_{A\ox A^\op}\mathbb{W}-\beta\ox_{A\ox A^\op}\mathbb{W}^\op
&=\sum_{v,\,w\in G^0}
\Big(\sum_{s(g_i)=w}[E_{i1}S_{e_i}^*\ox p_w^\op]-\sum_{r(g_j)=v}[p_v\ox E_{j1}S_{f_j^\op}^*]\Big)\nonumber\\
&=\sum_{v,\,w\in G^0}
\sum_{s(g_i)=w}\sum_{r(g_j)=v}[(E_{i1}S_{e_i}^*\ox p_w^\op)(p_v\ox E_{1j}S_{f_j^\op})]\nonumber\\
&=\sum_{v,\,w\in G^0}
\sum_{s(g_i)=w}\sum_{r(g_j)=v}[E_{ij}S_{e_i}^*\ox S_{f_j^\op}]\delta_{v,r(e_i)}\delta_{w,s(f_j)}\nonumber\\
&=\sum_{g_j\in G^1}[E_{jj}S_{e_j}^*\ox S_{f_j^\op}]
\quad\quad\text{ as each $|vG^1w| \le 1$}\nonumber\\
&=\sum_{g_j\in G^1}[S_{e_j}^*\ox S_{f_j^\op}].
\label{eq:Kam-Put}
\end{align*}
Identifying $E^\op$ with the edge module of the opposite graph, we deduce that
the class constructed in Lemma~\ref{lem:eval=0} recovers the $K$-theory Poincar\'e
duality class of Kaminker and Putnam \cite{KP}, but
for any  finite graph with at most one edge between any two vertices.

In the non-unital case, Kajiwara, Pinzari 
and Watatani \cite[Section~6.1]{KajPinWat}
showed that a left inner product can be defined 
on $C_0(G^1)$ making $C_0(G^1)$ bi-Hilbertian precisely when 
the in- and out-valences of the graph are uniformly bounded. 
The requirement that $C_0(G^1)$ is the restriction of
a finitely generated module over $C_b(G^1)$ is proved in \cite{AR}.
The construction of the classes $W$ and so $\delta$ extend to this
generality, but the construction of the $K$-homology 
fundamental class discussed in the next section does not immediately
extend.

%

\section{Examples of $K$-homology fundamental classes}
\label{sec:K-fun}

We have been unable to identify a general procedure for lifting $K$-homology fundamental
classes from $A$ to $\CPa_E$. For the special cases of crossed products by $\Z$ and
graph algebras, we \emph{can} produce the required $K$-homology class; but the
procedure in each case is ad hoc.

\subsection{Crossed products by $\Z$}
\label{subsec:cpi}

For this subsection, we suppose that  $\alpha:A\to A$ is an automorphism, and that $\mu$
is a $K$-homology fundamental class for $A$ such that $\mu$ and $\alpha$ satisfy the
conditions of Lemma~\ref{lem:CP-conditions}(2). Thus $\mu$ is represented by a spectral
triple $(\A\ox\A^\op,{}_\pi\H,\D)$ with $\ol{\A\H}=\ol{\A^\op\H}=\H$, both $\alpha$ and
$\alpha^\op$ preserve the subalgebras $\A$ and $\A^\op$, and $\alpha$ is implemented on
$\H$ by a unitary $W_\alpha$ such that $[\D,W_\alpha]$ is bounded. The main constructions of
this section do not require $A$ to be unital.

\begin{thm}
\label{thm:cp-fun} Take $A$, $\alpha$, and $\mu=[(\A\ox\A^\op,{}_\pi\H,\D)]\in KK^d(A\ox
A^\op,\C)$ as above. Let $E := {_\alpha A_A}$. Write $S_1 \in \CPa_E$ for the generator
corresponding to $1_A$ regarded as an element of $E$. Let $\mathscr{O}_E \subseteq
\CPa_E$ be the subalgebra described at~\eqref{eq:scriptO}, and similarly for
$\mathscr{O}_{E^\op}$. Let $U\in \B(\oplus_{n\in\Z}\H)$ be the shift,
$(U\xi)_n=\xi_{n+1}$. There is a representation $\tilde{\pi}$ of
$\mathscr{O}_E\ox\mathscr{O}_E^\op$ on $\bigoplus_{n\in\Z}\H$ such that for all $a,b \in
\A$ and $\xi \in \bigoplus_{n \in \Z} \H$, we have
\begin{equation}
\begin{split}
(\tilde{\pi}(a\ox b^\op&)\xi)_n = \pi(\alpha^n(a)\ox b^\op)\xi, \qquad
(\tilde{\pi}(S_1\otimes 1)\xi)_n = (U\xi)_n=\xi_{n+1},\text{ and}\\
&(\tilde{\pi}(1\otimes S_{1^\op})\xi)_n=(U^{-1}W_\alpha\xi)_n=(W_\alpha U^{-1}\xi)_n=W_\alpha\xi_{n-1}.
\end{split}
\label{eq:two-ways}
\end{equation}
Let $N : \bigoplus_{n \in \Z} \H \to \bigoplus_{n \in \Z} \H$ be the densely defined
number operator $(N \xi)_n = n \xi_n$. If $d$ is even, write $\H_+$ and $\H_-$ for the
even and odd subspaces of $\H$ so that $\H=\begin{pmatrix} \H_+\\ \H_-\end{pmatrix}$ and
$\D=\begin{pmatrix} 0 & \D_-\\ \D_+ & 0\end{pmatrix}$. Define
\[
\Delta_0 :=
    \begin{cases}
    \bigg(\mathscr{O}_E\ox\mathscr{O}_E^\op, \bigoplus_{n\in\Z}(\H\ox\C^2), \bigg(\begin{matrix} 0 & N-i\D\\ N+i\D & 0\end{matrix}\bigg)\bigg)
        &\text{ if $d$ is odd}\\
    \bigg(\mathscr{O}_E\ox\mathscr{O}_E^\op, \bigoplus_{n\in\Z}\bigg(\begin{matrix} \H_+\\ \H_-\end{matrix}\bigg),\bigg(\begin{matrix} N & \D_-\\ \D_+ & -N \end{matrix}\bigg)\bigg),
        &\text{ if $d$ is even.}
    \end{cases}
\]
If both $[\D,\alpha^n(a)]$ and $[\D,\alpha^n(a)^\op]$ are uniformly norm-bounded in $n$,
then $\Delta_0$ is an unbounded Kasparov module. 
If in addition the operators $W_\alpha^n[\D,W_\alpha^{-n}](\D\pm i)^{-1}$
are uniformly bounded in $n$, the class $\Delta \in
KK^{d+1}(\CPa_E \otimes \CPa_E^\op, \C)$ that $\Delta_0$ defines is a $K$-homology
fundamental class.
\end{thm}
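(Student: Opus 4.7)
The plan is to apply Theorem~\ref{thm:(OEop) sufficient}(1). Since $E = {}_\alpha A_A$ is an invertible bimodule, Proposition~\ref{prop:op-is-op} identifies $\CPa_{E^\op}$ with $\CPa_E^\op$, so the class $\Delta$ constructed from $\Delta_0$ can be read as a class in $KK^{d+1}(\CPa_E\ox\CPa_{E^\op},\C)$ in the sense of that theorem. The needed hypothesis $[E]\ox_A\mu = [E^\op]\ox_{A^\op}\mu$ is supplied by Lemma~\ref{lem:CP-conditions}(2), whose standing hypotheses are precisely the running assumptions on $\mu$, $\alpha$ and $W_\alpha$. So the remaining tasks are to verify that $\tilde\pi$ really is a representation, that $\Delta_0$ is an unbounded Kasparov module, and that the class $\Delta$ it defines satisfies the product identities $\iota_{A,\CPa_E}\ox_{\CPa_E}\Delta = \extocls\ox_{A^\op}\mu$ and $\iota_{A^\op,\CPa_{E^\op}}\ox_{\CPa_{E^\op}}\Delta = \extcls\ox_A\mu$.

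The verifications that $\tilde\pi$ is a $*$-representation and that $\Delta_0$ satisfies the Kasparov-module axioms are essentially mechanical. The crossed-product covariance $\pi(a)S_1 = S_1\pi(\alpha(a))$ becomes the identity $\tilde\pi(a\ox 1)U = U\tilde\pi(\alpha(a)\ox 1)$, which is immediate from $(\tilde\pi(a\ox 1)\xi)_n = \pi(\alpha^n(a))\xi_n$; the analogous relation in the second factor unpacks, after accounting for the reversed multiplication in $\mathscr{O}_E^\op$, to the intertwining $W_\alpha\pi(b)W_\alpha^* = \pi(\alpha^{-1}(b))$ supplied by the hypothesis, and commutation between the two factors follows because $U$ and $W_\alpha$ act on disjoint degrees of freedom while $\phi$ and $\psi$ commute on $\H$. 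For the Kasparov-module axioms, since $N$ acts only on the $\Z$-index and $\D$ acts fibrewise, $D^2$ equals $N^2+\D^2$ on each copy of $\H$; this has compact resolvent because $N$ has discrete spectrum $\Z$ and $\D$ has compact resolvent on $\H$, and $D$ is self-adjoint and regular on its natural core. The bounded-commutator condition reduces to the routine verifications $[N,\tilde\pi(a\ox b^\op)] = 0$ (diagonal in $n$), $[N,U] = -U$, $[N,W_\alpha U^{-1}] = W_\alpha U^{-1}$, $[\D,U] = 0$, $[\D,W_\alpha U^{-1}] = [\D,W_\alpha]U^{-1}$ (bounded by hypothesis), and $[\D,\tilde\pi(a\ox b^\op)]$ having fibre $[\D,\pi(\alpha^n(a)\ox b^\op)]$ at level $n$, uniformly bounded by the uniform-boundedness assumption.

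The main obstacle is the product computation. Using the representative $(\mathscr{O}_E^\op,(\Fock_{E,\Z})_{A^\op},N)$ of $\extocls$ from Proposition~\ref{prop:minus-enn} together with the identifications $E^{\ox n}\ox_{A^\op}\H\cong\H$, the Hilbert module underlying $\extocls\ox_{A^\op}\mu$ is unitarily $\bigoplus_{n\in\Z}\H$. Because $\Fock_{E,\Z}$ is built from the algebra $A$ itself, the trivial connection suffices in the unbounded product, yielding an operator of the same shape as $D$ but with $A$ acting untwisted, as $\phi$, on every fibre. The fibrewise unitary $V = \bigoplus_n W_\alpha^{-n}$ intertwines this product representation with the restriction of $\tilde\pi$ to $A\ox\mathscr{O}_E^\op$: by the intertwining hypothesis on $W_\alpha$, conjugation by $W_\alpha^{-n}$ converts $\phi$ to $\phi\circ\alpha^n$ on fibre $n$, and $V$ commutes with $N$ because $W_\alpha$ acts fibrewise. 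The difference between the transported operator $VDV^*$ and $D$ is, on fibre $n$, proportional to $W_\alpha^{-n}[\D,W_\alpha^n]$, which by the assumed uniform bound on $W_\alpha^n[\D,W_\alpha^{-n}](\D\pm i)^{-1}$ defines a relatively bounded perturbation of $D$; hence the two unbounded modules represent the same $KK$-class, giving the first product identity. The second identity is proved identically, with the roles of the two tensor factors exchanged and using the representative of $\extcls$ from Proposition~\ref{prop:minus-enn}. An appeal to Theorem~\ref{thm:(OEop) sufficient}(1) then yields that $\Delta$ is a $K$-homology fundamental class.
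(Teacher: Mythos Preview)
Your approach is essentially the same as the paper's: verify that $\tilde\pi$ is a representation and $\Delta_0$ an unbounded Kasparov module, then check the two product identities of Theorem~\ref{thm:(OEop) sufficient}(1) using the fibrewise unitary $V=\bigoplus_n W_\alpha^{-n}$ and the relative boundedness hypothesis to control $VDV^*-D$. The only organisational difference is that the paper first constructs $\Delta_0$ explicitly as (an extension of) the product $\extcls\ox_A\mu$, so the identity $\iota_{A^\op,\CPa_E^\op}^*\Delta=\extcls\ox_A\mu$ is immediate and $V$ is needed only for the other one, whereas you treat the two identities symmetrically; the paper also pins down the technical steps (self-adjointness and compact resolvent of $N\#\D$, identification of the unbounded product, and the operator homotopy) by citing \cite{MR,KaadLesch,CarPotSuk}, which you should do as well rather than leaving them as assertions.
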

\begin{proof}
The universal properties of $\CPa_E$ and $\CPa_E^\op$, together with that of the tensor
product, show that there is a representation of $\CPa_E \ox \CPa_E^\op$ whose restriction
to $\mathscr{O}_E\ox\mathscr{O}_E^\op$ satisfies the desired formulas.

We will first construct the product of the extension class
and the $K$-homology fundamental class for $A$. We only present the argument for $d$ odd, as the case for $d$ even is similar.
The extension class $\extcls$ is represented by
$(\CPa_E,\oplus_{n\in\Z}A,N)$
\cite[Theorem~3.1]{RRSext}, as described in Proposition
\ref{prop:minus-enn}.

The internal product of $\oplus_{n\in\Z}A$ with $\H$ is just $\oplus_{n\in\Z}\H$ because
the action is non-degenerate. Since both $\extcls$ and $\mu$ are odd, we need to double
both of them to even classes using the Clifford algebra $\Cliff_1$. We omit the details
but refer to  \cite[Appendix]{BCR} for the mechanics and determination of signs.

Abusing notation slightly, we write $N$ for $N \ox 1$ on $\bigoplus_{n \in \Z} A \ox_A
\H$ and we write $\D$ for the operator $\bigoplus_{n \in \Z} \D$ on the same space. We
will show that
\begin{equation}\label{eq:O spec trip}
\left(\mathscr{O}_E\ox\A^\op, \bigoplus_{n\in\Z}(\H\ox\C^2),\begin{pmatrix} 0 & N-i\D\\ N+i\D & 0\end{pmatrix}\right)
\end{equation}
is a spectral triple representing the Kasparov product $\extcls\ox_A\mu$.
To see this, first note that the operator
\[
N\#\D := \begin{pmatrix} 0 & N-i\D\\ N+i\D & 0\end{pmatrix}
\]
is self-adjoint by \cite[Proposition~3.12, Theorem~3.18 and Lemma~4.2]{MR}, and has
locally compact resolvent by \cite[Theorem~6.7]{KaadLesch}. 

By assumption we have uniform boundedness of 
$[\D,\alpha^n(a)]$ and $[\D,\alpha^n(a)^\op]$. For commutators with the other generators of $\mathscr{O}_E\ox\mathscr{O}_E^\op$
we just need to recall that $[\D,W_\alpha]$ is assumed to be bounded,
and observe that
$$
[\D,U]=0,\quad [N,U]=U,\quad\mbox{and}\quad [N,W_\alpha]=0.
$$
Hence $N\#\D$ has
bounded commutators not only with elements of $\mathscr{O}_E\ox\A^\op$,
but also with all of $\mathscr{O}_E\ox\mathscr{O}_E^\op$. (Similar but more general conclusions were
reached in \cite[Lemma 3.4, Proposition 3.5]{BKM}.)

We deduce that $[N\#\D, \mathscr{O}_E\ox\A^\op] \subseteq \B(\oplus_{n \in \Z} \H)$, and
so~\eqref{eq:O spec trip} is indeed a spectral triple. Theorem~4.4 of \cite{MR} shows
that this triple represents $\extcls \ox_A \mu$.
Using the fact that we have bounded commutators with all of $\mathscr{O}_E\ox\mathscr{O}_E^\op$, we obtain a spectral triple $\Delta_0$ for
$\mathscr{O}_E\ox\mathscr{O}_E^\op$, whose class we denote by $\Delta$. Plainly
$\iota_{A^\op,\CPa_E^\op}\ox_{\CPa_E^\op}\Delta$ coincides with $\extcls\ox_A\mu$.


For the final statement set
$V=\oplus_{n\in\Z}W_\alpha^{-n}$,
and assume that $V[\D,V^*](\D\pm i)^{-1}$ is bounded. 
We need to show that $\iota_{A,\CPa_E}\ox_{\CPa_E}\Delta = \extocls \otimes_{A^\op} \mu$.
Since $V$ is unitary,
 $\Delta$ is
represented by the spectral triple
$$
\left(\mathscr{O}_E\ox\mathscr{O}_{E^\op},{}_{V\tilde{\pi}(\cdot)V^*}(\oplus_n\H\ox\C^2),
\begin{pmatrix} 0 & N-iV\D V^*\\N+iV\D V^* & 0\end{pmatrix}\right)
$$
obtained from $\Delta_0$ by unitary equivalence and by identifying $\Oo_E^\op$ with
$\Oo_{E^\op}$ using Proposition~\ref{prop:op-is-op}.

We have $[V,N]=0$, and $V[\D,V^*](\D\pm i)^{-1}$
is bounded by assumption. A simple calculation shows
that the straight-line path
$\D_t:=\D+tV[\D,V^*]$ between $\D$ and $V\D V^*$
is graph norm differentiable \cite[Definition 6]{CarPotSuk}, and so
$\D_t(1+\D_t^2)^{-1/2}$ is an operator homotopy by \cite[Theorem 20]{CarPotSuk}. We deduce that
$$
\Delta = \left[\left(\mathscr{O}_E\ox\mathscr{O}_{E^\op},{}_{V\tilde{\pi}(\cdot)V^*}(\oplus_n\H\ox\C^2),
\begin{pmatrix} 0 & N-i\D \\N+i\D & 0\end{pmatrix}\right) \right].
$$
For $a,b \in \A$ and $\xi \in \bigoplus_{n \in \Z} \H$, the representation
$V\tilde{\pi}(\cdot)V^*$ satisfies
\begin{gather*}
(V\tilde{\pi}(a\ox b^\op)V^*\xi)_n=\pi(a\ox \alpha^{-n}(b)^\op)\xi,\qquad
    (\tilde{\pi}(1\otimes S_{1^\op})\xi)_n=(U^{-1}\xi)_n=\xi_{n-1},\text{ and}\\
(\tilde{\pi}(S_1\otimes 1)\xi)_n=(UW_\alpha\xi)_n=(W_\alpha U\xi)_n=W_\alpha\xi_{n+1}.
\end{gather*}
Lemma~\ref{lem:CP-conditions} gives $E^\op={}_{\alpha^{\op-1}}A^\op_{A^\op}$, and so,
just as above, we obtain
$$
\iota_{A,\CPa_E}\ox_{\CPa_E}\Delta=\extocls\ox_{A^\op}\mu.
$$
This is equal to $-\extobarcls\ox_{A^\op}\mu$ by Proposition~\ref{prop:minus-enn}.
\end{proof}

Given a Riemannian manifold $M$, by an \emph{almost-isometry} on $M$ we mean a diffeomorphism $\phi : M \to M$
such that for every $f \in C^\infty_c(M)$, we have the differentials of $f\circ\phi^k$ uniformly bounded, so
$\sup_k \|d(f \circ \phi^k)\| <
\infty$. Given an almost-isometry $\phi$ of a manifold $M$, we can construct a 
$K$-homology fundamental class for the crossed product $C_0(M)\rtimes_\alpha\Z$ of $C_0(M)$ by the automorphism
$\alpha$ dual to $\phi$.

\begin{corl} 
\label{cor:the-point}
Let $(M,g)$ be a complete Riemannian spin$^c$ manifold. Suppose that $\phi : M \to M$ is
a  spin$^c$-structure-preserving almost-isometry. Define $\alpha : C^\infty_0(M)
\to C^\infty_0(M)$ by $\alpha(f) = f \circ \phi$. Then 
there exists a class $\Delta\in KK^{d+1}(C_0(M)\rtimes_\alpha\Z\ox (C_0(M)\rtimes_\alpha\Z)^\op,\C)$
satisfying part 1. of Theorem \ref{thm:(OEop) sufficient}.

If $M$ is compact, then $\Delta$ is a
$K$-homology fundamental class for
$C(M)\rtimes_\alpha\Z$ represented by the spectral triple $\Delta_0$. In particular,
\begin{align*}
K_*(C(M)\rtimes_\alpha\Z) &\cong K^{*+\dim(M)+1}((C(M)\rtimes_\alpha\Z)^\op)
\text{ and}\\
K_*((C(M)\rtimes_\alpha\Z)^\op) &\cong K^{*+\dim(M)+1}(C(M)\rtimes_\alpha\Z).
\end{align*}
If $\phi$ is an isometry and $M$ is compact then $C(M)\rtimes_\alpha\Z$ is Poincar\'e self-dual.
\end{corl}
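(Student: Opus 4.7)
The plan is to apply Theorem~\ref{thm:(OEop) sufficient}(3) to the bimodule $E = {_\alpha C(M)_{C(M)}}$. Since $E$ is an invertible bimodule (it comes from the automorphism $\alpha$), Proposition~\ref{prop:op-is-op} gives $\CPa_{E^\op} \cong \CPa_E^\op$, so a Poincar\'e duality between $\CPa_E$ and $\CPa_{E^\op}$ is the same as Poincar\'e self-duality of $\CPa_E \cong C(M)\rtimes_\alpha\Z$. So it suffices to produce a $K$-homology fundamental class $\Delta$ and a $K$-theory fundamental class $\delta$ satisfying parts (1)~and~(2) of Theorem~\ref{thm:(OEop) sufficient} respectively, starting from the Bott/Dirac pair $(\beta, \mu)$ for $C(M)$ recalled in Section~\ref{subsub:mflds}.

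The class $\Delta$ is essentially already in hand. Since an isometry is in particular an almost-isometry, the preceding part of Corollary~\ref{cor:the-point} (obtained from Theorem~\ref{thm:cp-fun}) gives the spectral triple $\Delta_0$ and the class $\Delta$; the verification already done there shows $\iota_{A,\CPa_E}\ox_{\CPa_E}\Delta = \extocls\ox_{A^\op}\mu$ and $\iota_{A^\op,\CPa_{E^\op}}\ox_{\CPa_{E^\op}}\Delta = \extcls\ox_A\mu$, which are exactly the conditions in Theorem~\ref{thm:(OEop) sufficient}(1). The compatibility condition $[E]\otimes_A\mu = [E^\op]\otimes_{A^\op}\mu$ for the Dirac class, which underlies this, is the first part of Proposition~\ref{prop:spin-cee-hyp}.

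For the $K$-theory fundamental class $\delta$, I would invoke Theorem~\ref{thm:yeah}. Its two hypotheses are immediate in this setting: because $M$ is compact, $A = C(M)$ is unital and $E = {_\alpha C(M)}$ is already finitely generated as a right $A$-module (singly generated by $1$), so no unitisation is required; and because $\phi$ is an isometry, the second part of Proposition~\ref{prop:spin-cee-hyp} gives $\beta\otimes_{C(M)}[E] = \beta\otimes_{C(M)}[E^\op]$. Theorem~\ref{thm:yeah} then produces $\delta \in KK^{d+1}(\C,\CPa_E\otimes\CPa_{E^\op})$ satisfying
\[
\delta\ox_{\CPa_E}\extcls = -\beta\ox_{A^\op}\iota_{A^\op,\CPa_{E^\op}}, \qquad
\delta\ox_{\CPa_{E^\op}}\extocls = \beta\ox_A\iota_{A,\CPa_E},
\]
which are exactly the conditions in Theorem~\ref{thm:(OEop) sufficient}(2).

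With both $\delta$ and $\Delta$ in hand, Theorem~\ref{thm:(OEop) sufficient}(3) delivers a Poincar\'e duality between $\CPa_E$ and $\CPa_{E^\op}$, and Proposition~\ref{prop:op-is-op} identifies the latter with $\CPa_E^\op$, giving Poincar\'e self-duality for $C(M)\rtimes_\alpha\Z$. The only real obstacle is checking the hypotheses of Theorem~\ref{thm:yeah} and Theorem~\ref{thm:(OEop) sufficient} — the $K$-theory side requires $\beta$-invariance (an isometric condition), while the $K$-homology side requires $\mu$-invariance (only a spin$^c$-preservation condition). The isometry hypothesis is precisely what upgrades the almost-isometry conclusion (half a duality) to the full self-duality.
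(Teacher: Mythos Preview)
Your argument is correct and follows essentially the same path as the paper's proof: the $K$-homology class $\Delta$ comes from Theorem~\ref{thm:cp-fun}, the $K$-theory class $\delta$ comes from the machinery of Section~\ref{subsec:K-theory-fun} (the paper phrases this as ``the discussion of the example of subsection~\ref{ex:automorphisms} combined with Proposition~\ref{prop:spin-cee-hyp}'', but that discussion rests on exactly the same Theorem~\ref{thm:yeah} you invoke), and then part~(3) of the lifting theorem assembles them into a self-duality. The only cosmetic difference is that you route through Theorem~\ref{thm:(OEop) sufficient} while the paper invokes Theorem~\ref{thm:(OE)op sufficient}; since $E$ is an invertible bimodule, Proposition~\ref{prop:op-is-op} and Corollary~\ref{cor:all-my-ops} make these two formulations interchangeable, so this is not a genuine divergence.
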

\begin{proof}

The first statement is a direct consequence of Theorem~\ref{thm:cp-fun}
and the uniform boundedness of the differentials $d(\phi^k)$. If $\phi$ is an
isometry and $M$ is compact then the discussion of the example of subsection~\ref{ex:automorphisms}
combined with Proposition~\ref{prop:spin-cee-hyp} gives a $K$-theory fundamental class,
and then Theorem~\ref{thm:(OE)op sufficient}(3) gives a Poincar\'e self-duality.
\end{proof}

\begin{corl} 
\label{cor:the-point-theta}
Let $(M_\theta,g)$ be a $\theta$-deformation of
a complete Riemannian spin$^c$ manifold. 
Suppose that  $\alpha : C^\infty_0(M_\theta)
\to C^\infty_0(M_\theta)$ is an automorphism unitarily implemented
on $L^2(S_\theta)$ and commuting with the Dirac operator. Then 
there exists a class $\Delta\in KK^{d+1}(C_0(M_\theta)\rtimes_\alpha\Z\ox (C_0(M_\theta)\rtimes_\alpha\Z)^\op,\C)$
satisfying part 1. of Theorem \ref{thm:(OEop) sufficient}.

If $M$ is compact, then $\Delta$ is a
$K$-homology fundamental class for
$C_0(M_\theta)\rtimes_\alpha\Z$. In particular,
\begin{align*}
K_*(C(M_\theta)\rtimes_\alpha\Z) &\cong K^{*+\dim(M_\theta)+1}((C(M_\theta)\rtimes_\alpha\Z)^\op)
\text{ and}\\
K_*((C(M_\theta)\rtimes_\alpha\Z)^\op) &\cong K^{*+\dim(M_\theta)+1}(C(M_\theta)\rtimes_\alpha\Z).
\end{align*}
If $\alpha$ is unitarily implemented and $M$ is compact then $C(M_\theta)\rtimes_\alpha\Z$ is Poincar\'e self-dual.
\end{corl}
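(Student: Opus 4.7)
The plan is to mirror the proof of Corollary~\ref{cor:the-point}, replacing the geometric input (the spin$^c$ structure and isometric diffeomorphism) with its $\theta$-deformed counterpart. The hypotheses made here are stronger than in the classical case: because $\alpha$ is already assumed unitarily implemented by some $W_\alpha$ commuting with $\D$, the boundedness conditions appearing in Theorem~\ref{thm:cp-fun} become \emph{trivial}, and this is the main simplification driving the argument.

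First I would invoke the known fact that a $\theta$-deformed complete Riemannian spin$^c$ manifold $M_\theta$ carries a canonical spectral triple $(\A_\theta\ox\A_\theta^\op,L^2(S_\theta),\D)$ whose class $\mu\in KK^d(C_0(M_\theta)\ox C_0(M_\theta)^\op,\C)$ is a $K$-homology fundamental class (so part of a Poincar\'e self-duality for $C_0(M_\theta)$, with the accompanying Bott class $\beta\in KK^d(\C,C_0(M_\theta)\ox C_0(M_\theta)^\op)$ obtained by deforming Kasparov's Bott construction). Then I would set $E={}_\alpha C_0(M_\theta)_{C_0(M_\theta)}$ and verify the hypotheses of Theorem~\ref{thm:cp-fun}. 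Since $[W_\alpha,\D]=0$ by assumption, we have $\alpha^n(a)=W_\alpha^n a W_\alpha^{-n}$, hence $[\D,\alpha^n(a)]=W_\alpha^n[\D,a]W_\alpha^{-n}$ has norm equal to $\|[\D,a]\|$ uniformly in $n$; the same holds for $[\D,\alpha^n(a)^\op]$. Moreover $[\D,W_\alpha^{-n}]=0$, so the operator $W_\alpha^n[\D,W_\alpha^{-n}](\D\pm i)^{-1}$ vanishes identically and is uniformly bounded in $n$. Therefore Theorem~\ref{thm:cp-fun} produces an unbounded Kasparov module $\Delta_0$ whose class $\Delta\in KK^{d+1}(\CPa_E\ox\CPa_E^\op,\C)$ satisfies the conditions of Theorem~\ref{thm:(OEop) sufficient}(1), and when $M_\theta$ is compact the four induced maps in~\eqref{eq:isos-op} are isomorphisms.

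For the final self-duality statement, I would next produce a compatible $K$-theory fundamental class $\delta$ using the construction of Section~\ref{subsec:K-theory-fun}. The required input is the identity $\beta\ox_{C(M_\theta)}[E]=\beta\ox_{C(M_\theta)^\op}[E^\op]$, which I would verify by applying Lemma~\ref{lem:CP-conditions}(\ref{it:autobeta}): taking $V$ to be the unitary on the Bott module induced by $W_\alpha$ (this lift exists because, as in the classical case, $\alpha$ preserves the spin$^c$-type data underlying $\beta$, and $W_\alpha$ commutes with $\D$, hence with the Bott operator $T$, making the commutator $V^{-1}(VT-TV)=0$ automatically compact). Theorem~\ref{thm:yeah} then delivers $\delta$, and Theorem~\ref{thm:(OEop) sufficient}(3) combines $\delta$ with $\Delta$ to conclude that $C(M_\theta)\rtimes_\alpha\Z$ is Poincar\'e self-dual (after passing between $\CPa_{E^\op}$ and $\CPa_E^\op$ via Proposition~\ref{prop:op-is-op}, which applies since $E$ is an invertible bimodule).

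The main obstacle is the construction and invariance of the Bott class $\beta$ on the $\theta$-deformed manifold: one needs the analogue of the Clifford-algebra lift $V$ of $W_\alpha$ to the geodesic neighbourhood module used in Section~\ref{subsub:mflds}, together with the verification that $V$ intertwines the Bott operator $T$ up to a compact endomorphism. For $\theta$-deformations this should follow from the standard equivariant deformation machinery, but the bookkeeping of actions on the spinor bundle and its geodesic pullback in the noncommutative setting is the most delicate step; everything else reduces to a direct transcription of the arguments for Corollary~\ref{cor:the-point}.
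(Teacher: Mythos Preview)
Your proposal is correct but takes a genuinely different route from the paper. The paper's proof is a single line: it invokes the $KK$-equivalence $C(M_\theta)\sim_{KK}C(M)$ due to Yamashita, which transports the fundamental classes $(\beta,\mu)$ for $C(M)$ directly to fundamental classes for $C(M_\theta)$; everything else then follows exactly as in Corollary~\ref{cor:the-point}. By contrast, you work intrinsically on the deformed manifold, building the $\theta$-deformed spectral triple representing $\mu$ and a deformed Bott class $\beta$, and then feed these into Theorem~\ref{thm:cp-fun} and Lemma~\ref{lem:CP-conditions} directly.

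What each approach buys: the paper's argument is almost free and entirely sidesteps the issue you correctly flag as ``the main obstacle,'' namely constructing the Bott module on $M_\theta$ and exhibiting the lift $V$ of $W_\alpha$ compatible with the Bott operator $T$. Your approach, on the other hand, yields explicit representatives of $\Delta$ (and $\delta$) on the deformed manifold rather than classes obtained abstractly by pushing along a $KK$-equivalence; this is in keeping with the paper's emphasis on concrete representatives. Your observation that the hypothesis $[W_\alpha,\D]=0$ trivialises all the uniform-boundedness conditions in Theorem~\ref{thm:cp-fun} is exactly right and is the reason the $K$-homology half goes through cleanly in your argument. The only part of your sketch that would require genuine work beyond transcription is the deformed Bott class and its $\alpha$-invariance; the paper avoids this entirely via the $KK$-equivalence.
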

\begin{proof}
The $KK$-equivalence $C(M_\theta)\sim_{KK}C(M)$, \cite{Yamashita}, gives us fundamental classes for
$C(M_\theta)$.
\end{proof}

\begin{rmk}
An analogous construction can be given when $(M,g)$ is oriented but not necessarily
spin$^c$. This construction, which uses Kasparov's fundamental class
\cite{KasparovEqvar}, starts from a Poincar\'e duality between $C(M)$ and $\Cliff(M)$ and
produces a Poincar\'e duality between $C(M)\rtimes_\alpha\Z$ and
$\Cliff(M)\rtimes_\alpha\Z$. See Appendix~\ref{sec:Magnus-made-me-do-it}.
\end{rmk}
\begin{rmk}
The important point in the above constructions is that we have an
explicit representative $\Delta_0$ of the fundamental class. Likewise
representatives of the dual $K$-theory class can be obtained from the
Bott class and the isometry.
\end{rmk}

\subsection{Cuntz--Krieger algebras and graph algebras} For unital Cuntz--Krieger algebras we
have fairly complete information. Let $E$ be the edge module for a finite graph with no sources nor sinks. When the graph algebra has associated shift space a Cantor set, Kaminker and Putnam provided an
extension representing a $K$-homology 
fundamental class $\Delta$ relating $\Oo_E$ and
$\Oo_{E^\op}$, \cite{KP}. Goffeng and Mesland provided a Kasparov module
representing this extension in \cite{GM}.

We show how  Goffeng and Mesland's construction fits our framework. Given further assumptions we produce
a $K$-homology fundamental class relating $\Oo_E$ and $\Oo_E^\op$, and so deduce a
$KK$-equivalence between $\Oo_{E^\op}$ and $\Oo_E^\op$.

\subsubsection{The $K$-homology fundamental class for $C^*(G^\op)$}
\label{subsub:GMI}

Fix a finite directed graph $G=(G^0,G^1,r,s)$ 
with no sources and no sinks, 
let $A = C(G^0)$, and write $E$ for the
Cuntz--Krieger module $C(G^1)$ of $G$. We regard $E$ as a bi-Hilbertian bimodule with
\[
(e \mid f)_A(v) = \sum_{r(g) = v} \overline{e(g)}f(g)
    \quad\text{ and }\quad
{_A(e \mid f)}(v) = \sum_{s(g) = v} e(g)\overline{f(g)}.
\]
We write $\phi : A \to \C$ for the functional $\phi(a) = \sum_{v \in G^0} a_v$. Observe that
\begin{equation}\label{eq:mu for CK}
    \mu=[(A\ox A^\op, L^2(A,\phi),0)]
\end{equation}
is a $K$-homology fundamental class for $A$ by Remark~\ref{rmk:Cr PD}. The functional
$\phi$ is invariant for $E$ in the sense that $\phi((e \mid f)_A) = \phi({_A(e \mid f)})$
for all $e,f \in E$, cf \cite[Section 4]{RRSext}.

Since $\phi$ is faithful, we can form the Hilbert space $L^2(\Fock_E,\phi)$ with inner
product $\langle \xi,\eta\rangle=\phi((\xi\mid \eta)_A)$. On this Hilbert space we can
define operators $L_e,\,R_e$ for $e\in E$ via the formulae
$$
L_e\xi=e\ox_A \xi,\quad R_e\xi=\xi\ox_Ae.
$$
We have $[L_e,R_f]=0$ for all $e,\,f\in E$. For $b\in L^2(A,\phi)$ we have
$[L_e,R_f^*]b=-{}_A(eb\mid f)P_0$, where $P_0$ is the projection onto  the degree zero
part $A \subseteq \Fock_E$ of the Fock space. Since $A$ has finite linear dimension, we
obtain commuting representations of $\CPa_E$ and $\CPa_{E^\op}$ modulo compacts, and
hence an extension
\begin{equation}
0
\to \K(L^2(\Fock_E,\phi))\to C^*(L,R)\to
\CPa_E\ox \CPa_{E^\op}\to 0.
\label{eq:KP-ext}
\end{equation}
This extension is almost the extension used by Kaminker and Putnam to define the
$K$-homology fundamental class: the difference is that Kaminker and Putnam's extension
replaces $L^2(A,\phi)$ in degree zero by $\C$. (For us, $L^2(A, \phi) = \ell^2(G^0) =
C(G^0)$.)

A Fredholm module representing Kaminker and Putnam's extension appears in \cite{GM}. Our
class is based directly on their idea. In the situation of Cuntz--Krieger algebras,
Goffeng and Mesland's Fredholm module is constructed from the Hilbert space
$L^2(\CPa_E)\ox L^2(\CPa_{E^\op})$---where the GNS $L^2$-spaces are defined with respect
to the unique KMS-states \cite{EFW} for the gauge actions---and the natural left action.
There is an isometric embedding of $L^2(\Fock_E)$ into $L^2(\CPa_E)\ox
L^2(\CPa_{E^\op})$. Writing $P$ for the projection onto the image of this embedding, the
Fredholm operator appearing in Goffeng and Mesland's Fredholm module is $2P-1$.

To adapt Goffeng and Mesland's approach to our setting, we require two pieces of
information from \cite{RRSext}. One is the construction of representatives of the classes
$\extcls$ and $\extocls$. The other is an expectation $\CPa_E\to A$ which, when composed
with the $E$-invariant functional $\phi:A\to\C$, yields a KMS functional on $\CPa_E$.

In order to obtain these ingredients, we make an assumption on the asymptotic
behaviour of the Watatani indices of the tensor powers of $E$. When $E$ is the module
associated to an irreducible matrix that is not a permutation matrix, our assumption is automatically satisfied,
\cite[Example~3.8]{RRSext}.

Recall that if $(a_n)$ and $(b_n)$ are sequences of positive real numbers, we say that
$(a_n) \in O(b_n)$ if there exist $N \in \N$ and $C \ge 0$ such that $a_n \le C b_n$ for
all $n \ge N$.

\begin{ass}
\label{ass:one} For each $k \ge 0$, let $\mathrm{e}^{\beta_k}$ denote the right Watatani
index of $E^{\ox k}$. We assume that for every $k\in \N$, there is a $\delta>0$ such that
for each $\nu\in E^{\otimes k}$ there exists $\tilde{\nu}\in E^{\otimes k}$ satisfying
$$
\big(\Vert \mathrm{e}^{-\beta_n}\nu \mathrm{e}^{\beta_{n-k}}-\tilde\nu\Vert\big)_{n=1}^\infty
\in O(n^{-\delta}).
$$
\end{ass}
\begin{defn}
\label{defn:xi-phi-infty}
Given Assumption~\ref{ass:one}, \cite[Proposition~3.5]{RRSext} describes an expectation
$\Phi_\infty:\CPa_E\to A$. We define $\Xi_E$ to be the completion of $\CPa_E$ in the norm
determined by the inner product $(a \mid b)_A:=\Phi_\infty(a^*b)$. When
Assumption~\ref{ass:one} holds for $E^\op$, we obtain the analogous module $\Xi_{E^\op}$.
\end{defn}

The delta functions $\delta_\alpha$ on paths $\alpha\in G^k$ of length $k$ constitute a
frame for $E^{\otimes k}$. For $\alpha \in G^k$, as a notational convenience, we will
write $S_\alpha$ for the element $S_{\delta_\alpha} \in \Oo_E$. We have $\CPa_E =
\ol{\linspan}\{S_\alpha S^*_\beta : s(\alpha) = s(\beta)\}$. We denote the image of
$S_\alpha S_\beta^*$ in $\Xi_E$ by $W_{\alpha,\beta}$. As a notational convenience, we
interchangeably write $W_{\alpha, 1}$ or $W_{\alpha, s(\alpha)}$ for the image of
$S_\alpha$ and $W_{1, \alpha}$ or $W_{s(\alpha), \alpha}$ for the image of $S^*_\alpha$.
We also denote the set of finite paths in the graph $G$ by $G^*=\bigcup_kG^k$. 

The opposite module $E^\op$ coincides with the Cuntz--Krieger module of the opposite
graph $G^\op$, which has vertex set $(G^\op)^0 = G^0$, edges $(G^\op)^1 = \{g^\op : g \in
G^1\}$ and range and source maps $r(g^\op) = s(g)$ and $s(g^\op) = r(g)$. If $\alpha =
\alpha_1 \cdots \alpha_k \in G^k$ then $\alpha^\op = \alpha^\op_k \cdots \alpha^\op_1 \in
(G^\op)^k$, so we obtain elements $\{W_{\alpha^\op, \beta^\op} : \alpha,\beta \in E^*,
r(\alpha) = r(\beta)\} \in \Xi_{E^\op}$.

By \cite[Theorem~3.14]{RRSext}, the subspace $\overline{{\rm span}}\{W_{\mu,1}:\,\mu\in
G^*\}$ is isometrically isomorphic to $\Fock_E$ and is complemented in $\Xi_E$. We write
$P_{\Fock_E}$ for the projection on this subspace. We have
\begin{equation}
\extcls=[(\CPa_E,\Xi_E,2P_{\Fock_E}-1)]\in KK^1(\CPa_E,A).
\label{eq:ext-Xi}
\end{equation}
This is the representative of $\extcls$ that we require.

By \cite[Proposition~4.7]{RRSext},  $\phi\circ\Phi_\infty:\CPa_E\to\C$ is a KMS$_1$ functional for the
dynamics $S_e\mapsto e^{i\beta t}S_e$, $e\in E,\,t\in\R$.

\begin{ntn}
The following notation will prove very helpful throughout our calculations below. Let $G$
be a directed graph. Given $\lambda = \lambda_1 \cdots \lambda_m \in G^m$, and $0 \le j
\le m$, we define
\[
\underline{\lambda}_j := \begin{cases}
        \lambda_1\cdots \lambda_j &\text{ if $j \ge 1$} \\
        r(\lambda) &\text{ if $j = 0$}
    \end{cases}
    \quad \mbox{and} \quad
\overline{\lambda}^{\op (m-j)} := \begin{cases}
        \lambda^\op_m \cdots \lambda^\op_{j+1} &\text{ if $j < m$} \\
        s(\lambda)^\op &\text{ if $j = m$.}
        \end{cases}
\]
Equivalently, if $\lambda = \alpha\beta$ is the unique factorisation with $|\alpha| = j$
and $|\beta| = |\lambda|-j$, then $\underline{\lambda}_j = \alpha$ and
$\overline{\lambda}^{\op (m-j)} = \beta^\op$.
\end{ntn}

\begin{lemma}[{see also \cite[Section~2.3]{GM}}]
\label{lem:fock-isom}
Let $G$ be a finite directed graph with no sources
and no sinks, 
let $A = C(G^0)$ and
let $E$ be the associated edge module, regarded as a finitely generated
bi-Hilbertian $A$-bimodule. Assume that $E$ satisfies Assumption~\ref{ass:one}. There is
an isometry 
$$
\V : \,L^2(\Fock_E,\phi) \to \Xi_E\ox\Xi_{E^\op}\ox_{A\ox A^\op}L^2(G^0,\phi)
$$
such that, for $\lambda = \lambda_1\cdots \lambda_m\in G^m$, we have
$$
\V\delta_\lambda=\sum_{j=0}^m \frac{1}{\sqrt{m+1}} W_{\underline{\lambda}_j,1}\ox W_{\overline{\lambda}^{\op
(m-j)},1}\ox_{A\ox A^\op} \delta_{s(\lambda_j)}.
$$
For $\alpha,\beta,\mu,\nu \in G^*$ with $s(\alpha) = s(\beta)$ and $r(\mu) = r(\nu)$ and
$x \in S$, we have
\begin{align*}
\V^*&(W_{\alpha,\beta}\ox W_{\mu^\op,\nu^\op}\ox x) \\
    &=\begin{cases}
        \delta_{r(\beta), s(\nu)} \frac{x_{r(\beta)}}{\sqrt{|\alpha|-|\beta| + |\mu| - |\nu| + 1}}
            \delta_{\underline{\alpha}} \cdot \Phi_\infty(S_\beta S^*_\beta)\Phi^\op_\infty(S_{\nu^\op} S^*_{\nu^\op}) \cdot \delta_{\overline{\mu}}
            &\text{if $\alpha=\underline{\alpha}\beta$ and $\mu = \nu\overline{\mu}$} \\
        0 &\text{otherwise,}
    \end{cases}
\end{align*}
where $\Phi_\infty$ is as in Definition \ref{defn:xi-phi-infty}.
\end{lemma}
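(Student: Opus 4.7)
My plan is to define $\V$ on the orthonormal basis $\{\delta_\lambda\}_{\lambda \in G^*}$ of $L^2(\Fock_E,\phi)$ via the displayed formula, verify the isometry property directly, and then extend by density. The crux is the identification from \cite[Theorem~3.14]{RRSext}: the map $\delta_\mu \mapsto W_{\mu,1}$ is an isometric embedding of $\Fock_E$ into $\Xi_E$ as $A$-modules. This yields $(W_{\alpha,1} \mid W_{\beta,1})_A = \delta_{|\alpha|,|\beta|}\delta_{\alpha,\beta}\, p_{s(\alpha)}$, since $(W_{\alpha,1} \mid W_{\beta,1})_A = \Phi_\infty(S_\alpha^* S_\beta)$ and the case $|\alpha| \neq |\beta|$ is killed by the gauge-invariance of $\Phi_\infty$. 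The analogous facts hold for $\Xi_{E^\op}$ via $\Phi_\infty^\op$.

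With these tools, computing $\langle \V\delta_\lambda, \V\delta_\mu\rangle$ is a careful bookkeeping exercise. Using the tensor-product formula $\langle x_1 \otimes y_1 \otimes \xi_1, x_2 \otimes y_2 \otimes \xi_2\rangle = \langle \xi_1, ((x_1 \mid x_2)_A \otimes (y_1 \mid y_2)_{A^\op})\,\xi_2\rangle$ and expanding $\V\delta_\lambda, \V\delta_\mu$ as sums over splitting indices $(j,k)$, the $A$- and $A^\op$-factors force $|\underline{\lambda}_j| = |\underline{\mu}_k|$ and $m - j = n - k$, hence $m = n$ and $j = k$. The surviving diagonal terms further require $\underline{\lambda}_j = \underline{\mu}_j$ and $\overline{\lambda}^{\op(m-j)} = \overline{\mu}^{\op(m-j)}$, forcing $\lambda = \mu$. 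Each of the $m+1$ diagonal terms then contributes $(m+1)^{-1}\phi(p_{s(\lambda_j)}) = (m+1)^{-1}$, summing to $1 = \phi(p_{s(\lambda)}) = \langle \delta_\lambda,\delta_\lambda\rangle_{L^2(\Fock_E,\phi)}$. Linearity and continuity then extend $\V$ to a global isometry.

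For the adjoint formula I would pair $\V\delta_\lambda$ with the simple tensor $W_{\alpha,\beta}\otimes W_{\mu^\op,\nu^\op}\otimes x$ and identify which $j$-th summand survives. Gauge-invariance of $\Phi_\infty$ and $\Phi_\infty^\op$ forces $j = |\alpha|-|\beta|$ (necessitating $\alpha = \underline{\alpha}\beta$ with $\underline{\lambda}_j = \underline{\alpha}$) and $m-j = |\mu|-|\nu|$ (necessitating $\mu = \nu\overline{\mu}$ with $\overline{\lambda}^{\op(m-j)} = \overline{\mu}^\op$); together these pin $\lambda$ down to the concatenation $\underline{\alpha}\cdot\overline{\mu}$, which is a well-defined path exactly when $s(\underline{\alpha}) = r(\overline{\mu})$, that is, $r(\beta) = s(\nu)$. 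The Cuntz--Krieger relations then reduce $S_{\underline{\alpha}}^* S_\alpha S_\beta^*$ to $S_\beta S_\beta^*$ and yield the $\Phi_\infty^\op(S_{\nu^\op}S_{\nu^\op}^*)$ factor by the same manoeuvre on the opposite side; the prefactor $(m+1)^{-1/2} = (|\alpha|-|\beta|+|\mu|-|\nu|+1)^{-1/2}$ is inherited from the formula for $\V$, and the $x_{r(\beta)}$ factor arises from the diagonal action of $A\otimes A^\op$ on $L^2(G^0,\phi)$. The main obstacle is orchestrating the bookkeeping between $G$ and $G^\op$, particularly to track orientations in the $\overline{\lambda}^{\op(\cdot)}$ terms and to confirm that the gauge-invariance argument for $\Phi_\infty^\op$ applies with the correct parity conventions; here Assumption~\ref{ass:one} enters implicitly through its role in guaranteeing that $\Phi_\infty$ and $\Phi_\infty^\op$ are well-defined bounded expectations.
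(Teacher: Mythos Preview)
Your proposal is correct and follows essentially the same approach as the paper: both establish the isometry by directly computing $\langle \V\delta_\lambda, \V\delta_\mu\rangle$ via the double sum over splitting indices and reducing to the diagonal, and both obtain the adjoint formula by pairing $\V\delta_\lambda$ against a generic simple tensor $W_{\alpha,\beta}\ox W_{\mu^\op,\nu^\op}\ox x$ and using gauge-invariance of $\Phi_\infty$, $\Phi_\infty^\op$ to isolate the surviving term. Your invocation of \cite[Theorem~3.14]{RRSext} to shortcut the inner-product identities $(W_{\alpha,1}\mid W_{\beta,1})_A = \delta_{\alpha,\beta}\,p_{s(\alpha)}$ is a clean way to organise what the paper does by hand.
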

\begin{proof}
Fix $\lambda = \lambda_1 \cdots \lambda_m$ and $\mu = \mu_1 \cdots \mu_n \in G^*$ and
calculate
\begin{align*}
\sum_{j,k}&\frac{1}{\sqrt{m+1}}\frac{1}{\sqrt{n+1}}\langle W_{\underline{\lambda}_j,1}\ox W_{\overline{\lambda}^{\op (m-j)},1}\ox \delta_{s(\lambda_j)},
        W_{\underline{\mu}_k,1}\ox W_{\overline{\mu}^{\op (n-k)},1}\ox \delta_{s(\mu_k)} \rangle\\
&=\delta_{m,n}\sum_{j,k} \frac{1}{m+1} \langle \delta_{s(\lambda_j)}, ((\underline{\lambda}_j\mid\underline{\mu}_k)_A\ox
                                                (\overline{\lambda}^{\op(m-j)}\mid \overline{\mu}^{\op (m-k)})_{A^\op})\cdot \delta_{s(\mu_k)}\rangle\\
&=\delta_{m,n} \sum_j \frac{1}{m+1}\phi((\delta_{s(\lambda_j)} \mid
                                        (\underline{\lambda}_j\mid\underline{\mu}_j)_A \delta_{s(\mu_j)} {}_A(\overline{\mu}^{(m-j)}\mid \overline{\lambda}^{(m-j)}))_A)\\
&= \begin{cases}
    \sum^m_{j=0} \frac{1}{m+1} \phi(\delta_{s(\lambda_j)} \mid \delta_{s(\lambda_j)}) &\text{ if $\lambda = \mu$}\\
    0 &\text{ otherwise}
    \end{cases}\\
&= \begin{cases}
    1 &\text{ if $\lambda = \mu$}\\
    0 &\text{ otherwise}
    \end{cases}\\
&= \langle \delta_\lambda, \delta_\mu\rangle_{L^2(\Fock_E)}.
\end{align*}
So there is an isometry $\V$ satisfying the desired formula. Now fix $\lambda,
\alpha,\beta,\mu,\nu \in G^*$ with $s(\alpha) = s(\beta)$ and $s(\mu) = s(\nu)$ and $x
\in S$. Put $l = |\lambda|$, and calculate
\begin{align*}
\big\langle \V& \delta_\lambda, W_{\alpha,\beta} \otimes W_{\mu^\op, \nu^\op} \otimes_{A \otimes A^\op} x\big\rangle\\
    &= \sum^l_{j=0} \frac{1}{\sqrt{l+1}} \langle W_{\underline{\lambda}_j,1} \otimes W_{\overline{\lambda}^{\op(l-j)}, 1} \otimes \delta_{s(\lambda_j)},
            W_{\alpha,\beta} \otimes W_{\mu^\op, \nu^\op} \otimes_{A \otimes A^\op} x\rangle\\
    &= \sum^l_{j=0} \frac{1}{\sqrt{l+1}} \phi\Big(\Big(\delta_{s(\lambda_j)} \mathbin{\Big|}
                    \big(\big(W_{\underline{\lambda}_j,1} \mathbin{\big|} W_{\alpha,\beta}\big)_A \otimes
                        \big(W_{\overline{\lambda}^{\op(l-j)},1} \mathbin{\big|} W_{\mu^\op, \nu^\op}\big)_{A^\op}\big) \cdot x\Big)\Big)\\
    &= \sum^l_{j=0} \frac{1}{\sqrt{l+1}} \phi\big(\big(\delta_{s(\lambda_j)} \mathbin{\big|}
                    \Phi_\infty\big(S_{\underline{\lambda}_j}^*S_\alpha S^*_\beta\big) \cdot x
                        \cdot \Phi^\op_\infty\big(S^*_{\overline{\lambda}^{\op(l-j)}} S_{\mu^\op} S^*_{\nu^\op}\big)\big)\big)\\
    &= \sum^l_{j=0} \frac{1}{\sqrt{l+1}} \Phi_\infty\big(S_{\underline{\lambda}_j}^*S_\alpha S^*_\beta\big)(s(\lambda_j))
            \phi\big(\big(\delta_{s(\lambda_j)} \mathbin{\big|} x\big)\big)
                        \Phi^\op_\infty\big(S^*_{\overline{\lambda}^{\op(l-j)}}S_{\mu^\op} S^*_{\nu^\op}\big)(s(\lambda_j)).
\end{align*}
This is nonzero only if $\alpha = \underline{\alpha}\beta$ and $\mu = \nu \overline{\mu}$
and $\lambda = \underline{\alpha}\overline{\mu}$, in which case $r(\beta) =
s(\underline{\alpha}) = r(\overline{\mu}) = s(\nu)$, and the final line of the preceding
calculation collapses to
\[
\frac{1}{\sqrt{l+1}} \Phi_\infty\big(S_\beta S^*_\beta\big)(r(\beta))
            x_{r(\beta)} \Phi^\op_\infty\big(S_{\nu^\op} S^*_{\nu^\op}\big)(r(\beta)).
\]
This is precisely the inner-product of $\delta_\lambda$ with the right-hand side of the
expression given for $\V^*(W_{\alpha,\beta} \otimes W_{\mu^\op, \nu^\op} \otimes x)$.
\end{proof}

\begin{prop}
\label{prop:Kas-mod} Let $G$ be a finite directed graph
with no sources and no sinks, let $A = C(G^0)$ and let $E$ be
the associated edge module which we assume satisfies Assumption \ref{ass:one}, 
regarded as a finitely generated bi-Hilbertian
$A$-bimodule. Let $W$ be as in Lemma~\ref{lem:fock-isom}, and let $P := \V\V^*$. Then
$$
\left(\CPa_E\ox\CPa_{E^\op}, \Xi_E \ox \Xi_{E^\op}, 2P-1\right)
$$
defines a Kasparov module, whose class $\Delta$ represents the
extension~\eqref{eq:KP-ext}.
\end{prop}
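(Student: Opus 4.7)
The plan is as follows. First, Lemma~\ref{lem:fock-isom} supplies $\V^*\V = \Id_{L^2(\Fock_E,\phi)}$, whence $P := \V\V^*$ is a genuine projection and $F := 2P-1$ is self-adjoint with $F^2 = \Id$. So the verification that the given triple is a Kasparov module reduces to showing that $[F, x] = 2[P,x]$ is compact on $\Xi_E \ox \Xi_{E^\op}$ for every $x$ in some dense $*$-subalgebra of $\CPa_E \ox \CPa_{E^\op}$.

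Second, and most technical, I would check this for the generators $S_e \ox 1$ ($e \in E$) and $1 \ox S_{f^\op}$ ($f \in E$); compactness for the full dense subalgebra then follows from the Leibniz rule and the fact that compacts form an ideal. Using the explicit formulas for $\V$ and $\V^*$ in Lemma~\ref{lem:fock-isom}, I would directly compute $(S_e \ox 1)\V\delta_\lambda$ and $\V\delta_{e\lambda}$ for $\lambda \in G^m$ with $r(\lambda) = s(e)$ and subtract. The difference should telescope into a sum of $m+1$ terms each carrying the prefactor $(m+1)^{-1/2} - (m+2)^{-1/2} = O(m^{-3/2})$, plus a single remainder term of order $(m+2)^{-1/2}$ from the extra summand at $j = 0$ in $\V\delta_{e\lambda}$. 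Since $P\V = \V$, the commutator $[P, S_e \ox 1]$ applied to $\V\delta_\lambda$ is $(P - \Id)$ applied to this error. Its $\Xi_E \ox \Xi_{E^\op}$-norm is controlled using Assumption~\ref{ass:one} together with the properties of $\Phi_\infty$ from Definition~\ref{defn:xi-phi-infty}. Combining this length-wise decay with the orthogonality of the $\V\delta_\lambda$ across different lengths, one approximates the commutator uniformly by finite-rank operators supported on paths of length $\le N$, proving compactness. The argument for $1 \ox S_{f^\op}$ is symmetric.

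Third, to identify the class of the Kasparov module with the extension class of~\eqref{eq:KP-ext}: the compressed action $x \mapsto PxP$ of $\CPa_E \ox \CPa_{E^\op}$ on $P(\Xi_E \ox \Xi_{E^\op}) \cong L^2(\Fock_E,\phi)$ agrees modulo compacts with the natural $(L,R)$-representation---this is a reformulation of the commutator estimates of Step~2, combined with the identification of $\V$ as the embedding $L^2(\Fock_E,\phi) \hookrightarrow \Xi_E \ox \Xi_{E^\op}$ via vacuum-like vectors. Consequently $x \mapsto PxP + (1-P)$ is a completely positive Busby lift for~\eqref{eq:KP-ext}, and the standard dictionary between semi-split extensions and Fredholm modules of the form $F = 2P - 1$ (see \cite[\S7]{KasparovTech}) shows our cycle represents the class of that extension.

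The main obstacle will be the uniform compactness estimate of Step~2. The prefactor $O(m^{-3/2})$ is very favourable, but the number of paths of length $m$ grows exponentially in $m$, and the orthogonal decomposition of $\Xi_E \ox \Xi_{E^\op}$ by length is subtle: the $\V$-images are orthogonal across lengths, but the error vectors across different $\lambda$ of a fixed length need not be. Controlling the interplay of $O(m^{-3/2})$ decay with the $\Phi_\infty$-weighted inner product is precisely where the $O(n^{-\delta})$ bound of Assumption~\ref{ass:one}, via its control on the renormalisations $\mathrm{e}^{-\beta_n}\nu \mathrm{e}^{\beta_{n-k}}$, becomes indispensable.
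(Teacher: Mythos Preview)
Your overall plan---reduce to commutator compactness for generators, then identify the extension via the Busby picture---matches the paper. But Step~2 has a genuine gap: computing $[P, S_e \ox 1]$ only on vectors $\V\delta_\lambda$ shows that $[P, S_e \ox 1]P$ is compact, not that $[P, S_e \ox 1]$ itself is. The $\V\delta_\lambda$ span only $\operatorname{range}(P)$, so you have said nothing about $[P, S_e \ox 1](1-P) = P(S_e \ox 1)(1-P)$. In the paper this missing half is the bulk of the argument: one analyses $\V\V^*(S_e \ox 1)(1-\V\V^*)$ on the full spanning set $W_{\alpha,\beta} \ox W_{\mu^\op,\nu^\op} \ox x$ using the explicit formula for $\V^*$ from Lemma~\ref{lem:fock-isom}, and the case split (does $\alpha = \eta\beta$, or $e\alpha = \beta$?) is where the real work lies. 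An alternative route---which the paper does not take but which would let you stay on $\operatorname{range}(P)$---is to also compute $[P, S_e^* \ox 1]\V\delta_\lambda$ and invoke $P(S_e \ox 1)(1-P) = -\big([P, S_e^* \ox 1]P\big)^*$; but you do not describe this, and it is not automatic, since $S_e^* W_{r(\lambda),1} = W_{s(e),e}$ lies outside the Fock image and must be treated separately.

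Two smaller points. Your $O(m^{-3/2})$ telescoping is a red herring: writing $(S_e \ox 1)\V\delta_\lambda = \sqrt{\tfrac{m+2}{m+1}}\,\V\delta_{e\lambda} - \tfrac{1}{\sqrt{m+1}}\,W_{r(e),1}\ox W_{(e\lambda)^\op,1}\ox\delta_{r(e)}$ and applying $(P-1)$ kills the first term outright, leaving a single $O(m^{-1/2})$ remainder---exactly the paper's $\sqrt{2/l}$ estimate. And your worry about non-orthogonality of error vectors at fixed length is unfounded: different $\lambda$'s give remainders with distinct $W_{(e\lambda)^\op,1}$ in the second slot, hence orthogonal. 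Assumption~\ref{ass:one} enters only to define $\Phi_\infty$ and $\Xi_E$; the decay in the commutator estimate comes purely from the $1/\sqrt{m+1}$ normalisation in $\V$, not from the $O(n^{-\delta})$ bound.
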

\begin{proof}
It suffices to show that $[P, a]$ is compact for every $a \in \CPa_E \ox \CPa_{E^\op}$:
since $P = P^*$ and $(2P - 1)^2 = 1$, the result will then follow from standard
Busby-invariant arguments.

We compute for generators $S_e\ox 1$ of $\CPa_E$, where $e \in G^1$. Together with the
Leibniz rule, an identical calculation for $1\ox S_{f^\op}$, and routine approximation,
this will suffice to show that $P$ has compact commutators with $\CPa_E\ox\CPa_{E^\op}$.

We have
\[
\V\V^*(S_e \ox 1) - (S_e \ox 1)\V\V^* = (\V\V^*(S_e \ox 1) \V\V^* - (S_e \ox 1)\V\V^*) + \V\V^*(S_e \ox 1)(1- \V\V^*),
\]
so it suffices to show that each of $(\V\V^*(S_e \ox 1) \V\V^* - (S_e \ox 1)\V\V^*)$ and
$\V\V^*(S_e \ox 1)(1- \V\V^*)$ is compact. For the former, we fix $\lambda \in G^*$, say
$|\lambda| = l$, so that $\V\delta_\lambda$ is a typical spanning element of $\V\V^*
\Xi$. If $s(e)\not=r(\lambda)$, then $(S_e \ox 1) \V\delta_\lambda = 0$, and so
$(\V\V^*(S_e \ox 1) \V\V^* - (S_e \ox 1)\V\V^*)\V\delta_\lambda = 0$. If $s(e) =
r(\lambda)$, then
\begin{align*}
\big(\V\V^*&(S_e \ox 1)\V\V^* - (S_e \ox 1)\V\V^*\big)\V\delta_\lambda
    = \big((\V\V^* - 1)(S_e \ox 1)\big) \sum^l_{j=0} \frac{1}{\sqrt{l+1}} W_{\underline{\lambda}_j, 1} \ox W_{\overline{\lambda}^{\op(l-j)},1} \ox \delta_{s(\underline{\lambda}_j)} \\
    &= (\V\V^* - 1) \sum^l_{j=0} \frac{1}{\sqrt{l+1}} W_{e\underline{\lambda}_j, 1} \ox W_{\overline{\lambda}^{\op(l-j)},1} \ox \delta_{s(\underline{\lambda}_j)} \\
    &= (\V\V^* - 1) \frac{1}{\sqrt{l+1}}
        \big(\sqrt{l+2}\V \delta_{e\lambda} - W_{r(e), 1} \ox W_{(e\lambda)^\op, 1} \ox \delta_{r(e)}\big)\\
    &= \frac{-(\V\V^* - 1)}{\sqrt{l+1}} (W_{r(e), 1} \ox W_{(e\lambda)^\op, 1} \ox \delta_{r(e)}) 
    = \frac{-1}{\sqrt{l+1}} \big(\V \frac{1}{l+2} \delta_{e\lambda} - W_{r(e), 1} \ox W_{(e\lambda)^\op, 1} \ox \delta_{r(e)}\big) \\
    &= \frac{-1}{\sqrt{l+1}} \Big(\Big(\frac{1}{l+2} \sum^{l+1}_{j=0} W_{\underline{e\lambda}_j, 1} \ox W_{\overline{e\lambda}^{\op(l+1-j)},1} \ox \delta_{s(\underline{e\lambda}_j)}\Big)
                                    - W_{r(e), 1} \ox W_{(e\lambda)^\op, 1} \ox \delta_{r(e)}\Big) \\
    &= \frac{1}{\sqrt{l+1}} W_{r(e), 1} \ox W_{(e\lambda)^\op, 1} \ox \delta_{r(e)}
            - \frac{1}{\sqrt{l+1}(l+2)} \sum^{l+1}_{j=0} W_{\underline{e\lambda}_j, 1} \ox W_{\overline{e\lambda}^{\op(l+1-j)},1} \ox \delta_{s(\underline{e\lambda}_j)}\\
    &= \frac{\sqrt{l+1}}{l+2} W_{r(e), 1} \ox W_{(e\lambda)^\op, 1} \ox \delta_{r(e)}
            - \frac{1}{\sqrt{l+1}(l+2)} \sum^{l+1}_{j=1} W_{\underline{e\lambda}_j, 1} \ox W_{\overline{e\lambda}^{\op(l+1-j)},1} \ox \delta_{s(\underline{e\lambda}_j)}.
\end{align*}
The terms in this sum are mutually orthogonal, and $\|W_{\underline{e\lambda}_j, 1} \ox
W_{\overline{e\lambda}^{\op(l+1-j)},1} \ox \delta_{s(\underline{e\lambda}_j)}\| = 1$ for
each $j$. So we have
\[
\|\big(\V\V^*(S_e \ox 1) - (S_e \ox 1)\V\V^*\big)\V\delta_\lambda\| \sim \sqrt{\frac{l+1}{(l+2)^2} + \frac{l^2}{(l+1)(l+2)^2}} \sim \sqrt{\frac{2}{l}}.
\]
Moreover, if $|\lambda'| = |\lambda|$, then the terms in the above sum for $\lambda$ are
all orthogonal to all of the terms in the corresponding sum for $\lambda'$. Thus
$\big\|\big(\V\V^*(S_e \ox 1) - (S_e \ox 1)\V\V^*\big)|_{\linspan\{\V\delta_\lambda :
|\lambda| = l\}}\big\| \sim \sqrt{\frac{2}{l}}$. The subspaces $Y_l :=
\linspan\{\V\delta_\lambda : |\lambda| = l\}$ are all finite-dimensional. Writing $P_l$
for the projection onto $Y_l$, we see that
\[
\|(\V\V^*(S_e \ox 1) \V\V^* - (S_e \ox 1)\V\V^*) (1 - P_l)\| \sim \sqrt{\frac{2}{l}} \to 0.
\]
So $(\V\V^*(S_e \ox 1) \V\V^* - (S_e \ox 1)\V\V^*) = \lim_l (\V\V^*(S_e \ox 1) \V\V^* -
(S_e \ox 1)\V\V^*)P_l$ is compact.

Now we must compute $\V\V^*(S_e \ox 1)(1- \V\V^*)$.

Fix paths $\alpha,\beta,\mu,\nu \in G^*$ with $s(\alpha) = s(\beta)$ and $r(\mu) = r(\nu)$ and
$x \in C(G^0)$. We compute
$\V\V^*(S_e \ox 1)(1- \V\V^*) (W_{\alpha,\beta} \ox
W_{\mu^\op,\nu^\op} \ox x)$. If $s(e) \not= r(\alpha)$ then both $\V\V^* (S_e \ox
1)(W_{\alpha,\beta}\ox W_{\mu^\op, \nu^\op} \ox x)$ and $(S_e \ox 1) \V\V^*
(W_{\alpha,\beta}\ox W_{\mu^\op, \nu^\op} \ox x)$ are zero, so we can assume that
$r(\alpha) = s(e)$.

Similarly, both terms are zero unless $(e\alpha) = \zeta\beta$ for some $\zeta$ and $\mu
= \nu\tau$, and $s(\zeta) = r(\tau)$. So we suppose that this is also the case. Two cases
remain: either $\alpha = \eta\beta$ for some $\eta$, or $e\alpha = \beta$.

First suppose that $\alpha = \eta\beta$. Let $l := |\alpha| - |\beta|
+ |\mu| - |\nu|$, and let $\lambda := \eta\tau$. Then
\begin{align}
\big(\V\V^*(S_e\ox 1)\big)&(W_{\alpha,\beta}\ox W_{\mu^\op, \nu^\op} \ox x) \nonumber\\
    &= \V\V^*(W_{e\alpha,\beta}\ox W_{\mu^\op, \nu^\op} \ox x) \nonumber\\
    &= \frac{x_{r(\beta)} (\Phi_\infty(S_\beta S^*_\beta)\Phi^\op_\infty(S_{\nu^\op} S^*_{\nu^\op}))(r(\beta))}{\sqrt{|e\eta| + |\tau| + 1}}
        \V \delta_{e\eta\tau}  \nonumber\\
    &= \frac{x_{r(\beta)} (\Phi_\infty(S_\beta S^*_\beta)\Phi^\op_\infty(S_{\nu^\op} S^*_{\nu^\op}))(r(\beta))}{l + 2}
        \sum^{l+1}_{j=0} W_{\underline{e\lambda}_j,1} \otimes W_{\overline{e\lambda}^{\op(l-j)},1} \otimes \delta_{s(e\underline{\lambda}_j)}.
        \label{eq:VV*Seox1}
\end{align}
On the other hand,
\begin{align*}
\big(\V\V^*&(S_e\ox 1)\V\V^*\big)(W_{\alpha,\beta}\ox W_{\mu^\op, \nu^\op} \ox x)\\
    &= \V\V^*(S_e\ox 1) \frac{x_{r(\beta)} (\Phi_\infty(S_\beta S^*_\beta)\Phi^\op_\infty(S_{\nu^\op} S^*_{\nu^\op}))(r(\beta))}{\sqrt{|\eta| + |\tau| + 1}}
        \V \delta_{\eta\tau} \\
    &= \V\V^*(S_e \ox 1) \frac{x_{r(\beta)} (\Phi_\infty(S_\beta S^*_\beta)\Phi^\op_\infty(S_{\nu^\op} S^*_{\nu^\op}))(r(\beta))}{\sqrt{l + 1}}
        \frac{1}{\sqrt{l + 1}} \sum^l_{j=0} W_{\underline{\lambda}_j, 1} \otimes W_{\overline{\lambda}^{l-j}, 1} \otimes \delta_{s(\underline{\lambda}_j)} \\
    &= \frac{x_{r(\beta)} (\Phi_\infty(S_\beta S^*_\beta)\Phi^\op_\infty(S_{\nu^\op} S^*_{\nu^\op}))(r(\beta))}{l + 1}
        \sum^l_{j=0} \V\V^*(S_e \ox 1)(W_{\underline{\lambda}_j, 1} \otimes W_{\overline{\lambda}^{\op(l-j)}, 1} \otimes \delta_{s(\underline{\lambda}_j)}).\\
\intertext{Our calculation~\eqref{eq:VV*Seox1} of $\big(\V\V^*(S_e\ox 1)\big)(W_{\alpha,\beta}\ox W_{\mu^\op, \nu^\op}
\ox x)$ applies to each term in this sum, and we obtain}
\big(\V\V^*&(S_e\ox 1)\V\V^*\big)(W_{\alpha,\beta}\ox W_{\mu^\op, \nu^\op} \ox x)\\
    &= \frac{x_{r(\beta)} (\Phi_\infty(S_\beta S^*_\beta)\Phi^\op_\infty(S_{\nu^\op} S^*_{\nu^\op}))(r(\beta))}{l + 1}
        \sum^l_{j=0} \frac{1}{l+2}\sum^{l+1}_{k=0}(W_{\underline{e\lambda}_k, 1} \otimes W_{\overline{e\lambda}^{\op(l-k)}, 1} \otimes \delta_{s(\underline{e\lambda}_k)})\\
    &= \frac{x_{r(\beta)} (\Phi_\infty(S_\beta S^*_\beta)\Phi^\op_\infty(S_{\nu^\op} S^*_{\nu^\op}))(r(\beta))}{l + 2}
        \sum^{l+1}_{k=0}(W_{\underline{e\lambda}_k, 1} \otimes W_{\overline{e\lambda}^{\op(l-k)}, 1} \otimes \delta_{s(\underline{e\lambda}_k)})\\
    &= \big(\V\V^*(S_e\ox 1)\big)(W_{\alpha,\beta}\ox W_{\mu^\op, \nu^\op} \ox x).
\end{align*}
Hence
\[
\V\V^*(S_e \ox 1)(1- \V\V^*) (W_{\alpha,\beta} \ox W_{\mu^\op,\nu^\op} \ox x) = 0.
\]

Now suppose that $e\alpha = \beta$ and $\mu=\nu\tau$. Then $\big(\V\V^*(S_e\ox
1)\V\V^*\big)(W_{\alpha,\beta}\ox W_{\mu^\op, \nu^\op} \ox x) = 0$, whereas
\begin{align*}
\big(\V\V^*(S_e\ox 1)\big)&(W_{\alpha,\beta}\ox W_{\mu^\op, \nu^\op} \ox x)\\
    &= \V\V^*(W_{e\alpha,\beta}\ox W_{\mu^\op, \nu^\op} \ox x)\\
    &= \frac{x_{r(e)} (\Phi_\infty(S_{e\alpha} S^*_{e\alpha})\Phi^\op_\infty(S_{\nu^\op} S^*_{\nu^\op}))(r(e))}{\sqrt{|\tau| + 1}}
        \V \delta_{\tau} \\
    &= \frac{x_{r(e)} (\Phi_\infty(S_{e\alpha} S^*_{e\alpha})\Phi^\op_\infty(S_{\nu^\op} S^*_{\nu^\op}))(r(e))}{|\tau| + 1}
        \sum^{|\tau|}_{j=0} W_{\underline{\tau}_j,1} \otimes W_{\overline{\tau}^{\op(|\tau|-j)},1} \otimes \delta_{s(\underline{\tau}_j)}\\
    &= \frac{x_{r(e)} (\Phi_\infty(S_{\beta} S^*_{\beta})\Phi^\op_\infty(S_{\nu^\op} S^*_{\nu^\op}))(r(e))}{\sqrt{|\tau|+1}}
        \V\delta_{\tau}.
\end{align*}

For $\lambda \in G^*$, let $P_\lambda$ be the projection onto the 1-dimensional subspace
$\C \V\delta_\lambda$ of $\Xi \ox \Xi^\op \ox \ell^2(S, \phi)$. We will prove that
$\V\V^*(S_e \ox 1)(1- \V\V^*) = \lim_{l \to \infty} \sum_{\lambda \in G^l} P_\lambda
\V\V^*(S_e \ox 1)(1- \V\V^*)$. Since each $G^l$ is finite, this will complete the proof
that the commutator is a compact operator.

To prove the result, first observe that we always have $W_{\alpha,\beta} \ox
W_{\mu^\op,\nu^\op} \ox x = x(r(\beta)) W_{\alpha,\beta} \ox W_{\mu^\op,\nu^\op} \ox 1$,
where $1 \in \C G^0$ is the vector $1_v = 1$ for all $v$. Thus we can always assume that $x=1$.

Next, if $\V\V^*(S_e \ox 1)(1-\V\V^*)(W_{\alpha_1,\beta_1}\ox W_{\mu^\op_1, \nu^\op_1} \ox 1) \in \C \V
\delta_{\lambda_1}$ and $\V\V^*(S_e \ox 1)(1-\V\V^*)(W_{\alpha_2,\beta_2}\ox W_{\mu^\op_2, \nu^\op_2} \ox 1) \in \C
\V \delta_{\lambda_2}$ with $\lambda_1 \not= \lambda_2$, and both are nonzero, then
$e\alpha_i = \zeta_i\beta_i$ and $\mu_i = \nu_i\tau_i$
and we have $\zeta_i\tau_i =
\lambda_i$. So either $\zeta_1 \not= \zeta_2$ or $\tau_1 \not= \tau_2$, and we deduce
that $W_{\alpha_1,\beta_1}\ox W_{\mu^\op_1, \nu^\op_1} \ox 1 \perp
W_{\alpha_2,\beta_2}\ox W_{\mu^\op_2, \nu^\op_2} \ox 1$.

It follows that for any finite $F \subseteq G^*$, we have
\[
\Big\|\Big(1 - \sum_{\lambda \in F} P_\lambda\Big) \V\V^*(S_e \ox 1)(1- \V\V^*)\Big\|
    = \sup_{\lambda \not\in F} \|P_\lambda \V\V^*(S_e \ox 1)(1- \V\V^*)\|.
\]
So it suffices to show that for any $\varepsilon > 0$ there is a finite set $F \subseteq
G^*$ such that
\begin{equation}\label{eq:toshow}
\|P_\lambda \V\V^*(S_e \ox 1)(1- \V\V^*)\| < \varepsilon\quad\text{ for all }\quad\lambda \not\in F.
\end{equation}

For this, first note from our calculations above that $\V\V^*(S_e \ox 1)(1-
\V\V^*)(W_{\alpha,\beta}\ox W_{\mu^\op, \nu^\op} \ox 1)$ is either zero, or perpendicular to $\C
\V\delta_\lambda$ unless $e\alpha = \beta$ and $\mu = \nu\lambda$. So, defining
\[
H_{e,\lambda} := \overline{\linspan}\{W_{\alpha,e\alpha} \ox W_{(\nu\lambda)^\op,\nu^\op} \ox 1
    : r(\alpha) = s(e),\, s(\nu) = r(\lambda)=r(e)\},
\]
we have
\[
\|P_\lambda \V\V^*(S_e \ox 1)(1- \V\V^*)\|
    = \big\|\V\V^*(S_e \ox 1)(1 - \V\V^*)|_{H_{e,\lambda}} \big\|.
\]

The Cuntz--Krieger relation gives $W_{\alpha,e\alpha} = \sum_{\alpha' \in s(\alpha)E^k}
W_{\alpha\alpha', e\alpha\alpha'}$ for any $\alpha \in s(e) G^*$ and any $k$. Similarly,
each $W_{(\nu\lambda)^\op, \nu^\op} = \sum_{\nu' \in G^kr(\nu)} W_{(\nu'\nu\lambda)^\op,
(\nu'\nu)^\op}$ for any $k$. So putting
\begin{equation}\label{eq:Hlk spanners}
H_{e, \lambda,k} = \linspan\{W_{\alpha,e\alpha} \ox W_{(\nu\lambda)^\op, \nu^\op} \ox 1 : \,\alpha \in s(e)G^k,\ \nu \in G^k r(\lambda)\},
\end{equation}
we have a filtration $H_{e,\lambda} = \bigcup_k H_{e, \lambda, k}$. This implies that
$$
\big\|\V\V^*(S_e
\ox 1)(1 - \V\V^*)|_{H_{e,\lambda}} \big\|
= \sup_k \big\|\V\V^*(S_e \ox 1)(1 -\V\V^*)|_{H_{e, \lambda, k}} \big\|.
$$
The spanning elements~\eqref{eq:Hlk spanners} are mutually orthogonal and span
$H_{e,\lambda, k}$. Let $l := |\lambda|$. Using the calculations of $\big(\V\V^*(S_e\ox
1)(1-\V\V^*)\big)(W_{\alpha,\beta}\ox W_{\mu^\op, \nu^\op} \ox x)$ above, we see that for
a spanning element $W_{\alpha,e\alpha} \ox W_{(\nu\lambda)^\op, \nu^\op} \ox 1$ of
$H_{e,\lambda, k}$, we have
\[
\big(\V\V^*(S_e\ox 1)(1-\V\V^*)\big)(W_{\alpha,e\alpha} \ox W_{(\nu\lambda)^\op, \nu^\op} \ox 1)\\
    = \frac{(\Phi_\infty(S_{e\alpha} S^*_{e\alpha})\Phi^\op_\infty(S_{\nu^\op} S^*_{\nu^\op}))(r(e))}{\sqrt{l+1}}
        \V\delta_\lambda.
\]

We have
\begin{align*}
\|W_{\alpha,e\alpha}{} \ox W_{(\nu\lambda)^\op, \nu^\op} \ox 1\|^2
    &= \phi\big(\big(1 \mid (W_{\alpha,e\alpha} \ox W_{(\nu\lambda)^\op} \mid W_{\alpha,e\alpha} \ox W_{(\nu\lambda)^\op})_{A \ox A^\op}\big)\big)\\
    &= \phi\big(\Phi_\infty(S_{e\alpha} S^*_\alpha S_\alpha S^*_{e\alpha}) \cdot 1 \Phi^\op_\infty(S_{\nu^\op} S^*_{(\nu\lambda)^\op} S_{(\nu\lambda)^\op} S^*_{\nu^\op})\big)\\
    &= \sum_{v \in G^0} \Phi_\infty(S_{e\alpha} S^*_{e\alpha})(v) \Phi^\op_\infty(S_{\nu^\op} S^*_{\nu^\op})(v)\\
    &= \big(\Phi_\infty(S_{e\alpha} S^*_{e\alpha}) \Phi^\op_\infty(S_{\nu^\op} S^*_{\nu^\op})\big)(r(e)).
\end{align*}

For $\alpha \in s(e)G^k$ and $\nu \in G^k r(\lambda)$, let
\[
\kappa_{\alpha,\nu} := \sqrt{\big(\Phi_\infty(S_{e\alpha} S^*_{e\alpha}) \Phi^\op_\infty(S_{\nu^\op} S^*_{\nu^\op})\big)(r(e))}
\]
and
\[
h_{\alpha,\nu} := \kappa^{-1}_{\alpha,\nu} W_{\alpha,e\alpha} \ox W_{(\nu\lambda)^\op, \nu^\op} \ox 1.
\]
Then $\{h_{\alpha,\nu} : \alpha \in s(e)G^k,\, \nu \in G^k r(\lambda)\}$ is an orthonormal
basis for $H_{e, \lambda, k}$, and
\[
\big(\V\V^*(S_e\ox 1)(1-\V\V^*)\big) h_{\alpha,\nu} = \frac{\kappa_{\alpha,\nu}}{\sqrt{l+1}} \V\delta_\lambda
\]
for each $\alpha,\nu$. The Cuntz--Krieger relation in 
each of $C^*(G)$ and $C^*(G^\op)$ shows
that
\[
\sum_{\alpha,\nu}\kappa_{\alpha,\nu}^2
    =\! \sum_{\alpha,\nu} \big(\Phi_\infty(S_{e\alpha} S^*_{e\alpha}) \Phi^\op_\infty(S_{\nu^\op} S^*_{\nu^\op})\big)(r(e))
    \!=\! \big(\Phi_\infty(S_{e} S^*_{e}) \Phi^\op_\infty(P_{r(e)})\big)(r(e))
    \!=\!\Phi_\infty(S_{e} S^*_{e})(r(e)),
\]
where $P_{r(e)}\in \Oo_E$ is the projection
corresponding to the function $\delta_{r(e)}\in C(G^0)$.
Hence
\[
\big\|\big(\V\V^*(S_e\ox 1)(1-\V\V^*)\big)|_{H_{e,\lambda, k}}\big\|
    = \Big(\sum_{\alpha,\nu} \Big(\frac{\kappa_{\alpha,\nu}}{\sqrt{l+1}}\Big)^2\Big)^{1/2}
    = \Big(\frac{\Phi_\infty(S_{e} S^*_{e})(r(\lambda))}{l+1}\Big)^{1/2}.
\]

Now fix $\varepsilon > 0$ and choose $l$ large enough so that $\frac{1}{\sqrt{l+1}}<\varepsilon$. Let $F := \bigcup_{k
\le l} G^k$. Then $F$ is finite, and the calculations above show that
\[
\sup_{\lambda \not\in F} \|P_\lambda \V\V^*(S_e \ox 1)(1- \V\V^*)\|
    \le \Big(\frac{\big(\Phi_\infty(S_{e} S^*_{e}) \Phi^\op_\infty(s_{r(\lambda)})\big)(r(\lambda))}{l+1}\Big)^{1/2}
    < \varepsilon,
\]
which is~\eqref{eq:toshow}.
\end{proof}

\begin{thm}\label{thm:CK-Khom}
Let $G$ be a finite graph with no sources and no sinks
and let $E$ be the associated edge module. Assume that the
Watatani indices of $E^{\ox m}$ and $E^{\op\ox m}$ satisfy Assumption~\ref{ass:one}. Then
the class $\Delta$ is a $K$-homology fundamental class.
\end{thm}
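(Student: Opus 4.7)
The strategy is to invoke Theorem~\ref{thm:(OEop) sufficient}(1) with $\mu = [(A \otimes A^\op, L^2(A,\phi), 0)]$. The invariance condition $[E]\otimes_A\mu = [E^\op]\otimes_{A^\op}\mu$ has already been checked in Proposition~\ref{prp:graph classes}, so what remains is to verify the two Kasparov-product identities
\begin{align*}
\iota_{A,\CPa_E}\otimes_{\CPa_E}\Delta &= \extocls\otimes_{A^\op}\mu,\\
\iota_{A^\op,\CPa_{E^\op}}\otimes_{\CPa_{E^\op}}\Delta &= \extcls\otimes_A\mu.
\end{align*}
The isometry $\V$ built in Lemma~\ref{lem:fock-isom} is symmetric in $E$ and $E^\op$, in the sense that conjugating by the flip $\CPa_E\otimes\CPa_{E^\op}\cong\CPa_{E^\op}\otimes\CPa_E$ reproduces $\V$ with the roles of $E$ and $E^\op$ swapped. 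Hence it suffices to establish one of the two identities and then to deduce the other by this symmetry; I will focus on the second.

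By the representative \eqref{eq:ext-Xi} of $\extcls$ and the explicit form of $\mu$, the product $\extcls\otimes_A\mu$ is represented by the Fredholm module
$$
\bigl(\CPa_E\otimes A^\op,\ \Xi_E\otimes_A L^2(G^0,\phi)\otimes A^\op,\ (2P_{\Fock_E}-1)\otimes 1\bigr),
$$
the internal product being straightforward because the Dirac operator in $\mu$ vanishes. On the other hand, restricting $\Delta$ along $\iota_{A^\op,\CPa_{E^\op}}$ gives $\bigl(\CPa_E\otimes A^\op,\ \Xi_E\otimes\Xi_{E^\op}\otimes_{A\otimes A^\op}L^2(G^0,\phi),\ 2\V\V^*-1\bigr)$. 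The central step is to compare these two modules: one must produce an explicit unitary equivalence, modulo a degenerate summand and a compact perturbation, identifying the image of $\V$ with the copy of $L^2(\Fock_E,\phi)\otimes A^\op$ inside the target of the first module, with the orthogonal complement of $\mathrm{im}(\V)$ supporting an acyclic piece because $[2\V\V^*-1,\iota_{A^\op}(a^\op)]$ is already compact by Proposition~\ref{prop:Kas-mod}.

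The hardest step is carrying out this identification rigorously: the symmetrisation factors $\tfrac{1}{\sqrt{m+1}}$ appearing in $\V$ must be handled carefully so that the intertwining is exact up to compacts, not merely asymptotic. The cleanest route is to apply Kucerovsky's unbounded Kasparov product theorem to a natural unbounded lift built from the number operators $N_E$ and $N_{E^\op}$ on $\Xi_E$ and $\Xi_{E^\op}$ (which represent $\extcls$ and $\extocls$ by Proposition~\ref{prop:minus-enn}), suitably combined with a symmetrising commutator term designed to produce $2P-1$ as its phase. Once positivity of the graded commutator and the connection condition are verified in that setting, the product identification follows automatically; combined with the symmetry reduction of the first paragraph, Theorem~\ref{thm:(OEop) sufficient}(1) then yields the desired conclusion that $\Delta$ is a $K$-homology fundamental class.
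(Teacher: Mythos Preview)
Your overall strategy---reducing to Theorem~\ref{thm:(OEop) sufficient}(1), invoking Proposition~\ref{prp:graph classes} for the invariance of $\mu$, and then checking one of the two product identities by symmetry---matches the paper exactly. The genuine gap is in how you propose to carry out the comparison.

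First, your appeal to Proposition~\ref{prop:minus-enn} is illegitimate: that result assumes $E$ is an \emph{invertible} bimodule, which the edge module of a general finite graph is not. There is no integer-graded Fock space $\Fock_{E,\Z}$ here, and the number-operator picture of $\extcls$ in that proposition is unavailable. The representative of $\extcls$ you must work with is the one in~\eqref{eq:ext-Xi}, namely $(\CPa_E,\Xi_E,2P_{\Fock_E}-1)$, which is a bounded Fredholm module, not an unbounded one. So the proposed ``cleanest route'' via Kucerovsky's theorem and unbounded lifts built from $N_E$, $N_{E^\op}$ does not get off the ground: you have no unbounded representative to lift from, and no combination of number operators whose phase is $2P-1$ in any evident way.

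Second, even setting aside the citation error, the proposal is a sketch rather than a proof: ``suitably combined with a symmetrising commutator term designed to produce $2P-1$ as its phase'' names a desideratum, not a construction, and verifying Kucerovsky's positivity and connection conditions for an unspecified operator is not something one can wave through. The paper avoids all of this by writing down an explicit homotopy of projections
\[
\mathbb{P}_t=\frac{1}{1+t^2}\begin{pmatrix} \V\V^* & t\V(Y^*\ox 1)\\ t(Y\ox1)\V^* & t^2 Q\end{pmatrix},\qquad t\in[0,\infty],
\]
where $Y:L^2(\Fock_E,\phi)\hookrightarrow L^2(\CPa_E,\phi\circ\Phi_\infty)$ is the Fock inclusion and $Q=YY^*$. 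One checks directly that $\mathbb{P}_t^2=\mathbb{P}_t$ and that $2\mathbb{P}_t-1$ has compact commutators with $\CPa_E\otimes A^\op$; at $t=0$ this is the restriction of $\Delta$, and at $t=\infty$ a degenerate summand splits off leaving exactly the representative of $\extcls\ox_A\mu$. This is both elementary and complete, and it sidesteps the symmetrisation factors $1/\sqrt{m+1}$ that you correctly flagged as the delicate point.
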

\begin{proof}
Let $\mu=(\C^k\ox\C^k,\C^k,0)$ be the $K$-homology fundamental class for $\C^k$.
Proposition~\ref{prp:graph classes} shows that this class satisfies the hypothesis of
Theorem~\ref{thm:(OEop) sufficient}(1). So by that theorem, it suffices to check that
\[
    \iota_{A,\CPa_E}\ox_{\CPa_E}\Delta = \extocls\otimes_{ A^{\op}}\mu \quad\text{ and }\quad
    \iota_{A^\op,\CPa_{E^\op}}\ox_{\CPa_{E^\op}}\Delta = \extcls\ox_{A}\mu.
\]

First we compute $\extcls\ox_A\mu$. As described at~\eqref{eq:mu for CK}, $\mu$ is
represented by $(\C^k \ox \C^k, L^2(\C^k,\phi), 0)$ where $\phi : \C^k \to \C$ is given
by $\phi(a) = \sum_j a_j$. Since $\C^k\ox\C^k$ acts diagonally on $\C^k$, writing
$\Phi_\infty:\CPa_E\to A$ for the expectation of \cite[Proposition~3.5]{RRSext}, we have
$$
(\Xi_{E,A}\ox A^\op)\ox_{A\ox A^\op}\C^k\cong L^2(\CPa_E,\phi\circ\Phi_\infty).
$$

By \cite[Theorem~3.14]{RRSext}, there is an isometric inclusion
\[
Y : L^2(\Fock_E,\phi)\to L^2(\CPa_E, \phi)
\]
satisfying $Y(\mu) := [S_\mu]$. We write $Q := YY^* : L^2(\CPa_E, \phi) \to
Y(L^2(\Fock_E,\phi))$. Then $\extcls\ox_A\mu$ is represented by the Kasparov module
$(\CPa_E\ox A^\op,L^2(\CPa_E,\phi\circ\Phi_\infty),2Q-1)$.

For $t \in [0,\infty]$, define
$$
\mathbb{P}_t:=\frac{1}{1+t^2}\begin{pmatrix} \V\V^* & t\V(Y^*\ox 1)\\ t(Y\ox1)\V^* & t^2 Q\end{pmatrix}.
$$
A direct computation shows that each $\mathbb{P}_t$ is a projection, and that
$$
F_t := \left(\CPa_E\ox A^\op, \begin{pmatrix} L^2(\CPa_E)\ox L^2(\CPa_{E^\op})\\ L^2(\CPa_E)\end{pmatrix},
2\mathbb{P}_t-1\right)
$$
is a Fredholm module. At $t = 0$, this Fredholm module represents
$\iota_{A^\op,\CPa_{E^\op}}\ox_{\CPa_{E^\op}}\Delta$. At $t = \infty$, we have
$$
\left(\CPa_E\ox A^\op, \begin{pmatrix} L^2(\CPa_E)\ox L^2(\CPa_{E^\op})\\ L^2(\CPa_E)\end{pmatrix},
\begin{pmatrix} -1 & 0\\ 0 & 2Q-1\end{pmatrix}\right).
$$
Since $(\CPa_E \ox A^\op, L^2(\CPa_E) \ox L^2(\CPa_{E^\op}), -1)$ is a degenerate
Kasparov module, we deduce that
\[
[F_\infty] = \left[\CPa_E\ox A^\op,  L^2(\CPa_E),
 2Q-1\right]
\]
in $KK(\CPa_E \otimes A^\op, \C)$. Since $A \ox A^\op = \C^k\ox\C^k$ acts diagonally on
$L^2(A, \phi) =\C^k$, we deduce from \cite[Theorem~A.5]{CS} that
\begin{align*}
[\left(\CPa_E\ox A^\op,\Xi_E\ox A^\op, (2Q-1)\ox 1_{A^\op}\right)]&{}\ox_{A\ox A^\op}[(A\ox A^\op,A_\C,0)] \\
    & =[\left(\CPa_E\ox A^\op,  L^2(\CPa_E), 2Q-1\right)] = [F_\infty],
\end{align*}
so $F_\infty$ represents $\extcls \ox_A \mu$. Since the $F_t$ constitute a homotopy, this
shows that $\iota_{A^\op,\CPa_{E^\op}}\ox_{\CPa_{E^\op}}\Delta = \extcls\ox_{A}\mu$.

The same argument applied to the opposite graph shows that
$\iota_{A,\CPa_E}\ox_{\CPa_E}\Delta = \extocls\otimes_{ A^{\op}}\mu$, and this completes
the proof.
\end{proof}

\begin{example} \label{eg:CK-so-good}
Verifying Assumption~\ref{ass:one} in Theorem~\ref{thm:CK-Khom} in general seems
complicated, but it is  checkable in concrete instances. For example, $SU_q(2)$, realised
as a graph $C^*$-algebra as in \cite[Example~3.10]{RRSext}, is easily seen to satisfy
Assumption~\ref{ass:one}. Likewise, by \cite[Lemma~3.7 and Example~3.8]{RRSext},
Cuntz--Krieger modules associated to primitive non-negative matrices all satisfy
Assumption~\ref{ass:one}.
\end{example}

\begin{corl}
\label{cor:CK-PD} Let $G$ be a finite directed graph
with no sinks and no sources. Suppose that the edge modules of $G$ and
$G^\op$ both satisfy Assumption~\ref{ass:one}. Then $C^*(G)$ is Poincar\'e dual to the
graph algebra of the opposite graph $C^*(G^\op)$. Thus there are isomorphisms
$$
K_*(C^*(G))\to K^{*+1}(C^*(G^\op)),\quad
K_*(C^*(G^\op))\to K^{*+1}(C^*(G)).
$$
\end{corl}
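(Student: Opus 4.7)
The plan is to assemble the pieces already constructed: realise $C^*(G)$ and $C^*(G^\op)$ as Cuntz--Pimsner algebras of the edge modules of $G$ and $G^\op$, verify the hypotheses of Theorem~\ref{thm:(OEop) sufficient}(3), and then read off the stated $K$-theoretic isomorphisms from the definition of Poincar\'e duality.

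First I would set $A=C(G^0)$ and let $E$ be the edge module of $G$, so that by the standard identification (see Proposition~\ref{prp:graph classes} and Example~\ref{eg:odd opposite}) we have $\CPa_E\cong C^*(G)$ and $\CPa_{E^\op}\cong C^*(G^\op)$. Since $G$ is finite with no sources and no sinks, $A$ is finite dimensional and unital; by Remark~\ref{rmk:Cr PD} it is Poincar\'e self-dual via the classes $\mu\in KK^0(A\ox A^\op,\C)$ and $\beta\in KK^0(\C,A\ox A^\op)$ described there, so $d=0$.

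Next I would check the four compatibility hypotheses of Theorem~\ref{thm:(OEop) sufficient}(3). The two ``dynamical'' conditions $[E]\ox_A\mu=[E^\op]\ox_{A^\op}\mu$ and $\beta\ox_A[E]=\beta\ox_{A^\op}[E^\op]$ are precisely the content of Proposition~\ref{prp:graph classes}, which uses both ``no sources'' (for the injectivity of the left actions) and ``no sinks'' (for the $E^\op$ side). Because $A$ is unital and $E$ is finitely generated projective on both sides, the restriction hypothesis required by Theorem~\ref{thm:yeah} is trivially met with $A_b=A$, so that theorem produces a $K$-theory fundamental class $\delta\in KK^{d+1}(\C,\CPa_E\ox \CPa_{E^\op})$ satisfying the product formulas with the extension classes demanded by Theorem~\ref{thm:(OEop) sufficient}(2). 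Since Assumption~\ref{ass:one} holds for both $E$ and $E^\op$, Theorem~\ref{thm:CK-Khom} provides a $K$-homology fundamental class $\Delta\in KK^{d+1}(\CPa_E\ox \CPa_{E^\op},\C)$ satisfying the compatibility conditions of Theorem~\ref{thm:(OEop) sufficient}(1).

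With all the hypotheses verified, Theorem~\ref{thm:(OEop) sufficient}(3) yields that $\CPa_E$ and $\CPa_{E^\op}$ are Poincar\'e dual. Unpacking the definition~\eqref{eq:mu isos}--\eqref{eq:beta isos} with $d+1=1$, the Kasparov products with the resulting dual pair furnish the isomorphisms
\[
K_*(C^*(G))\xrightarrow{\ \cong\ } K^{*+1}(C^*(G^\op)),\qquad K_*(C^*(G^\op))\xrightarrow{\ \cong\ } K^{*+1}(C^*(G)),
\]
as claimed. There is no real obstacle here once the scaffolding is in place: the corollary is essentially a bookkeeping exercise, and the only mildly delicate point is verifying at the outset that the finite-dimensional $\beta,\mu$ of Remark~\ref{rmk:Cr PD} have the correct parity $d=0$ so that the parity shift in Theorem~\ref{thm:(OEop) sufficient} lands in $KK^1$, which is what produces the degree shift by one in the displayed isomorphisms.
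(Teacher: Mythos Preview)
Your proof is correct and follows exactly the approach the paper intends: the corollary has no explicit proof in the paper because it is an immediate combination of Theorem~\ref{thm:CK-Khom} (providing $\Delta$), the construction of $\delta$ in Section~\ref{ex:cuntz-krieger} via Theorem~\ref{thm:yeah}, and Theorem~\ref{thm:(OEop) sufficient}(3), with the dynamical compatibility conditions supplied by Proposition~\ref{prp:graph classes}. Your bookkeeping is accurate, including the parity check $d=0$ and the use of the ``no sinks'' hypothesis to get the $E^\op$ versions of the compatibility identities.
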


We require no assumptions on the associated shift space here. From the
characterisation of the $K$-theory of graph algebras \cite[Theorem~7.18]{Raeburn} we
obtain

\begin{corl}
\label{cor:graph-K-hom} Suppose the 
finite directed graphs
$G=(G^0,G^1,r,s)$ and $G^\op$ have no sinks and no sources, and that their
edge modules  satisfy 
Assumption~\ref{ass:one}. Then the even
$K$-homology groups of $C^*(G)$ and $C^*(G^\op)$ are torsion-free and have the same rank
as the corresponding odd $K$-theory groups.
\end{corl}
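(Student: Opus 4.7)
The plan is to reduce this to Corollary~\ref{cor:CK-PD} together with the standard computation of the $K$-theory of a finite graph $C^*$-algebra, which is where all the real work has already been done. First, Corollary~\ref{cor:CK-PD} gives Poincar\'e duality isomorphisms
\[
K^0(C^*(G^\op))\cong K_1(C^*(G)),\qquad K^0(C^*(G))\cong K_1(C^*(G^\op)).
\]
So the even $K$-homology of the two graph algebras is identified with the odd $K$-theory of their opposite graph algebras. Consequently it suffices to show that these odd $K$-theory groups are torsion-free and that $K_1(C^*(G))$ and $K_1(C^*(G^\op))$ have the same rank.

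For this I would invoke \cite[Theorem~7.18]{Raeburn}: if $A_G\in M_{G^0}(\Z)$ denotes the vertex matrix of $G$, then
\[
K_0(C^*(G))\cong \operatorname{coker}(1-A_G^t\colon \Z^{G^0}\to \Z^{G^0}),\quad
K_1(C^*(G))\cong \ker(1-A_G^t\colon \Z^{G^0}\to \Z^{G^0}).
\]
In particular $K_1(C^*(G))$ is the kernel of an integer matrix acting on a free abelian group, hence itself free abelian and a fortiori torsion-free. The same applies to $G^\op$, whose vertex matrix is $A_G^t$, so that $K_1(C^*(G^\op))\cong \ker(1-A_G)$ is also torsion-free. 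Transporting along the Poincar\'e duality isomorphisms above immediately yields that both $K^0(C^*(G))$ and $K^0(C^*(G^\op))$ are torsion-free.

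For the rank statement, I would use the elementary fact that an integer matrix and its transpose have the same rank; applied to $1-A_G$ this gives $\operatorname{rank}(\ker(1-A_G))=\operatorname{rank}(\ker(1-A_G^t))$, i.e.\ $K_1(C^*(G))$ and $K_1(C^*(G^\op))$ have the same rank. Combining with Poincar\'e duality yields that $K^0(C^*(G))\cong K_1(C^*(G^\op))$ has the same rank as $K_1(C^*(G))$, and symmetrically $K^0(C^*(G^\op))$ has the same rank as $K_1(C^*(G^\op))$, as required.

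There is no real obstacle in the present corollary: the genuinely hard work resides in Corollary~\ref{cor:CK-PD} (and hence in the careful construction of $\Delta$ and $\bar\delta$ and verification that they implement Poincar\'e duality). The only thing to be careful about is the parity shift in Corollary~\ref{cor:CK-PD}, which ensures that even $K$-homology on one side corresponds to \emph{odd} $K$-theory on the other; this is precisely what pairs the torsion-free odd $K$-theory groups of Raeburn's description with the even $K$-homology groups in the statement.
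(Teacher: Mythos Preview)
Your proof is correct and follows exactly the approach indicated in the paper, which does not give a detailed proof but simply states that the corollary follows from Corollary~\ref{cor:CK-PD} together with the characterisation of graph-algebra $K$-theory in \cite[Theorem~7.18]{Raeburn}. You have filled in precisely the details the paper leaves implicit: Poincar\'e duality identifies even $K$-homology of one graph algebra with odd $K$-theory of the other, Raeburn's description shows the odd $K$-theory is the kernel of an integer matrix and hence free abelian, and equality of ranks follows because a matrix and its transpose have kernels of equal rank.
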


\subsubsection{The $K$-homology fundamental class 
for $C^*(G)^\op$}
\label{subsub:other-op}

It turns out that proving that $C^*(G)^\op$ is a Poincar\'{e} dual for 
$C^*(G)$ requires more assumptions than for $C^*(G^\op)$.

We let $A=C(G^0)$ and $E=C(G^1)$ be the algebra and module for a finite directed graph
$G$ with no sources. We let $\phi:A\to\C$ be the state given by $\phi(a)=\sum_{v\in G^0}a(v)$.

Suppose that the bi-Hilbertian $A$-bimodule $E$ satisfies Assumption~\ref{ass:one}, and additionally that
for all $\nu\in E^{\ox k}$ there is $c_{|\nu|}\in A$ (necessarily central) which is
invertible and such that
\begin{equation}
{\bf q}(\nu):=\lim_{n\to\infty}\mathrm{e}^{-\beta_n}\nu \mathrm{e}^{\beta_{n-k}}=c_{|\nu|}\nu.
\label{eq:super-strong}
\end{equation}
This  assumption is a special case of a key assumption in \cite{GMRkappa}; see
\cite{GMRkappa} for more detail. This condition is not satisfied for all graph algebras.
For example, the bimodule of the graph with vertices $v,w$ and edges $e,f,g$ satisfying
$s(e) = r(e) = s(f) = v$ and $r(f) = s(g) = r(g) = w$, whose $C^*$-algebra is $SU_q(2)$,
does not satisfy this hypothesis. On the other hand, the graph-modules associated to
Cuntz--Krieger algebras of primitive $0$--$1$ matrices all do,
\cite[Example~3.8]{RRSext}.

We can now produce a fundamental $K$-homology class
$$
\overline{\Delta}\in KK^1(\CPa_E\ox\CPa_E^\op,\C)
=KK^1(\CPa_E\ox\CPa_{\ol{E}^\op},\C).
$$
The process is largely the same as for $\CPa_{E^\op}$, so we just describe it briefly. We
first claim that there is an isometry
$$
\ol{\V}:\,L^2(\Fock_E,\phi)\hookrightarrow \Xi_E\ox\Xi_{\ol{E}^\op}\ox_{A\ox A^\op}L^2(A,\phi)
$$
such that for every elementary tensor $\lambda \in E^{\ox |\lambda|}$, we have
$$
\ol{\V}(\lambda)=\sum_{j=0}^{|\lambda|}\frac{1}{\sqrt{|\lambda|+1}}
W_{\lambda_j,1}\ox W_{1,\ol{c^{-1/2}_{|\lambda|}\lambda}^{\op(|\lambda|-j)}}\ox \delta_{s(\lambda_j)}.
$$
Here $\ol{\lambda}^\op$ refers to the element of the conjugate module $\ol{E}^\op$
corresponding to $\lambda$. We write $\ol{\lambda}^{\op(|\lambda|-j)}$ for the element of
the conjugate module corresponding to the tail of $\lambda$ of length $|\lambda|-j$. By
Lemma~\ref{lem:half-a-bar} we see that here we do {\em not} reverse the order of elements
in a tensor product when we factor a simple tensor: that is, $\delta_{\ol{\lambda}^{\op}}
= \delta_{\ol{\lambda}^\op_{j}} \ox \delta_{\ol{\lambda}^{\op(|\lambda|-j)}}$ under the
identification of $(\ol{E}^\op)^{\ox |\lambda|}$ with $(\ol{E}^\op)^{\ox j} \ox
(\ol{E}^\op)^{\ox |\lambda| - j}$. The need for the additional
assumption~\eqref{eq:super-strong} becomes clear in the following computation, which uses
\cite[Lemmas 3.2~and~3.3]{RRSext}:
\begin{align*}
(W_{1,\ol{c^{-1/2}_{|\lambda|}\lambda}^\op}\mid W_{1,\ol{c^{-1/2}_{|\rho|}\rho}^\op})_{A^\op}
&={}_{A^\op}(\ol{c^{-1/2}_{|\lambda|}\lambda}^\op\mid {\bf q}(\ol{c^{-1/2}_{|\rho|}\rho})^\op)
={}_A({\bf q}(c^{-1/2}_{|\rho|}\rho)\mid c^{-1/2}_{|\lambda|}\lambda)^\op\\
&=\delta_{|\rho|,|\lambda|}{}_A({\bf q}(c^{-1/2}_{|\lambda|}c^{-1/2}_{|\rho|}\rho)\mid \lambda)^\op={}_A(\rho\mid \lambda)^\op=(\lambda^\op\mid\rho^\op)_{A^\op}.
\end{align*}
With this calculation in place, the remainder of the computations used to prove that
$\ol{\V}$ is an isometry are identical to those for the $E^\op$ case. The formula for the
adjoint is slightly modified, becoming
\[
\ol{\V}^* (W_{\alpha,\beta}\ox W_{\ol{\mu}^\op,\ol{\nu}^\op}\ox x)
    = x_{r(\beta)}\Phi_\infty(S_{\ol{\alpha}}S_\beta^*)(r(\beta))\Phi_\infty(S_{\ol{\nu}}S_\mu^*)(r(\beta))\,\delta_{\underline{\alpha},\underline{\sigma}}\,c_{|\underline{\sigma}|}^{1/2}.
\]
The additional factor of $c_{|\underline{\sigma}|}^{1/2}$ does not alter any subsequent
computations because it is central and invertible. For computing commutators of
$\Oo_E\ox\Oo_{\ol{E}^\op}$ with $\ol{\V}\ol{\V}^*$, the computations are mostly as for
the case of $E^\op$, with only minor differences. We  summarise the results as follows:

\begin{thm}
\label{thm:ee-op}
Let $G$ be a finite directed graph with no sources,
$A=C(G^0)$ and $E=C(G^1)$. Assume that $E$ satisfies Assumption~\ref{ass:one} and Equation
\eqref{eq:super-strong}. Then there is a
$K$-homology fundamental class
$$
\ol{\Delta}\in KK^1(\CPa_E\ox\CPa_E^\op,\C).
$$
Hence there are isomorphisms
$$
\cdot\ox_{\CPa_E}\ol{\Delta} : K_*(\CPa_E)\stackrel{\cong}{\to} K^{*+1}(\CPa_E^\op)
    \quad\mbox{and}\quad
\cdot\ox_{\CPa_E^\op}\ol{\Delta}:K_*(\CPa_E^\op)\stackrel{\cong}{\to} K^{*+1}(\CPa_E).
$$
\end{thm}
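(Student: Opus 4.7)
My plan is to follow the template set by the proof of Theorem~\ref{thm:CK-Khom} for the $E^\op$ case, and adapt it to the $\ol{E}^\op$ setting using the isometry $\ol{\V}$ introduced in the discussion preceding the theorem. The goal is to produce a Fredholm module
\[
\ol{\Delta}_0 = \bigl(\CPa_E \ox \CPa_E^\op,\, \Xi_E \ox \Xi_{\ol{E}^\op},\, 2\ol{P}-1\bigr),
\qquad \ol{P} \defeq \ol{\V}\,\ol{\V}^*,
\]
representing a class $\ol{\Delta} \in KK^1(\CPa_E \ox \CPa_E^\op, \C)$, and then to verify the hypotheses of Theorem~\ref{thm:(OE)op sufficient}(1) so as to conclude that $\ol{\Delta}$ is a $K$-homology fundamental class. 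The isomorphisms stated in the theorem are then immediate from \eqref{eq:isos}.

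First I would show that $\ol{\V}$ is a well-defined isometry. The key calculation is the one already indicated in the text: the identity
$(W_{1,\ol{c^{-1/2}_{|\lambda|}\lambda}^\op}\mid W_{1,\ol{c^{-1/2}_{|\rho|}\rho}^\op})_{A^\op} = (\lambda^\op \mid \rho^\op)_{A^\op}$
uses \cite[Lemmas 3.2~and~3.3]{RRSext} together with Equation~\eqref{eq:super-strong}, and lets the inner-product computation of Lemma~\ref{lem:fock-isom} go through verbatim, after exchanging the roles of $s$ and $r$ to match the conjugate module's left inner product. The formula for $\ol{\V}^*$ given in the text then follows by pairing, and the additional central invertible factor $c_{|\underline{\sigma}|}^{1/2}$ does not interfere with subsequent manipulations.

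Next I would prove compactness of $[\ol{P},x]$ for $x$ in $\CPa_E \ox \CPa_E^\op$. By the Leibniz rule and density it suffices to consider generators $S_e \ox 1$ and $1 \ox S_{\bar{e}}^{*\op}$ (where we identify $\CPa_E^\op \cong \CPa_{\ol{E}^\op}$ via Lemma~\ref{lem:E-barE}). The argument mirrors Proposition~\ref{prop:Kas-mod}: decompose $[\ol{P}, S_e\ox 1] = (\ol{P}(S_e\ox1)\ol{P} - (S_e\ox1)\ol{P}) + \ol{P}(S_e\ox1)(1-\ol{P})$, and bound each summand by its action on finite-dimensional spaces $\ol{\V}(\C\delta_\lambda)$, controlling the norm by $O(1/\sqrt{|\lambda|})$. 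The only genuinely new feature is bookkeeping with the central factors $c_{|\lambda|}$ from \eqref{eq:super-strong}, which are invertible and therefore do not affect the decay estimates; the orthogonality patterns and Cuntz--Krieger sum formulae are unchanged. I expect this to be the most labour-intensive but not conceptually difficult step.

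Finally I would verify the two compatibility conditions \eqref{eq:comm1} required by Theorem~\ref{thm:(OE)op sufficient}(1) for the pair $(\mu, \ol{\Delta})$, where $\mu$ is the $K$-homology fundamental class for $A = C(G^0)$ of Remark~\ref{rmk:Cr PD}. The invariance condition $[E]\ox_A\mu = [\ol{E}^\op]\ox_{A^\op}\mu$ is already established in Proposition~\ref{prp:graph classes}. To compute $\iota_{A,\CPa_E}\ox_{\CPa_E}\ol{\Delta}$ and $\iota_{A^\op,\CPa_E^\op}\ox_{\CPa_E^\op}\ol{\Delta}$ I would reproduce the homotopy argument of the proof of Theorem~\ref{thm:CK-Khom}: using the representation of $\extcls$ via $(\CPa_E, \Xi_E, 2P_{\Fock_E}-1)$ from \eqref{eq:ext-Xi} and the analogous representation of $\extobarcls$ (employing Proposition~\ref{prop:minus-enn} to insert the correct sign), form the one-parameter family
\[
\ol{\mathbb{P}}_t \defeq \frac{1}{1+t^2}\begin{pmatrix} \ol{\V}\,\ol{\V}^* & t\ol{\V}(Y^*\ox 1) \\ t(Y\ox 1)\ol{\V}^* & t^2 Q \end{pmatrix}
\]
and argue that at $t=0$ this recovers $\iota_{A^\op,\CPa_E^\op}\ox_{\CPa_E^\op}\ol{\Delta}$ while at $t=\infty$ it reduces, modulo a degenerate summand, to a representative of $\extcls\ox_A\mu$. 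The analogous homotopy, applied on the other tensor factor, will give $\iota_{A,\CPa_E}\ox_{\CPa_E}\ol{\Delta} = -\extobarcls\ox_{A^\op}\mu$; the sign here is forced by the $-N$ appearing in Proposition~\ref{prop:minus-enn}, which is precisely the sign demanded by \eqref{eq:comm1}. The hard part will be keeping the signs and identifications straight through the $\CPa_E^\op \cong \CPa_{\ol{E}^\op}$ isomorphism, since it is there that the contribution of \eqref{eq:super-strong} and Proposition~\ref{prop:minus-enn} must dovetail exactly to yield the signs required by Theorem~\ref{thm:(OE)op sufficient}(1).
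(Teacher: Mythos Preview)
Your proposal is correct and follows essentially the same approach as the paper, which likewise treats the $\ol{E}^\op$ case by adapting Lemma~\ref{lem:fock-isom}, Proposition~\ref{prop:Kas-mod}, and Theorem~\ref{thm:CK-Khom} to the modified isometry $\ol{\V}$. One caution: your appeal to Proposition~\ref{prop:minus-enn} for the sign in \eqref{eq:comm1} is misplaced, since that proposition is stated only for invertible bimodules and the graph module $E$ is not invertible; the sign must instead come directly from the structure of $\ol{\V}$, whose second tensor factor lands in $W_{1,\ol{\cdot}^\op}$ rather than $W_{\ol{\cdot}^\op,1}$, so that the homotopy on the $\CPa_E^\op$ side collapses onto the complementary projection and produces the required minus sign without invoking the $\Z$-graded Fock picture.
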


\subsubsection{A comparison of the dual algebras}
\label{sub:two-is-too-many}

We now discuss what happens when we can construct $\delta\in
KK^{d+1}(\C,\mathcal{O}_E\ox\mathcal{O}_{E^\op})$ and $\ol{\Delta}\in
KK^{d+1}(\mathcal{O}_E\ox\mathcal{O}_{E}^\op,\C)$ (or $\ol{\delta}$ and $\Delta$). Our
methods guarantee that the Kasparov product with these classes gives us isomorphisms
$$
K^*(\CPa_{E^\op})\stackrel{\delta\ox\cdot}{\longrightarrow}K_{*+d+1}(\CPa_E)
\stackrel{\cdot\ox\ol{\Delta}}{\longrightarrow}K^*(\CPa_E^\op)
$$
and
$$
K_*(\CPa_E^\op)\stackrel{\cdot\ox\ol{\Delta}}{\longrightarrow}K^{*+d+1}(\CPa_E)
\stackrel{\delta\ox\cdot}{\longrightarrow}K_*(\CPa_{E^\op}).
$$
Hence when $A$, and so $A^\op$, $\CPa_{E^\op}$ and $\CPa_E^\op$ are in the bootstrap
class, we find that $\CPa_{E^\op}$ and $\CPa_E^\op$ are $KK$-equivalent. Thus the two
potential dual algebras are indistinguishable at the level of $KK$-theory. Indeed if, for
example, $\CPa_E^\op$ and $\CPa_{E^\op}$ are UCT Kirchberg algebras, then they coincide.

\begin{thm}
\label{thm:KK-equiv-ops} Let $A$ be unital and Poincar\'e self-dual with fundamental
classes $\beta$ and $\mu$. If $E$ is a bi-Hilbertian bimodule with finite left and right
Watatani indices such that 
$\beta\ox_A[E]=\beta\ox_{A^\op}[E^\op]$ and
$[E]\ox_A\mu=[\ol{E}^\op]\ox_{A^\op}\mu$ then $\CPa_{E^\op}$ and $\CPa_E^\op$ are
$KK$-equivalent. Hence $\CPa_E$ is Poincar\'e dual to both $\CPa_{E^\op}$ and
$\CPa_E^\op$.
\end{thm}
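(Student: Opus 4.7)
The plan is to extract the two requisite fundamental classes from our existing machinery and patch them together by a Kasparov product. The hypothesis $\beta\ox_A[E]=\beta\ox_{A^\op}[E^\op]$ lets us invoke Theorem~\ref{thm:yeah} to produce a $K$-theory fundamental class $\delta\in KK^{d+1}(\C,\CPa_E\ox \CPa_{E^\op})$ implementing the isomorphisms~\eqref{eq:isos-op}. The hypothesis $[E]\ox_A\mu=[\ol{E}^\op]\ox_{A^\op}\mu$ is exactly the $K$-homology compatibility needed in Theorem~\ref{thm:(OE)op sufficient}(1); taking the existence of a representative $\ol{\Delta}\in KK^{d+1}(\CPa_E\ox \CPa_E^\op,\C)$ satisfying~\eqref{eq:comm1} as the running assumption signalled at the start of Section~\ref{sub:two-is-too-many}, that theorem makes $\ol{\Delta}$ a $K$-homology fundamental class implementing~\eqref{eq:isos}.

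Define
\[
\psi\defeq \delta\ox_{\CPa_E}\ol{\Delta}\in KK^0(\CPa_E^\op,\CPa_{E^\op}).
\]
By associativity of the Kasparov product, $\cdot\ox_{\CPa_E^\op}\psi$ factors (up to sign from graded commutativity) as the composite
\[
K_*(\CPa_E^\op)\xrightarrow{\;\cdot\ox\ol{\Delta}\;}K^{*+d+1}(\CPa_E)\xrightarrow{\;\delta\ox\cdot\;}K_*(\CPa_{E^\op}),
\]
which is an isomorphism because each arrow is. A symmetric calculation shows that $\psi\ox_{\CPa_{E^\op}}\cdot$ gives isomorphisms on $K$-homology, matching the two displays at the start of Section~\ref{sub:two-is-too-many}.

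Under the standing assumption that $A$ lies in the bootstrap class, the $KK$-equivalences of Section~\ref{sec:CP-Kas-mods} together with the defining extension of $\CPa_E$ place $\Toep_E$ and $\CPa_E$ in the bootstrap class, and Lemma~\ref{lem:E-barE} does the same for their opposites. The UCT then promotes $\psi$ to a $KK$-equivalence. To obtain the Poincar\'e dualities, transport $\ol{\Delta}$ through $\psi^{-1}$ to define
\[
\Delta\defeq (\Id_{\CPa_E}\hox\psi^{-1})\ox_{\CPa_E\ox \CPa_E^\op}\ol{\Delta}\in KK^{d+1}(\CPa_E\ox \CPa_{E^\op},\C),
\]
and analogously transport $\delta$ to produce $\ol{\delta}\in KK^{d+1}(\C,\CPa_E\ox \CPa_E^\op)$. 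A bookkeeping check---using $\delta\ox_{\CPa_E}\extcls=-\beta\ox_{A^\op}\iota_{A^\op,\CPa_{E^\op}}$ from Theorem~\ref{thm:yeah} and the analogous identities~\eqref{eq:comm1} for $\ol{\Delta}$---verifies that $(\delta,\Delta)$ satisfies the hypotheses of Theorem~\ref{thm:(OEop) sufficient}(3) and $(\ol{\delta},\ol{\Delta})$ satisfies those of Theorem~\ref{thm:(OE)op sufficient}(3), yielding the two claimed dualities for $\CPa_E$.

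The principal obstacle is the jump from group-level isomorphisms to a genuine $KK$-equivalence: constructing $\psi^{-1}$ directly would require the dual classes $\Delta$ and $\ol{\delta}$ that the theorem itself sets out to produce, forcing us to route through the UCT. A secondary point is the existence of a representative $\ol{\Delta}$ satisfying~\eqref{eq:comm1}; the hypothesis guarantees only commutativity of diagram~\eqref{eq:diagram}, while producing an actual Kasparov class lifting that diagram is the delicate construction carried out for specific examples in Section~\ref{sec:K-fun}.
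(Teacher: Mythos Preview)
Your approach matches the paper's almost exactly. The paper's argument, given in the paragraph preceding the theorem, is precisely to compose the maps induced by $\delta$ and $\ol{\Delta}$ to obtain isomorphisms $K_*(\CPa_E^\op)\cong K_*(\CPa_{E^\op})$ and $K^*(\CPa_E^\op)\cong K^*(\CPa_{E^\op})$, and then invoke the bootstrap class to upgrade this to a $KK$-equivalence. You have reproduced this faithfully, and you are right to flag that the existence of $\ol{\Delta}$ is a contextual assumption drawn from the opening sentence of Section~\ref{sub:two-is-too-many} rather than something derivable from the stated hypotheses alone: the condition $[E]\ox_A\mu=[\ol{E}^\op]\ox_{A^\op}\mu$ only makes diagram~\eqref{eq:diagram} commute, it does not manufacture a class satisfying~\eqref{eq:comm1}.

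Where you go beyond the paper is in your final paragraph, attempting to deduce the two Poincar\'e dualities by transporting $\ol{\Delta}$ and $\delta$ through $\psi^{\pm 1}$ and then asserting that a ``bookkeeping check'' shows the transported classes satisfy the hypotheses of Theorems~\ref{thm:(OE)op sufficient}(3) and~\ref{thm:(OEop) sufficient}(3). This is not a mere bookkeeping check: those hypotheses are the identities~\eqref{eq:comm1} and their analogues, which involve the specific classes $\iota_{A^\op,\CPa_{E^\op}}$ and $\extocls$. There is no reason for the $KK$-equivalence $\psi$ to satisfy $\iota_{A^\op,\CPa_{E^\op}}\ox_{\CPa_{E^\op}}\psi^{-1}=\iota_{A^\op,\CPa_E^\op}$, so the transported $\Delta$ need not satisfy the second identity in Theorem~\ref{thm:(OEop) sufficient}(1). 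The paper itself does not spell out the ``Hence'' in the theorem's final sentence; in the concrete setting of Corollary~\ref{cor:graph-self-dual} all four classes $\delta,\ol{\delta},\Delta,\ol{\Delta}$ are constructed directly, so each duality follows from the relevant part~(3) without transport. In the abstract setting of Theorem~\ref{thm:KK-equiv-ops}, a cleaner route to the dualities is to note that the transported $\Delta$ is still a $K$-homology fundamental class (transport by a $KK$-equivalence preserves the isomorphism property), so $\delta\ox_{\CPa_E}\Delta$ and $\delta\ox_{\CPa_{E^\op}}\Delta$ induce $K$-group isomorphisms and are therefore $KK$-invertible by UCT; one then argues as in the proof of Theorem~\ref{thm:(OE)op sufficient}(3), replacing $\delta$ by $\delta\ox v^{-1}$ for a suitable unit $v$, but this needs its own justification rather than an appeal to the stated hypotheses of parts~(1) and~(2).
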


\begin{corl}
\label{cor:graph-self-dual} Suppose that the finite
graph $G$ has no sinks and no sources,
and that the edge modules $E$ and $E^\op$ 
 satisfy Assumption~\ref{ass:one} and Equation~\eqref{eq:super-strong}. Then
$C^*(G^\op)$ and $C^*(G)^\op$ are $KK$-equivalent and both are Poincar\'e dual to
$C^*(G)$.
\end{corl}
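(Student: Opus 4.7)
The plan is to verify the hypotheses of Theorem \ref{thm:KK-equiv-ops} and to construct all four fundamental classes ($\delta,\ol{\delta},\Delta,\ol{\Delta}$) explicitly, so that Theorems \ref{thm:(OEop) sufficient}(3) and \ref{thm:(OE)op sufficient}(3) apply in tandem.

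First I would set $A := C(G^0)$ and $E := C(G^1)$, so that $\CPa_E\cong C^*(G)$, $\CPa_{E^\op}\cong C^*(G^\op)$ and $\CPa_E^\op\cong C^*(G)^\op$. Since $G^0$ is finite, $A$ is unital and Poincar\'e self-dual by Remark \ref{rmk:Cr PD}, with classes $\beta$ and $\mu$ of parity $d=0$. Because $G$ has no sources and no sinks, the left actions of $A$ on $E$, $E^\op$ and $\ol{E}^\op$ are all injective and by compacts, and the finiteness of $G^1$ gives finite left and right Watatani indices. Proposition \ref{prp:graph classes} then yields all four invariance identities
\begin{align*}
\beta\ox_A[E] &= \beta\ox_{A^\op}[E^\op] = \beta\ox_{A^\op}[\ol{E}^\op],\\
[E]\ox_A\mu &= [E^\op]\ox_{A^\op}\mu = [\ol{E}^\op]\ox_{A^\op}\mu,
\end{align*}
which are exactly the input hypotheses of both Theorem \ref{thm:(OE)op sufficient} and Theorem \ref{thm:(OEop) sufficient} (and of Theorem \ref{thm:KK-equiv-ops}).

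Next I would construct the fundamental classes. Theorem \ref{thm:yeah} gives a $K$-theory class $\delta\in KK^1(\C,\CPa_E\ox\CPa_{E^\op})$ satisfying the identities required by Theorem \ref{thm:(OEop) sufficient}(2), and Theorem \ref{thm:yeah-op} gives $\ol{\delta}\in KK^1(\C,\CPa_E\ox\CPa_E^\op)$ satisfying those required by Theorem \ref{thm:(OE)op sufficient}(2). For the dual $K$-homology classes, Assumption \ref{ass:one} for both $E$ and $E^\op$ is precisely what is needed to invoke Theorem \ref{thm:CK-Khom}, producing $\Delta\in KK^1(\CPa_E\ox\CPa_{E^\op},\C)$ that satisfies the hypothesis of Theorem \ref{thm:(OEop) sufficient}(1); and Assumption \ref{ass:one} together with Equation \eqref{eq:super-strong} for $E$ lets us invoke Theorem \ref{thm:ee-op} to produce $\ol{\Delta}\in KK^1(\CPa_E\ox\CPa_E^\op,\C)$ that satisfies the hypothesis of Theorem \ref{thm:(OE)op sufficient}(1).

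With all four classes in hand, Theorem \ref{thm:(OEop) sufficient}(3) gives Poincar\'e duality between $C^*(G)=\CPa_E$ and $C^*(G^\op)=\CPa_{E^\op}$, and Theorem \ref{thm:(OE)op sufficient}(3) gives Poincar\'e duality between $C^*(G)$ and $C^*(G)^\op=\CPa_E^\op$. The $KK$-equivalence of $C^*(G^\op)$ and $C^*(G)^\op$ is then the content of Section \ref{sub:two-is-too-many}: the Kasparov products $\delta\ox_{\CPa_E}\ol{\Delta}$ and $\ol{\delta}\ox_{\CPa_E}\Delta$ assemble into mutually inverse classes in $KK(\CPa_E^\op,\CPa_{E^\op})$. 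Alternatively the whole deduction is packaged by Theorem \ref{thm:KK-equiv-ops} itself, whose hypotheses we have just verified.

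The main obstacle is the production of $\ol{\Delta}$ rather than $\Delta$: constructing a $K$-homology fundamental class between $\CPa_E$ and its opposite $\CPa_E^\op$ (as opposed to between $\CPa_E$ and $\CPa_{E^\op}$) is where the extra hypothesis \eqref{eq:super-strong} is essential, since it is what makes the isometry $\ol{\V}$ of Theorem \ref{thm:ee-op} well-defined via asymptotic invariance of the Watatani indices. Once this class is in place, the remaining compatibility checks with $\extcls$, $\extocls$ and $\extobarcls$ are parallel to those already performed for the $\CPa_{E^\op}$ case and require no new ideas.
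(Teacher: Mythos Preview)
Your proposal is correct and follows the paper's intended route: the corollary is stated without proof as an immediate consequence of the preceding constructions, and you have correctly identified and verified all the ingredients (Proposition~\ref{prp:graph classes} for the invariance conditions, Theorems~\ref{thm:yeah} and~\ref{thm:yeah-op} for $\delta$ and $\ol{\delta}$, Theorems~\ref{thm:CK-Khom} and~\ref{thm:ee-op} for $\Delta$ and $\ol{\Delta}$, and then parts~(3) of Theorems~\ref{thm:(OE)op sufficient} and~\ref{thm:(OEop) sufficient}).

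One small remark on the $KK$-equivalence step: your assertion that $\delta\ox_{\CPa_E}\ol{\Delta}$ and $\ol{\delta}\ox_{\CPa_E}\Delta$ are \emph{mutually inverse} is stronger than what Section~\ref{sub:two-is-too-many} actually argues. The paper's reasoning there is that these products induce isomorphisms on $K$-theory and $K$-homology, and then invokes the bootstrap class (which $C(G^0)=\C^{|G^0|}$ certainly satisfies) to upgrade this to a $KK$-equivalence via UCT. Alternatively, once you have established both Poincar\'e dualities via parts~(3) of the two theorems, the $KK$-equivalence of $\CPa_{E^\op}$ and $\CPa_E^\op$ follows from the general fact that two Poincar\'e duals of the same algebra are $KK$-equivalent. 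Either justification is fine here; just be aware that the direct mutual-inverse claim would need its own argument.
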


\appendix

\section{General Poincar\'e duality}
\label{sec:Magnus-made-me-do-it}

We briefly comment on a more general situation than that of lifting a Poincar\'e
self-duality for $A$ to one for $\Oo_A$. We thank Magnus Goffeng for pointing out the
applications of our approach to this setting. Let $A$, $B$ be Poincar\'e dual
unital $C^*$-algebras, with the duality realised by classes $\mu\in KK^d(A\ox B,\C)$ and
$\beta\in KK^d(A\ox B,\C)$. Suppose that $E$ is an $A$--$A$-correspondence and $F$ is a
$B$--$B$-correspondence.

By considering diagrams like that in Lemma~\ref{lem:diagram-commutes} with $B$ in place
of $A^\op$, and applying arguments like those in Section~\ref{sec:funky-diagram}, we
arrive at the following statement.

\begin{thm}
\label{thm:Aopp-Bee} With $A$, $B$, $E$ and $F$ as above, suppose that
$[E]\ox_A\mu=[F]\ox_B\mu \in KK(A \otimes B, \C)$ and $\beta\ox_A[E]=\beta\ox_B[F] \in
KK(C, A \otimes B)$. Then classes $\Delta\in KK^{d+1}(\CPa_E\ox\CPa_F,\C)$ and
$\delta\in KK^{d+1}(\C,\CPa_E\ox\CPa_F)$ define isomorphisms as in
Lemma~\ref{lem:diagram-commutes} if
$$
\iota_{A,\CPa_E}\ox_{\CPa_E}\Delta={\rm ext}_B\ox_B\mu
\quad\mbox{and}\quad
\iota_{B,\CPa_F}\ox_{\CPa_F}\Delta={\rm ext}_A\ox_A\mu
$$
and
$$
-\delta\ox_{\CPa_E}{\rm ext}_A=\beta\ox_B\iota_{B,\CPa_F}
\quad\mbox{and}\quad
\delta\ox_{\CPa_F}{\rm ext}_B=\beta\ox_A\iota_{A,\CPa_E}.
$$
\end{thm}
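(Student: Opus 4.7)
The plan is to mimic the argument for Theorem~\ref{thm:(OE)op sufficient}, replacing the role of $A^\op$ throughout by $B$, and the opposite-module extension class $\extobarcls$ (or $\extocls$) by the class $\operatorname{ext}_F := \varepsilon_F \otimes_{\End^0_B(\Fock_F)}[\Fock_F]\in KK^1(\CPa_F,B)$ of the defining extension for $\CPa_F$. The Poincar\'e duality between $A$ and $B$ gives identifications $\cdot\otimes_B\mu : K_*(B) \to K^{*+d}(A)$ and $\cdot\otimes_A\mu : K_*(A) \to K^{*+d}(B)$ (and dually for $\beta$), and these will serve to splice together the two Pimsner six-term sequences, one attached to $E$ and one attached to $F$.

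First I would write down the analogue of diagram \eqref{eq:diagram}: the inner hexagon is the Pimsner $K$-homology sequence for $\CPa_E$, the outer hexagon is the Pimsner $K$-theory sequence for $\CPa_F$, and the two are connected vertically by $\cdot\otimes_B\mu$ and $\cdot\otimes_A\mu$. The horizontal arrows in each hexagon are $(\operatorname{Id}-[E])\otimes_A\cdot$ and $(\operatorname{Id}-[F])\otimes_B\cdot$ respectively. The squares involving these horizontal arrows commute precisely because of the hypothesis $[E]\otimes_A\mu = [F]\otimes_B\mu$ (and its Kasparov-dual version for $\beta$). The remaining solid rectangles commute because, by exactness, their long sides factor through two consecutive maps in one of the hexagons and are therefore zero.

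Next, I would identify the vertical connecting arrows $\theta : K_*(\CPa_F)\to K^{*+d+1}(\CPa_E)$ induced by $\Delta$, namely $\theta = \cdot\otimes_{\CPa_F}\Delta$, and verify that the conditions
\[
\iota_{A,\CPa_E}\otimes_{\CPa_E}\Delta = \operatorname{ext}_F\otimes_B\mu,\qquad \iota_{B,\CPa_F}\otimes_{\CPa_F}\Delta = \operatorname{ext}_E\otimes_A\mu
\]
are exactly what is needed for the two trapezoids connecting the inclusion arrows $\iota_{A,\CPa_E*}$, $\iota_{B,\CPa_F*}$ and the boundary maps $\partial_E$, $\partial_F$ (which, by Kasparov \cite[Section~7]{KasparovTech}, are Kasparov product with $\operatorname{ext}_E$ and $\operatorname{ext}_F$) to commute. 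Once the whole diagram commutes, the five lemma gives that $\theta = \cdot\otimes_{\CPa_F}\Delta$ is an isomorphism, and a symmetric argument (or swapping the roles of $E$ and $F$) gives that $\cdot\otimes_{\CPa_E}\Delta$ is also an isomorphism. The $\delta$ case is dual: one uses the Kasparov-dual diagram in which the vertical connecting maps are $\beta\otimes_A\cdot$ and $\beta\otimes_B\cdot$, and the hypotheses on $\delta$ encode commutation of the trapezoids involving the boundary maps, with the sign in $-\delta\otimes_{\CPa_E}\operatorname{ext}_A$ exactly compensating for the antisymmetry of the external Kasparov product.

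The main obstacle, as in the self-dual case, will be bookkeeping the signs: the antisymmetry of the external product, the sign $(-1)^d$ in the duality identity $\beta\otimes_A\mu = (-1)^d\operatorname{Id}_{KK(B,B)}$, and the sign appearing in the boundary maps all have to be tracked carefully so that the trapezoids really do commute and not just commute up to sign. Apart from this, no new ideas beyond those in the proofs of Theorems~\ref{thm:(OE)op sufficient} and~\ref{thm:(OEop) sufficient} are required, since the finite-index/bi-Hilbertian structure played no role in that part of those arguments; what was used was only the $KK$-duality between $A$ and its chosen partner and the commutation of the extension with the relevant inclusions.
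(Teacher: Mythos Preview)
Your sketch is correct and matches the paper's approach exactly: the paper does not give a separate proof of this theorem, but simply says it follows ``by considering diagrams like that in Lemma~\ref{lem:diagram-commutes} with $B$ in place of $A^\op$, and applying arguments like those in Section~\ref{sec:funky-diagram}.'' Your write-up spells out precisely that analogy (inner hexagon for $E$, outer for $F$, vertical maps from the $A$--$B$ duality, commutation of the trapezoids encoded by the stated conditions on $\Delta$ and $\delta$, five lemma to finish), so there is nothing to add beyond noting that what the paper calls $\operatorname{ext}_A$ and $\operatorname{ext}_B$ are your $\operatorname{ext}_E$ and $\operatorname{ext}_F$.
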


The same arguments as in Theorem~\ref{thm:yeah} allow us to obtain a $K$-theory
fundamental class for $\CPa_E\ox\CPa_F$.
\begin{thm}
\label{thm:K-class-general} Let $A$ and $B$ be unital $C^*$-algebras. Suppose that $A$
and $B$ are Poincar\'e dual with invariant classes $\beta,\,\mu$ as in
Theorem~\ref{thm:Aopp-Bee}, and assume that $E$ and $F$ are both finitely generated as
right modules. Construct $\mathbb{W}_A$ from $E_A$ and $\mathbb{W}_B$ from $F_B$ as in
Definition~\ref{defn:K-fun} and Lemma~\ref{lem:eval=0}. Then
$$
\delta := \beta\ox_{A\ox B}\mathbb{W}_A - \beta\ox_{A\ox B}\mathbb{W}_B\in KK(\C,A\ox B)
$$
is a $K$-theory fundamental class for $\Oo_E\ox \Oo_F$.
\end{thm}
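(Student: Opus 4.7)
The plan is to mirror the proof of Theorem~\ref{thm:yeah}, replacing $A^\op$ with $B$ and $E^\op$ with $F$ throughout, and to verify the hypotheses of Theorem~\ref{thm:Aopp-Bee}(2). (We interpret the statement as asserting that $\delta$ lives naturally in $K_d(\S\CPa_E \ox \CPa_F) \cong K_{d+1}(\CPa_E \ox \CPa_F)$, having been lifted from the mapping cone $\mathbb{M} \defeq M(A \ox B, \CPa_E \ox \CPa_F)$.)

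First I would form
\[
\hat\delta \defeq \beta \ox_{A \ox B} \mathbb{W}_A - \beta \ox_{A \ox B} \mathbb{W}_B \in KK^d(\C, \mathbb{M}),
\]
exactly as in Definition~\ref{defn:K-fun}. Precisely as in Lemma~\ref{lem:eval=0}, the composition with the evaluation map $\ev \colon \mathbb{M} \to A \ox B$ satisfies
\[
\hat\delta \ox_\mathbb{M} \ev
= \beta \ox_{A \ox B}\big(([A]-[E])\ox [B]\big) - \beta \ox_{A \ox B}\big([A] \ox ([B]-[F])\big)
= \beta \ox_B [F] - \beta \ox_A [E],
\]
which vanishes by hypothesis. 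Exactness of the $K$-theory sequence for the mapping cone
\[
0 \to \S\CPa_E \ox \CPa_F \xrightarrow{\mathbf{j}} \mathbb{M} \xrightarrow{\ev} A \ox B \to 0
\]
then yields a class $\delta \in K_d(\S\CPa_E \ox \CPa_F)$ with $\hat\delta = \delta \ox_{\S\CPa_E \ox \CPa_F} \mathbf{j}$.

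Next I would compute $\delta \ox_{\CPa_E} \extcls_A$ by the strategy of Theorem~\ref{thm:yeah}. The class $\extcls_A \ox \Id_{KK(\CPa_F, \CPa_F)}$ restricts trivially to $A \ox B$, so admits a lift $\widehat{\widehat{\extcls_A}} \in KK(\mathbb{M}, A \ox \CPa_F)$ satisfying $\mathbf{j} \ox_\mathbb{M} \widehat{\widehat{\extcls_A}} = (j \ox_{M(A,\CPa_E)} \widehat{\extcls_A}) \ox \Id_{KK(\CPa_F,\CPa_F)}$. As in the proof of Theorem~\ref{thm:yeah}, one has
\[
\delta \ox_{\CPa_E} \extcls_A = \hat\delta \ox_\mathbb{M} \widehat{\widehat{\extcls_A}}.
\]
The index-theoretic computation of Lemma~\ref{lem:pairing} (invoking \cite[Theorem~2.11]{CNNR} through the Stinespring dilation of $\Fock_E$) gives $\mathbb{W}_A \ox_\mathbb{M} \widehat{\widehat{\extcls_A}} = -\Id_{KK(A,A)} \ox \iota_{B, \CPa_F}$, while the $\mathbb{W}_B$ contribution vanishes because $\widehat{\widehat{\extcls_A}}$ restricts to zero on the $B$-factor. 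Hence
\[
\delta \ox_{\CPa_E} \extcls_A = -\beta \ox_B \iota_{B, \CPa_F}.
\]
The symmetric argument using a lift $\widehat{\widehat{\extcls_B}} \in KK(\mathbb{M}, \CPa_E \ox B)$ yields $\delta \ox_{\CPa_F} \extcls_B = \beta \ox_A \iota_{A, \CPa_E}$, with the opposite sign arising from the fact that this time it is the $\mathbb{W}_B$ summand of $\hat\delta$ that pairs nontrivially.

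The two identities just established are precisely the hypotheses appearing in Theorem~\ref{thm:Aopp-Bee}(2) (with $B$ in place of $A^\op$), whose conclusion is that $\delta$ induces the required isomorphisms between $K^*(\CPa_E)$, $K^*(\CPa_F)$, $K_{*+d+1}(\CPa_F)$ and $K_{*+d+1}(\CPa_E)$ via Kasparov product. The only subtle point in the argument will be the careful sign bookkeeping in the two extension-class pairings: one must track the anti-symmetry of the external product and the sign convention in Lemma~\ref{lem:pairing} so that the final signs match those required by Theorem~\ref{thm:Aopp-Bee}(2). This is the analogue of the sign discrepancy reconciled by Corollary~\ref{cor:all-my-ops} in the invertible-bimodule setting, and should be the only step requiring real care beyond direct translation of the earlier proofs.
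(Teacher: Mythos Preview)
Your proposal is correct and follows precisely the approach the paper indicates: the paper itself does not spell out a proof but simply states that ``the same arguments as in Theorem~\ref{thm:yeah}'' apply, and your write-up is exactly that translation with $B$ in place of $A^\op$ and $F$ in place of $E^\op$. The only minor remark is that Theorem~\ref{thm:Aopp-Bee} in the paper is stated as a single theorem rather than with numbered parts, so your reference to ``part~(2)'' should just point to the $\delta$ conditions there; and your closing caveat about Corollary~\ref{cor:all-my-ops} is unnecessary, since the sign computation goes through directly as you have it without appeal to the invertible-bimodule case.
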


Obtaining $K$-homology fundamental classes is harder, but we mention one
important case.

\begin{example}\label{ex:riem-mfld}
Let $(M,g)$ be a compact oriented Riemannian manifold of dimension $d$ and $\alpha$ the
automorphism of $C^\infty(M)$ dual to an orientation-preserving isometry.

Insisting on an actual isometry, as opposed to an almost-isometry, ensures that we obtain
an automorphism $\tilde{\alpha}$ of the bundle of Clifford algebras $\Cliff(M,g)$. Thus
we obtain correspondences $E={}_\alpha C(M)_{C(M)}$ and
$F={}_{\tilde{\alpha}}\Cliff(M,g)_{\Cliff(M,g)}$.

The orientable version of Kasparov's Bott class for $M$ is as we described in the proof
of Proposition~\ref{prop:spin-cee-hyp}, except the spinor bundle is replaced by the
Clifford bundle. Thus $\beta$ has a representative $(\C,X_{C(M)\ox\Cliff(M,g)},T)$ with
$\beta\in KK^d(\C,C(M)\ox\Cliff(M,g))$. Just as in the proof of
Proposition~\ref{prop:spin-cee-hyp}, we can implement $\alpha$ and $\tilde{\alpha}$ via a
$\C$-linear map $V:X\to X$ satisfying conditions analogous to those of part~1 of
Lemma~\ref{lem:CP-conditions}. In particular, we can show that
$\beta\ox_{C(M)}[E]=\beta\ox_{\Cliff(M,g)}[F]$, and so we can obtain a $K$-theory
fundamental class $\delta\in KK^{d+1}(\C,C(M)\ox\Cliff(M,g))$. As in the spin$^c$ case,
we restrict to compact manifolds to obtain the $K$-theory class.

We can also produce the $K$-homology fundamental class, and this does not require compactness, though our formulation of Poincar\'e duality does require compactness. Kasparov's
fundamental class \cite{KasparovEqvar,LRV} for the oriented manifold $(M,g)$ is
$$
\lambda=
\left[\left(C^\infty(M)\ox\Cliff(M,g), {}_{\pi}L^2(\Lambda_+T^*M\oplus\Lambda_-T^*M),
\begin{pmatrix} 0 & (d+d^*)_-\\
(d+d^*)_+ & 0 \end{pmatrix}\right)\right],
$$
where $\pi$ is the  representation defined for $f\in C(M)$ and $v\in \Gamma(T^*M)$ by
$$
\pi(f\ox v)\omega(x)=f(x)(v(x)\wedge \omega(x)+v(x)\llcorner\omega(x)),\quad\omega\in L^2(\Lambda^*T^*M).
$$
Just as for the Bott class, we can implement $\alpha$ and $\tilde{\alpha}$ on
$L^2(\Lambda^*T^*M)$ via a $\C$-linear map $W$. One checks that $W$ and $\lambda$ satisfy
analogues of the conditions of part~2 of Lemma~\ref{lem:CP-conditions}. In particular, we
can show that $[E]\ox_{C(M)}\lambda=[F]\ox_{\Cliff(M,g)}\lambda$. So mildly modifying the
constructions of Section~\ref{subsec:cpi}, we obtain a fundamental class implementing
duality between the crossed product of the functions and the crossed product of the
Clifford algebra,
$$
\Delta=
\left[\left((C^\infty(M)\rtimes_\alpha\Z)\ox(\Cliff(M,g)\rtimes_\alpha\Z), {}_{\tilde{\pi}}\ell^2(\Z)\ox L^2(\Lambda T^*M),
\begin{pmatrix} N & (d+d^*)_-\\
(d+d^*)_+ & -N \end{pmatrix}\right)\right],
$$
where the representation $\tilde{\pi}$ is defined analogously
to Equation \eqref{eq:two-ways}.
See \cite{KasparovEqvar} and \cite{LRV} for
more information about Kasparov's fundamental class.
\end{example}

\section{Relationships between the extension classes}
\label{sec:one-more-op}

If $E$ is not an invertible bimodule, then constructing a representative of the extension
class is more complicated than for an invertible module \cite[Theorem~3.14]{RRSext}. As a
result, the relationship between the extension class for $\CPa_E$ and that for
$\CPa_{E}^\op$ is also more complicated than in Section~\ref{subsec:ee-smeb}, as we now
explain. Throughout this section we assume that $A$ is unital, and that $E$ is finitely
generated and bi-Hilbertian and satisfies Assumption~\ref{ass:one} of subsection
\ref{subsub:GMI}.

Just as for $K$-theory where we compared $K_*(A)$ and $K_*(A^\op)$ in
Lemma~\ref{lem:K-op}, we can also produce an explicit isomorphism for $KK$-groups of
algebras and their opposites. This will allow us to compare the classes $\extcls$ and
$\extobarcls$. (When $E$ is not an invertible bimodule, there can be no corresponding
comparison with $\extocls$.)

\begin{prop}
\label{prop:kay-op-kay}
For any $C^*$-algebras $A,\,B$ there is an isomorphism
$$
{\rm OP}:KK(A,B)\stackrel{\cong}{\longrightarrow} KK(A^\op,B^\op)
$$
given on cycles by the map
$$
(A,X_B,T)\mapsto (A^\op,\ol{X}_{B^\op},\ol{T}).
$$
Here $\ol{X}_{B^\op}$ is the (left $B$-)conjugate module
considered
as a right $B^\op$ module,
and for $a\in A$, $x\in X$ we define
$a^\op\ol{x}=\ol{a^*x}$ and
$\ol{T}\ol{x}=\ol{Tx}$.
\end{prop}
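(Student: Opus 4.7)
The plan is to verify that the prescribed formula sends Kasparov cycles to Kasparov cycles, that it respects homotopy and addition, and that applying it twice recovers the identity. The main technical content is just bookkeeping around conjugate linearity, the opposite algebra involution $(b^\op)^*=(b^*)^\op$, and the conversion between left- and right-module structures; there is no single hard step.

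First I would equip $\ol{X}$ with its right $B^\op$-module structure. As in Section~\ref{sec:bimods}, $\ol{X}$ is a left Hilbert $B$-module with $b\cdot\ol{x}=\ol{xb^*}$ and ${}_B(\ol{x}\mid\ol{y})=(x\mid y)_B$; converting left-$B$ to right-$B^\op$ as on page~1625 of \cite{LRV} gives the right action $\ol{x}\cdot b^\op=\ol{xb^*}$ and the $B^\op$-valued inner product
\[
(\ol{x}\mid\ol{y})_{B^\op}={}_B(\ol{x}\mid\ol{y})^{*\op}=\big((y\mid x)_B\big)^\op.
\]
A short computation shows $(\ol{x}\mid\ol{y}\cdot b^\op)_{B^\op}=(\ol{x}\mid\ol{y})_{B^\op}\cdot b^\op$ and $(\ol{y}\mid\ol{x})_{B^\op}=(\ol{x}\mid\ol{y})_{B^\op}^*$, so this is a genuine right Hilbert $B^\op$-module. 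The formula $a^\op\ol{x}=\ol{a^*x}$ is a $*$-homomorphism $A^\op\to\End_{B^\op}(\ol{X})$, with adjointability following from the identity $(a^\op\ol{x}\mid\ol{y})_{B^\op}=(\ol{x}\mid (a^{*})^\op\ol{y})_{B^\op}$, itself a direct translation of adjointability of the $A$-action on $X$.

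Next I would check the Kasparov cycle conditions for $(A^\op,\ol{X}_{B^\op},\ol{T})$. Adjointability of $\ol T$ follows from $(\ol{T}\ol{x}\mid\ol{y})_{B^\op}=((y\mid Tx)_B)^\op=((T^*y\mid x)_B)^\op=(\ol{x}\mid\ol{T^*y})_{B^\op}$, so $\ol{T}^*=\ol{T^*}$. The key computation is
\[
\Theta_{\ol{x},\ol{y}}(\ol{z})=\ol{x}\cdot(\ol{y}\mid\ol{z})_{B^\op}=\ol{x\cdot(y\mid z)_B}=\ol{\Theta_{x,y}(z)},
\]
so the conjugate-linear map $S\mapsto\ol{S}$ (defined by $\ol{S}\,\ol{x}:=\ol{Sx}$) is a norm-preserving bijection $\End^0_B(X)\to\End^0_{B^\op}(\ol{X})$ that intertwines composition: $\ol{ST}=\ol{S}\,\ol{T}$. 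Consequently $[\ol{T},a^\op]=\ol{[T,a^*]^*}$ (up to signs), $(\ol{T}^2-1)a^\op=\ol{a^*(T^2-1)}$, and $(\ol{T}^*-\ol{T})a^\op=\ol{a^*(T^*-T)}$ are all compact, because the originals are.

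Passage to $KK$ is then straightforward: degenerate cycles go to degenerate cycles; direct sums are preserved because $\ol{X\oplus Y}=\ol X\oplus\ol Y$; and a homotopy, which is a cycle for $\big(A,C([0,1],B)\big)$, is sent to a cycle for $\big(A^\op,C([0,1],B^\op)\big)=\big(A^\op,C([0,1],B)^\op\big)$ via pointwise application of the construction. Hence OP descends to a well-defined group homomorphism $KK(A,B)\to KK(A^\op,B^\op)$.

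Finally, applying the same construction to $(A^\op,B^\op)$ gives a map $KK(A^\op,B^\op)\to KK(A^{\op\op},B^{\op\op})=KK(A,B)$. There is a natural isomorphism of Kasparov cycles $\ol{\ol{X}}\cong X$ via $\ol{\ol{x}}\leftrightarrow x$, which one checks is $A$-$B$-bilinear and carries $\ol{\ol{T}}$ to $T$; thus $\mathrm{OP}\circ\mathrm{OP}=\Id$, so OP is an isomorphism.
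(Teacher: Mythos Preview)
Your proposal is correct and follows essentially the same approach as the paper's proof, which simply asserts that the data $(A^\op,\ol{X}_{B^\op},\ol{T})$ is a Kasparov module, that homotopies map to homotopies, and that symmetry in $A$--$B$ versus $A^\op$--$B^\op$ gives the inverse. You have just filled in the routine verifications the paper leaves implicit; one small slip is that $(\ol{T}^2-1)a^\op$ equals $\ol{(T^2-1)a^*}$ rather than $\ol{a^*(T^2-1)}$, but since both $(T^2-1)a^*$ and $a^*(T^2-1)$ are compact this does not affect the argument.
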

\begin{proof}
Given a Kasparov module $(A,{}_\phi X_B,T)$ (bounded or unbounded), the data
$(A^\op,\ol{X}_{B^\op},\ol{T})$ defines a Kasparov $A^\op$--$B^\op$-module. To see that
$(A,{}_\phi X_B,T) \mapsto (A^\op,\ol{X}_{B^\op},\ol{T})$ descends to a well-defined map
from $KK(A, B)$ to $KK(A^\op, B^\op)$, observe that if $(A, Y_{B \ox C([0,1])},
\mathbf{T})$ is a homotopy of Kasparov $A$--$B$-modules, then $(A^\op, \ol{Y}_{(B \ox
C([0,1]))^\op}, \ol{\mathbf{T}})$ is a homotopy of the corresponding Kasparov
$A^\op$--$B^\op$-modules. Since this entire discussion is symmetric in $A$--$B$ and
$A^\op$--$B^\op$, we are done.
\end{proof}

\begin{prop}
Let $A$ be unital, and $E$ a finitely generated bi-Hilbertian $A$ bimodule satisfying
Assumption~\ref{ass:one}. Let $(\Oo_E,\Xi_{E,A},2P-1)$ be the representative of $\extcls$
described in Equation~\ref{eq:ext-Xi}, let $(\Oo_E^\op,\ol{\Xi_{E,A}}_{A^\op},2\ol{P}-1)$
be the class provided by Proposition~\ref{prop:kay-op-kay}, and let 
$(\Oo_E^\op,\Xi_{\ol{E}^\op,A^\op},2P^\op-1)$ be the representative of $\extobarcls$. Then the map
$$
S_{\ol{\mu}^\op}S_{\ol{\nu}^\op}^*\mapsto \ol{S_\mu S_\nu^*}^\op
$$
extends to a unitary isomorphism of Kasparov modules
$$
(\Oo_E^\op,\Xi_{\ol{E}^\op,A^\op},2P^\op-1)\cong(\Oo_E^\op,\ol{\Xi_{E,A}}_{A^\op},2\ol{P}-1).
$$
Hence under the isomorphism $KK(\Oo_E,A)\to KK(\Oo_E^\op,A^\op)$ of
Proposition~\ref{prop:kay-op-kay}, the class $\extcls$ is mapped to the class
$\extobarcls$.
\end{prop}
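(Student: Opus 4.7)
My plan is to construct an explicit unitary equivalence of Kasparov modules and then deduce the equality of classes. First, Lemma~\ref{lem:E-barE} identifies $\Oo_{\ol{E}^\op}$ with $\Oo_E^\op$ via $S_{\ol{e}^\op}\leftrightarrow S_e^{*\op}$. Under this identification $S_{\ol{\mu}^\op}S_{\ol{\nu}^\op}^*$ corresponds to $(S_\nu S_\mu^*)^\op$, so the prescribed map is the densely defined
\[
U\colon \Oo_E^\op\longrightarrow\ol{\Oo_E}^\op,\qquad U(a^\op)=\ol{a^*}^\op,
\]
and the proof reduces to verifying that $U$ extends to a unitary of Hilbert $A^\op$-modules that is $\Oo_E^\op$-linear and intertwines the Fredholm operators $2P^\op-1$ and $2\ol{P}-1$.

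The isometry property is the only substantive point. Using the inner product $(a\mid b)_{\Xi_{\ol{E}^\op}}=\Phi_\infty^{\ol{E}^\op}(a^*b)$ and the conjugate convention $(\ol{\xi}\mid\ol{\eta})_{\ol{\Xi_{E,A}}^\op}=(\eta\mid\xi)_{\Xi_E}^\op$ dictated by Proposition~\ref{prop:kay-op-kay}, I compute
\[
(a^\op\mid b^\op)_{\Xi_{\ol{E}^\op}}=\Phi_\infty^{\ol{E}^\op}((ba^*)^\op),\qquad (Ua^\op\mid Ub^\op)_{\ol{\Xi_{E,A}}^\op}=\Phi_\infty(ba^*)^\op,
\]
so equality of the two inner products is exactly the identity $\Phi_\infty^{\ol{E}^\op}(c^\op)=\Phi_\infty(c)^\op$ for $c\in\Oo_E$. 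This in turn follows by inspecting the strict-limit formula of \cite[Proposition~3.5]{RRSext}, once one identifies the right Watatani indices of $\ol{E}^\op$ with the opposites of the right Watatani indices of $E$ through Lemma~\ref{lem:half-a-bar}. Density then extends $U$ to the required isometry, and surjectivity is immediate from the definition.

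The intertwining properties are short algebraic checks. For the right $A^\op$-action, using $\ol{\xi}\cdot b^\op=\ol{\xi b^*}$, one has $U(a^\op)\cdot b^\op=\ol{a^*}^\op\cdot b^\op=\ol{a^*b^*}^\op=\ol{(ba)^*}^\op=U(a^\op b^\op)$. For the left $\Oo_E^\op$-action, Proposition~\ref{prop:kay-op-kay} gives $x^\op\ol{\xi}=\ol{x^*\xi}$, whence $x^\op\cdot U(a^\op)=\ol{x^*a^*}^\op=\ol{(ax)^*}^\op=U(x^\op\cdot a^\op)$. For the Fredholm operator, the Fock subspace of $\Xi_{\ol{E}^\op,A^\op}$ is the closed span of $\{S_{\ol{\mu}^\op}\}=\{S_\mu^{*\op}\}$, and $U$ sends this onto the closed span of $\{\ol{S_\mu}^\op\}$, which is exactly the range of $\ol{P}$; thus $UP^\op U^*=\ol{P}$. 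Passing to $KK$-classes gives $\extobarcls=\mathrm{OP}(\extcls)$. The main obstacle in executing this plan is the opposite-algebra compatibility of $\Phi_\infty$: one must verify that Assumption~\ref{ass:one} for $E$ passes to its analogue for $\ol{E}^\op$ and that the identification of Watatani indices is consistent with the grading, after which the identity $\Phi_\infty^{\ol{E}^\op}(c^\op)=\Phi_\infty(c)^\op$ and the remaining verifications are routine.
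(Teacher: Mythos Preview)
Your plan is correct and takes a somewhat different route from the paper. You reduce the isometry check to a single global identity $\Phi_\infty^{\ol{E}^\op}(c^\op)=\Phi_\infty(c)^\op$ for $c\in\Oo_E$, and then handle the module actions and the Fredholm operator by short algebraic checks. The paper instead works directly on spanning elements: it computes both $(W_{\ol{\mu}^\op,\ol{\nu}^\op}\mid W_{\ol{\rho}^\op,\ol{\sigma}^\op})_{A^\op}$ and $(\ol{W_{\mu,\nu}}^\op\mid\ol{W_{\rho,\sigma}}^\op)_{A^\op}$ explicitly, unwinding each via the formula $\Phi_\infty(S_\mu S_\nu^*)={}_A(\mu\mid q(\nu))$ and its $\ol{E}^\op$ analogue, and then matches the two expressions using the self-adjointness of the map $q$ for both inner products. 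Your global identity is equivalent to this matching, so the two arguments have the same technical core; what differs is the packaging.

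The trade-off: your formulation is cleaner and makes the statement ``$\extobarcls=\mathrm{OP}(\extcls)$'' feel inevitable once one believes $\Phi_\infty$ is natural with respect to $\ol{(\cdot)}^\op$. But the paper's explicit computation has the advantage of making transparent exactly where Assumption~\ref{ass:one} enters---namely through the self-adjointness of $q$---whereas in your approach this is hidden inside the verification that the Watatani indices and the strict-limit formula transform correctly. You correctly flag this as the main obstacle; be aware that carrying it out amounts to essentially the same calculation the paper performs, just organised differently. Your checks of the $A^\op$- and $\Oo_E^\op$-linearity and of $UP^\op U^*=\ol{P}$ are the same as the paper's.
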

\begin{proof}
Fix elementary tensors $\mu,\nu,\rho, \sigma \in \Fock_E$. Assumption~\ref{ass:one}
provides a positive adjointable (for both inner products) map $q:\Fock_E\to \Fock_E$
given by $q(\mu)=\lim_{n\to\infty}\mathrm{e}^{-\beta_n}\nu \mathrm{e}^{\beta_{n-|\mu|}}$.

We write $\mu=\mu_{in} \ox \mu_f$ where $\mu_{in}$ is an initial tensor factor of $\mu$
whose length will be clear from context, and $\mu_f$ the corresponding final tensor
factor. Write $W_{\ol{\mu}^\op,\ol{\nu}^\op}$ for the image of
$S_{\ol{\mu}^\op}S^*_{\ol{\nu}^\op}$ in the completion $\Xi_{\ol{E}^\op,A^\op}$ and
$W_{\mu,\nu}$ for the image of $S_{\mu,\nu}$ in $\Xi_{E,A}$. We have
\begin{align*}
(W_{\ol{\mu}^\op,\ol{\nu}^\op} \mid W_{\ol{\rho}^\op,\ol{\sigma}^\op})_{A^\op}
&=\Phi_\infty(S_{\ol{\nu}^\op}S_{\ol{\mu}^\op}^*S_{\ol{\rho}^\op}S_{\ol{\sigma}^\op}^*)\\
&=\left\{\begin{array}{ll}{}_{A^\op}(\ol{\nu}^\op(\ol{\mu}^\op \mid \ol{\rho_{in}}^\op)_{A^\op}\ol{\rho_f}^\op \mid \ol{q}^\op(\ol{\sigma}^\op)) & |\rho|\geq |\mu|\\
{}_{A^\op}(\ol{\nu}^\op \mid \ol{q}^\op(\ol{\sigma}^\op( \ol{\rho}^\op\mid \ol{\mu_{in}}^\op )_{A^\op})\ol{\mu_f}^\op) & |\rho|\leq |\mu|\end{array}\right.\\
&=\left\{\begin{array}{ll}{}_{A^\op}(\ol{\nu}^\op{}_A(\ol{\rho_{in}} \mid \ol{\mu})^\op \ol{\rho_f}^\op\mid \ol{q}^\op(\ol{\sigma}^\op)) & |\rho|\geq |\mu|\\
{}_{A^\op}(\ol{\nu}^\op \mid \ol{q}^\op(\ol{\sigma}^\op{}_A(\ol{\mu_{in}}\mid\ol{\rho})^\op\ol{\mu_f}^\op)) & |\rho|\leq |\mu|\end{array}\right.\\
&=\left\{\begin{array}{ll}{}_{A^\op}(\ol{\nu(\mu \mid \rho_{in})_A\rho_f}^\op \mid \ol{q(\sigma)}^\op)) & |\rho|\geq |\mu|\\
{}_{A^\op}(\ol{\nu}^\op \mid \ol{q(\sigma(\rho\mid\mu_{in})_A\mu_f)}^\op)) & |\rho|\leq |\mu|\end{array}\right.\\
&=\left\{\begin{array}{ll}{}_A(q(\sigma) \mid \nu(\mu \mid \rho_{in})_A\rho_f)^\op& |\rho|\geq |\mu|\\
{}_A(q(\sigma(\rho\mid\mu_{in})_A\mu_f)\mid\nu)^\op&|\rho|\leq|\mu|.\end{array}\right.
\end{align*}
Likewise in the conjugate module $\ol{\Xi_{E,A}}_{A^\op}$
we have
\begin{align*}
(\ol{W_{\mu,\nu}}^\op \mid \ol{W_{\rho,\sigma}}^\op)_{A^\op}
&={}_A(\ol{W_{\rho,\sigma}}\mid \ol{W_{\mu,\nu}})^\op
=(W_{\rho,\sigma}\mid W_{\mu,\nu})_A^\op
=\Phi_\infty(S_\sigma S_\rho^* S_\mu S_\nu^*)\\
&=\begin{cases}
    {}_A(\sigma\mid q(\nu(\mu\mid \rho_{in})_A\rho_f))^\op & \text{ if $|\rho|\geq |\mu|$}\\
    {}_A(\sigma(\rho\mid\mu_{in})_A\mu_f\mid q(\nu))^\op & \text{ if $|\rho|\leq |\mu|$.}
\end{cases}
\end{align*}
The self-adjointness of $q$ and sesquilinearity of the inner product show that there is
an isometry
$$
U:\Xi_{\ol{E}^\op,A^\op}\to \ol{\Xi_{E,A}}_{A^\op}
    \quad\text{ such that }\quad
UW_{\ol{\mu}^\op, \overline{\nu}^\op} = \ol{W_{\mu,\nu}}^\op.
$$
This $U$ carries the embedded image $\ol{\linspan}\{W_{\ol{\mu}^\op,1}:\,\mu\in\Fock_E\}$
of $\Fock_{\ol{E}^\op}$ to the embedded image
$\ol{\linspan}\{\ol{W_{\mu,1}}:\,\mu\in\Fock_E\}$ of $\ol{\Fock_{E}}^\op$. Hence
$$
UP^\op U^*=\ol{P}.
$$
Likewise,
$$
US_{\ol{e}^\op}W_{\ol{\rho}^\op,\ol{\sigma}^\op}
=UW_{\ol{e\rho}^\op,\ol{\sigma}^\op}
=\ol{W_{e\rho,\sigma}}^\op
=S_e^{*\op} UW_{\ol{\rho}^\op,\ol{\sigma}^\op}.
$$
and we deduce that the actions are also intertwined.
\end{proof}

\end{document}